\newtheorem{theo}{Theorem}[section]
\newtheorem{lemm}[theo]{Lemma}
\newtheorem{rema}[theo]{Remark}
\newtheorem{defi}[theo]{Definition}
\newtheorem{propo}[theo]{Proposition}
\newtheorem{coro}[theo]{Corollary}
\newtheorem{newconj}{Conjecture}[theo]
\newtheorem{newpropo}{Proposition}[theo]
\theoremstyle{definition}
\newtheorem{op}[theo]{Open problems}
\newcommand{\BigO}[1]{\ensuremath{\operatorname{O}\bigl(#1\bigr)}}
\newcommand{\betadRamb}{\widetilde{A}}
\newcommand{\betad}{A'}
\newcommand{\betar}{A}
\def\Z{\mathbb{Z}}
\def\N{\mathbb{N}}
\def\F{\mathbb{F}}
\def\Q{\mathbb{Q}}
\def\R{\mathbb{R}}
\def\C{\mathbb{C}}
\def\P{\mathbb{P}}
\def\p{\mathfrak{p}}
\def\cO{\mathcal{O}}
\newcommand{\Ld}[2][D]{\ensuremath{\mathcal{L}(#2\mathcal{#1})}} 
\newcommand{\D}[1][D]{\ensuremath{\mathcal{#1}}} 
\newcommand{\mus}{\mu^\mathrm{sym}}
\newcommand{\Ms}{M^\mathrm{sym}}
\newcommand{\ms}{m^\mathrm{sym}}
\begin{document}

\title[Tensor rank
of multiplication and related problems]{On the tensor rank
of multiplication in finite extensions of finite fields and related issues in algebraic geometry}

\author[Ballet]{St\'ephane Ballet}
\address{Aix-Marseille Université, CNRS, Centrale Marseille, Institut de Mathématiques de Marseille\\ 
case 907, 163 avenue de Luminy, F13288 Marseille cedex 9\\ France}
\email{stephane.ballet@univ-amu.fr}

\author[Chaumine]{Jean Chaumine}
\address{Laboratoire G\'eom\'etrie Alg\'ebrique et 
Applications \`a la Th\'eorie de l'Information\\ 
Universit\'e de la Polyn\'esie Fran\c{c}aise\\ 
B.P. 6570, 98702 Faa'a, Tahiti\\ France}
\email{jean.chaumine@upf.pf}

\author[Pieltant]{Julia Pieltant}
\address{Conservatoire National des Arts et Métiers \\Équipe en émergence Sécurité-Défense\\ 
EPN 15 STRATÉGIES\\
Pôle Sécurité Défense - Chaire de Criminologie\\
40 rue des Jeûneurs, F75002 Paris\\ France}
\email{julia.pieltant@lecnam.net}

\author[Rambaud]{Matthieu Rambaud}
\address{CNRS LCTI\\
Télécom ParisTech,
46 rue Barrault,  F-75634 Paris cedex 13\\ France}
\email{rambaud@enst.fr}

\author[Randriambololona]{Hugues Randriambololona}
\address{CNRS LCTI\\
Télécom ParisTech,
46 rue Barrault,  F-75634 Paris cedex 13\\ France}
\email{randriambololona@enst.fr}

\author[Rolland]{Robert Rolland}
\address{Aix-Marseille Université, CNRS, Centrale Marseille, Institut de Mathématiques de Marseille\\ 
case 907, 163 avenue de Luminy, F13288 Marseille cedex 9\\ France}
\email{robert.rolland@acrypta.fr}

\date{\today}
\keywords{finite field, tensor rank of the multiplication, function field}
\subjclass[2010]{ Primary 14H05; Secondary 12E20}

\begin{abstract}
In this paper, we give a survey of the known results concerning the tensor rank of the multiplication 
in finite extensions of finite fields, enriched with some not published recent results as well as analyzes enhancing 
the qualitative understanding of the domain. 
In particular, we identify and clarify certain results not completely proved and we emphasis the link with open problems 
in number theory, algebraic geometry, and coding theory. 
\end{abstract}

\maketitle

\tableofcontents 

%
%
%
%
%
%
%
%
%
%
%
%
%

\section{Introduction} \label{SecIntro}

This article proposes a survey on the tensor rank of the multiplication in finite fields. It is an update 
of the previous survey \cite{baro2} published about ten years ago. The deep improvements done since then require 
a complete rewrite of the survey highlighting the current state of the art. In particular, 
we present the new techniques introduced 
in recent years. The growing importance of this topic has attracted many mathematicians and computer scientists 
who developed new ideas and obtained new results. At the same time, we report a number of non-trivial errors and solutions
which testify to the vividness of the domain and the community concerned.
The finite fields are an important area. They arise in many fields
applications, particularly in areas related to information theory.
In particular, the complexity of the multiplication in the finite fields is a central problem.
It is a part of the algebraic complexity for which the best general reference is \cite{buclsh}. 
It turns out that studying this problem has raised many issues of number theory and algebraic geometry. 
Notably, it has revealed deep links between these different domains. So, one of the objectives of this article 
is also to explicit these links and to present current related open problems. In the same time
we prove some new results not yet published.

Let us describe more precisely the problem: we suppose that we have the multiplication 
in a finite field ${\mathbb F}_q$ and we want to construct an algorithm of multiplication in 
the extension ${\mathbb F}_{q^n}$ which is the least expansive in terms of operations in ${\mathbb F}_q$.
Let us remark that from this point of view the multiplication in ${\mathbb F}_{q^n}$ is the multiplication 
of two polynomials of degree $<n$ with coefficients in ${\mathbb F}_q$. We then distinguish in the algorithm 
two types of operations: those which are linear with respect to the variables that one multiply and those which are 
bilinear with respect to the two variables. 
More precisely, let $\mathcal{B}=\{e_1,...,e_n\}$ be a basis of ${\mathbb F}_{q^n}$ over ${\mathbb F}_q$.
If $x=\sum_{i=1}^{n}x_ie_i$ et $y=\sum_{i=1}^{n}y_ie_i$ then a direct computation gives:
\begin{equation}\label{calculdirect}
z=xy=\sum_{h=1}^{n}z_he_h=\sum_{h=1}^{n}\biggr( \sum_{i,j=1}^{n}t_{ijh}x_ix_j\biggl)e_h,
\end{equation}
where $$e_ie_j=\sum_{h=1}^{n}t_{ijh}e_h,$$ 
$t_{ijh}\in {\mathbb F}_q$ being constants.
Then the problem of the algebraic complexity consists on determining the minimal number of 
elementary operations in ${\mathbb F}_q$ 
required to compute the product of two elements  $x,y \in {\mathbb F}_{q^n}$. We can distinguish the following operations:
\begin{itemize}
\item addition : $(\alpha,\beta) \mapsto \alpha+\beta$ où $\alpha,\beta \in {\mathbb F}_q$,
\item scalar multiplication : $x_i \mapsto \alpha \cdot x_i$ where $\alpha, x_i \in {\mathbb F}_q$, 
and $\alpha$ is a constant,
\item non-scalar or bilinear multiplication :  $(x_i,y_j) \mapsto x_i \cdot y_j$ where $x_i,y_j \in {\mathbb F}_q$ 
depend on the elements 
$x$ and $y$ of $\mathbb{F}_{q^n}$ which are multiplied.
\end{itemize}
So, to obtain the product $xy$ by the direct computation, one counts: 
\begin{itemize}
\item $n^3-n$ additions, 
\item$n^3$ scalar multiplications, 
\item $n^2$ non-scalar or bilinear multiplications.
\end{itemize}

The bilinear complexity of the algorithm of multiplication is given by the number of used bilinear multiplications.
This complexity corresponds to the rank of the tensor of multiplication corresponding to this algorithm in ${\mathbb F}_{q^n}$ 
as vector space over ${\mathbb F}_{q}$, as will be explained in the next section. 

The bilinear complexity of multiplication in finite fields  ${\mathbb F}_{q^n}$ 
over  ${\mathbb F}_{q}$ is obtained by a tensor (resp. an algorithm) of minimal rank (resp. of minimal bilinear complexity).
The survey emphases the study of this minimal complexity. 

In this paper, it is a question of introducing the problem of the tensor rank of the multiplication 
in finite fields and of giving  a statement of the results obtained in this
part of algebraic complexity theory, as well as related issues. 

\subsection{Tensor rank and multiplication algorithm}

Let us recall the notions of multiplication algorithm and associated bilinear complexity.  

\begin{defi}
Let $K$ be a field and $E_0, \ldots , E_s$ be finite dimensional $K$-vector spaces. 
A non zero element $t\in E_0 \otimes \cdots \otimes E_s$ 
is said to be an elementary tensor, or a tensor of rank 1, if it can be written in 
the form $t=e_0 \otimes \cdots \otimes e_s$ for some $e_i \in E_i$. 
More generally, the rank of an arbitrary $t\in E_0 \otimes \cdots \otimes E_s$  is 
defined as the minimal length of a decomposition of $t$ as a sum of elementary tensors.
\end{defi}

\begin{defi}
If $$ \alpha~~:~~E_1\times \cdots \times E_s \longrightarrow E_0$$
is an $s$-linear map, the $s$-linear complexity of $\alpha$ is defined as the tensor rank of the element
$$\tilde{\alpha}\in E_0\otimes E_1^{\vee} \otimes \cdots \otimes E_s^{\vee}$$ 
where $E_i^{\vee}$ denotes the dual of $E_i$ 
as vector space over $K$ for any integer $i$, naturally deduced from $\alpha$. In particular, 
the $2$-linear complexity is called the bilinear complexity.
\end{defi}

\begin{defi}
Let $\mathcal A $ be a finite-dimensional $K$-algebra. We denote by 
$$\mu(\mathcal A/K)$$
the bilinear complexity of the multiplication map
$$m_{\mathcal A}~~:~~\mathcal A \times \mathcal A \longrightarrow \mathcal A$$
considered as a $K$-bilinear map. 

\noindent
In particular, if $\mathcal{A}=\F_{q^m}$ and $K=\F_q$,  we set: $$\mu_q(m)=\mu(\F_{q^m}/\F_q).$$
\end{defi}

More concretely, $\mu(\mathcal A/K)$ is the smallest integer $n$ such that there exist linear forms 
$\phi_1,\ldots , \phi_n, \psi_1, \ldots  , \psi_n~~:~~\mathcal A \longrightarrow K$, 
and elements $w_1, \ldots , w_n \in \mathcal A$, 
such that for all $x,y\in \mathcal A$ one has
\begin{equation} xy= \phi_1(x)\psi_1(y)w_1+\cdots +\phi_n(x)\psi_n(y)w_n, \label{xy}\end{equation}
since such an expression is the same thing as a decomposition
\begin{equation}
t_M=\sum_{i=1}^{n}w_i\otimes \phi_i\otimes \psi_i \in  \mathcal{A} \otimes \mathcal{A}^{\vee} \otimes \mathcal{A}^{\vee}
\end{equation}
for the multiplication tensor of $\mathcal{A}$.

\begin{defi}
We call multiplication algorithm of length $n$  for $\mathcal A/K$ 
a collection of $\phi_i, \psi_i, w_i$ that satisfy \eqref{xy} or equivalently a tensor decomposition 
$$
t_M=\sum_{i=1}^{n}w_i\otimes \phi_i\otimes \psi_i \in  \mathcal{A} \otimes \mathcal{A}^{\vee} \otimes \mathcal{A}^{\vee}
$$ 
for the multiplication tensor of $\mathcal{A}$. Such an algorithm is said 
symmetric if $\phi_i=\psi_i$ for all $i$ (this can happen only if $\mathcal A$ is commutative).
\end{defi}

Hence, when $\mathcal{A}$ is commutative, it is interesting to study the minimal length 
of a symmetric multiplication algorithm. 

\begin{defi}
Let $\mathcal A $ be a finite-dimensional commutative $K$-algebra. The symmetric bilinear complexity $$\mus(\mathcal A/K)$$ 
is the minimal length of a symmetric multiplication algorithm. 

\noindent
In particular, if $\mathcal{A}=\F_{q^m}$ and $K=\F_q$,  we set: $$\mus_q(m)=\mus(\F_{q^m}/\F_q).$$
\end{defi}

Here are some basic properties of these quantities, taken from \cite[Lemma 1.10]{randJComp}:
\begin{lemm}
\label{functorial_inequalities}
\begin{enumerate}[(a)]
\item
If $\mathcal{A}$ is a finite-dimensional $K$-algebra
and $L$ an extension field of $K$,
and if we let $\mathcal{A}_L=\mathcal{A}\otimes_K L$ considered as an $L$-algebra, then
\begin{equation*}
\label{funct1}
\mu(\mathcal{A}_L/L)\leq\mu(\mathcal{A}/K).
\end{equation*}
Moreover, if $\mathcal{A}$ is commutative, we also have
\begin{equation*}
\label{funct1s}
\mus(\mathcal{A}_L/L)\leq\mus(\mathcal{A}/K).
\end{equation*}
\item
If $\mathcal{A}$ is a finite-dimensional $L$-algebra,
where $L$ is an extension field of $K$,
then $\mathcal{A}$ can also be considered as a $K$-algebra, and
\begin{equation*}
\label{funct2}
\mu(\mathcal{A}/K)\leq\mu(\mathcal{A}/L)\mu(L/K).
\end{equation*}
Moreover, if $\mathcal{A}$ is commutative, we also have
\begin{equation*}
\label{funct2s}
\mus(\mathcal{A}/K)\leq\mus(\mathcal{A}/L)\mus(L/K).
\end{equation*}
\item
If $\mathcal{A}$ and $\mathcal{B}$ are two finite-dimensional $K$-algebras,
\begin{equation*}
\label{funct3}
\mu(\mathcal{A}\times\mathcal{B}/K)\leq\mu(\mathcal{A}/K)+\mu(\mathcal{B}/K).
\end{equation*}
Moreover, if $\mathcal{A}$ and $\mathcal{B}$ are commutative, we also have
\begin{equation*}
\label{funct3s}
\mus(\mathcal{A}\times\mathcal{B}/K)\leq\mus(\mathcal{A}/K)+\mus(\mathcal{B}/K).
\end{equation*}
\item
If $\mathcal{A}$ and $\mathcal{B}$ are two finite-dimensional $K$-algebras,
\begin{equation*}
\label{funct4}
\mu(\mathcal{A}\otimes_K\mathcal{B}/K)\leq\mu(\mathcal{A}/K)\mu(\mathcal{B}/K).
\end{equation*}
Moreover, if $\mathcal{A}$ and $\mathcal{B}$ are commutative, we also have
\begin{equation*}
\label{funct4s}
\mus(\mathcal{A}\otimes_K\mathcal{B}/K)\leq\mus(\mathcal{A}/K)\mu(\mathcal{B}/K).
\end{equation*}
\end{enumerate}
\end{lemm}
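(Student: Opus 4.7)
The overall plan is to construct, for each inequality, an explicit multiplication algorithm of the claimed length starting from the algorithms given on the right-hand side. I use freely the equivalence between a length-$n$ multiplication algorithm for $\mathcal{A}/K$ and a length-$n$ rank decomposition $t_M=\sum_{i=1}^{n}w_i\otimes\phi_i\otimes\psi_i$ of the multiplication tensor; symmetry corresponds to requiring $\phi_i=\psi_i$.

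Part (a) is obtained by extension of scalars. Given a decomposition $xy=\sum_i\phi_i(x)\psi_i(y)w_i$ over $K$, the $L$-linear forms $\phi_i\otimes_K\mathrm{id}_L$ on $\mathcal{A}_L$ and the elements $w_i\otimes 1\in\mathcal{A}_L$ satisfy the same bilinear relation on $\mathcal{A}_L$ by uniqueness of the $K$-bilinear extension, and the symmetric condition is preserved slot by slot. Part (b) proceeds by composition: starting from an $L$-algorithm for $\mathcal{A}/L$ of length $m$ whose forms $\phi_i,\psi_i:\mathcal{A}\to L$ are in particular $K$-linear, I would substitute into each $L$-valued product $\phi_i(x)\psi_i(y)$ the length-$\ell$ decomposition $uv=\sum_k\phi'_k(u)\psi'_k(v)w'_k$ delivered by the $L/K$ algorithm. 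This yields an algorithm of length $m\ell$ for $\mathcal{A}/K$ with $K$-linear forms $\phi'_k\circ\phi_i$, $\psi'_k\circ\psi_i$ and coefficient elements $w'_kw_i\in\mathcal{A}$; symmetry on both inputs is transparently preserved.

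Parts (c) and (d) follow from direct-sum and tensor-product constructions. For (c), I would pull back the forms of each algorithm along the projections $\mathcal{A}\times\mathcal{B}\to\mathcal{A},\mathcal{B}$ and push the elements $w_i$ forward along the inclusions, then concatenate the two decompositions, since $(x_1,x_2)(y_1,y_2)=(x_1y_1,x_2y_2)$. For (d) in the non-symmetric case, under the canonical identification $(\mathcal{A}\otimes_K\mathcal{B})^\vee\simeq\mathcal{A}^\vee\otimes_K\mathcal{B}^\vee$ the multiplication tensor of $\mathcal{A}\otimes_K\mathcal{B}$ equals $t_\mathcal{A}\otimes t_\mathcal{B}$ after reshuffling the six factor slots, so tensoring two rank decompositions term by term produces $\mu(\mathcal{A}/K)\mu(\mathcal{B}/K)$ rank-one summands.

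The subtle point, and what I expect to be the main obstacle, is the symmetric refinement of (d): a naive term-by-term tensor product of a symmetric $\mathcal{A}$-algorithm and a non-symmetric $\mathcal{B}$-algorithm produces forms $\phi^\mathcal{A}_i\otimes\phi^\mathcal{B}_j$ and $\phi^\mathcal{A}_i\otimes\psi^\mathcal{B}_j$, which agree only when $\phi^\mathcal{B}_j=\psi^\mathcal{B}_j$, and would at best yield $\mus(\mathcal{A}/K)\mus(\mathcal{B}/K)$. I would try to circumvent this by factoring through the $\mathcal{B}$-algebra structure: first, base-change the symmetric $\mathcal{A}/K$ algorithm by $\mathcal{B}$ (a flat $K$-algebra since free) to obtain $\mus((\mathcal{A}\otimes_K\mathcal{B})/\mathcal{B})\leq\mus(\mathcal{A}/K)$; this produces $\mus(\mathcal{A}/K)$ symmetric $\mathcal{B}$-valued bilinear products of the shape $\Phi_i(x)\Phi_i(y)$ with $\Phi_i:\mathcal{A}\otimes_K\mathcal{B}\to\mathcal{B}$ $\mathcal{B}$-linear. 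Each such $\mathcal{B}$-valued product is then evaluated using a length-$\mu(\mathcal{B}/K)$ $K$-algorithm for $\mathcal{B}$; the technical heart is to rearrange the resulting $K$-bilinear terms so that the two slots carry identical $K$-linear forms on $\mathcal{A}\otimes_K\mathcal{B}$, which proceeds as in \cite{randJComp}.
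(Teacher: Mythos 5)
Your parts (a), (b), (c) and the asymmetric half of (d) are correct, and they are the expected arguments (scalar extension, composition of algorithms, concatenation along the two projections, term-by-term tensoring of rank-one summands). For comparison: the paper gives no proof at all of this lemma, it simply imports the statement from \cite[Lemma 1.10]{randJComp}, so for these items there is nothing to compare against beyond the standard constructions you supply.

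The genuine gap is in the symmetric half of (d), and you have correctly located the difficulty without resolving it. Your two-step scheme (base-change the symmetric $\mathcal{A}$-algorithm to $\mathcal{B}$, then expand each $\mathcal{B}$-valued product $\Phi_i(x)\Phi_i(y)$ by a $K$-algorithm for $\mathcal{B}$) puts $\chi_j\circ\Phi_i$ in one slot and $\eta_j\circ\Phi_i$ in the other, and these coincide only if the $\mathcal{B}$-algorithm is itself symmetric; the final ``rearrangement as in \cite{randJComp}'' that you invoke is not there and cannot be supplied. Indeed, specializing the printed inequality $\mus(\mathcal{A}\otimes_K\mathcal{B}/K)\leq\mus(\mathcal{A}/K)\,\mu(\mathcal{B}/K)$ to $\mathcal{A}=K$ would give $\mus(\mathcal{B}/K)\leq\mu(\mathcal{B}/K)$ for every finite-dimensional commutative $\mathcal{B}$, hence $\mus=\mu$ identically, which is exactly the question recorded as open later in this survey (whether $\mu_q(n)<\mus_q(n)$ can occur); so the mixed bound as printed is a misprint rather than a provable statement, and the intended inequality is $\mus(\mathcal{A}\otimes_K\mathcal{B}/K)\leq\mus(\mathcal{A}/K)\,\mus(\mathcal{B}/K)$. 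That correct version you have in effect already proved in passing: tensoring two symmetric decompositions term by term yields the symmetric rank-one summands $(w_i\otimes v_j)\otimes(\phi_i\otimes\chi_j)\otimes(\phi_i\otimes\chi_j)$. So the fix is to prove the $\mus\cdot\mus$ bound by your naive argument and drop the attempted mixed bound, rather than deferring the impossible symmetrization step to the reference.
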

In particular, the following lemma of Shparlinski, Tsfasman, and Vladut 
\cite[Lemma 1.2]{shtsvl}, is especially useful.
Actually, the right-hand inequality was already stated in the original paper of 
D.V. Chudnovsky and G.V.Chudnovsky \cite[eq. (6.2)]{chch},
so the new contribution of I. Shparlinski, M. Tsfasman, and S. Vladut is the left-hand inequality.
This will be important when we will consider asymptotic complexities in Lemma \ref{lemasyMqmq}.
\begin{lemm}\label{plongement}
For all $m,n$ we have
$$\mu_{q}(n)\leq\mu_q(mn)\leq \mu_q(m)\cdot\mu_{q^m}(n).$$
\end{lemm}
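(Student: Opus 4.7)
The two inequalities have rather different flavors, so I would treat them separately.

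The right-hand inequality $\mu_q(mn)\leq\mu_q(m)\cdot\mu_{q^m}(n)$ is an immediate consequence of part (b) of Lemma \ref{functorial_inequalities}. Indeed, view $\F_{q^{mn}}$ as an $\F_{q^m}$-algebra (of dimension $n$, since $\F_{q^m}\subset\F_{q^{mn}}$); then part (b) applied with $K=\F_q$, $L=\F_{q^m}$, and $\mathcal{A}=\F_{q^{mn}}$ gives
\[
\mu(\F_{q^{mn}}/\F_q)\leq\mu(\F_{q^{mn}}/\F_{q^m})\cdot\mu(\F_{q^m}/\F_q),
\]
which is exactly what we want. Conceptually, one performs an outer multiplication algorithm for the degree-$n$ extension $\F_{q^{mn}}/\F_{q^m}$ (using $\mu_{q^m}(n)$ bilinear multiplications over $\F_{q^m}$) and substitutes, for each $\F_{q^m}$-bilinear multiplication, an inner algorithm of length $\mu_q(m)$ that multiplies in $\F_{q^m}$ over $\F_q$.

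For the left-hand inequality $\mu_q(n)\leq\mu_q(mn)$, the plan is to restrict and project any multiplication algorithm for $\F_{q^{mn}}/\F_q$ down to one for $\F_{q^n}/\F_q$. Since $n\mid mn$, the field $\F_{q^n}$ embeds as an $\F_q$-subalgebra of $\F_{q^{mn}}$. Choose an $\F_q$-linear projection $\pi\colon\F_{q^{mn}}\to\F_{q^n}$ that is the identity on $\F_{q^n}$ (this is possible because any subspace of an $\F_q$-vector space admits a linear complement). Starting from an optimal decomposition
\[
xy=\sum_{i=1}^{N}\phi_i(x)\psi_i(y)w_i,\qquad x,y\in\F_{q^{mn}},
\]
with $N=\mu_q(mn)$, restrict $\phi_i,\psi_i$ to $\F_{q^n}$ and apply $\pi$ to both sides. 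For $x,y\in\F_{q^n}$ the product $xy$ lies in $\F_{q^n}$, so $\pi(xy)=xy$, and we obtain
\[
xy=\sum_{i=1}^{N}\bigl(\phi_i|_{\F_{q^n}}\bigr)(x)\bigl(\psi_i|_{\F_{q^n}}\bigr)(y)\,\pi(w_i),
\]
a multiplication algorithm of length $N$ for $\F_{q^n}/\F_q$. Hence $\mu_q(n)\leq N=\mu_q(mn)$.

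Neither step presents a real obstacle: the right inequality is pure functoriality, and the left one amounts to the observation that restriction to a subalgebra followed by a chosen linear retraction never increases tensor rank. The only mildly subtle point worth stating explicitly in the write-up is the existence and use of the retraction $\pi$, which is what makes the restricted decomposition land back inside $\F_{q^n}$.
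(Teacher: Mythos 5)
Your proof is correct and follows essentially the same route as the paper, which derives the right-hand inequality from Lemma~\ref{functorial_inequalities}(b) with $\mathcal{A}=\F_{q^{mn}}$, $L=\F_{q^m}$, $K=\F_q$, and the left-hand one from the inclusion $\F_{q^n}\subseteq\F_{q^{mn}}$. The only difference is that you spell out the retraction $\pi$ making the restriction argument explicit, a detail the paper leaves implicit.
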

Actually the same holds for symmetric complexity.
\begin{lemm}\label{plongementsym}
For all $m,n$ we have
$$\mus_{q}(n)\leq\mus_q(mn)\leq \mus_q(m)\cdot\mus_{q^m}(n).$$
\end{lemm}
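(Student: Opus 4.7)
The plan is to handle the two inequalities separately, as they use quite different ideas, but both follow closely the pattern of the non-symmetric Lemma \ref{plongement}.

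For the right-hand inequality $\mus_q(mn)\leq \mus_q(m)\cdot\mus_{q^m}(n)$, I would simply invoke Lemma \ref{functorial_inequalities}(b), in its symmetric version, applied to the tower $\F_q\subset \F_{q^m}\subset \F_{q^{mn}}$. Indeed, $\F_{q^{mn}}$ is a commutative $\F_{q^m}$-algebra of dimension $n$, isomorphic to $\F_{(q^m)^n}$, so
$$\mus_q(mn)=\mus(\F_{q^{mn}}/\F_q)\leq \mus(\F_{q^{mn}}/\F_{q^m})\cdot \mus(\F_{q^m}/\F_q)=\mus_{q^m}(n)\cdot \mus_q(m).$$
There is nothing new to prove here beyond what is already contained in the preceding lemma.

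For the left-hand inequality $\mus_q(n)\leq \mus_q(mn)$, I would adapt to the symmetric setting the standard trick used by Shparlinski--Tsfasman--Vladut. Start from an optimal symmetric multiplication algorithm of length $N=\mus_q(mn)$ for $\F_{q^{mn}}/\F_q$, so that for all $x,y\in \F_{q^{mn}}$,
$$xy=\sum_{i=1}^{N}\phi_i(x)\phi_i(y)w_i,$$
with $\phi_i\colon \F_{q^{mn}}\to \F_q$ linear and $w_i\in \F_{q^{mn}}$. Since $\F_{q^n}$ is an $\F_q$-subspace of $\F_{q^{mn}}$, fix an $\F_q$-linear projection $\pi\colon \F_{q^{mn}}\to \F_{q^n}$ which is the identity on $\F_{q^n}$. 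Restricting the identity above to $x,y\in \F_{q^n}$ (so that $xy\in \F_{q^n}$ already) and applying $\pi$, we use $\F_q$-linearity of $\pi$ together with the fact that each scalar $\phi_i(x)\phi_i(y)$ lies in $\F_q$, to get
$$xy=\pi(xy)=\sum_{i=1}^{N}\phi_i(x)\phi_i(y)\,\pi(w_i),$$
an identity in $\F_{q^n}$. This is a symmetric multiplication algorithm of length $N$ for $\F_{q^n}/\F_q$, with linear forms $\phi_i\!\restriction_{\F_{q^n}}$ and reconstructing elements $\pi(w_i)\in \F_{q^n}$.

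I do not anticipate any real obstacle. The only point that requires a moment's thought is checking that the projection trick preserves the symmetry condition $\phi_i=\psi_i$. But this is immediate: the linear forms are unchanged (only restricted), and only the $w_i$'s are altered via $\pi$, so if $\phi_i=\psi_i$ to begin with, the equality persists after restriction. Hence the argument of Shparlinski--Tsfasman--Vladut transfers verbatim, and the symmetric analogue holds with the same constants.
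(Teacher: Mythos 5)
Your proposal is correct and follows essentially the same route as the paper: the right-hand inequality is obtained by applying Lemma~\ref{functorial_inequalities}(b) to the tower $\F_q\subset\F_{q^m}\subset\F_{q^{mn}}$, and the left-hand inequality comes from the inclusion $\F_{q^n}\subseteq\F_{q^{mn}}$, for which you merely spell out the standard restriction-and-projection argument (preserving symmetry) that the paper leaves implicit.
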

\begin{proof}
The left-hand inequalities $\mu_{q}(n)\leq\mu_q(mn)$ and $\mus_{q}(n)\leq\mus_q(mn)$ are 
consequences of the inclusion $\F_{q^n}\subseteq\F_{q^{mn}}$.
Then, for the right-hand inequalities $\mu_q(mn)\leq \mu_q(m)\cdot\mu_{q^m}(n)$ and $\mus_q(mn)\leq \mus_q(m)\cdot\mus_{q^m}(n)$,
we apply Lemma~\ref{functorial_inequalities}(b) with $\mathcal{A}=\F_{q^{mn}}$, $L=\F_{q^m}$, and $K=\F_q$.
\end{proof}

\subsection{Organization of the paper} 

In Section \ref{SecOld}, we present the classical results via the approach 
using the multiplication by polynomial interpolation.
In Section \ref{SectionAlgCurvApproach}, we give an historical record of 
results obtained from the pioneer works due to 
D.V. and G.V. Chudnovsky in \cite{chch} and later I. Shparlinski, M. Tsfasman and S. Vladut in \cite{shtsvl}. 
In particular, we present the original algorithm. 
This modern approach uses the interpolation over algebraic curves defined over 
finite fields. This approach, which we recount the first success 
as well as the rocks on which the pionners came to grief, enables to end at a first 
complete proof of the linearity of 
the bilinear complexity of multiplication by S. Ballet in \cite{ball1}. 
In Section \ref{SecCodes}, we present the code approach for the bilinear complexity 
and explain the connexion between the bilinear complexity of multiplication and 
the so-called (exact) supercodes, or equivalently multiplication friendly 
codes in the lexicon of certain authors. 
Then, in Section \ref{SecGeneralizations}, we present the different generalizations 
of the original D.V. and G.V. Chudnovsky algorithm, in particular the most successful 
version of the algorithm of Chudnovsky--Chudnovsky type at the present time, due to 
H. Randriambololona in \cite{randJComp}. This part explains the links with algebraic geometry.
In Section \ref{AsyBounds}, we recall  the known results on the asymptotic bounds about 
the symmetric and asymmetric bilinear complexity that have been established through the last 30 years. 
Then, in a same way, in Section \ref{UniBounds}, we give uniform bounds about the symmetric 
and asymmetric bilinear complexity.
Finally, in Section \ref{SecEffective} we present methods about the effective construction 
of bilinear multiplication algorithms in finite fields.

\section{Old classical results} \label{SecOld} 
Let 
$$P(u)=\sum_{i=0}^{n}a_iu^i$$
be a  monic irreducible polynomial of degree $n$ with
coefficients in a field $F$.
Let
$$R(u)=\sum_{i=0}^{n-1}x_iu^i$$
and
$$S(u)=\sum_{i=0}^{n-1}y_iu^i$$
be two polynomials of degree $\leq n-1$ where
the coefficients $x_i$ and $y_i$ are indeterminates.

C. Fiduccia and Y. Zalcstein (cf. \cite{fiza}, \cite{buclsh}  p.367 Prop. 14.47)
have studied the general problem of computing
the coefficients of the product $R(u) \times S(u)$
and they have shown that at least $2n-1$ multiplications
are needed.
When the field $F$ is infinite, an algorithm reaching exactly
this bound was previously given by A. Toom in \cite{toom}.
S. Winograd described in \cite{wino2} all the algorithms reaching
the bound $2n-1$.
Moreover, S. Winograd proved in \cite{wino3} that up to some
transformations every algorithm for computing the coefficients
of $R(u) \times S(u) \mod P(u)$ which is of bilinear complexity $2n-1$,
necessarily computes the coefficients of $R(u) \times S(u)$, and consequently
uses one of the algorithms described in \cite{wino2}.
These algorithms use interpolation techniques and cannot be performed
if the cardinality of the field $F$ is $<2n-2$. In conclusion,
we have the following result:

\begin{theo}\label{old}
If the cardinality of $F$ is $<2n-2$, every algorithm computing
the coefficients of  $R(u) \times S(u) \mod P(u)$ has a bilinear
complexity $>2n-1$.
\end{theo}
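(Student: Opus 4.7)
The plan is to argue by contradiction, relying on the three cited results of Fiduccia--Zalcstein, Toom, and Winograd. Suppose that some algorithm computes the coefficients of $R(u)\times S(u) \bmod P(u)$ with bilinear complexity at most $2n-1$. The lower bound recalled from Fiduccia--Zalcstein already forbids complexity strictly less than $2n-1$, so the only remaining case to rule out is that of a complexity equal to $2n-1$; this is what will produce the strict inequality in the conclusion.

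First I would invoke the theorem of Winograd in \cite{wino3}: any algorithm for $R\times S \bmod P$ of bilinear complexity exactly $2n-1$ must, after suitable invertible linear changes of the input and output variables, actually compute all $2n-1$ coefficients of the full polynomial product $R(u)\cdot S(u)$ (of which the reduction modulo $P$ is only a linear image). This crucial step transports the question from the quotient ring $F[u]/P(u)$ to the ambient polynomial ring $F[u]$, where the complete classification of \cite{wino2} is available.

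Next I would apply that classification: every bilinear algorithm of rank $2n-1$ for the full polynomial multiplication in degree $<n$ is, up to those same trivialities, an interpolation algorithm. It selects $2n-1$ pairwise distinct nodes in $F\cup\{\infty\}$, evaluates $R$ and $S$ at each of them (the ``value at $\infty$'' being read as the leading coefficient, and costing a single bilinear product), multiplies the $2n-1$ pairs of evaluations, and reconstructs the coefficients of $R\cdot S$ by Lagrange-type interpolation, which is purely linear. Such a scheme requires at least $2n-2$ pairwise distinct finite elements of $F$ (allowing one of the $2n-1$ nodes to be $\infty$). Under the hypothesis $|F|<2n-2$ no such configuration of nodes exists, which contradicts the existence of the algorithm and establishes the theorem.

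The main obstacle is not in the counting but in the reduction step: correctly invoking \cite{wino3} so that an arbitrary complexity-$(2n-1)$ algorithm for the mod-$P$ product is genuinely transformed into a complexity-$(2n-1)$ algorithm for the unreduced product, with the \emph{same} bilinear multiplications. Once this reduction is granted, the rest is just the observation that an interpolation scheme with $2n-1$ distinct nodes cannot fit in a field of cardinality less than $2n-2$.
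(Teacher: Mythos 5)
Your proposal is correct and follows essentially the same route as the paper: the paper's justification is precisely the chain Fiduccia--Zalcstein (ruling out rank $<2n-1$), Winograd's \cite{wino3} reduction of any rank-$(2n-1)$ algorithm for the product modulo $P$ to one computing the full product $R(u)\times S(u)$, and the classification in \cite{wino2} of those as interpolation algorithms needing at least $2n-2$ distinct elements of $F$, which is impossible when $|F|<2n-2$. You have merely made explicit the counting of nodes (one of the $2n-1$ allowed to be $\infty$) that the paper leaves implicit.
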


Applying the results of S. Winograd and H. De Groote \cite{groo} and Theorem \ref{old} to
the multiplication in a finite extension $\F_{q^n}$ of a finite field
$\F_q$ we obtain:

\begin{theo}\label{thm_wdg}
The bilinear complexity $\mu_q(n)$ of the multiplication
in the finite field $\F_{q^n}$ over $\F_q$ verifies
$$\mu_q(n) \geq 2n-1,$$
with equality holding if and only if
$$n \leq \frac{q}{2}+1.$$
\end{theo}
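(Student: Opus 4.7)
The plan is to establish the inequality and the equality characterization separately, using the results already cited in the excerpt. For the lower bound $\mu_q(n) \geq 2n-1$, I would invoke the De Groote--Winograd bound, which says that for a finite-dimensional commutative $K$-algebra $\mathcal{A}$, one has $\mu(\mathcal{A}/K) \geq 2\dim_K\mathcal{A} - t$, where $t$ denotes the number of maximal ideals of $\mathcal{A}$. Applied to $\mathcal{A} = \F_{q^n}$, which is a field and hence has $t=1$, this yields $\mu_q(n) \geq 2n-1$ directly. This is the ``mod $P(u)$'' analogue of the Fiduccia--Zalcstein $2n-1$ bound for full polynomial multiplication; it is the piece that requires the De Groote input beyond Theorem~\ref{old}.

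For the ``$\Leftarrow$'' direction of the equality case, under the hypothesis $n \leq q/2 + 1$, equivalently $q \geq 2n-2$, I would produce an explicit algorithm of length $2n-1$. Fix a monic irreducible $P \in \F_q[u]$ of degree $n$ and identify $\F_{q^n}$ with $\F_q[u]/(P(u))$. Since $|\P^1(\F_q)| = q+1 \geq 2n-1$, one can pick $2n-1$ distinct points $\alpha_1, \dots, \alpha_{2n-1} \in \P^1(\F_q)$. The algorithm then performs the $2n-1$ bilinear products $R(\alpha_i) S(\alpha_i)$ in $\F_q$, with the usual convention at $\infty$, namely the product of the leading coefficients of $R$ and $S$. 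Because $\deg(RS) \leq 2n-2$, these $2n-1$ values determine $R(u)S(u)$ by Lagrange interpolation via an $\F_q$-linear map, and the subsequent reduction modulo $P(u)$ is also $\F_q$-linear, so no further bilinear operations are incurred.

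For the ``$\Rightarrow$'' direction, I would argue by contrapositive: if $n > q/2 + 1$, then $q < 2n-2$, and Theorem~\ref{old} applies directly to say that every algorithm computing the coefficients of $R(u)S(u) \bmod P(u)$ has bilinear complexity strictly greater than $2n-1$, so in particular $\mu_q(n) > 2n-1$, ruling out equality. The main subtlety in the whole argument is not producing the interpolation algorithm, which is routine, but the fact that the lower bound $2n-1$ for the multiplication \emph{modulo} an irreducible polynomial (rather than for the full product, which is what Fiduccia--Zalcstein handles) indeed holds and is saturated only by interpolation-type algorithms; this is exactly where the Winograd--De Groote classification enters, simultaneously giving the inequality $\mu_q(n) \geq 2n-1$ and the rigidity that underlies the strict inequality in Theorem~\ref{old}.
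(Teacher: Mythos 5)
Your proposal is correct and follows essentially the same route as the paper, which simply invokes the Winograd--De Groote results together with Theorem~\ref{old}: you fill in the standard details, namely the $2\dim_K\mathcal{A}-t$ lower bound specialized to the field $\F_{q^n}$, evaluation--interpolation on $2n-1$ points of $\P^1(\F_q)$ (possible exactly when $q\geq 2n-2$, i.e.\ $n\leq q/2+1$) for the upper bound, and Theorem~\ref{old} for strictness when $q<2n-2$. The only minor remark is attributional: the general bound $\mu(\mathcal{A}/K)\geq 2\dim_K\mathcal{A}-t$ is usually credited to Alder--Strassen, though for the field case the $2n-1$ bound is indeed the Winograd/Fiduccia--Zalcstein result the paper cites.
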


This result does not give any estimate of an upper bound for 
$\mu_q(n)$, when $n$ is large.
In \cite{lesewi}, A. Lempel, G. Seroussi and S. Winograd proved
that $\mu_q(n)$ has a quasi-linear upper bound.
More precisely:

\begin{theo}
The bilinear complexity of the multiplication
in the finite field $\F_{q^n}$ over $\F_q$ verifies:
$$\mu_q(n) \leq f_q(n)n,$$
where $f_q(n)$ is a very slowly growing function defined recursively by 
$$f_q(n)=2f_q((\lceil\log_q(2(q-1)n) \rceil),$$ $n\geq 4$, $q\geq 2$.\\ 
For $n<4$, $f_q(n)$ is defined as follows:

$$f_q(n)=\left\{
\begin{array}{ll}
 1,&  n=1, \hbox{ } q\geq 2, \\
 &   \\
  \dfrac{3}{2},&  n=2, \hbox{ } q\geq 2,\\
  &   \\
  \dfrac{5}{3},&  n=3, \hbox{ } q\geq 4,\\  
  &   \\
  2,&  n=3, \hbox{ } 2\leq q\leq 3. 
\end{array}\right .$$
\end{theo}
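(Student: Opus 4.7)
The plan is to proceed by strong induction on $n$, with the base cases $n \in \{1,2,3\}$ handled by exhibiting explicit algorithms meeting each stated value of $f_q(n)$, most of which follow from Theorem~\ref{thm_wdg}: for $n=1$ the identity trivially gives $\mu_q(1)=1$; for $n=2$ the Karatsuba-type algorithm gives $\mu_q(2)=3$ (indeed $n\leq q/2+1$ whenever $q\geq 2$); for $n=3$ and $q\geq 4$ we have $n\leq q/2+1$ so $\mu_q(3)=5$; and for $q\in\{2,3\}$ one exhibits an explicit algorithm of length $6$ over $\F_q$ (e.g. by embedding $\F_q[u]/(P)$ into a larger ring and using interpolation at a few extra points, or by Winograd-style direct computation). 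This yields $f_q(n)\leq f_q(n)$ as stated in the table.

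For the inductive step $n\geq 4$, the idea is to reduce multiplication in $\F_{q^n}$ to the computation of the polynomial product $R(u)S(u)\in\F_q[u]$ of degree $<2n-1$; reduction modulo $P(u)$ is $\F_q$-linear, hence free in the bilinear model. To compute $R\cdot S$ itself I would use a Chinese-remainder / multi-point evaluation strategy over an auxiliary modulus $G\in\F_q[u]$ of degree $\geq 2n-1$ that splits into small-degree irreducible factors. Set $k=\lceil\log_q(2(q-1)n)\rceil$. Using the standard identity $\sum_{d\mid k}dN_d=q^k$ (where $N_d$ is the number of monic irreducibles of degree $d$ over $\F_q$), one checks that the total degree of all monic irreducible polynomials of degree $\leq k$ is at least $2(q-1)n\geq 2n-1$. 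So one can select a family of \emph{distinct} monic irreducibles $G_1,\dots,G_s\in\F_q[u]$ of degrees $d_i\leq k$ with
$$
2n-1\;\leq\;D:=\sum_{i=1}^s d_i\;\leq\; 2n,
$$
the upper bound being achievable by adding factors greedily (the last added factor has degree $\leq k\leq n-1$, so after possibly discarding or reshuffling a single term one lands in the window). Set $G=\prod G_i$. The CRT isomorphism
$$
\F_q[u]/(G)\;\xrightarrow{\sim}\;\prod_{i=1}^{s}\F_q[u]/(G_i)\;\cong\;\prod_{i=1}^{s}\F_{q^{d_i}}
$$
is $\F_q$-linear on both sides, so computing the residues $R\bmod G_i$ and $S\bmod G_i$ and reconstructing $RS$ from the residues $(RS)\bmod G_i$ costs no bilinear multiplications. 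Because $\deg(RS)<2n-1\leq\deg G$, knowing $RS\bmod G$ determines $RS$.

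Thus the entire bilinear cost lies in the $s$ pointwise products in the factors $\F_{q^{d_i}}$, each costing at most $\mu_q(d_i)$. Since $d_i\leq k<n$ for $n\geq 4$, the induction hypothesis gives $\mu_q(d_i)\leq f_q(d_i)d_i\leq f_q(k)d_i$, where monotonicity of $f_q$ in the relevant range is used. Summing:
$$
\mu_q(n)\;\leq\;\sum_{i=1}^s\mu_q(d_i)\;\leq\;f_q(k)\sum_{i=1}^s d_i\;\leq\;2n\,f_q(k)\;=\;f_q(n)\cdot n,
$$
with $k=\lceil\log_q(2(q-1)n)\rceil$, which is precisely the stated recursion. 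Well-foundedness of the induction comes from $k<n$ whenever $n\geq 4$ and $q\geq 2$, which is a direct check on the definition of $k$.

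The main technical obstacle, and the step that would need the most care, is the combinatorial control of the family $(G_i)$: one must ensure simultaneously that $\sum d_i\geq 2n-1$ (to determine $RS$) and $\sum d_i\leq 2n$ (to hit the constant $2$ in the recursion). This rests on the density estimate for monic irreducibles of degree $\leq k$ together with a careful greedy selection argument; a marginal overshoot can be absorbed by replacing one high-degree factor by several smaller ones, or by showing the constant $2(q-1)$ in the definition of $k$ leaves enough slack. Beyond that, all remaining steps (CRT, $\F_q$-linearity of reduction, base cases) are either standard or direct.
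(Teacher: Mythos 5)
This theorem is not proved in the paper: it is quoted from Lempel, Seroussi and Winograd \cite{lesewi} as a known result, so there is no internal proof to compare yours against. Your reconstruction is the right kind of argument and its skeleton is sound: reduce multiplication in $\F_{q^n}$ to the product of two polynomials of degree $<n$, recover that product from its residues modulo a squarefree product $G=\prod_i G_i$ of distinct monic irreducibles of degrees $d_i\leq k=\lceil\log_q(2(q-1)n)\rceil$ (in modern language this is Chudnovsky-type interpolation on $\P^1$ at places of degree up to $k$, in the spirit of Section~\ref{eval_deg_mult}), pay only the bilinear cost $\sum_i\mu_q(d_i)$, and recurse. The counting is correct, since $\sum_{d\mid k}dN_d=q^k$ gives total available degree at least $q^k\geq 2(q-1)n\geq 2n$; the CRT and reduction maps are $\F_q$-linear; $k\leq n-1$ for $n\geq 4$; and the base cases follow from Theorem~\ref{thm_wdg} together with a standard length-$6$ algorithm for $\mu_2(3)$ and $\mu_3(3)$.

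The step that is genuinely not proved is the one you flag yourself: selecting the $G_i$ with $2n-1\leq\sum_i d_i\leq 2n$. The greedy argument as written does not close it: adding irreducibles in increasing order of degree and stopping when the sum first reaches $2n-1$ can overshoot by as much as $k-2$, not by one unit, and your proposed repair of ``replacing one high-degree factor by several smaller ones'' is unavailable exactly in the problematic case, because greediness means every irreducible of smaller degree is already in the family. The claim is nevertheless true, with a short fix: the multiset of degrees of all monic irreducibles of degree $\leq k$ is a complete sequence, since any value $d\geq 2$ satisfies $d\leq 1+q^{d-1}\leq 1+\sum_{e<d}eN_e$ and there are $q\geq 2$ entries equal to $1$; hence its subset sums cover every integer in $[0,\sum_{d\leq k}dN_d]$, and as this total is at least $2n$ one can choose a subfamily of total degree exactly $2n$ (or $2n-1$). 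Equivalently, one can run your greedy selection and then delete small-degree factors whose degrees sum exactly to the overshoot. A second, minor, unproved point is the monotonicity $f_q(d_i)\leq f_q(k)$: it does hold, by the same induction, but it needs a one-line check across the boundary between the tabulated values and $n=4$ (e.g.\ $f_q(4)=2f_q(k)\geq 2\geq f_q(3)$). With these two points supplied, your proof is complete and constitutes a legitimate self-contained derivation of the bound that the paper only cites.
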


\begin{coro}
Asymptotically, 
$$f_q(n)<\underbrace{\log_q\log_q \cdots \log_q}_{\mbox{$k$ times}}(n)$$
for any $k \geq 1$.
\end{coro}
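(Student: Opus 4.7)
The plan is to unroll the recursion for $f_q$, which halves the argument at each step in a logarithmic sense, and then compare the resulting bound to the $k$-fold iterated logarithm.

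Writing $g(n)=\lceil\log_q(2(q-1)n)\rceil$, the recursion reads $f_q(n)=2f_q(g(n))$ for $n\geq 4$, so iterating gives $f_q(n)=2^i f_q(g^{(i)}(n))$ whenever $g^{(j)}(n)\geq 4$ for all $j<i$. Since $f_q(m)\leq 2$ for $m\in\{1,2,3\}$, letting $i^*(n)$ denote the smallest $i$ with $g^{(i)}(n)\leq 3$ yields the clean estimate $f_q(n)\leq 2^{i^*(n)+1}$.

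To control $i^*(n)$, I would construct thresholds $T_0=3$ and $T_{i+1}=\lfloor q^{T_i}/(2(q-1))\rfloor$; by construction $T_{i+1}$ is the largest integer $m$ with $g(m)\leq T_i$, using that $g$ is non-decreasing. A direct induction on $i$ then gives $f_q(n)\leq 2^{i+1}$ for every $n\leq T_i$: the base case $i=0$ is the explicit bound $f_q(m)\leq 2$ for $m\leq 3$, and the inductive step uses $g(n)\leq T_{i-1}$ for $n\leq T_i$. The sequence $(T_i)$ grows as an iterated $q$-exponential of height $i$: indeed $\log_q T_{i+1}=T_i-O(1)$, so applying $\log_q$ to $T_i$ decreases the tower height by exactly one up to a bounded additive error.

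Finally, to conclude, fix $k\geq 1$ and take $n$ large. Locate the unique index $i=i(n)$ with $T_{i-1}<n\leq T_i$, noting that $i\to\infty$ as $n\to\infty$. The previous step gives $f_q(n)\leq 2^{i+1}$, whereas monotonicity of $\log_q$ shows
\[
\underbrace{\log_q\log_q\cdots\log_q}_{k\text{ times}}(n)>\underbrace{\log_q\log_q\cdots\log_q}_{k\text{ times}}(T_{i-1}),
\]
which, by the previous observation on the tower recursion, is at least $T_{i-1-k}-O(k)$, i.e., comparable to a tower of height $i-1-k$. Since such a tower dominates $2^{i+1}$ as $i\to\infty$ for any fixed $k$, the inequality $f_q(n)<\underbrace{\log_q\log_q\cdots\log_q}_{k\text{ times}}(n)$ follows for all sufficiently large $n$.

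The main technical point is pure bookkeeping: one must verify that the additive $O(1)$ error terms (from the ceiling, the factor $2(q-1)$, and the shift in the tower recursion) accumulate only linearly in the iteration index while $T_i$ itself grows as a tower, so they never catch up and cannot disturb the final comparison.
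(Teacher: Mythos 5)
The paper does not actually prove this corollary: it is quoted, together with the preceding theorem defining $f_q$, from Lempel--Seroussi--Winograd \cite{lesewi}, so there is no in-text argument to compare against. Your proposal supplies a correct self-contained proof, and its structure is the natural one: unrolling $f_q(n)=2f_q(g(n))$ with $g(n)=\lceil\log_q(2(q-1)n)\rceil$ shows $f_q(n)\leq 2^{i+1}$ once $n\leq T_i$, where the thresholds $T_0=3$, $T_{i+1}=\lfloor q^{T_i}/(2(q-1))\rfloor$ are exactly characterized by $g(m)\leq T_i\iff m\leq q^{T_i}/(2(q-1))$ (so your induction step $n\leq T_i\Rightarrow g(n)\leq T_{i-1}$ is airtight), while $T_{i-1}<n$ forces the $k$-fold iterated logarithm of $n$ to exceed roughly $T_{i-1-k}$, a tower of height $i-1-k$ that swamps $2^{i+1}$. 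The points you defer to ``bookkeeping'' are indeed routine but worth recording explicitly: (i) the $T_i$ are strictly increasing (e.g.\ $T_1\geq 4>T_0$ and $q^{T}/(2(q-1))>T+1$ for $T\geq3$), so the index $i(n)$ with $T_{i-1}<n\leq T_i$ exists and tends to infinity; (ii) each extraction of a logarithm loses only a bounded additive amount, $\log_q T_{j+1}\geq T_j-\log_q(4(q-1))$ and $\log_q(T-c)\geq \log_q T-1$ for $T\geq 2c$, so after $k$ steps the loss is $O(k)$ for fixed $k$; and (iii) $T_m$ eventually dominates any fixed exponential in $m$ (already $T_{m+1}\geq 2^{T_m}/(2(q-1))-1$ gives $T_m\geq 4^m$ for large $m$), which is all that is needed to beat $2^{i+1}$. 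With these remarks made explicit, your argument is complete and is a perfectly adequate substitute for the omitted proof from \cite{lesewi}.
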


Furthermore, extending and using more efficiently the technique
developed in \cite{bska1}, N. Bshouty and M. Kaminski
showed that
$$\mu_q(n) \geq 3n-o(n)$$
for $q \geq 3.$
The proof of the above lower bound on the complexity of straight-line
algorithms for polynomial multiplication is based on the analysis
of Hankel matrices representing bilinear forms defined by linear
combinations of the coefficients of the polynomial product.

\section{The approach via algebraic curves}\label{SectionAlgCurvApproach}

We have seen in the previous section
that if the number of points of the ground field is too low, we cannot perform
the multiplication by the Winograd interpolation method. 
D.V. and G.V. Chudnovsky
have designed in \cite{chch} an algorithm 
where the interpolation is done on points of an 
algebraic
curve over the groundfield with a sufficient number of rational points. 
We will denote by CCMA this Chudnovsky--Chudnovsky Multiplication Algorithm.
Using this algorithm, D.V. and G.V. Chudnovsky claimed that 
the bilinear complexity of
the multiplication in finite extensions of a finite field is asymptotically linear but 
later I. Shparlinski, M. Tsfasman and S. Vladut in \cite{shtsvl}  
noted that they only proved that the quantity $m_q=\liminf_{k \rightarrow \infty}\frac{\mu_q(k)}{k}$ is bounded 
which does not enable to prove the linearity. To prove the linearity, it is also necessary to prove that 
$M_q= \limsup_{k \rightarrow \infty}\frac{\mu_q(k)}{k}$ is bounded which is the main aim of their paper. 
However,  I. Cascudo, R. Cramer and C. Xing recently detected a mistake in the proof 
of I. Shparlinski, M. Tsfasman and S. Vladut. 
Unfortunately, this mistake that we will explain in details in this section, also had an 
effect on their improved estimations of $m_q$. 

After the above pioneer research, S. Ballet obtained in \cite{ball1} 
(cf. also \cite{ball0}) the first upper bounds uniformly with respect to $q$ 
for $\mu_q(n)$. The algorithm CCMA being clearly symmetric, these first uniform bounds 
also concerned $\mus_q(n)$. 
Moreover, these bounds not being affected by the same mistake enable at the same time 
to prove the linearity of the bilinear complexity of
the multiplication in finite extensions of a finite field since it obviously implied 
that $M_q$ was finite. 
Subsequently, critical improvements were introduced: in \cite{ball0}\cite{ball1}, S. Ballet 
introduces simple numerical 
conditions on algebraic curves of an arbitrary genus $g$ giving a sufficient condition 
for the application of the algorithm CCMA 
(existence of places of certain degree, of non-special divisors of degree $g-1$) 
generalizing the result of A. Shokrollahi \cite{shok} for the elliptic curves; in 
\cite{ball0}\cite{ball1} S. Ballet introduces the use of  towers 
of algebraic functions fields and their densification in \cite{ball3}; in \cite{baro1} S. Ballet and R. Rolland introduce
the use of places of higher degree; in \cite{baro1} S. Ballet and R. Rolland introduce 
the descent over $\F_q$ of the definition field $\F_{q^2}$ of a densified 
tower defined over $\F_{q^2}$ for any finite field $\F_q$ with a 
caracteristic $p=2$ and in \cite{balbro}, S. Ballet, D. Le Brigand and R. Rolland generalize 
the method for any finite field; in \cite{ball4} , S. Ballet derive optimal criterions for direct construction 
of the divisors satisfying the needed conditions and 
in \cite{chau0}\cite{chau1}, J. Chaumine proves that these criterions 
are always satisfied in the elliptic case, 
so improving the result of A. Shokrollahi \cite{shok}; in \cite{balb}, 
thanks to an existence theorem of non-special divisors of degree $g-1$, S. Ballet and D. Le Brigand 
improve sufficient conditions for the application of the algorithm CCMA for the extensions of arbitrary finite fields; 
in \cite{arna1}, N. Arnaud introduces the use of local expansion, called derivated evaluation; 
in \cite{bapi} \cite{piel} S. Ballet and Julia Pieltant introduce the use of divisors of 
degree zero thanks to a existence result obtained in \cite{bariro} 
by S. Ballet, C. Ritzenthaler and R. Rolland and combine it with local expansion.
Then M. Cenk and F. \"Ozbudak \cite{ceoz}, and H. Randriambololona \cite{randJComp} gave improvements by
using of local expansion and high degree places.
These can be combined with the following other independent ingredients, also proposed in \cite{randJComp}:
allowing asymmetry in the interpolation procedure,
which establishes the announced Shparlinski-Tsfasman-Vladut estimates for $m_q$ and $M_q$;
and using the best bilinear complexities recursively, an idea that was then also used in \cite{babotu}.
Last, two ideas can be used in order to deal with symmetric complexities:
bounds involving the $2$-torsion \cite{Xing}\cite{randITW}\cite{cacrxi}\cite{cacrxi2},
and direct construction of the divisors satisfying the needed conditions \cite{randIJM}\cite{rand2D-G}\cite{randJComp}.
Ultimately this allows to obtain for most cases the Shparlinski-Tsfasman-Vladut estimates also for $\ms_q$ and $\Ms_q$,
as well as other related estimates for symmetric complexity.

\subsection{The D.V. Chudnovsky and G.V. Chudnovsky algorithm (CCMA)}

In this section, we recall the brilliant idea of  D.V. Chudnovsky and G.V. Chudnovsky and give their main result.
First, we present  the original CCMA, 
which was established in 1987 in \cite{chch}.

\begin{theo}\label{chudchud}
Let 
\begin{itemize}
\item $F/\F_q$ be an algebraic function field,
\item $Q$ be a degree $n$ place of $F/\F_q$, 
\item ${\D}$ be a divisor of $F/\F_q$, 
\item ${\mathscr P}=\{P_1,\ldots,P_{N}\}$ be a set of places of degree $1$.
\end{itemize}
We suppose that $Q$, $P_1, \ldots, P_N$ are not in the support of ${\D}$
and that:
\begin{enumerate}[(a)]
	\item the evaluation map 
	$$
	Ev_Q: 
	\left |
	\begin{array}{cll}
  	\Ld{}  &\rightarrow & \F_{q^n}\simeq F_Q \\
   	f & \mapsto & f(Q) \\
	\end{array}
	\right .
	$$
	is onto (where $F_Q$ is the residue class field of $Q$),
	\item the application 
	$$
	Ev_{{\mathscr P}}:
	\left |
	\begin{array}{cll}
  	\Ld{2} & \rightarrow & \F_{q}^{N}  \\
   	f & \mapsto & (f(P_1),\ldots,f(P_{N})) \\
	\end{array}
	\right .
	$$
	is injective.
\end{enumerate}
Then  $$\mu_q(n)\leq N.$$
\end{theo}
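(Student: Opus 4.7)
The plan is to exhibit an explicit bilinear multiplication algorithm of length $N$, by lifting multiplication in $\F_{q^n}$ to multiplication of functions in $\mathcal{L}(\mathcal{D})$, carrying it out pointwise at the rational places $P_i$, and recovering the result through the local evaluation at $Q$.

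First, since $Ev_Q:\mathcal{L}(\mathcal{D})\to\F_{q^n}$ is surjective by hypothesis (a), I would fix an $\F_q$-linear section $s:\F_{q^n}\to\mathcal{L}(\mathcal{D})$, so that $s(x)(Q)=x$ for every $x\in\F_{q^n}$. For each $i=1,\dots,N$, the hypothesis that $P_i$ is not in the support of $\mathcal{D}$ makes evaluation well defined on $\mathcal{L}(\mathcal{D})$, and I would set
\begin{equation*}
\phi_i(x)=s(x)(P_i),\qquad \psi_i(y)=s(y)(P_i),
\end{equation*}
which are $\F_q$-linear forms $\F_{q^n}\to\F_q$.

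Next, for any $x,y\in\F_{q^n}$, the product $s(x)s(y)$ lies in $\mathcal{L}(2\mathcal{D})$, and the places $Q,P_1,\dots,P_N$ all avoid the support of $2\mathcal{D}$, so evaluation there remains valid. The injectivity of $Ev_{\mathcal{P}}$ on $\mathcal{L}(2\mathcal{D})$, given by hypothesis (b), implies that the assignment $Ev_{\mathcal{P}}(h)\mapsto h(Q)$ is a well-defined $\F_q$-linear map on the subspace $V=Ev_{\mathcal{P}}(\mathcal{L}(2\mathcal{D}))\subseteq\F_q^N$. Extending this map arbitrarily (for instance by zero on a linear complement) to an $\F_q$-linear map $\F_q^N\to\F_{q^n}$, I obtain elements $w_1,\dots,w_N\in\F_{q^n}$ such that, for every $h\in\mathcal{L}(2\mathcal{D})$,
\begin{equation*}
h(Q)=\sum_{i=1}^{N}h(P_i)\,w_i.
\end{equation*}

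Applying this identity to $h=s(x)s(y)$ and using the multiplicativity of evaluation at the rational places $P_i$ gives
\begin{equation*}
xy=\bigl(s(x)s(y)\bigr)(Q)=\sum_{i=1}^{N}s(x)(P_i)\,s(y)(P_i)\,w_i=\sum_{i=1}^{N}\phi_i(x)\psi_i(y)\,w_i,
\end{equation*}
which is a bilinear decomposition of the multiplication tensor of $\F_{q^n}$ of length $N$, yielding $\mu_q(n)\leq N$. No step is really an obstacle; what needs care is simply the bookkeeping that the hypotheses (a) and (b) are precisely what allow the lifting $\F_{q^n}\hookrightarrow\mathcal{L}(\mathcal{D})$ and the reconstruction from $\mathcal{L}(2\mathcal{D})$ to succeed. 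Note also that, since one can choose $s$, $\phi_i$, $\psi_i$ symmetrically ($\phi_i=\psi_i$), the algorithm is in fact symmetric, so the same argument gives $\mus_q(n)\leq N$.
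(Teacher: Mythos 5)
Your proof is correct and follows exactly the standard Chudnovsky--Chudnovsky argument: the paper only states this theorem with a reference to \cite{chch}, and the canonical proof is precisely what you wrote — lift $\F_{q^n}$ into $\Ld{}$ via a section of $Ev_Q$, form the product in $\Ld{2}$, evaluate at the rational places, and recover the value at $Q$ by composing the inverse of $Ev_{\mathscr P}$ on its image with evaluation at $Q$, extended linearly to $\F_q^N$. Your final observation that one may take $\phi_i=\psi_i$, so the algorithm is symmetric and $\mus_q(n)\leq N$, coincides with the remark the paper makes immediately after the theorem.
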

We presented this result as it was formulated in \cite{chch}, in terms of the bilinear complexity $\mu_q(n)$.
However closer inspection of the method shows that it produces symmetric algorithms, 
so the conclusion also holds for the \emph{symmetric} bilinear complexity:
$$\mus_q(n)\leq N.$$

\subsection{The linearity of the bilinear complexity of the multiplication}\label{mM}

As seen previously, I. Shparlinski, M. Tsfasman and S. Vladut have given in \cite{shtsvl} many interesting remarks on CCMA and the bilinear complexity.
In particular, they have considered asymptotic bounds$\footnote{The families of curves used by the pioneers only gave asymptotic bounds. M. Tsfasman in a private communication asked for the question of finding uniform bounds to R. Rolland. \label{borunivTsfa}}$
for the bilinear complexity in order to prove the linearity of this complexity 
from CCMA.
Following these authors, let us define 
$$M_q= \limsup_{k \rightarrow \infty}\frac{\mu_q(k)}{k}$$
and
$$m_q=\liminf_{k \rightarrow \infty}\frac{\mu_q(k)}{k}.$$

Moreover, we also have to consider the symmetric variants of these quantities which 
were not considered 
by I. Shparlinski, M. Tsfasman and S. Vladut,
but were first introduced by 
H. Randriambololona in \cite{randJComp}, and have become equally important since then: 

$$\Ms_q= \limsup_{k \rightarrow \infty}\frac{\mus_q(k)}{k}$$
and
$$\ms_q=\liminf_{k \rightarrow \infty}\frac{\mus_q(k)}{k}.$$

It is clear that we have: $$M_q\leq \Ms_q$$ and $$m_q\leq \ms_q.$$
 
It is not obvious at all that either of these values is finite. Note that if $M_q$ (resp. $\Ms_q$) is finite, then bilinear complexity (resp. the symmetric bilinear complexity) of multiplication is linear in the degree of extension, namely there exists a constant $C_q\geq M_q$ (resp. $C^{sym}_q\geq \Ms_q$) such that for any integer $n>1$, 
$$\mu_q(n)\leq C_qn \quad (\hbox{resp. } \mus_q(n)\leq C^{sym}_qn).$$ 


From Theorem \ref{chudchud}, D.V. Chudnovsky and G.V. Chudnovsky derive \cite[Theorem 7.7]{chch}\footnote{This result is originally formulated for $\mu_q(n)$. Although at this time most authors did not distinguish in the notation between bilinear complexity and symmetric bilinear complexity, it was known that the CCMA naturally produces symmetric algorithms (cf. \cite[Definition p. 154 and Remark 2.2]{chch} and also more precisely \cite[Proof of Theorem 1.1]{ball1}), so the estimate also holds for the symmetric bilinear complexity $\mus_q(n)$.\label{symornot}}: 
for $q\geq25$ a square, as $n\to\infty$, we have 
\begin{equation}\label{chudmq}
\mus_q(n)\leq 2\left(1+\frac{1}{\sqrt{q}-3}\right)\cdot n+o(n).
\end{equation}

However, as pointed out by I. Shparlinski, M. Tsfasman and S. Vladut, the proof given for Bound (\ref{chudmq}) is quite sketchy, with some important details missing.
This made them question its validity.

More precisely, relying on Ihara's work \cite{ihar}, D.V. Chudnovsky and G.V. Chudnovsky consider 
Shimura modular curves having an asymptotically maximal number of points over $\F_q$,
and in the final step of their argument, they assert that, for some given constant $C$
and for all integers $n$ large enough, they can choose curves in this family of genus $g=C\cdot n+o(n)$.
Although it follows from \cite{ihar} that this is possible for infinitely many $n$,
D.V. Chudnovsky and G.V. Chudnovsky need it to hold for \emph{all} $n$, for which they do not give justification.
Because of this, I. Shparlinski, M. Tsfasman and S. Vladut explain that one should consider that,
although D.V. Chudnovsky and G.V. Chudnovsky state an estimate for the limsup $M_q$, their proof is valid only 
for the liminf $m_q$.

But then, with \cite[Claim, p.~163]{shtsvl}, I. Shparlinski, M. Tsfasman and S. Vladut
precisely describe a family of Shimura curves that satisfy the conditions needed by D.V. Chudnovsky and G.V. Chudnovsky,
which essentially completes the proof of \eqref{chudmq}.
Unfortunately, at the same time,  I. Shparlinski, M. Tsfasman and S. Vladut also propose
to replace \eqref{chudmq} with a sharper bound,
and in doing so they
introduce in the proof an unproved argument. The gap in their proof was found by I. Cascudo, R. Cramer and C. Xing 
(cf. personal communication in 2009 and \cite[Section V]{cacrxi2}). They present the gap as follows:
\textit{the mistake in 
\cite{shtsvl}  from 1992 is in the proof of their Lemma 3.3, 
page 161, the paragraph following formulas about the degrees of the divisor. It reads: ``\,}\textsl{Thus the  number 
of linear equivalence classes of degree $a$ for which either 
Condition $\alpha$ or Condition $\beta$ fails is at most $D_{b'} + D_b$.}\textit {'' This is incorrect; $D_b$ should 
be multiplied by the torsion. Hence the proof of their asympotic bound is incorrect.}~\guillemotright. 
Note that a synthesis work enabling to fill the gap 
let in the proof of D.V and G. V. Chudnovsky with the approach of Shparlinski, Tsfasman and Vladut is possible but not direct. 
Anyway, independently, 
by using the strategy of D.V and G. V. Chudnovsky applied to the first 
tower\footnote{The advantage of this tower of algebraic function fields is that firstly 
one knows explicitly the number of rational points and the the genus for each step, 
secondly the ratio of rational points over the genus is very good.} of 
Garcia-Stichtenoth \cite{gast} attaining the Drinfeld-Vladut bound, 
joint to a result concerning the existence of non-special divisors of degree $g-1$, 
S. Ballet gives in \cite{ball1} the first complete proof of the linearity of 
the bilinear complexity of the multiplication. More precisely, it was done by 
determining directly upper bounds for $C^{sym}_q$. From there, different works 
were done to improve the asymptotic bounds (cf. Section \ref{AsyBounds}) and the 
uniform bounds (cf. Section \ref{UniBounds}).

\section{The approach via codes}\label{SecCodes}

Initially, just after the pioneer work of D.V. and G.V. Chudnovsky \cite{chch}, 
I. Shparlinski, M. Tsfasman and S. Vladut in \cite{shtsvl} 
specified the link between certain codes and multiplication tensors. 
Then, they introduced the notion of exact supercodes also called multiplication friendly codes.

\subsection{Connection with codes and asymptotic lower bounds}\label{connectcode}

First, let us recall the link between the linear error-correcting codes and  
the decomposition of multiplication tensors.

Let us recall the following classical definition:

\begin{defi}
A linear error-correcting code $C$ over $\F_q$ of length $N$, dimension $n$ and 
Hamming distance $d$ is called an $[N,n,d]_q$-code. 
The rate $\frac{n}{d}$  of such a code is denoted by $R$ and its relative minimum 
distance $\frac{d}{n}$ by 
$\delta$.
\end{defi}

By \cite{shtsvl}, it is possible to construct a code using decomposition of $t_M$ into a 
sum of rank one tensors. Indeed, if 
$$t_M = \sum^{N}_{l=1}a_l \otimes b_l \otimes c_l$$ where $a_l \in  \F_{q^n}^*$, $b_l \in  \F_{q^n}^*$, $c_l \in  \F_{q^n}$, 
then one defines an $\F_q$-linear map 
$$
\begin{array}{cccc}
\phi: & \F_{q^n} & \longrightarrow & \F_q^N  \\
       & x  & \longmapsto & (a_1(x),\ldots,a_N(x)).
\end{array}
$$

From  \cite{shtsvl}, it follows that:

\begin{propo} \label{code}
The $\F_q$-vector space $C=\mathrm{Im}\, \phi$ is an $[N,n,d]_q$-code such that $d\geq n$. 
\end{propo}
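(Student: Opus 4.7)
The plan is to establish two things about $\phi$: first, that it is $\F_q$-linear and injective, so that its image is an $[N,n,\cdot]_q$-code; second, that every nonzero codeword has Hamming weight at least $n$. Linearity is immediate from the fact that each $a_l$ is an $\F_q$-linear form on $\F_{q^n}$, so only the two substantive steps — injectivity and the distance bound — require work, and both rely on unpacking the decomposition
$$xy \;=\; \sum_{l=1}^{N} a_l(x)\, b_l(y)\, c_l \qquad \forall x,y\in\F_{q^n}$$
that is encoded by $t_M=\sum_l a_l\otimes b_l\otimes c_l$.

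For injectivity, I would suppose $\phi(x)=0$, which by definition means $a_l(x)=0$ for every $l$. Plugging this into the displayed identity with an arbitrary $y$ gives $xy=0$ for all $y\in\F_{q^n}$. Specializing to $y=1$ (or to any $y$ with $xy\neq 0$ if $x\neq 0$) forces $x=0$. Hence $\phi$ is injective and $C=\mathrm{Im}\,\phi$ has $\F_q$-dimension exactly $n$, so $C$ is an $[N,n,d]_q$-code for some $d$.

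For the lower bound on $d$, I would fix any nonzero $x\in\F_{q^n}$, set $S=\{l : a_l(x)\neq 0\}$, and let $w=|S|$, which is the Hamming weight of $\phi(x)$. From the decomposition, for every $y\in\F_{q^n}$,
$$xy \;=\; \sum_{l\in S} a_l(x)\, b_l(y)\, c_l \;\in\; \mathrm{Span}_{\F_q}\{c_l : l\in S\}.$$
Because $x$ is a nonzero element of the \emph{field} $\F_{q^n}$, multiplication by $x$ is a bijection of $\F_{q^n}$, so as $y$ varies, $xy$ exhausts $\F_{q^n}$. Thus the span on the right contains all of $\F_{q^n}$, forcing $|S|=w\geq \dim_{\F_q}\F_{q^n}=n$. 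Since this holds for every nonzero $x$, we conclude $d\geq n$.

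The argument is essentially elementary once the tensor decomposition is written out; the only real subtlety — and the step I would double-check — is the distance estimate, where one must resist the temptation to argue via individual $c_l$'s and instead exploit the fact that nonzero elements of $\F_{q^n}$ are units to get a surjectivity statement for $y\mapsto xy$. Everything else is bookkeeping.
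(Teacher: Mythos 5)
Your proof is correct: the paper itself states this proposition without proof, citing Shparlinski--Tsfasman--Vl\u{a}du\c{t}, and your argument (injectivity of $\phi$ by evaluating the decomposition at $y=1$, and the weight bound by noting that for $x\neq 0$ the products $xy$ exhaust $\F_{q^n}$ while lying in the $\F_q$-span of the $c_l$ with $a_l(x)\neq 0$) is exactly the standard one from that reference. Nothing is missing.
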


\begin{coro} \label{tensorcode}
Any  decomposition of length $N$ of a tensor of multiplication in the finite field 
$\F_{q^n}$ gives an $[N,n,d]_q$-code such that $d\geq n$. 
In particular, if $N_q(n)$ is the minimum length of a linear $[N,n,n]_q$-code then the 
tensor rank $\mu_q(n)$ of multiplication in the finite 
field $\F_{q^n}$ is such that $\mu_q(n)\geq N_q(n)$.    
\end{coro}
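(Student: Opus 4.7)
The corollary is a direct consequence of Proposition \ref{code}, so my plan is to unpack its two assertions in turn, with essentially no computation beyond what is already carried out in the proposition. For the first assertion, I would simply invoke Proposition \ref{code} verbatim: given any length-$N$ decomposition
$$t_M = \sum_{l=1}^{N} a_l \otimes b_l \otimes c_l$$
of the multiplication tensor of $\F_{q^n}/\F_q$, the proposition asserts precisely that the image of the evaluation map $\phi \colon x \mapsto (a_1(x),\ldots,a_N(x))$ is a linear $[N,n,d]_q$-code with $d \geq n$.

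For the second (``in particular'') part, I would specialize this construction to an \emph{optimal} decomposition. By the very definition of tensor rank there exists a decomposition of $t_M$ of length exactly $\mu_q(n)$; applying the construction above to it produces a linear $\F_q$-code of length $\mu_q(n)$, dimension $n$, and minimum distance at least $n$. This code lies in the family of codes whose shortest length defines $N_q(n)$, so the defining minimality of $N_q(n)$ immediately yields $N_q(n) \leq \mu_q(n)$.

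The only point worth a moment's attention is the interpretation of the notation $[N,n,n]_q$: under the (common in this context) reading ``minimum distance at least $n$'' the inequality is immediate as above, while under the stricter reading ``minimum distance exactly $n$'' one would insert an additional puncturing step to bring $d$ down to $n$ without losing dimension (which is harmless here since $d \geq n \geq 2$ rules out codewords of weight $1$), still yielding a code of length at most $\mu_q(n)$. There is no conceptual obstacle beyond this: the entire corollary is essentially bookkeeping on top of Proposition \ref{code} and the definition of $\mu_q(n)$ as the minimal length of a tensor decomposition.
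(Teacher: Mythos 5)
Your argument is correct and coincides with the paper's (implicit) reasoning: the corollary is stated there as an immediate consequence of Proposition~\ref{code}, applied in particular to a decomposition of minimal length $\mu_q(n)$, exactly as you do. Your remark on the reading of the parameters $[N,n,n]_q$ is harmless bookkeeping and does not change the argument.
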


Let us recall that there exists a continuous decreasing function $\alpha^\mathrm{lin}_q(\delta)$  
on the segment $[0, 1-\frac{1}{q}]$ 
which corresponds to the bound for the rate $R$  of the linear codes over $\F_q$ with relative minimum distance at least 
$\delta$ (cf \cite[1.3.1]{tsvl2}). Hence:

\begin{coro} \label{deltacode}
One has: $$m_q\geq \delta^{-1}_q,$$ where $\delta_q$ is the unique solution of the equation 
$\alpha^\mathrm{lin}_q(\delta)=\delta$.
\end{coro}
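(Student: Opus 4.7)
The plan is to chain together two ingredients already in hand: the lower bound $\mu_q(n)\geq N_q(n)$ from Corollary \ref{tensorcode}, and the asymptotic bound $\alpha^{\mathrm{lin}}_q$ on the rate versus relative minimum distance of linear codes over $\F_q$. The key observation is that the codes produced by Proposition \ref{code} have the very special feature that $d\geq n$, so a code realizing $N_q(n)$ is an $[N_q(n),n,n]_q$-code with rate and relative minimum distance both equal to $n/N_q(n)$; in other words, it lies on the diagonal $R=\delta$ in the rate-distance plane.

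First I would fix notation: for each $n$, pick a code achieving $N_q(n)$ and set $\delta_n=R_n=n/N_q(n)$. By definition of $\alpha^{\mathrm{lin}}_q$ as the supremum of achievable rates at a given asymptotic relative distance, one has, for any accumulation point $\delta^{*}$ of the sequence $(\delta_n)$ as $n\to\infty$, the inequality $\delta^{*}\leq \alpha^{\mathrm{lin}}_q(\delta^{*})$; here I would use continuity and monotonicity of $\alpha^{\mathrm{lin}}_q$ on $[0,1-1/q]$ to pass to the limit from the finite-length constraint $R_n\leq \alpha^{\mathrm{lin}}_q(\delta_n)+o(1)$.

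Next, since $\alpha^{\mathrm{lin}}_q$ is continuous and strictly decreasing while the identity function is strictly increasing, the equation $\alpha^{\mathrm{lin}}_q(\delta)=\delta$ has exactly one solution $\delta_q\in(0,1-1/q)$, and for all $\delta>\delta_q$ one has $\alpha^{\mathrm{lin}}_q(\delta)<\delta$. Therefore every accumulation point $\delta^{*}$ of $(\delta_n)$ satisfies $\delta^{*}\leq \delta_q$, i.e.\ $\limsup_{n\to\infty} n/N_q(n)\leq \delta_q$, which rewrites as
\[
\liminf_{n\to\infty}\frac{N_q(n)}{n}\geq \frac{1}{\delta_q}=\delta_q^{-1}.
\]

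Finally, combining this with Corollary \ref{tensorcode}, which gives $\mu_q(n)\geq N_q(n)$ for every $n$, yields
\[
m_q=\liminf_{n\to\infty}\frac{\mu_q(n)}{n}\;\geq\;\liminf_{n\to\infty}\frac{N_q(n)}{n}\;\geq\;\delta_q^{-1},
\]
which is the desired inequality. The only delicate point I foresee is justifying the passage to the limit in the asymptotic bound $R_n\leq \alpha^{\mathrm{lin}}_q(\delta_n)$; this is really the definition of $\alpha^{\mathrm{lin}}_q$ as an upper bound for the achievable rate function, together with its continuity, so nothing deeper than properties already recorded in \cite[1.3.1]{tsvl2} is needed.
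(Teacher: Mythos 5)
Your argument is correct and is exactly the route the paper intends: it states Corollary \ref{deltacode} as an immediate consequence (``Hence'') of Corollary \ref{tensorcode} together with the defining property of $\alpha^{\mathrm{lin}}_q$, and your write-up simply fills in that standard limiting argument (codes on the diagonal $R=\delta$, monotonicity and continuity of $\alpha^{\mathrm{lin}}_q$, uniqueness of the fixed point $\delta_q$, then $\mu_q(n)\geq N_q(n)$). No discrepancy with the paper's approach; the only cosmetic remark is that the paper's stated ``rate $\frac{n}{d}$'' and ``relative distance $\frac{d}{n}$'' are typos, and your use of the standard normalizations $n/N$ and $d/N$ is the correct reading.
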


Any upper bound for $\alpha^\mathrm{lin}_q(\delta)$ gives an upper bound for  $\delta_q$ and thus a lower bound for $m_q$.
So, from this corollary, it follows that we can obtain lower bounds of the asymptotic quantity $m_q$ from asymptotic 
parameters of codes. Now, let us summarize the known lower bounds concerning this quantity, 
namely the lower bound of $m_2$ obtained by R. Brockett, M. Brown and D. Dobkin in \cite{brdo1,brdo2} 
by using the bound of ``four'' \cite[1.3.2]{tsvl2} 
for asymptotic parameters of binary codes, and the lower bound of $m_q$ for $q>2$ given by 
I. Shparlinski, M. Tsfasman and S. Vladut in \cite{shtsvl} 
by using the asymptotic Plotkin bound \cite[1.3.2]{tsvl2}. 
Note that this last bound is a straightforward consequence of Proposition~4.3 established 
by D.V. and G.V. Chudnovsky  in \cite{chch}.

\begin{propo}
One has:
$$m_2\geq 3.52$$ and $$m_q\geq 2\left( 1+\frac{1}{q-1}\right) \mbox{ for any }q>2.$$

\end{propo}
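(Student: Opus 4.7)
The plan is to apply Corollary \ref{deltacode}, which reduces the whole question to producing an upper estimate on the abscissa $\delta_q$ at which the asymptotic linear code rate function $\alpha^\mathrm{lin}_q$ meets the diagonal $R=\delta$. Since $\alpha^\mathrm{lin}_q$ is continuous and strictly decreasing on $[0,\,1-1/q]$ while the diagonal is continuous and strictly increasing, any explicit upper estimate $\alpha^\mathrm{lin}_q(\delta)\leq f(\delta)$ with $f$ continuous decreasing forces $\delta_q\leq\delta^\ast$, where $\delta^\ast$ is the unique solution of $f(\delta^\ast)=\delta^\ast$, and Corollary \ref{deltacode} then yields $m_q\geq 1/\delta^\ast$.

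For $q>2$, I would take for $f$ the explicit asymptotic Plotkin bound for $\F_q$-linear codes recalled in \cite[1.3.2]{tsvl2}. The fixed-point equation $f(\delta)=\delta$ is affine and solves in closed form; after inverting one obtains the announced lower bound $m_q\geq 2(1+1/(q-1))$. The excerpt itself points out that the same estimate can alternatively be extracted directly from Proposition~4.3 of \cite{chch}, which is a Plotkin-type inequality tailored to the multiplication tensor; one may therefore present the argument either as ``reduce to Corollary \ref{deltacode} and plug in Plotkin'' or simply as ``apply Chudnovsky--Chudnovsky~4.3 and pass to the liminf of $\mu_q(n)/n$''. Either route is purely mechanical once the right bound is on the table.

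For $q=2$, the Plotkin bound alone would yield only $m_2\geq 3$, which falls short of the target $3.52$. One therefore replaces Plotkin by the sharpened \emph{bound of four} for binary linear codes recalled in \cite[1.3.2]{tsvl2}, specifically designed to improve Plotkin in characteristic two. Substituting this sharper $f$ into the fixed-point equation of Corollary \ref{deltacode} and resolving numerically produces the estimate $\delta_2^{-1}\geq 3.52$, as originally obtained by Brockett, Brown and Dobkin \cite{brdo1,brdo2}.

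The strategy is thus conceptually uniform: reduce via Corollary \ref{deltacode}, plug in the strongest explicit upper bound on $\alpha^\mathrm{lin}_q$ available in \cite{tsvl2}, and solve a single-variable fixed-point equation. The only delicate points are purely technical: locating the precise formulation of the Plotkin bound which produces the exact constant $2(1+1/(q-1))$ rather than a nearby value, and carrying out the numerical resolution in the binary case, where the bound of four is not affine. No genuine combinatorial or geometric difficulty arises beyond assembling ingredients already collected in the previous paragraphs.
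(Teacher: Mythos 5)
For $q=2$ your route (Corollary \ref{deltacode} combined with the ``bound of four'' of \cite[1.3.2]{tsvl2}) is exactly what the paper does, which is simply to cite Brockett--Brown--Dobkin; no issue there. The genuine gap is in the case $q>2$. Feeding the asymptotic Plotkin bound $\alpha^{\mathrm{lin}}_q(\delta)\leq 1-\frac{q}{q-1}\delta$ into the fixed-point mechanism of Corollary \ref{deltacode} gives $\delta_q\leq\frac{q-1}{2q-1}$ and hence only $m_q\geq\frac{2q-1}{q-1}=2+\frac{1}{q-1}$, strictly weaker than the stated $2\left(1+\frac{1}{q-1}\right)=2+\frac{2}{q-1}$ (for $q=3$ this is $2.5$ instead of $3$). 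This is not the ``purely technical'' matter of locating the right formulation of Plotkin: the Plotkin line lies strictly above the diagonal at $\delta=\frac{q-1}{2q}$, so no version of it can have its crossing there; in fact even the strongest known linear-programming upper bounds on $\alpha^{\mathrm{lin}}_q$ do not help (for $q=3$ their intersection with the diagonal is near $\delta\approx 0.36$, i.e. $m_3\gtrsim 2.8<3$). The obstruction is structural: Corollary \ref{deltacode} only exploits the existence of an $[N,n,\geq n]_q$-code, and that information alone cannot yield the constant $\frac{2q}{q-1}$.

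The missing ingredient is an argument that uses the multiplicative (tensor, or supercode) structure beyond the bare code parameters. That is precisely what \cite[Proposition 4.3]{chch} provides --- a Plotkin-type averaging carried out directly on the bilinear algorithm --- and it is in this form (equivalently, via the argument of \cite{shtsvl}) that the bound $m_q\geq 2\left(1+\frac{1}{q-1}\right)$ is actually obtained; one then passes to the liminf of $\mu_q(n)/n$ as you indicate. In your proposal this second route is mentioned only as an interchangeable alternative (``either route is purely mechanical once the right bound is on the table''), but the two routes are not equivalent: the first, as you set it up, proves a weaker inequality. The paper, being a survey, gives nothing beyond the citations, but its remark that the $q>2$ bound ``is a straightforward consequence of Proposition 4.3'' of \cite{chch} is exactly the step that your fixed-point computation cannot replace.
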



\subsection{Supercodes} 

Let us recall the notion of supercode introduced by Shparlinski, Tsfasman and Vladut in \cite{shtsvl}. 
First, let us recall the idea leading 
to the emergence of the notion of supercode. By Section \ref{connectcode}, any decomposition of 
the tensor $t_M$ into a sum of $N$ 
summands of rank one enables us to obtain an $[N,n,d]_q$-code. In fact, the notion of  
supercode follows from the question to know when 
it is possible conversely to construct such a decomposition from  a linear $[N,n,\geq n]_q$-code.

\begin{defi}\label{defsupercode}
Let $S \subseteq \F_{q^n} \oplus \F_q^{N}$ be an $\F_q$-linear subspace.
$S$ is called an $[N,n]_q$-supercode if the following conditions are satisfied:
\begin{enumerate}[1)]
	\item the first projection $$\pi_1: \F_{q^n} \oplus \F_q^N \longrightarrow \F_{q^n}$$ restricted to $S$ is surjective.
	\item let ${S^2=\{s_1s_2\, | \, s_1, s_2 \in S\}}$ where the multiplication is 
	that in $\F_q$-algebra \linebreak[4]${\F_{q^n} \oplus \F_q^{N}}$ 
	and let $<S^2>$ be the subspace in ${\F_{q^n} \oplus \F_q^{N}}$ spanned by $S^2$. 
	The second projection $$\pi_2: \F_{q^n} \oplus \F_q^N \longrightarrow \F_q^N$$ restricted to $<S^2>$ is injective.
\end{enumerate}
\end{defi}

From Definition \ref{defsupercode}, it is now possible to obtain the following more restrictive notion, almost equivalent 
to the notion of symmetric decomposition of a multiplication tensor.

\begin{defi}\label{defsupercodeexact}
An $[N,n]_q$-supercode $S$ is said exact if $\pi_1$ is an isomorphism, i.e. if ${dim\,S=n}$.
\end{defi}

\begin{propo} \label{proprisuper}
Let S be an $[N,n]_q$-supercode and let $C=\pi_2(S)$, then:
\begin{enumerate}[(1)]
	\item $C$ is an $[N,\geq n, \geq n]$-code.
	\item If S is exact then $C$ is an $[N, n, \geq n]$-code.
	\item Any supercode contains an exact sub-supercode.
\end{enumerate}
\end{propo}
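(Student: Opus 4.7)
I would begin with (3), which is the most structural part and provides a useful reduction. Set $K = \ker(\pi_1|_S)$; by Condition~(1) of Definition~\ref{defsupercode}, $K$ has codimension $n$ in $S$. Pick any $\F_q$-linear complement $S'$ of $K$ in $S$, so that $S = K \oplus S'$; then $\pi_1|_{S'} : S' \to \F_{q^n}$ is an isomorphism and $\dim S' = n$. Condition~(1) of the supercode property is immediate, and Condition~(2) is inherited since $(S')^2 \subseteq S^2$ forces $\langle (S')^2\rangle \subseteq \langle S^2\rangle$, on which $\pi_2$ is already injective. Hence $S'$ is an exact sub-supercode.

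For (1) and (2), the key preliminary fact is that $\pi_2|_S$ itself is injective: given $s = (a,0) \in S$, the element $s^2 = (a^2,0)$ lies in $\langle S^2\rangle$ with $\pi_2(s^2) = 0$, so Condition~(2) forces $s^2 = 0$, and hence $a = 0$ because $\F_{q^n}$ is a field. Thus $\dim C = \dim S \geq n$, with equality in the exact case. For the minimum distance, fix a nonzero $c \in C$ of weight $w$ with support $I$, and unique lift $s = (a,c) \in S$. The multiplication map $m_s : S \to \langle S^2\rangle$, $s' \mapsto s\cdot s'$, satisfies $\pi_2(m_s(s')) = c \odot \pi_2(s')$, which is supported on $I$, so by the supercode injectivity $\dim m_s(S) \leq w$. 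The linear form $\lambda := \pi_1 \circ (\pi_2|_{\langle S^2\rangle})^{-1}$ then gives $\lambda(c \odot \pi_2(s')) = a\cdot \pi_1(s')$. When $a \neq 0$, surjectivity of $\pi_1|_S$ makes $\lambda$ surject the at-most-$w$-dimensional subspace $c \odot C$ onto $\F_{q^n}$, forcing $w \geq n$. This disposes of (2) entirely, since exactness yields $\ker(\pi_1|_S) = 0$ and hence $a \neq 0$ for every nonzero lift.

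The main obstacle is the distance bound in (1) when $c \in \pi_2(\ker\pi_1|_S)$, so that its lift has $a = 0$: the identity $\lambda(c \odot \pi_2(s')) = 0$ then yields no information on $w$. My plan here is to decompose $S = K \oplus S'$ via (3) and exploit the injectivity of $\pi_2$ on $\langle S^2\rangle$ more cleverly, by comparing products $s \cdot s'$ (for $s' \in S'$) with products in $(S')^2$: any coincidence between an element of $c \odot C'$ and one of $C' \odot C'$ (where $C' = \pi_2(S')$) should force matching $\pi_1$-values on both sides, and combined with the $[N, n, \geq n]$-structure of $C'$ from (2) this ought to produce a contradiction with $w < n$. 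This last step is the most delicate part and is the crux of the proof.
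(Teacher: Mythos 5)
Your parts (3) and (2), and the dimension statement in (1), are correct and complete: the complement construction indeed produces an exact sub-supercode; the squaring trick $(a,0)\in S\Rightarrow (a^2,0)\in\langle S^2\rangle$ shows $\pi_2|_S$ is injective, whence $\dim C=\dim S\geq n$ (with equality in the exact case); and your rank argument on $t\mapsto st$ — image of dimension at most $w$ because $\pi_2$ is injective on $\langle S^2\rangle$ and $\pi_2(st)=c\odot\pi_2(t)$ is supported on $I$, of dimension at least $n$ because $\pi_1(st)=a\,\pi_1(t)$ is onto when $a\neq0$ — settles the distance bound for every codeword whose lift has $a\neq0$, hence all of (2). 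The gap is exactly where you located it: codewords lifting into $\ker(\pi_1|_S)$. Be aware that the survey gives no proof to compare with (the proposition is quoted from \cite{shtsvl}), so the missing step has to stand on its own.

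Unfortunately, the plan you sketch for that last case cannot succeed, because with Definition~\ref{defsupercode} taken literally the bound can fail there. Take $q=2$, $n=2$, $N=4$, $\F_4=\F_2(\omega)$, and let $S\subseteq\F_4\oplus\F_2^4$ be spanned by $s_0=(0,(1,0,0,0))$, $s_1=(1,(0,1,1,0))$, $s_2=(\omega,(0,0,1,1))$. Then $\pi_1|_S$ is onto; since $s_0s_1=s_0s_2=0$, the space $\langle S^2\rangle$ is spanned by $(0,(1,0,0,0))$, $(1,(0,1,1,0))$, $(\omega,(0,0,1,0))$, $(\omega^2,(0,0,1,1))$, whose second components are linearly independent over $\F_2$, so $\pi_2$ is injective on $\langle S^2\rangle$ and $S$ is a supercode; yet $C=\pi_2(S)$ contains the weight-one word $(1,0,0,0)$, coming precisely from $\ker(\pi_1|_S)$. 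So no argument using only conditions 1) and 2) can yield $d\geq n$ for all of $C$; what your (2) and (3) give in general is that $C$ has dimension $\geq n$ and contains the $[N,n,\geq n]$ subcode $\pi_2(S')$ attached to an exact sub-supercode, and that is the portion of statement (1) that is actually provable from the stated definition. The full distance claim in (1) requires either an additional hypothesis on $S$ (equivalently, a condition dropped in the transcription from \cite{shtsvl}) or a reformulation in terms of an exact sub-supercode — a corner case worth flagging explicitly, in the spirit of this survey, rather than attempting to force through the ``crux'' step you describe.
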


In fact, the notion of exact supercode is equivalent to that of symmetric decomposition of $t_M$ 
into a sum of $N$ rank one tensors, up to the representation of $\F_{q^n}$ 
(i.e modulo the following equivalence relation):

\begin{defi}\label{tensorequival}
Let ${\sigma_1=\sum_{i=1}^N u_i\otimes u_i\otimes w_i}$ and ${\sigma_2=\sum_{i=1}^N v_i\otimes v_i\otimes z_i}$ 
be two symmetric decompositions of $t_M$.
We call $\sigma_1$ and $\sigma_2$ equivalent if $u_i=v_i$ for every~$i$.
\end{defi}

Now, by considering the equivalence relation of Definition \ref{tensorequival}, we obtain the following result. 

\begin{theo}
There is a bijection between the set of exact supercodes and the set of equivalence classes of 
symmetric decompositions of $t_M$. 
\end{theo}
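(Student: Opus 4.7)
The plan is to construct explicit maps in both directions between exact $[N,n]_q$-supercodes and equivalence classes of symmetric decompositions of $t_M$, and verify they are mutually inverse. The whole argument is essentially bookkeeping once one identifies the correct correspondence $S \;\leftrightarrow\; \phi=(u_1,\dots,u_N)$.

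\emph{From a supercode to a decomposition class.} Let $S$ be an exact $[N,n]_q$-supercode. By condition (1) and the equality $\dim S = n$ from Definition \ref{defsupercodeexact}, the projection $\pi_1|_S$ is an isomorphism; call its inverse $s : \F_{q^n} \to S$. Set $\phi = \pi_2 \circ s : \F_{q^n} \to \F_q^N$, and write $\phi(x) = (u_1(x),\dots,u_N(x))$ in the standard basis of $\F_q^N$. For any $x,y \in \F_{q^n}$, the product $s(x)\cdot s(y)$ taken in the $\F_q$-algebra $\F_{q^n}\oplus\F_q^N$ lies in $S^2$, and equals $(xy,\, u_1(x)u_1(y),\dots,u_N(x)u_N(y))$. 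Condition (2) asserts that $\pi_2$ is injective on $\langle S^2\rangle$, so the assignment $(u_1(x)u_1(y),\dots,u_N(x)u_N(y)) \mapsto xy$ extends to a well-defined $\F_q$-linear map $\eta : \pi_2(\langle S^2\rangle) \to \F_{q^n}$. Pick any linear extension $\tilde\eta : \F_q^N \to \F_{q^n}$, and set $w_i = \tilde\eta(e_i)$. Then $xy = \sum_i u_i(x)u_i(y)\,w_i$ is a symmetric decomposition. Any two extensions $\tilde\eta$, $\tilde\eta'$ give decompositions with the same $u_i$, hence are equivalent in the sense of Definition~\ref{tensorequival}.

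\emph{From a decomposition class to a supercode.} Conversely, given a symmetric decomposition $\sigma = \sum_{i=1}^N u_i\otimes u_i\otimes w_i$, form the graph
\[
S_\sigma = \{(x,\, u_1(x),\dots,u_N(x)) : x \in \F_{q^n}\} \;\subseteq\; \F_{q^n}\oplus\F_q^N .
\]
Visibly $\dim S_\sigma = n$ and $\pi_1|_{S_\sigma}$ is an isomorphism, giving exactness and condition (1). For condition (2), any element of $\langle S_\sigma^2\rangle$ can be written $(a,b) = \sum_k \lambda_k\bigl(x_ky_k,(u_i(x_k)u_i(y_k))_i\bigr)$; using the relation $x_ky_k = \sum_i u_i(x_k)u_i(y_k)\,w_i$ one obtains $a = \sum_i b_i\, w_i$, so $b=0$ forces $a=0$. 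Thus $\pi_2$ is injective on $\langle S_\sigma^2\rangle$. Since $S_\sigma$ depends only on $(u_1,\dots,u_N)$, the construction factors through the equivalence relation of Definition~\ref{tensorequival}.

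\emph{Mutual inversion.} Starting from $S$, extracting $\phi$ via the section $s$ and rebuilding $S_\phi$ gives back exactly $S$, because $S = \{(x,\phi(x))\}$ by construction. Starting from an equivalence class represented by $\sigma$, building $S_\sigma$ and then extracting $u_i$ via the section $s$ of $\pi_1|_{S_\sigma}$ recovers the same linear forms $u_i$; the $w_i$ produced by the ambiguous extension of $\tilde\eta$ may differ, but the resulting decomposition is equivalent to $\sigma$ by definition. The two maps are therefore inverse to each other.

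\emph{Main obstacle.} The delicate point is the interplay between condition (2) of Definition~\ref{defsupercode} and the existence/ambiguity of the $w_i$: on one hand, injectivity of $\pi_2$ on $\langle S^2\rangle$ is exactly what allows one to define $\eta$ on $\pi_2(\langle S^2\rangle)$ (and hence the $w_i$ after arbitrary extension); on the other hand, the non-uniqueness of the extension $\tilde\eta$ is precisely absorbed by the equivalence relation, which identifies decompositions sharing the same linear forms $u_i$. Making this dictionary precise, and checking that no further data (e.g.\ normalization of $\F_{q^n}$, ordering of the coordinates of $\F_q^N$) is lost or duplicated, is the substantive content of the proof.
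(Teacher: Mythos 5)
Your proof is correct and follows essentially the same route as the paper, which simply defers to the explicit two-way construction of Shparlinski--Tsfasman--Vl\u{a}du\c{t} (\cite[Proposition 1.11 and Corollary 1.13]{shtsvl}): an exact supercode is the graph of the tuple $(u_1,\dots,u_N)$, condition (2) yields the $w_i$ up to the ambiguity absorbed by the equivalence relation, and the two assignments are mutually inverse. Your write-up just makes this dictionary explicit, including the correct use of injectivity of $\pi_2$ on $\langle S^2\rangle$ to define the $w_i$.
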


Then, by \cite[Proposition 1.11 and Corollary 1.13]{shtsvl}, we obtain:

\begin{coro}\label{supercodealgomult}
\begin{enumerate}[(1)]
        
	\item Any exact supercode ${S\subset \F_{q^n}\oplus \F_q^N}$ yields a symmetric 
	multiplication algorithm of bilinear complexity $N$ 
and conversely.
	\item Any supercode ${S\subset \F_{q^n}\oplus \F_q^N}$ yields a symmetric multiplication 
	algorithm of bilinear complexity $\leq N$.
\end{enumerate}
\end{coro}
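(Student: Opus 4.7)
The plan is to establish a direct correspondence between exact supercodes and symmetric multiplication algorithms, and then to deduce the general case from Proposition~\ref{proprisuper}(3).

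\emph{Forward direction of (1).} Starting from an exact supercode $S\subset \F_{q^n}\oplus \F_q^N$, the map $\pi_1|_S : S\to \F_{q^n}$ is bijective (surjective by Definition~\ref{defsupercode}(1), injective because $\dim S=n$ by exactness). I would define its inverse $\sigma:\F_{q^n}\to S$ and write it as $\sigma(x)=(x,u_1(x),\ldots,u_N(x))$, exhibiting $N$ linear forms $u_i:\F_{q^n}\to\F_q$. Multiplying in the $\F_q$-algebra $\F_{q^n}\oplus\F_q^N$ yields
$$
\sigma(x)\sigma(y)=\bigl(xy,\,u_1(x)u_1(y),\,\ldots,\,u_N(x)u_N(y)\bigr)\in S^2\subset\langle S^2\rangle.
$$
Since $\pi_2$ restricted to $\langle S^2\rangle$ is injective by Definition~\ref{defsupercode}(2), the element $\sigma(x)\sigma(y)$ is recoverable from the tuple $(u_i(x)u_i(y))_{i}$ by a fixed $\F_q$-linear map; composing with $\pi_1$ produces an identity $xy=\sum_{i=1}^N u_i(x)u_i(y)\,w_i$ for some $w_i\in\F_{q^n}$, which is precisely a symmetric multiplication algorithm of length $N$.

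\emph{Converse direction of (1).} Given a symmetric algorithm $xy=\sum_{i=1}^N u_i(x)u_i(y)\,w_i$, I would set $S=\{(x,u_1(x),\ldots,u_N(x)):x\in\F_{q^n}\}$. By construction $\pi_1|_S$ is an isomorphism, so $\dim S=n$ and the first condition of Definition~\ref{defsupercode} holds. The delicate step is verifying injectivity of $\pi_2$ on $\langle S^2\rangle$: for pure products $st$ with $s=(x,(u_i(x))_i)$ and $t=(y,(u_i(y))_i)$ in $S$, the first coordinate of $st$ equals $xy$, which the algorithmic identity expresses as the explicit $\F_q$-linear function $\sum_i (u_i(x)u_i(y))\,w_i$ of the second-projection coordinates $\pi_2(st)$. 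This linear dependence $\pi_1(v)=\sum_i \pi_2(v)_i\,w_i$ is inherited from the spanning set $S^2$ by any $\F_q$-linear combination, hence holds on all of $\langle S^2\rangle$; consequently $\pi_2(v)=0$ forces $\pi_1(v)=0$, so $v=0$.

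\emph{Part (2).} Given an arbitrary $[N,n]_q$-supercode $S$, I would invoke Proposition~\ref{proprisuper}(3) to extract an exact sub-supercode $S'\subset S$, still lying in $\F_{q^n}\oplus\F_q^N$, and apply (1) to $S'$: this yields a symmetric multiplication algorithm of length at most $N$, the bound being strict precisely when some of the coordinate forms attached to $S'$ vanish identically. The main obstacle in the entire argument is the extension-by-linearity step in the converse of (1): one must promote the identity reconstructing $xy$ from $(u_i(x)u_i(y))_i$, which is a priori only a bilinear identity in $(x,y)$, to a well-defined $\F_q$-linear relation on the whole subspace $\langle S^2\rangle$ rather than merely on the set of pure products $S^2$. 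Once this point is isolated and treated carefully, the remainder is straightforward bookkeeping.
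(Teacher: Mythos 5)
Your proof is correct and follows essentially the same route as the paper, which simply invokes the Shparlinski--Tsfasman--Vladut correspondence (their Prop.~1.11 and Cor.~1.13) together with Proposition~\ref{proprisuper}(3): inverting $\pi_1|_S$ to obtain the linear forms $u_i$, using injectivity of $\pi_2$ on $\langle S^2\rangle$ to recover $\sigma(x)\sigma(y)$ and hence the $w_i$, and, for the converse, correctly extending the reconstruction identity $\pi_1(v)=\sum_i\pi_2(v)_i\,w_i$ from the spanning set $S^2$ to all of $\langle S^2\rangle$ by linearity. No gaps to report.
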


 Note that I. Shparlinski, M. Tsfasman and S. Vladut in \cite{shtsvl} gave an explicit construction 
 of a symmetric tensor $t_M$ of length $N$ performing the multiplication in a finite field $\F_{q^n}$ 
 from an exact supercode ${S\subset \F_{q^n}\oplus \F_q^N}$. Conversely, from an arbitrary symmetric 
 decomposition, they explicitly obtain an exact supercode by \cite[Proposition 1.11]{shtsvl}.
 
 \begin{rema}
 Note that certain authors use the notion of multiplication friendly code which is equivalent 
 to the notion of exact supercode. 
 In particular, the results obtained by using the notion of multiplication friendly code only 
 concern the symmetric bilinear complexity.
 \end{rema}
 
\begin{op}
How can one characterize those $[N,\geq k,\geq k]$-codes which are projections of supercodes?
\end{op}

\section{Generalizations of the algorithm of Chudnovsky-Chudnovsky}\label{SecGeneralizations}

\subsection{Motivation}\label{motiv}

When using the original Chudnovsky-Chudnovsky method,
one sees that the bounds that can be obtained on the bilinear complexity, as well as their effectivity
or the practical implementation of the corresponding multiplication algorithms,
highly depend on the choice of the geometric data on which Theorem~\ref{chudchud} is applied.
For instance, in order to get the best possible bounds, one needs curves having sufficiently many rational points with the smallest possible genus.
This works well when one is considering a base field that is not too small, and of square order, so the celebrated Drinfeld-Vladut bound can be attained (see section~\ref{CurveChoice} for details).
But in other situations, the original Chudnovsky-Chudnovsky method presents certain limitations.
Several improvements were then proposed to overcome these limitations.

In order to better understand these improvements, we will thus distinguish \emph{two steps} in the construction of multiplication algorithms.
The \emph{first step} is to state a ``generic'' CCMA, which takes as input some geometric data
(a function field or a curve, some places or points on it, and some divisors that satisfy adequate conditions),
and gives as output an effective multiplication algorithm, or at least an upper bound on some bilinear complexity.
The \emph{second step} then is to specify the geometric objects on which this generic CCMA will be 
applied: choice of the curves,
existence of the divisors, etc.

Concerning the \emph{first step} (generic statement of the CCMA), successive generalizations were proposed by various authors,
using several independent ingredients, among which we can cite:
\begin{itemize}
\item evaluation at places of higher degree and/or with multiplicities
\item symmetric/asymmetric versions of the algorithm optimized for symmetric/asymmetric bilinear complexity respectively
\item formulation adapted for an iterative use.
\end{itemize}

In this section we give more details on these lines of improvements, with emphasis on the first two (in sections \ref{eval_deg_mult} and \ref{DiscussionSym}),
and we present the best finalized version of the CCMA \cite[Theorem 3.5]{randJComp}, which combines them all.
We then explain how intermediate historical contributions can be retrieved as particular cases.

Concerning the \emph{second step} (specification of the geometric objects), the most important ingredients are:
\begin{itemize}
\item careful choice of the curves, either explicit recursive towers, 
their densification and descent of base field (see section~\ref{descent} for details), or more abstract modular, Shimura, or Drinfled modular curves (see section~\ref{courbes_Shimura})
\item techniques to ensure the existence, or even to effectively construct the divisor needed 
to perform interpolation, of best possible degree; this is especially important in the context of symmetric algorithms (see section~\ref{sym_methods}).
\end{itemize}

Of course these two steps that we distinguished are closely intertwined: a suitably generalized generic CCMA will allow
a broader choice for the geometric objects, hence lead to better bounds or a more effective implementation.
In the other direction, it can happen that some geometric conditions
(e.g. existence of points of given degree or of suitable divisors)
can be replaced with simple numerical criteria, and get included in the statement of the generic CCMA.

\subsection{Evaluation at places of higher degree and with multiplicities}
\label{eval_deg_mult}

Here one can cite several successive contributions.
\begin{itemize}
\item First S. Ballet and R. Rolland have generalized in \cite{baro1} the algorithm using places of degree $1$ and~$2$.
\item Then N. Arnaud \cite{arna1} introduced, as in the interpolation of Lagrange-Sylvester,
the use of derivatives (evaluation with multiplicities) to improve the interpolation process.
\item These ideas are combined and extended in the work of M. Cenk and F. \"Ozbudak in \cite{ceoz}. 
This generalization uses several coefficients in the local expansion at each place $P_i$ 
instead of just the first one. Due to the way it is obtained, their bound for the bilinear complexity
involves a sum of local contributions, each of which is written as a product of two separate factors:
one factor accounts for the degree of the place, the other factor accounts for the multiplicity. 
\item Last H.~Randriambololona \cite{randJComp} refined this method by introducing a single quantity
that combines both degree and multiplicity at the same time and leads to the sharpest bounds as presently known.
\end{itemize}
This quantity introduced in \cite{randJComp} can be defined in two variants,
one for the bilinear complexity, the other for the symmetric bilinear complexity:
\begin{defi}
For any integers $m,\ell\geq 1$ we consider the $\F_q$-algebra $\F_{q^m}[t]/(t^\ell)$ of polynomials 
in one indeterminate with coefficients in $\F_{q^m}$, truncated at order $\ell$,
and we denote by 
$$\mu_q(m,\ell) = \mu((\F_{q^m}[t]/(t^\ell))/\F_q)$$
its bilinear complexity over $\F_q$, and by 
$$\mus_q(m,\ell) = \mus((\F_{q^m}[t]/(t^\ell))/\F_q)$$
its symmetric bilinear complexity over $\F_q$. 
\end{defi}

Note that for $\ell=1$, we have $\mu_q(m,1)=\mu_q(m)$ and $\mus_q(m,1)=\mus_q(m)$.
While for $m=1$, we have $\mu_q(1,\ell)=\widehat{M}_q(\ell)$ as defined by M. Cenk and F. \"Ozbudak in \cite{ceoz}
(we could set likewise $\mus_q(1,\ell)=\widehat{M}_q^{\mathrm{sym}}(\ell)$, although this quantity is not considered in \cite{ceoz}).

\vspace{\baselineskip}

The generalized evaluation maps that appear in the generalized CCMA can be described either in the language of modern algebraic geometry,
as done in \cite{randJComp}, or in the language of algebraic function fields, as done in previous works. 
Actually these two languages are equivalent, so we explain how to pass from one to the other.

Suppose we are given:
\begin{itemize}
\item a curve $X$ over $\F_q$ (which corresponds to a function field $F/\F_q$)
\item a closed point $P$ on $X$ of degree $m$ (which corresponds to a place of $F$ of degree $m$)
\item an integer $\ell$.
\end{itemize}
This allows to consider the thickened point $P^{[\ell]}$ on $X$, which is the closed subscheme defined by the sheaf of ideals $(\mathcal{I}_P)^\ell$.

Now, for any divisor $\D$ on $X$, we can define a generalized evaluation map, that evaluates sections of $\mathcal{D}$ at $P$
with multiplicity $\ell$. In geometric terms, this is just the natural restriction map
$$\varphi_{\D,P,\ell}:\Ld{}\longrightarrow\mathcal\mathcal{O}_X(\D)|_{P^{[\ell]}}.$$
After possibly replacing $\D$ with a linearly equivalent divisor, we will assume $P$ is not in the support of $\D$.
We then have a natural identification
$\mathcal{O}_X(\D)|_{P^{[\ell]}}=\mathcal{O}_{P^{[\ell]}}$.
Then, thanks to \cite[Lemma 3.4]{randJComp}, we have an isomorphism of algebras
$$\mathcal{O}_{P^{[\ell]}}\simeq\F_{q^m}[t]/(t^\ell)$$
where $t$ corresponds to a local parameter $t_P$ at $P$, and $\F_{q^m}$ is identified with the residue field of $P$.
Last, in order to make everything explicit for computations, we can use the natural linear isomorphism

$\F_{q^m}[t]/(t^\ell)\simeq(\F_{q^m})^\ell$
identifying a polynomial $a_0+a_1t+\cdots+a_{\ell-1}t^{\ell-1}$ with its coefficients $(a_0,a_1,\dots,a_{\ell-1})$.
Combining all this, the generalized evaluation map becomes
\begin{equation}
\label{gen_eval}
\varphi_{\D,P,\ell}:\left|\begin{array}{ccl}
\Ld{} & \longrightarrow & (\F_{q^m})^\ell\\
f & \mapsto & (f(P),f'(P),\dots,f^{(\ell-1)}(P))
\end{array}\right.
\end{equation}
where the $f^{(k)}(P)$ are the coefficients of the local expansion
$$f=f(P)+f'(P)t_P+f''(P)t_P^2+\cdots+f^{(k)}(P)t_P^k+\cdots$$
of $f$ at $P$ with respect to $t_P$. Sometimes this is also called a ``derived evaluation map'',
although one should be careful that for $k\geq2$ these $f^{(k)}(P)$ are not precisely derivatives in the usual sense
(at best they are ``$\frac{1}{k!}$ times the derivative'').

\subsection{Discussion on symmetry}\label{DiscussionSym}

In the broader context of bilinear algorithms over finite fields, the distinction between
(general) bilinear complexity and symmetric bilinear complexity, together with some of the
mathematical issues related specifically to the construction of symmetric algorithms, were first discussed
in 1984 by Seroussi and Lempel with~\cite{sele}.

Focusing now on works based on the Chudnovsky-Chudnovsky method, it turns out that until 2011, all results
(including those in \cite{chch}\cite{shtsvl}\cite{baro2}\cite{ceoz}\cite{rand2D-G}) were stated in terms
of $\mu_q$ only (not $\mus_q$), although by construction the method always produced symmetric algorithms.
Of course this does not mean that the authors were not aware of the distinction:
indeed, for instance, I. Shparlinski, M. Tsfasman and S. Vladut explicitely mentioned the issue
when they observed \cite[p.~154]{shtsvl} that their notion of supercode corresponds only to symmetric algorithms.

However the situation became unsatisfactory when I. Cascudo, R. Cramer and C. Xing discovered 
the gap in the construction
of the divisor in~\cite{shtsvl}, as already discussed in section~\ref{SectionAlgCurvApproach}. Indeed, it turns
out that the difficulty of this construction, which they analyze in terms of the $2$-torsion in the
divisor class group of the curve (see section~\ref{Bound2torsion}),
is closely related to the symmetry requirement for the algorithm.

Finally, things were clarified by H. Randriambololona in \cite{randJComp}.
Along with the contributions already discussed in section~\ref{eval_deg_mult}, this work
introduced two further improvements to the method:
\begin{itemize}
\item one that solves the difficulty with the construction of the divisor in the symmetric case, at least 
for curves with sufficiently many rational points (see section~\ref{construction_sym} for details)
\item another one that produces asymmetric algorithms instead, by allowing asymmetry in the CCMA;
this is advantageous because asymmetric interpolation allows more freedom in the choice of the divisors,
and ultimately, can lead to sharper bounds.
\end{itemize}

As a consequence of these developments, whenever possible, the generalized CCMA should be stated
in two versions, one for bilinear complexity, the other for symmetric bilinear complexity.
Likewise, the numerical bounds should be stated in two versions, accordingly.

\vspace{\baselineskip}

Beside bilinear complexity $\mu_q$ and symmetric bilinear complexity $\mus_q$,
other refinements were introduced and studied in \cite{sele} and \cite[Appendix~A]{randAGCT}:
these are trisymmetric bilinear complexity $\mu^\mathrm{tri}_q$, and normalized trisymmetric 
bilinear complexity $\mu^\mathrm{nrm}_q$.

It should be noted that it can happen that these quantities are not well defined for some values of $q$ and $n$.
More precisely, \cite[Prop.~A.14]{randAGCT} shows that $\mu^\mathrm{tri}_q(n)$ is well defined for 
all values of $q$ and $n$ except precisely for $q=2,\,n\geq3$.
Likewise \cite[Prop.~A.19]{randAGCT} shows that $\mu^\mathrm{nrm}_q(n)$ is well defined for 
all values of $q$ and $n$ except precisely for $q=2,\,n\geq3$ and for $q=4,\,n\geq2$.

In any case, when well defined, one has
$$\mu_q(n)\leq\mus_q(n)\leq\mu^\mathrm{tri}_q(n)\leq\mu^\mathrm{nrm}_q(n).$$
Also, \cite[Th.~2]{sele} gives
$\mu_q^{\mathrm{tri}}(n)\leq4\,\mu_q^{\mathrm{sym}}(n)$ for $q\neq2$, $\mathrm{char}(\F_q)\neq3$,
and \cite[Prop.~A.19]{randAGCT}
gives
$\mu_q^{\mathrm{nrm}}(n)\leq 2\,\mu_q^{\mathrm{tri}}(n)$ for $q\neq7$
and
$\mu_7^{\mathrm{nrm}}(n)\leq 3\,\mu_7^{\mathrm{tri}}(n)$.
Joint with the linearity of $\mus_q$, this gives the linearity
of $\mu_q^{\mathrm{nrm}}$ and $\mu_q^{\mathrm{tri}}$ for most $q$.

But beside this, very few is known about these quantities.

\begin{op}
What are the exact values of $\mu_q^{\mathrm{nrm}}(n)$ and $\mu_q^{\mathrm{tri}}(n)$ for small $q$ and $n$?

Can some of the inequalities between $\mu_q(n)$, $\mus_q(n)$, $\mu^\mathrm{tri}_q(n)$ and $\mu^\mathrm{nrm}_q(n)$ be strict? If so, for which values of $n$?

Can one give better asymptotic bounds on them?
\end{op}

\vspace{\baselineskip}

\subsection{The current generalized CCMA}\label{CurrentCCMA}

Now we can state H. Randriambolona 's result \cite[Theorem 3.5]{randJComp}, which 
provides the current most general CCMA.
It makes use of the most elaborate form of derived evaluation, and it gives bounds both 
for asymmetric complexity and for symmetric complexity.

As already explained, this result was originally presented in the language of modern algebraic geometry,
but here we give the equivalent translation in the language of function fields.

\begin{theo} \label{theo_evalder}
Let
\begin{itemize}
	\item $q$ be a prime power,
	\item $F/\F_q$ be an algebraic function field,
	\item $Q$ be a place of $F/\F_q$, of degree $n=\deg Q$
        \item $\ell$ be a positive integer
	\item $\D_1,\D_2$ be two divisors of $F/\F_q$,
	\item ${\mathscr P}=\{P_1,\ldots,P_N\}$ be a set of places of arbitrary degree $d_i=\deg P_i$,
	\item $u_1,\ldots,u_N$ be positive integers.
\end{itemize}
We suppose that $Q$ and all the places in $\mathscr P$ are not in the support of $\D_1$ and $\D_2$, and that:
\begin{enumerate}[(a)]
	\item the maps
	$$\varphi_{\D_1,Q,\ell}:\mathcal{L}(\D_1)\longrightarrow (\F_{q^n})^\ell$$
        and
 	$$\varphi_{\D_2,Q,\ell}:\mathcal{L}(\D_2)\longrightarrow (\F_{q^n})^\ell$$
 	are onto,
	\item the map
	$$Ev_{\mathscr P,\underline{u}} :  \left |
	\begin{array}{ccl}
	\mathcal{L}(\D_1+\D_2) & \longrightarrow & \left(\F_{q^{d_1}}\right)^{u_1} \times 
	\left(\F_{q^{d_2}}\right)^{u_2} \times \cdots \times \left(\F_{q^{\deg d_N}}\right)^{u_N} \\
	f & \longmapsto & \big(\varphi_1(f), \varphi_2(f), \ldots, \varphi_N(f)\big)
	\end{array} \right.$$
	is injective,
\end{enumerate}
where the applications $\varphi_{\D_1,P,\ell}$, $\varphi_{\D_2,P,\ell}$, and $\varphi_i=\varphi_{\D_1+\D_2,P_i,u_i}$ 
are the derived evaluation maps from \eqref{gen_eval}.
Then 
$$\mu_q(n,\ell) \leq \displaystyle \sum_{i=1}^N \mu_q(d_i,u_i).$$
Moreover, if $\D_1=\D_2$, the same holds for the symmetric bilinear complexity:
$$\mus_q(n,\ell) \leq \displaystyle \sum_{i=1}^N \mus_q(d_i,u_i).$$

Existence of the objects satisfying the conditions above is ensured by the following numerical criteria:
\begin{itemize}
\item a sufficient condition for the existence of $Q$ of degree $n$ is that $2g+1\leq q^{(n-1)/2}(q^{1/2}-1)$, 
where $g$ is the genus of $F$
\item a sufficient condition for (a) is that the divisors $\D_1-\ell Q$ and $\D_2-\ell Q$ are nonspecial: 
$$i(\D_1-\ell Q)=i(\D_2-\ell Q)=0$$
where $i$ denotes index of speciality
\item a necessary and sufficient condition for (b) is that the divisor $\D_1+\D_2-\mathcal{G}$ is zero-dimensional:
$$\dim\mathcal{L}(\D_1+\D_2-\mathcal{G})=0$$
where $\mathcal{G}=u_1P_1+\cdots+u_NP_N$.
\end{itemize}
\end{theo}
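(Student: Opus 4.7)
The plan is to set up the generalized CCMA as a three-step procedure—lift, multiply in the function field, interpolate back—then separately verify each of the three numerical criteria using Riemann--Roch and a Weil-type lower bound on the number of places of degree $n$. Given $x,y\in\F_{q^n}[t]/(t^\ell)$, identified with $\mathcal{O}_{Q^{[\ell]}}$ via a local parameter at $Q$, hypothesis~(a) furnishes preimages $f_1\in\mathcal{L}(\D_1)$ and $f_2\in\mathcal{L}(\D_2)$ with $\varphi_{\D_1,Q,\ell}(f_1)=x$ and $\varphi_{\D_2,Q,\ell}(f_2)=y$. The decisive observation—and the main conceptual point of the argument—is that the isomorphism $\mathcal{O}_{P^{[\ell]}}\simeq\F_{q^m}[t]/(t^\ell)$ recalled in the discussion preceding \eqref{gen_eval} (Lemma~3.4 of \cite{randJComp}) is an isomorphism of $\F_q$-algebras, so every derived evaluation map is a ring homomorphism on its domain. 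Consequently $\varphi_{\D_1+\D_2,Q,\ell}(f_1f_2)=xy$, and at each $P_i$ we have $\varphi_i(f_1f_2)=\varphi_i(f_1)\cdot\varphi_i(f_2)$ inside $\F_{q^{d_i}}[t]/(t^{u_i})$; by~(b) the tuple $\bigl(\varphi_i(f_1f_2)\bigr)_{1\leq i\leq N}$ determines $f_1f_2\in\mathcal{L}(\D_1+\D_2)$, from which $xy$ is recovered by applying $\varphi_{\D_1+\D_2,Q,\ell}$.

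The bilinear-cost accounting is then immediate: the lifts $x\mapsto f_1$, $y\mapsto f_2$, the inversion of $Ev_{\mathscr P,\underline u}$ on its image, and the final evaluation at $Q$ are all $\F_q$-linear, hence free in bilinear complexity; the only bilinear multiplications arise in the $N$ local products $\varphi_i(f_1)\cdot\varphi_i(f_2)$, each realized by a bilinear algorithm of length $\mu_q(d_i,u_i)$ over $\F_q$ by definition of this quantity. Summing gives the stated bound on $\mu_q(n,\ell)$. In the symmetric case $\D_1=\D_2$, taking $f_1=f_2$ whenever $x=y$ and using symmetric local algorithms of length $\mus_q(d_i,u_i)$ at each $P_i$ makes the global algorithm symmetric in $x,y$, yielding the bound on $\mus_q(n,\ell)$.

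For the three existence criteria I would argue as follows. The sufficient condition $2g+1\leq q^{(n-1)/2}(q^{1/2}-1)$ for the existence of a place $Q$ of degree $n$ is a standard consequence of the explicit formula for the number $B_n$ of places of degree exactly $n$, after bounding the Frobenius eigenvalue contributions to $B_n$ by the Weil estimates. For condition~(a), the kernel of $\varphi_{\D_j,Q,\ell}$ is exactly $\mathcal{L}(\D_j-\ell Q)$ since $Q\notin\mathrm{supp}(\D_j)$ and the evaluation vanishes iff $v_Q(f)\geq\ell$; Riemann--Roch then yields
\begin{equation*}
\dim\mathcal{L}(\D_j)-\dim\mathcal{L}(\D_j-\ell Q)=\ell n+i(\D_j)-i(\D_j-\ell Q),
\end{equation*}
and the assumption $i(\D_j-\ell Q)=0$ forces $i(\D_j)=0$ by monotonicity of $i$ under $\D_j-\ell Q\leq\D_j$, so the cokernel is zero and surjectivity onto $(\F_{q^n})^\ell$ follows. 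For~(b), an element $f\in\mathcal{L}(\D_1+\D_2)$ lies in the kernel of $Ev_{\mathscr P,\underline u}$ iff $v_{P_i}(f)\geq u_i$ for every $i$, i.e.\ iff $f\in\mathcal{L}(\D_1+\D_2-\mathcal{G})$, yielding the claimed equivalence with the zero-dimensionality $\dim\mathcal{L}(\D_1+\D_2-\mathcal{G})=0$.
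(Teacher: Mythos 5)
Your proposal is correct and follows essentially the same route as the proof of Randriambololona's Theorem~3.5 in \cite{randJComp}, which is the proof this survey relies on: lift $x,y$ through linear sections of the surjections in (a), multiply in the function field, recover $f_1f_2$ from its truncated expansions at the $P_i$ via (b), read off $xy$ at $Q$, and charge only the $N$ local products (each of cost $\mu_q(d_i,u_i)$, resp.\ $\mus_q(d_i,u_i)$ with a common section when $\D_1=\D_2$), with the three numerical criteria handled exactly as you do by the Weil bound and by identifying the kernels with $\mathcal{L}(\D_j-\ell Q)$ and $\mathcal{L}(\D_1+\D_2-\mathcal{G})$ plus Riemann--Roch. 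The only cosmetic looseness is calling the derived evaluations ``ring homomorphisms on their domain'' (the spaces $\mathcal{L}(\D_j)$ are not rings; what is used is multiplicativity of truncated local expansions, i.e.\ that they restrict a homomorphism $\mathcal{O}_{P}\to\mathcal{O}_{P}/\mathfrak{m}_P^{u}$) and phrasing symmetry as ``$f_1=f_2$ whenever $x=y$'' rather than as the choice of one fixed linear section used for both arguments.
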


The fact that $\mu_q(n,\ell)$ (resp. $\mus_q(n,\ell)$) appears on the left-hand side of the 
inequalities allows to apply the result recursively.
For $n=1$ it also provides bounds for the quantity $\widehat{M}_q(\ell)$ of M. Cenk and F. \"Ozbudak
(resp. for $\widehat{M}_q^{\mathrm{sym}}(\ell)$).

However in most applications we are interested mostly in the case $\ell=1$.
If we restate the result in this particular case, and focus only on the symmetric part, 
this generalized version of CCMA algorithm then specializes to the following 
statement (special case of \cite[Theorem 3.5]{randJComp}), which suffices for most applications:

\begin{coro}\label{theo_evalder_simple}
Let
\begin{itemize}
	\item $q$ be a prime power,
	\item $F/\F_q$ be an algebraic function field,
	\item $Q$ be a place of $F/\F_q$, of degree $n=\deg Q$ and residue field $F_Q\simeq\F_{q^n}$
 	\item $\D$ be a divisor of $F/\F_q$,
	\item ${\mathscr P}=\{P_1,\ldots,P_N\}$ be a set of places of arbitrary degree $d_i=\deg P_i$,
	\item $u_1,\ldots,u_N$ be positive integers.
\end{itemize}
We suppose that $Q$ and all the places in $\mathscr P$ are not in the support of $\D$, and that:
\begin{enumerate}[(a)]
	\item the evaluation map
	$$\varphi_{\D,Q}:\left|\begin{array}{ccl}\mathcal{L}(\D)&\longrightarrow&\F_{q^n}\\f&\mapsto&f(Q)\end{array}\right.$$
	is onto
        \item the map
	$$Ev_{\mathscr P,\underline{u}} :  \left |
	\begin{array}{ccl}
	\mathcal{L}(2\D) & \longrightarrow & \left(\F_{q^{d_1}}\right)^{u_1} \times \left(\F_{q^{d_2}}\right)^{u_2} \times \cdots \times \left(\F_{q^{\deg d_N}}\right)^{u_N} \\
	f & \longmapsto & \big(\varphi_1(f), \varphi_2(f), \ldots, \varphi_N(f)\big)
	\end{array} \right.$$
	is injective, where $\varphi_i=\varphi_{2\D,P_i,u_i}$ is the derived evaluation map from \eqref{gen_eval}.
\end{enumerate}
Then 
$$\mus_q(n) \leq \displaystyle \sum_{i=1}^N \mus_q(d_i,u_i).$$
\end{coro}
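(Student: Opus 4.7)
The plan is to deduce this corollary as the special case of Theorem \ref{theo_evalder} obtained by setting $\ell = 1$ and $\mathcal{D}_1 = \mathcal{D}_2 = \mathcal{D}$. With those choices, $\mus_q(n,1) = \mus_q(n)$, the divisor $\mathcal{D}_1 + \mathcal{D}_2$ becomes $2\mathcal{D}$, and the derived map $\varphi_{\mathcal{D}, Q, 1}$ is literally the evaluation $f \mapsto f(Q)$; hence hypotheses (a) and (b) of the theorem reduce verbatim to the hypotheses stated in the corollary. Since $\mathcal{D}_1 = \mathcal{D}_2$ the symmetric conclusion of Theorem \ref{theo_evalder} applies and yields the claimed bound directly.

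For readers who prefer a self-contained construction, one would proceed as follows. Fix an $\F_q$-linear section $\sigma : \F_{q^n} \to \mathcal{L}(\mathcal{D})$ of the surjection in (a), so that for every $x \in \F_{q^n}$ one has $\sigma(x)(Q) = x$. Given $x,y \in \F_{q^n}$, set $f = \sigma(x)$ and $g = \sigma(y)$; then $fg \in \mathcal{L}(2\mathcal{D})$ and $(fg)(Q) = xy$. Apply each derived evaluation $\varphi_{\mathcal{D}, P_i, u_i}$ (an $\F_q$-linear map) separately to $f$ and to $g$, compute the product in the local algebra $\F_{q^{d_i}}[t]/(t^{u_i})$ at symmetric bilinear cost $\mus_q(d_i, u_i)$, and concatenate the results to obtain $Ev_{\mathscr{P}, \underline{u}}(fg)$. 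By the injectivity condition (b), this image determines $fg$ inside $\mathcal{L}(2\mathcal{D})$ via an $\F_q$-linear reconstruction, after which evaluating at $Q$ recovers $xy$.

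The total count of bilinear $\F_q$-multiplications is exactly $\sum_{i=1}^N \mus_q(d_i, u_i)$, since every other step is $\F_q$-linear in the inputs. Symmetry of the resulting global algorithm follows from using a single section $\sigma$ for both arguments and from the symmetry of each local algorithm. The one step requiring care, and which I expect to be the main verification, is the compatibility
$\varphi_{2\mathcal{D}, P_i, u_i}(fg) = \varphi_{\mathcal{D}, P_i, u_i}(f) \cdot \varphi_{\mathcal{D}, P_i, u_i}(g)$
inside $\F_{q^{d_i}}[t]/(t^{u_i})$; this amounts to the fact that truncated local expansion at $P_i$ is a ring homomorphism, which is precisely the content of the identification $\mathcal{O}_X(\mathcal{D})|_{P_i^{[u_i]}} \simeq \F_{q^{d_i}}[t]/(t^{u_i})$ recalled in the setup of \eqref{gen_eval}. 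Once that compatibility is in hand the rest is bookkeeping.
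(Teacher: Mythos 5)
Your first paragraph is exactly the paper's argument: Corollary \ref{theo_evalder_simple} is obtained by specializing Theorem \ref{theo_evalder} to $\ell=1$ and $\D_1=\D_2=\D$, so that the hypotheses and the symmetric conclusion reduce verbatim to the stated ones. The additional self-contained construction you sketch is essentially the proof of Theorem \ref{theo_evalder} itself (as in \cite{randJComp}) and is consistent, but it is not needed beyond the specialization step the paper uses.
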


\vspace{\baselineskip}

This can be specialized still further.
Indeed, first observe that for all $d,u$ we have the easy inequality
\begin{equation}\label{IneqFormRecur}
\mus_q(d,u)\leq\mus_q(d)\widehat{M}_{q^d}^{\mathrm{sym}}(u).
\end{equation}
This follows directly from Lemma~\ref{functorial_inequalities}(b) applied with 
$\mathcal{A}=\F_{q^d}[t]/(t^u)$, $L=\F_{q^d}$, $K=\F_q$.
We deduce:

\begin{coro}\label{ceoz_sym}
Under the same hypotheses as Corollary \ref{theo_evalder_simple}, we have
$$\mus_q(n) \leq \displaystyle \sum_{i=1}^N\mus_q(d_i)\widehat{M}_{q^{d_i}}^{\mathrm{sym}}(u_i).$$
\end{coro}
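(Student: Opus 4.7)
The plan is a direct concatenation of two ingredients already prepared in the excerpt, so the proof should be short. First I would invoke Corollary~\ref{theo_evalder_simple}, whose hypotheses are verbatim those of the statement at hand, to obtain
$$\mus_q(n)\leq\sum_{i=1}^N\mus_q(d_i,u_i).$$
Then, for each index $i$, I would apply the auxiliary inequality~\eqref{IneqFormRecur}, namely
$$\mus_q(d_i,u_i)\leq\mus_q(d_i)\,\widehat{M}_{q^{d_i}}^{\mathrm{sym}}(u_i),$$
and sum over $i$. This chains directly to the desired bound.

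The only step worth fleshing out is the justification of~\eqref{IneqFormRecur}, which the paper attributes to Lemma~\ref{functorial_inequalities}(b). Concretely, I would set $K=\F_q$, $L=\F_{q^d}$, and $\mathcal{A}=\F_{q^d}[t]/(t^u)$; then $\mathcal{A}$ is a finite-dimensional commutative $L$-algebra (hence also a finite-dimensional $K$-algebra), and the symmetric version of part~(b) of the lemma gives
$$\mus(\mathcal{A}/K)\leq\mus(\mathcal{A}/L)\,\mus(L/K).$$
By definition, the left-hand side is $\mus_q(d,u)$, the factor $\mus(\mathcal{A}/L)$ equals $\widehat{M}_{q^d}^{\mathrm{sym}}(u)$ (this is the definition of $\widehat{M}_{q^d}^{\mathrm{sym}}(u)=\mus_{q^d}(1,u)$ recalled just after the definition of $\mus_q(m,\ell)$), and $\mus(L/K)=\mus_q(d)$.

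The main (and essentially only) obstacle is bookkeeping: one must check that the hypotheses of Corollary~\ref{theo_evalder_simple} carry over unchanged (they do, as the statement explicitly says ``under the same hypotheses''), and that the symmetric version of Lemma~\ref{functorial_inequalities}(b) applies, which requires only commutativity of $\mathcal{A}$ over $L$---clearly satisfied for a truncated polynomial algebra. No further calculation is needed.
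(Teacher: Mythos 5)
Your proof is correct and follows exactly the paper's route: Corollary \ref{theo_evalder_simple} gives $\mus_q(n)\leq\sum_{i=1}^N\mus_q(d_i,u_i)$, and then inequality \eqref{IneqFormRecur}, justified precisely as in the paper by the symmetric part of Lemma \ref{functorial_inequalities}(b) with $\mathcal{A}=\F_{q^{d_i}}[t]/(t^{u_i})$, $L=\F_{q^{d_i}}$, $K=\F_q$, is applied termwise. The identification $\mus(\mathcal{A}/L)=\mus_{q^{d_i}}(1,u_i)=\widehat{M}_{q^{d_i}}^{\mathrm{sym}}(u_i)$ is also exactly the one the paper uses, so nothing is missing.
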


Corollary \ref{ceoz_sym} can be seen as a symmetric variant of M. Cenk and F. \"Ozbudak's version of the CCMA \cite{ceoz}.
It is weaker than Corollary \ref{theo_evalder_simple},
since the inequality $\mus_q(d,u)\leq\mus_q(d)\widehat{M}_{q^d}^{\mathrm{sym}}(u)$ can be strict.

One should be careful that all bilinear complexities in the original statement of \cite{ceoz} (including the one for multiplicities)
have to be replaced by symmetric bilinear complexities in order to get this valid symmetric reformulation.

Going further back in time,
let us then remark that the algorithm given in \cite{chch} by D.V. and G.V. Chudnovsky corresponds to the 
case $d_i=1$ and $u_i=1$ for $i=1, \ldots, N$. 
The first generalization introduced by S. Ballet and R. Rolland in \cite{baro1} 
concerns the case $d_i=1 \mbox{ or }2$ and $u_i=1$ for $i=1, \ldots, N$. 
Next, the generalization introduced by N. Arnaud in \cite{arna1} concerns 
the case $d_i=1 \mbox{ or }2$ and $u_i=1\mbox{ or }2$  for $i=1, \ldots, N$. 
In particular, as a corollary of Theorem \ref{theo_evalder}, 
we have the following result 
obtained by N. Arnaud in \cite{arna1} by gathering the places 
used with the same multiplicity; namely he sets 
${\ell_j:= | \{ P_i\, | \, \deg P_i =j \mbox{ and } u_i=2 \} |}$ for ${j=1}$ and~$2$ and with $\D=\D_1=\D_2$.

\begin{coro} \label{theo_deg12evalder}
Let 
\begin{itemize}
	\item $q$ be a prime power,
	\item $F/\F_q$ be an algebraic function field,
	\item $Q$ be a degree $n$ place of $F/\F_q$, 
	\item $\D$ be a divisor of $F/\F_q$,
	\item ${\mathscr P}=\{P_1,\ldots,P_{N_1},P_{N_1+1},\ldots,P_{N_1+N_2}\}$ be a set of 
$N_1$ places of degree\\ \indent one and $N_2$ places of degree two,
	\item ${0 \leq \ell_1 \leq N_1}$ and ${0 \leq \ell_2 \leq N_2}$ be two integers.
\end{itemize}
We suppose that $Q$ and all the places in $\mathscr P$ are not in the support of $\D$ and that:
\begin{enumerate}[(a)]
	\item the map
	$$
	Ev_Q: \Ld{} \rightarrow \F_{q^n}\simeq F_Q$$
	is onto,
	\item the map
	 $$
	 Ev_{\mathscr P}: \left |
	\begin{array}{ccl}
   	\Ld{2} & \rightarrow & \F_{q}^{N_1} \times \F_{q}^{\ell_1}\times \F_{q^2}^{N_2} \times \F_{q^2}^{\ell_2}  \\
    	f & \mapsto & \big(f(P_1),\ldots,f(P_{N_1}),f'(P_1),\ldots,f'(P_{\ell_1}),\\
	  &  & \ f(P_{N_1+1}),\ldots,f(P_{N_1+N_2}),f'(P_{N_1+1}),\ldots,f'(P_{N_1+\ell_2})\big)
	\end{array} \right .
	$$
is injective.
\end{enumerate}
Then  
$$
\mus_q(n)\leq  N_1 + 2\ell_1 + 3N_2 + 6\ell_2.
$$
\end{coro}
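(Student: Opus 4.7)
The plan is to derive Corollary \ref{theo_deg12evalder} as a direct specialization of the generalized symmetric CCMA in Corollary \ref{theo_evalder_simple}, followed by an arithmetic bookkeeping step in which each local contribution $\mus_q(d_i,u_i)$ is replaced by a concrete small constant.

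First, I would set $\ell=1$ and $\D_1=\D_2=\D$ in Theorem \ref{theo_evalder}, reading off the symmetric version, which is exactly Corollary \ref{theo_evalder_simple}. The hypotheses (a) and (b) of Corollary \ref{theo_deg12evalder} match those of Corollary \ref{theo_evalder_simple} verbatim, provided one regroups the places according to their degree $d_i\in\{1,2\}$ and multiplicity $u_i\in\{1,2\}$: the set $\mathscr P$ in Corollary \ref{theo_deg12evalder} is partitioned into $N_1-\ell_1$ places with $(d_i,u_i)=(1,1)$, $\ell_1$ places with $(1,2)$, $N_2-\ell_2$ places with $(2,1)$, and $\ell_2$ places with $(2,2)$, and the explicit evaluation map written in (b) of Corollary \ref{theo_deg12evalder} is precisely the derived evaluation map $\mathrm{Ev}_{\mathscr P,\underline{u}}$ of Corollary \ref{theo_evalder_simple} under this identification (since for $u_i=2$ the derived evaluation outputs both $f(P_i)$ and $f'(P_i)$). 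Corollary \ref{theo_evalder_simple} then yields
\begin{equation*}
\mus_q(n) \leq (N_1-\ell_1)\,\mus_q(1,1) + \ell_1\,\mus_q(1,2) + (N_2-\ell_2)\,\mus_q(2,1) + \ell_2\,\mus_q(2,2).
\end{equation*}

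It remains to bound each local symmetric complexity by the constants $1,3,3,9$ respectively. The equalities $\mus_q(1,1)=\mus_q(1)=1$ and $\mus_q(2,1)=\mus_q(2)=3$ are immediate from Theorem \ref{thm_wdg}. For $\mus_q(1,2)$, one has to multiply two truncated polynomials in $\F_q[t]/(t^2)$, and the classical Karatsuba-style identity $(a_0+a_1t)(b_0+b_1t)\equiv a_0b_0+\bigl((a_0+a_1)(b_0+b_1)-a_0b_0-a_1b_1\bigr)t \pmod{t^2}$ gives a symmetric algorithm of length $3$, so $\mus_q(1,2)\leq 3$. For $\mus_q(2,2)$, I would invoke the tower/base-change inequality \eqref{IneqFormRecur}, namely $\mus_q(2,2)\leq \mus_q(2)\cdot\widehat M_{q^2}^{\mathrm{sym}}(2) = 3\cdot 3 = 9$, which itself follows from Lemma \ref{functorial_inequalities}(b) applied to $\mathcal{A}=\F_{q^2}[t]/(t^2)$ with $L=\F_{q^2}$ and $K=\F_q$.

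Plugging these into the above display yields
\begin{equation*}
\mus_q(n)\leq (N_1-\ell_1)+3\ell_1+3(N_2-\ell_2)+9\ell_2 = N_1+2\ell_1+3N_2+6\ell_2,
\end{equation*}
which is exactly the claimed bound. The only non-trivial steps are the two local constants $\mus_q(1,2)\leq 3$ and $\mus_q(2,2)\leq 9$, both of which are standard; I view the main conceptual work as already done, since it lies in the derived-evaluation machinery of Theorem \ref{theo_evalder}. Everything else is a matter of unpacking notation and tallying contributions, so I do not expect any substantial obstacle.
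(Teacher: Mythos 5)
Your proposal is correct and follows essentially the same route as the paper, which also obtains this statement by specializing the generalized symmetric CCMA (Theorem \ref{theo_evalder} with $\ell=1$, $\D_1=\D_2=\D$, i.e.\ Corollary \ref{theo_evalder_simple}) to places of degree $1$ or $2$ with multiplicities $1$ or $2$, and then tallying the local costs $\mus_q(1,1)=1$, $\mus_q(1,2)\leq 3$, $\mus_q(2)\leq 3$, $\mus_q(2,2)\leq\mus_q(2)\mus_{q^2}(1,2)\leq 9$ exactly as you do (cf.\ Inequality \eqref{IneqFormRecur}). The only cosmetic remark is that Theorem \ref{thm_wdg} concerns $\mu_q$, so the upper bounds $\mus_q(2)\leq 3$ and $\mus_q(1,2)\leq 3$ really come from the (symmetric) Karatsuba-type algorithm you exhibit, not from that theorem — but this is the standard fact the paper itself uses.
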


I\section{Choice of the curves}\label{CurveChoice}

\subsection{Motivation and notations}

As seen in Section \ref{SectionAlgCurvApproach} and \ref{SecGeneralizations}, until now, the best method to quantify the bilinear 
complexity of multiplication in finite fields is the CCMA algorithm based upon the interpolation over algebraic 
curves defined over a finite field. So in this context, to get the best bounds on the upper-limit complexities $M_q$ and $\Ms_q$ or the upper bounds $C_q$ and $C^{sym}_q$ defined in Section \ref{mM}, it is necessary to use sufficiently many different curves so as to deal with the worst cases. 
So let us give a name to the following requirement, formalized in \cite[Claim p163]{shtsvl}:

\begin{defi} \label{dense} 
Let $X_s/k$ be a family of curves over a field $k$ with genera $g_s$. 
We say that the family $\left(X_s\right)_s$ is {\it dense} if and only if the genera $g_s$ tend to 
infinity and the ratio of two successive genera $g_{s+1}/g_s$ tends to 1. 
\end{defi}

As introduced in the last section, multiplication algorithms by interpolation on 
algebraic curves often require many points of higher degree $r\geq 2$. So let us study the 
best possible asymptotic ratios $\beta_r$ of the number of places of degree $r$ divided by the genus. The first definition is due to M. Tsfasman \cite{tsfa} (cf. also \cite[definitions 1.1, 1.2 and 1.3]{baro4}).

\begin{defi}
Let ${\mathcal X}/\F_q=(X_s/\F_q)$ be a sequence of curves $X_s/\F_q$ defined over a 
finite field $\F_q$ of genus $g_s=g(X_s/\F_q)$. 
We suppose that the sequence of the genus $g_s$ is an increasing sequence growing to infinity. 
Then the sequence ${\mathcal X}/\F_q$ 
is said to be asymptotically exact if for all $m\geq 1$ the limit 
$\beta_r({\mathcal X})=\lim_{s\rightarrow\infty}\frac{B_r(X_s)}{g_s}$, where $B_r(X_s)$ 
denotes the number of closed points of degree $r$ of the curve $X_s$, exists. 
\end{defi}

\begin{defi}\label{DefIharaConstant}
Let $r\geq 1$ be an integer and $q$ a prime power. For $X$ a curve over $\F_q$, let $B_r(X)$ 
denote the number of closed points of degree $r$.
For an asymptotically exact sequence of curves ${\mathcal X}=(X_s)$, let us 
define $$\beta_r({\mathcal X})=\lim_{s\rightarrow\infty}\frac{B_r(X_s)}{g_s}.$$
Then, we respectively define : 
$$\betar_r(q)(\hbox{resp. } \betad_r(q)) =\limsup_{{\mathcal X}}\beta_r({\mathcal X}),$$ 
${\mathcal X}$ running over all asymptotically exact sequences of curves 
(resp. dense asymptotically exact sequences of curves).
\end{defi}

\begin{rema}
Note that the quantity $A_1(q)$ is the classical Ihara Constant $A(q)$ defined by Y. Ihara in \cite{ihar}.
The order $r$ Ihara constants $A_r(q)$ were in particular defined in \cite[definitions 1.3]{baro4}.
Concerning the quantities $\betad_r(q)$, note that the dense Ihara constant $\betad_1(q)$ was first introduced (and noted $A'(q)$)
by H. Randriambololona in \cite{rand2D-G} (cf. also \cite{randGap2}). The order $r$ dense Ihara constants $\betad_r(q)$ 
were first introduced (and noted $\betadRamb_r(q)$) by M. Rambaud in \cite{ramb1}.
\end{rema}

The following is possibly well-known. It essentially follows from \cite[Lemma IV.3]{cacrxiya}, itself 
based on the generalized bound of Drinfeld-Vladuts (cf. \cite[Theorem 1]{tsfa}, see also \cite[Definitions 1.2 and 1.3]{baro4}).

\begin{theo} \label{th:dv}
Let $\left(X_s/\F_q\right)$ be a family of curves over a finite field $\F_q$, with genera $g_s$ tending 
to infinity. Let $r\geq 1$ be an integer, $B_r(X_s)$ the number of closed points of 
degree $r$ and $\mid X_s(\F_{q^r}) \mid$ the number of points of $X_s$ in the extension 
$\F_{q^r}$. Then the following assertions are equivalent :
\begin{align}\tag{i} \lim_{s\rightarrow\infty}\frac{\left|X_s(\F_{q^r})\right |}{g_s} & = \sqrt{q^r}-1, \\ 
\tag{ii} \label{DVordreR}  \lim_{s\rightarrow\infty}\frac{B_r(X_s)}{g_s} & = \frac{\sqrt{q^r}-1}{r}.  
\end{align}
\end{theo}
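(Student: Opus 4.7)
My plan revolves around the standard identity
$$|X_s(\F_{q^r})| = \sum_{d \mid r} d \cdot B_d(X_s),$$
which holds because every $\F_{q^r}$-rational point lies on a unique closed point of some degree $d \mid r$, contributing exactly $d$ such points. With this in hand, the implication (ii)$\Rightarrow$(i) is immediate: the identity gives $|X_s(\F_{q^r})|/g_s \geq r \cdot B_r(X_s)/g_s$, whose liminf equals $\sqrt{q^r}-1$ by hypothesis, while the classical Drinfeld-Vladut bound applied over $\F_{q^r}$ forces $\limsup_s |X_s(\F_{q^r})|/g_s \leq \sqrt{q^r}-1$, and squeezing yields (i).

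For the converse (i)$\Rightarrow$(ii), I would invoke Tsfasman's generalized Drinfeld-Vladut inequality \cite[Theorem 1]{tsfa}, namely
$$\sum_{d \geq 1} \frac{d \cdot B_d(X_s)}{q^{d/2}-1} \leq g_s + O(1).$$
Since each $B_d(X_s)/g_s$ is bounded by the ordinary Drinfeld-Vladut bound over $\F_{q^d}$, I may pass to a subsequence along which the limits $\beta_d := \lim_s B_d(X_s)/g_s$ exist for every $d \mid r$. Hypothesis (i) then reads $\sum_{d \mid r} d\,\beta_d = \sqrt{q^r}-1$, while truncating the above bound to divisors of $r$ and dividing by $g_s$ gives $\sum_{d \mid r} \frac{d\,\beta_d}{q^{d/2}-1} \leq 1$.

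The crux is to force all the mass onto $d = r$. Setting $c_d := d\,\beta_d \geq 0$, eliminating $c_r$ from the equality and substituting into the inequality yields
$$\sum_{d \mid r,\ d < r} c_d\left(\frac{q^{r/2}-1}{q^{d/2}-1}-1\right) \leq 0.$$
Every parenthesized factor is strictly positive while each $c_d$ is nonnegative, so every term must vanish; hence $c_d = 0$ for $d \mid r$, $d < r$, and $c_r = q^{r/2}-1$, i.e.\ $\beta_r = (\sqrt{q^r}-1)/r$. Since $B_r(X_s)/g_s$ is bounded and every convergent subsequence returns this same value, the whole sequence converges to $(\sqrt{q^r}-1)/r$, proving (ii). The only non-routine ingredient is Tsfasman's generalized inequality; once it is granted, the rest amounts to a short linear-programming-style squeeze.
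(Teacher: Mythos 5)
Your proof is correct and follows essentially the route the paper itself indicates: the paper gives no argument for this theorem, deferring to \cite[Lemma IV.3]{cacrxiya} and the generalized Drinfeld--Vladut bound \cite[Theorem 1]{tsfa}, and your combination of the identity $|X_s(\F_{q^r})|=\sum_{d\mid r}d\,B_d(X_s)$, the classical Drinfeld--Vladut bound over $\F_{q^r}$ for (ii)$\Rightarrow$(i), and the generalized bound plus a positivity (linear-programming) argument for (i)$\Rightarrow$(ii) is exactly that. One correction: the per-curve inequality you display, $\sum_{d\geq1}d\,B_d(X_s)/(q^{d/2}-1)\leq g_s+O(1)$, is false as written, since for a fixed curve $d\,B_d(X_s)$ grows like $q^d$ and the left-hand side diverges. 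What \cite[Theorem 1]{tsfa} actually provides is the asymptotic form $\sum_{d\geq1}d\,\beta_d/(q^{d/2}-1)\leq1$ for an asymptotically exact family; so either pass (by a diagonal argument, using that each $B_d(X_s)/g_s$ is bounded) to a subsequence along which all $\beta_d$ exist and quote that statement, or use the truncated explicit-formula inequality, whose additive constant depends on the truncation level. Either way you obtain precisely the truncated inequality $\sum_{d\mid r}d\,\beta_d/(q^{d/2}-1)\leq1$ that your elimination step uses, and the remainder of your argument, including the subsequence argument upgrading convergence to the full sequence, is sound.
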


As a corollary of Theorem 1 in \cite{tsfa}, the following holds:

\begin{theo}
\begin{equation}\label{Atilder}
\betad_r(q) \leq \betar_r(q) \leq \frac{\sqrt{q^r}-1}{r}.
\end{equation}
\end{theo}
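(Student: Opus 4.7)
The plan is to handle the two inequalities separately, since each is essentially immediate once the correct tool is identified.

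For the first inequality $\betad_r(q)\leq\betar_r(q)$, the argument is purely set-theoretic. By Definition~\ref{DefIharaConstant}, both quantities are defined as $\limsup_{\mathcal{X}}\beta_r(\mathcal{X})$ over a class of asymptotically exact sequences of curves; the only difference is that on the left we restrict to the subclass of \emph{dense} sequences (those additionally satisfying $g_{s+1}/g_s\to 1$), while on the right we allow any asymptotically exact sequence. Since the supremum over a smaller collection cannot exceed the supremum over a larger one, the first inequality follows with no further work.

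For the second inequality $\betar_r(q)\leq(\sqrt{q^r}-1)/r$, the plan is to invoke Tsfasman's generalized Drinfeld--Vladut bound (Theorem~1 of \cite{tsfa}): for every asymptotically exact sequence $\mathcal{X}/\F_q$, the limits $\beta_m(\mathcal{X})$ exist and satisfy
$$\sum_{m\geq 1}\frac{m\,\beta_m(\mathcal{X})}{q^{m/2}-1}\leq 1.$$
Since every summand on the left-hand side is nonnegative, I may retain only the $m=r$ contribution and deduce $\beta_r(\mathcal{X})\leq(q^{r/2}-1)/r=(\sqrt{q^r}-1)/r$. Taking the $\limsup$ over all asymptotically exact sequences $\mathcal{X}$ yields exactly $\betar_r(q)\leq(\sqrt{q^r}-1)/r$, as required.

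Both steps rely on facts that are either definitional or imported from \cite{tsfa} as a black box, so I anticipate no real obstacle. One could equivalently derive the second inequality from the ``$\leq$'' content implicit in the equivalence of Theorem~\ref{th:dv}, which identifies $(\sqrt{q^r}-1)/r$ as the extremal value for $\beta_r$ and characterizes exactly when equality is attained; this would simply repackage the same Tsfasman estimate.
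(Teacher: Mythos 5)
Your proof is correct and follows essentially the same route as the paper, which simply states the result as a corollary of Theorem~1 of \cite{tsfa} (the generalized Drinfeld--Vl\u{a}du\c{t} bound): the left inequality is the trivial comparison of suprema over dense versus arbitrary asymptotically exact families, and the right one comes from keeping only the $m=r$ term in Tsfasman's sum $\sum_{m\geq1} m\,\beta_m(\mathcal{X})/(q^{m/2}-1)\leq1$. You merely spell out the details that the paper leaves implicit, so there is nothing to add.
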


\subsection{Explicit towers, densification and descent}\label{descent}

The pioneer papers \cite{chch} \cite{shtsvl} having for objectives to prove the linearity (cf. Section \ref{mM}) 
of this complexity with respect to the extension degree, required the use of 
infinite families of curves with many rational points relatively to the genus. 
However, the first exhibited families of curves (of type modular and Shimura) 
enable them to obtain uniquely purely asymptotic bounds. So, the objective of \cite{ball0} (cf. also \cite{ball1} 
and footnote \ref{borunivTsfa} page \pageref{borunivTsfa}) was to give 
the first uniform upper bounds with respect to $q$. In this aim, it was necessary to use more explicit families of curves. 
The first tower of algebraic function fields of Garcia-Stichtenoth \cite{gast} 
fulfilled the required conditions: knowledge of fundamental invariants, 
namely the genus and the number of rational points of each step of the tower, 
which attains the Drinfeld-Vladut bound. From a general point of view, to obtain the 
best bounds by CCMA, we need to use families of curves of genus increasing the more slowly 
possible (cf. Section \ref{motiv} and Theorem \ref{theoprinc} in Section \ref{mus}). But, a tower 
of algebraic function fields is composed of successive algebraic function fields whose genera increase 
as the extension degree between two consecutive steps by the Hurwitz formula.
For example, the first Garcia-Stichtenoth tower defined over $\F_{q^2}$ 
is an Artin-Schreier tower whose ratio of two consecutive genus is $\frac{g_{i+1}}{g_i}\geq q$ where $q$ is an arbitrary prime power.
In this case, an interesting strategy to improve the bounds obtained with this type of tower consisted on densifying this tower 
by adding intermediate steps (cf. \cite{ball2}). It is easily possible in this case, even without knowing the recursive equation of 
intermediate steps because the tower is a Galois tower.
When the used towers ${\mathcal X}/\F_q$ are such that the value of $\beta_1({\mathcal X})$ is not sufficiently large
(which is the case when the finite fields of definition are small or when the best known lower bound of 
the Ihara constant $\betar_r(q)$ associated to the definition field $\F_q$ is not sufficiently large), 
it is necessary to use places of degree $>1$ because of the Drinfeld-Vladut bound (cf. \cite{baro1}, \cite{bapi}). 
So, we need families of curves reaching the Drinfeld-Vladut Bound of order $r>1$ 
(cf. \cite{baro2} and Assertion (\ref{DVordreR}) in Theorem \ref{th:dv}). Until now, the only way to obtain such families is the technic of the descent of families of algebraic function fields defined over $\F_{q^r}$ on the definition field $\F_{q}$, which was introduced in \cite{baro1}. Of course, the descent of the original tower of Garcia-Stichtenoth is always possible 
since the coefficients of the recursive equation lie in $\F_q$. However, the problem arises as soon as we introduce intermediate steps. 
So, in \cite{baro1}, the descent was made explicit only for the characteristic two and $r=2$ because 
in this case the descended tower conserves the property to be Galois. Then, the generalization for any characteristic with $r=2$ 
was realized in \cite{balbro} by using two different techniques: theoretically by using the action of the Galois group of $\F_{q^2}/\F_q$ 
on the intermediate steps of the tower defined on $\F_{q^2}$ or by finding explicit equations of the 
intermediate steps. Then, having used all the possibilities of the towers, it became necessary to use 
families of algebraic function fields more dense than the towers. In this aim, 
it was natural to come back to the study of families of modular and Shimura curves, which is the subject of the following section.


\subsection{Modular and Shimura curves}\label{courbes_Shimura}

The previous section motivates the search for dense families of curves becoming optimal after a base field extension of (small) degree $r$.

\medskip
Firstly, the towers of Garcia-Stichtenoth \cite{gast}\cite{gast2} being actually defined over 
their prime field $\F_p$, then for any base extension degree $r$, there exists \emph{non-dense} 
towers reaching the previous bound (see next section):

\begin{equation}\label{DVR}
\betar_r(q)=\frac{\sqrt{q^r}-1}{r} \text{ as long as $q^r$ is a square.}
\end{equation}

Now, in the particular case of \emph{quadratic extensions} $r=2$, the celebrated results of 
\cite{ihar} and \cite{tvz} (cf. also \cite{shtsvl}) state that (see also the two original 
approaches of \cite[Theorem IV.4.5]{duce}), for all prime power $q$, there exists \emph{dense} families of 
\emph{Shimura modular curves} over $\F_q$ that become optimal over $\F_{q^2}$. See also 
\cite{voi} for an introduction (in characteristic zero).
Notice that classical modular curves over prime fields $\F_p$ are a particular case of Shimura curves. 
Summing up, the Shimura curves mentionned above match the bound of Drinfeld-Vladuts over $\F_{q^2}$, which reads:

\begin{equation}\label{DV}
\betad_1(q^2) = q-1. 
\end{equation}

Plus, taking into consideration that these curves are defined over $\F_q$, Theorem \ref{th:dv} implies :
\begin{equation}\label{DV2}
\betad_2(q) = \frac{q-1}{2}.
\end{equation}


\subsubsection{Intertwinning two recursive towers into a dense family}
A recursive construction to obtain a dense family of curves consists in \emph{intertwinning 
two towers of modular curves defined over the same basis}. Let us illustrate this with the 
classical modular curves $X_0(N)$. Let $l$ be a prime number, then we know from Igusa 
that there exists ---canonical--- models $X_0(l^i)_\Q$ over $\Q$ for any $i\geq 0$, which have 
good reduction at any $p\neq l$, and are asymptotically optimal over $\F_{p^2}$. 
The curves $X_0(l^i)_\Q$ form a tower over $\Q$ that is recursively determined from the 
two first steps (actually the first step is enough, see historical notes and references below). 
More precisely, the tower is deduced by iterated fiber products from the two following data:
\begin{itemize}
\item the canonical morphisms over $\Q$
$$X_0(l^2)\rightarrow X_0(l) \rightarrow X_0(1) $$
\item the Atkin-Lehner involutions $w_i$ on $X_0(l^i)_\Q$ for $i=0,1,2$
\end{itemize}

\begin{rema}\label{RemMat}
Actually the first step are enough to deduce the whole tower recursively 
(see historical notes and references below).
Namely, one needs only the covering map $X_0(l)\rightarrow X_0(1)$ and 
the Atkin-Lehner involutions $w_i$, for $i=0,1$. Caution must be taken since the fiber product of the first 
step $X_0(l)$ with its Atkin-Lehner twist ---in addition to be highly singular--- contains a 
second irreducible component in addition to $X_0(l^2)$. This comes from degree reasons, \cite[VI §2.3 \& §3.2]{ramb1} 
(or modular interpretation reasons, if one prefers).
\end{rema}

The genera in a single tower $X_0(l^i)_{\F_p}$ for any $p$ are tightly controled by the prime powers $l^i$:
\begin{equation}\label{eq:generaXN} l^i(1+1/l)/12+o(g_i)\leq g_i \leq l^i(1+1/l)/12 \end{equation}
(see \cite[4.1]{tsvl2} or \cite[Th 3.1.1 \& p107]{ds}). So this single tower does not form a dense family.

Now let $l'\neq l$ be another prime and consider the recursive tower $X_0(l'^j)_\Q$. Both towers are defined 
over the same basis $X_0(1)$, and, by taking fiber products over $X_0(1)$, we obtain:
$$X_0(l^i)_\Q\times X_0(l'^j)_\Q=X_0(l^il'^j)_\Q$$
for any $i$ and $j$. By doing so for every indexes $i$ and $j$ we obtain the family $\bigr\{X_0(l^il'^j)_\Q\bigl\}_{i,j}$: let us call this family the "intertwinning" of the two recursive towers. This family has good reduction at any prime $p\neq l,l'$ and is asymptotically optimal. 
The genera in this family are now closely controled by the prime products $l^il'^j$, as follows from
\begin{equation}\label{eq:generaXij} l^il'^j(1+1/l)(1+1/l')/12+o(g_{i,j})\leq g_{i,j} \leq l^il'^j(1+1/l)(1+1/l')/12 \; .\end{equation}
The key observation is that the family of integers $l^il'^j$ is \emph{dense}, i.e. its growth rate tends to zero. So that the intertwinned family $\bigr\{X_0(l^il'^j)_\Q\bigl\}_{i,j}$ is dense. 

\subsubsection{Problems of descent on Shimura curves and open questions}

Let us shift to Shimura curves and consider three specific recursive towers $X_0(\p^i)$ defined over the same basis $X_0(1)$ of genus zero. Let $F = \Q[cos(2\pi/7)]$ be the totally real number field of degree three, $\p_2$ and $\p_3$ the prime ideals over the inert primes  $(2)$ and $(3)$ and $\p_7$ the prime ideal over the split prime $(7)$. Let $B$ be the quaternion algebra over $F$, which is ramified exactly at two of the three real places and no finite place. $B$ contains one unique conjugacy class of Eichler orders of given level. In particular, "the" maximal order $\cO$ has its group of units $\cO^1$ which embeds into $\mathrm{PSL}_2(\R)$ onto the celebrated $(2,3,7)$ triangle group (it is the hyperbolic group of smallest covolume). The Shimura curve $X_0(1)_\C$ uniformized by this group has a canonical model over $F$ of genus zero with three rational points, which precisely arise from the elliptic points, of orders $2$, $3$ and $7$. Above this base curve one has notably the three towers $X_0(\p^i)$ where $\p=\p_2,\p_3$ and $\p_7$, which have canonical models over $F$. They have good reduction at every prime $\p'$ of $F$ different from  $\p_2$, $\p_3$ and $\p_7$ and, if furthermore $\p'=(p)$ comes from an inert prime, then the reductions $X_0(\p^i)_{\F_{p^3}}$ modulo $\p$ have an \emph{asymptotically optimal} number of points over $\F_{p^6}$ (see \cite[Th IV.4.5]{duce}, which is established from two independent methods). 

Now, intertwinning the two towers $X_0(\p_2^i)$ and $X_0(\p_7^j)$ over $X_0(1)$ gives a dense family $\bigl\{X_0(\p_2^i\p_7^j)_F\bigr\}_{i,j}$ over $F$, with genera tightly controled by the products $8^i.7^j$:
\begin{equation} \label{eq:generaX27}  g_{i,j}= 7^{j-2}(8^{i-1}  6/7 + 1/7) \; \text{for $i\geq 1$ and $j\geq 2$} \end{equation}
(and similar formulas for smaller $i$ or $j$: see \cite[IV Corollary 2.12]{ramb1}). In particular it has good reduction modulo $p_3=(3)$ and yields an asymptotically optimal dense family $X_0(\p_2^i\p_7^j)_{\F_{3^3}}$ over $\F_{3^3}$ with many points in $\F_{3^6}$. Now, the interesting problem for bilinear multiplication over $\F_3$ is: \emph{can we descend this family over $\F_3$ ?} Much of the work towards this result has been done, since it is proven in \cite[VI §5.2]{ramb1} that the two first steps of the reductions modulo $\p'=p_3$ of the two towers descend over $\F_3$. But recall that, over $F$, these two first steps are sufficient to build the whole family. So, the problem of descent of the family over $\F_3$ falls back to the following general question:

\begin{op} 
\begin{newconj} 
\label{conj:recmodp} Are good reductions of towers of Shimura curves recursive ? 
\end{newconj}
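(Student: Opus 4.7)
The approach is to lift the recursive data to an integral model and then check that it descends to the special fiber. Over $F$, the tower $\{X_0(\mathfrak{p}^i)\}_i$ is reconstructed from the two first steps $X_0(1) \leftarrow X_0(\mathfrak{p})$ equipped with their Atkin-Lehner involutions, by iterated fiber products followed by extracting the correct irreducible component (cf. Remark \ref{RemMat}). If each $X_0(\mathfrak{p}^i)$, together with its covering to $X_0(\mathfrak{p}^{i-1})$ and its Atkin-Lehner involution $w_i$, admits a canonical extension to a smooth projective model $\mathcal{X}_0(\mathfrak{p}^i)$ over $\mathrm{Spec}(\mathcal{O}_{F,\mathfrak{p}'})$, and if the fiber-product construction commutes with reduction modulo $\mathfrak{p}'$, then the reduced tower over the residue field $\mathbb{F}_{p^3}$ will enjoy the same recursive description.

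First, I would establish existence and uniqueness of the smooth integral models by invoking the moduli-theoretic interpretation of $X_0(\mathfrak{p}^i)$ as parametrizing abelian surfaces with quaternionic multiplication by $\mathcal{O}_B$ together with a level-$\mathfrak{p}^i$ structure, following Carayol. Good reduction at $\mathfrak{p}'$ is guaranteed by the assumption $\mathfrak{p}' \notin \{\mathfrak{p}_2,\mathfrak{p}_3,\mathfrak{p}_7\}$, and uniqueness of the smooth projective model ensures that the covering maps and Atkin-Lehner involutions, being defined over $F$, extend in a unique way to morphisms of the integral models. By functoriality of fiber products one then obtains an integral avatar of the fiber product $\mathcal{X}_0(\mathfrak{p}) \times_{\mathcal{X}_0(1)} \mathcal{X}_0(\mathfrak{p})$ twisted by the appropriate Atkin-Lehner involution, whose generic fiber agrees with the characteristic-zero construction.

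The main obstacle, and the reason this is stated as a conjecture rather than a theorem, lies in controlling the spurious irreducible component inside this fiber product under reduction. Over $F$, this fiber product is reducible with exactly two components, and one must select the one giving $X_0(\mathfrak{p}^2)$; but after reduction modulo $\mathfrak{p}'$, a priori these two components could collide, or the closures of their generic components could share irreducible components in the special fiber, so that the naive recursive prescription would fail to isolate $\mathcal{X}_0(\mathfrak{p}^2)_{\mathbb{F}}$. Two strategies seem natural. The first is to exhibit a numerical or Hecke-theoretic criterion (e.g.\ degree over $\mathcal{X}_0(1)$, or compatibility with a specific Hecke correspondence) that singles out the correct component and persists under reduction. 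The second, more structural, is to replace the fiber-product description with an intrinsic moduli-theoretic formulation of level raising at $\mathfrak{p}$ that makes sense directly over $\mathrm{Spec}(\mathcal{O}_{F,\mathfrak{p}'})$ and hence commutes with reduction tautologically. Either route ultimately requires a detailed understanding of the geometry of Hecke correspondences on Shimura curves over totally real fields at primes of good reduction, and in the second case an analogue, in this non-modular setting, of the Deligne--Rapoport type integral models.
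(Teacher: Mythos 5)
There is no proof to compare against: in the paper this statement is precisely Conjecture \ref{conj:recmodp}, posed as an open problem, and the authors only remark that they expect it to ``fall back to the modular interpretation of \emph{integral models} of Shimura curves'' --- no argument is given. Your text is likewise not a proof but a strategy sketch, and you concede as much yourself: the step you flag as ``the main obstacle'' (that after reduction modulo $\mathfrak{p}'$ the fiber product $\mathcal{X}_0(\mathfrak{p})\times_{\mathcal{X}_0(1)}\mathcal{X}_0(\mathfrak{p})$, twisted by Atkin--Lehner, might not cleanly isolate the component corresponding to $X_0(\mathfrak{p}^2)$, cf.\ Remark \ref{RemMat}) is exactly the content of the conjecture, and neither of your two proposed routes is carried out. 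So the proposal cannot be accepted as a proof; at best it reformulates the open question in terms that match the paper's own expectation (your second, ``intrinsic moduli-theoretic'' route is essentially the authors' suggestion about integral models).

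Beyond the admitted gap, one ingredient of your sketch is not correct as stated and would need repair even to set the strategy up. For the curves at hand, $F=\Q[\cos(2\pi/7)]$ is a totally real cubic field and $B$ is a quaternion algebra over $F$ split at exactly one real place; such Shimura curves are \emph{not} of PEL type, so $X_0(\mathfrak{p}^i)$ does not literally parametrize abelian surfaces with quaternionic multiplication by $\mathcal{O}_B$ --- that description is special to $F=\Q$. Carayol's integral models are obtained indirectly (via an auxiliary construction), which is precisely why the paper warns that ``the moduli interpretation is much more complicated'' in the Shimura case and why the recursive description of the reductions is conjectural rather than routine. Consequently, your appeal to ``uniqueness of the smooth projective model'' to extend the coverings and the involutions, and to functoriality of fiber products commuting with reduction, presupposes exactly the kind of modular interpretation of the integral models whose availability is the open point; invoking it does not discharge it.
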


We are confident that this point falls back to the modular interpretation of \emph{integral models} of Shimura curves ---and not only models over number fields, such as $F$---, which should be also well known to specialists.

Additional evidence supports the descent question that we are concerned with, since it is also established in \cite[Th. V.5.14]{ramb1} that the family $\bigl\{X_0(\p_2^i\p_7^j)_F\bigr\}_{i,j}$ descends over $\Q$, and that strong numerical evidence (the number of points) suggests that the third steps also descend (\cite[VI §5.2]{ramb1}).

Recapitulating: descent of the previous family, as would be implied e.g. by Conjecture \ref{conj:recmodp}, would provide a dense family over $\F_3$ with many points of degree $6$, which would thus establish:
\begin{equation} \label{eq:thB} \betad_6(3)=\frac{3^3-1}{6} \end{equation}
which is (prematurely) claimed as "Theorem B" in \cite{ramb1}.
\end{op}

Likewise, intertwinning the two towers $X_0(\p_3^i)$ and $X_0(\p_7^j)$ over $X_0(1)$ gives a dense family $\bigl\{X_0(\p_3^i\p_7^j)_F\bigr\}_{i,j}$ over $F$, with genera tightly controled by the products $27^i.7^j$, good reduction modulo $p_2=(2)$ over $\F_{2^3}$ and asymptotically many points in $\F_{2^6}$.

\begin{op}
Similarly, we are concerned with descent of this dense family over $\F_2$, which if true would thus yield the value $\betad_6(2)=\frac{2^3-1}{6}$. 
Let us assume that the previous Conjecture \ref{conj:recmodp} is true: then this would already imply that the tower $X_0(\p_7^j)$ descends over $\F_2$. So, we would then be left to show that the two first steps of the tower $X_0(\p_3^i)$ also descend. More precisely: 

\begin{newconj}\label{conjX} The following morphisms descend over $\F_2$:
the canonical branched cover $X_0(\p_3^2)_{\F_{2^3}} \longrightarrow X_0(\mathfrak{p}_3)_{\F_{2^3}}$, 
and the Atkin Lehner involution on $X_0(\mathfrak{p}^{2}_3)_{\F_{2^3}}$ .
\end{newconj}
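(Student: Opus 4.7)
The plan is to descend in two stages: first from $F$ to $\Q$, then reduce modulo the prime $(2)$. The key structural fact is that $(2)$ is inert in $F=\Q[\cos(2\pi/7)]$, so $\p_2=(2)\cO_F$ has residue field $\F_{2^3}$, and the Galois group $\mathrm{Gal}(\F_{2^3}/\F_2)\cong\Z/3\Z$ is canonically identified with $\mathrm{Gal}(F/\Q)$ via the decomposition group at $\p_2$. Hence descent over $\F_2$ of the reductions matches up exactly with descent over $\Q$ of the characteristic zero objects followed by reduction mod $(2)$.

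First I would construct $\Q$-models of $X_0(\p_3)_F$, $X_0(\p_3^2)_F$, the canonical covering morphism between them, and the Atkin--Lehner involution on $X_0(\p_3^2)_F$, following the blueprint laid out in \cite[Th. V.5.14]{ramb1} for the $X_0(\p_2^i\p_7^j)_F$ family. Concretely, one analyzes the action of $\mathrm{Gal}(F/\Q)$ on the Shimura datum: the quaternion algebra $B$ (ramified at two of the three real places, which are permuted transitively by Galois), the Eichler order $\cO$, and the level structures cut out by powers of $\p_3$. The Galois group permutes the real places, which exchanges the complex analytic uniformizations; on the canonical models, this must be matched to an automorphism of the curve. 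The crucial point is that the covering map $X_0(\p_3^2)\to X_0(\p_3)$ and the Atkin--Lehner involution admit moduli-theoretic descriptions (forgetting part of the level structure, and swapping the two sides of the level, respectively) which are manifestly $\mathrm{Gal}(F/\Q)$-equivariant, and hence should descend together with the underlying curves.

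Second, once the $\Q$-models are available, reducing modulo $(2)$ is automatic since $\p_3$ is coprime to $\p_2$: the curves have good reduction at $(2)$, so the $\Q$-models extend to smooth integral models over $\Z_2$, whose special fibers are the sought $\F_2$-models; their base change to $\F_{2^3}$ then recovers $X_0(\p_3)_{\F_{2^3}}$, $X_0(\p_3^2)_{\F_{2^3}}$, the covering map, and the Atkin--Lehner involution. This yields exactly the descent asserted by the conjecture.

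The hard part will be the first step. While \cite[V.5.14]{ramb1} handles the $\p_7$ tower, where $\p_7$ is split and the decomposition group is trivial, here $\p_3$ is inert in $F$, so the decomposition structure is nontrivial and the analogous Galois-equivariance argument must be reworked. Concretely, one must verify that the cohomology class in $H^1(\mathrm{Gal}(F/\Q),\mathrm{Aut}(X_0(\p_3^i)_F))$ measuring the obstruction to descent vanishes, or exhibit an explicit twist that kills it. A further subtlety is the equivariance of the Atkin--Lehner involution itself: $w_{\p_3^2}$ is defined via an element of the normalizer of $\cO$ in $B^\times$, and one must check that the Galois conjugates of such an element, taken on the three real places, act compatibly with the conjugated curves. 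These verifications are precisely the ones that, in the split-prime analog, occupy the bulk of \cite[VI §5.2]{ramb1} and for which in our case only numerical (point-count) evidence is presently available.
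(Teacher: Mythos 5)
What you are trying to prove is stated in the paper as Conjecture~\ref{conjX}; the paper gives no proof of it, only supporting context (Elkies' explicit degree-$27$ Belyi map for the first step $X_0(\p_3)\to X_0(1)$ over $\Q$, numerical point-count evidence, and the observation that the problem would reduce further under Conjecture~\ref{conj:recmodp} on integral models). Your proposal does not close this gap: it is a strategy outline whose decisive step --- descent over $\Q$ of $X_0(\p_3^2)$, of the covering $X_0(\p_3^2)\to X_0(\p_3)$, and of the Atkin--Lehner involution --- is exactly the conjectural content, and you do not establish it. In particular, the assertion that the moduli-theoretic descriptions are ``manifestly $\mathrm{Gal}(F/\Q)$-equivariant, and hence should descend'' is precisely the kind of inference the paper warns against: in the discussion surrounding Conjecture~\ref{conjY} it recalls that Shimura curves with Galois-invariant parameters need not descend to their field of moduli (explicit counterexamples are cited there), so equivariance of the Shimura datum does not by itself force the vanishing of the obstruction class in $H^1(\mathrm{Gal}(F/\Q),\mathrm{Aut}(X_0(\p_3^i)_F))$, nor the compatibility of a normalizing element defining $w_{\p_3^2}$ with the descent data. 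You acknowledge this yourself and note that only numerical evidence is available; but then the proposal restates the difficulty rather than resolving it, so it cannot count as a proof.

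Two further remarks on the architecture of your plan. First, descending the characteristic-zero objects over $\Q$ and then reducing is a sufficient route but a strictly stronger demand than the conjecture, which only asks for descent of the reductions from $\F_{2^3}$ to $\F_2$; the paper's suggested path is different, namely through modular interpretations of \emph{integral} models (Conjecture~\ref{conj:recmodp}), which would reduce the whole question to the first steps of the tower, where Elkies' explicit equation is available. Second, your reduction step is essentially fine in outline (since $(2)$ is inert, $F_{\p_2}/\Q_2$ is unramified, good reduction away from $\p_3$ descends along unramified extensions via the Jacobian, and the covering map and involution extend to the smooth models), but it only becomes relevant once the characteristic-zero descent --- the genuinely open part --- has been carried out.
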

Finally, notice that the first step of this tower $X_0(\mathfrak{p}_3)_\Q \longrightarrow X_0(1)_\Q$ was explicitly 
computed over $\Q$ in \cite{elki2006}: a Belyi map of degree $27$. 
So, if it was true that good reduction of towers of Shimura curves were also recursive from the first step (see Remark \ref{RemMat}), then one would be left with the easier problem of finding a good reduction modulo $(3)$ of this Belyi map of degree $27$.
\end{op}

\begin{op} 

From a more general point of view, the so far known families of curves attaining the Drinfeld Vladuts bound over q are all defined over fields of square cardinal $q=p^(2t)$. The following conjecture states (under an equivalent form) that for all square $q$, there exists such a dense optimal family over $\F_q$ which descend over the prime field $\F_p$.

 \begin{newconj}\label{conjY}
 Let $p$ be a prime number and $2t\geq 4$ an even integer. Then the following equality holds: 
 \begin{equation}\label{conjYformula}
 \betad_r(q)=\frac{p^t-1}{2t}.
 \end{equation}
 Said otherwise: there exists a family $(X_s/\F_{p^{2t}})_{s\geq1}$ of curves over $\F_p$ with 
 (increasing) genera $g_s$ tending to infinity such that
\begin{enumerate}
\item[(i)] $X_s$ is, actually, defined over the prime field $\F_p$;
\item[(ii)] $\lim_{s \rightarrow \infty}\frac{g_{s+1}}{g_s}=1$ (maximal density condition)
\item[(iii)] $\lim_{s \rightarrow \infty}\frac{\mid X_s(\F_{p^{2t}})\mid}{g_s}=p^t-1$ (Ihara constant over $\F_{p^{2t}}$)
\end{enumerate}
 \end{newconj}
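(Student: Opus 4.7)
The plan is to construct the required dense family by combining the intertwining of Shimura modular towers, as illustrated in the excerpt for $(p,t)=(3,3)$ and $(2,3)$, with a systematic descent of the base field down to $\F_p$.

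First, for the arithmetic setup, I would fix a totally real number field $F/\Q$ of degree $t$ in which $p$ is inert, so that the unique prime $\mathfrak{p}$ of $F$ above $p$ has residue field $\F_{p^t}$; existence of such $F$ can be arranged via subfields of cyclotomic fields together with Chebotarev density. On $F$ I take a quaternion algebra $B$ ramified at exactly all but one of the real places and at no finite place (adjusting the non-archimedean ramification set by a single auxiliary prime if parity so requires), together with two distinct finite primes $\mathfrak{q}_1,\mathfrak{q}_2$ of $F$ of good reduction at $\mathfrak{p}$.

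Next, I would build a dense family over $F$ by intertwining two towers of Shimura modular curves. The curves $X_0(\mathfrak{q}_1^i)_F$ and $X_0(\mathfrak{q}_2^j)_F$ attached to Eichler orders of levels $\mathfrak{q}_1^i$ and $\mathfrak{q}_2^j$ in a maximal order of $B$ have canonical models over $F$ whose reductions modulo $\mathfrak{p}$ form towers over $\F_{p^t}$ which are asymptotically optimal over $\F_{p^{2t}}$, by \cite[Th.~IV.4.5]{duce}. Taking iterated fibre products over $X_0(1)_F$,
\begin{equation*}
X_0(\mathfrak{q}_1^i\mathfrak{q}_2^j)_F \;=\; X_0(\mathfrak{q}_1^i)_F\,\times_{X_0(1)_F}\,X_0(\mathfrak{q}_2^j)_F,
\end{equation*}
one obtains a family whose genera are pinned to the products $N(\mathfrak{q}_1)^iN(\mathfrak{q}_2)^j$; since $N(\mathfrak{q}_1)$ and $N(\mathfrak{q}_2)$ are multiplicatively independent, these products form a dense subsequence of $\N$, so the family is dense in the sense of Definition~\ref{dense}, and after reduction modulo $\mathfrak{p}$ conditions (ii) and (iii) of the conjecture hold, \emph{a priori} only over $\F_{p^t}$.

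Finally, the main obstacle is to descend this family from $\F_{p^t}$ all the way down to $\F_p$, thereby promoting condition (iii) into a statement over $\F_p$. Following the strategy of \cite{balbro} for the Garcia--Stichtenoth tower, I would first invoke Conjecture~\ref{conj:recmodp}: granted it, descending the entire infinite family reduces to descending only finitely many objects, namely the covers $X_0(\mathfrak{q}_k)\to X_0(1)$ and the Atkin--Lehner involutions on $X_0(\mathfrak{q}_k^2)$ modulo $\mathfrak{p}$, for $k=1,2$. For these finitely many objects I would try to produce a $\mathrm{Gal}(\F_{p^t}/\F_p)$-stable model, either by a Galois-descent argument exploiting the modular interpretation (and the fact that the level structures $\mathfrak{q}_k$ are intrinsic), or by exhibiting explicit equations in the spirit of the degree-$27$ Belyi map computed in \cite{elki2006} and checking that their coefficients lie in $\F_p$. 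The hardest parts are therefore twofold: establishing Conjecture~\ref{conj:recmodp}, which itself should follow from a careful analysis of integral models of Shimura curves in the style of the Katz--Mazur treatment of classical modular curves; and, for each individual pair $(p,t)$, carrying out the finitely many remaining explicit descents, of which Conjecture~\ref{conjX} is the prototypical instance.
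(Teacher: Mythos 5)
What you have written is not a proof, and indeed it cannot be one as it stands: the statement you are addressing is Conjecture~\ref{conjY}, which the paper leaves open precisely because the step you defer to the end --- descending the reduced Shimura family from its natural field of definition $\F_{p^t}$ down to the prime field $\F_p$ --- is the known obstruction. Your own text concedes this by invoking Conjecture~\ref{conj:recmodp} (recursivity of good reductions of Shimura towers) and by leaving the ``finitely many remaining explicit descents'' (of which Conjecture~\ref{conjX} is the prototype) as open tasks; a proof that is conditional on two unproved conjectures is a reduction, not a proof. Worse, the Galois-descent argument you sketch (``produce a $\mathrm{Gal}(\F_{p^t}/\F_p)$-stable model \ldots exploiting the modular interpretation and the fact that the level structures are intrinsic'') is exactly the argument that was used in \cite{cacrxiya} (their Lemma~IV.4) and that the paper reports to be false: having Galois-invariant parameters only shows that $\F_p$ is the field of \emph{moduli}, and \cite[Section 3]{bapirasi} exhibits counterexamples showing that Shimura curves need not descend to their field of moduli. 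So the intrinsic-level-structure route does not close the gap, and no general mechanism is currently known to force the descent in (i).

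The non-descent parts of your construction are essentially the strategy discussed in the paper (intertwining two towers $X_0(\mathfrak{q}_1^i)$, $X_0(\mathfrak{q}_2^j)$ over $X_0(1)$ to get density, optimality over the quadratic extension of the residue field via \cite[Th.~IV.4.5]{duce}), and the density claim via multiplicative independence of the norms is fine, granting recursivity so that all steps $X_0(\mathfrak{q}_1^i\mathfrak{q}_2^j)$ exist with good reduction --- but note that even this already uses Conjecture~\ref{conj:recmodp}, plus the care about extra irreducible components in the fibre products stressed in Remark~\ref{RemMat}. If you want to contribute here, the honest formulation is: ``Conjecture~\ref{conj:recmodp} together with the explicit descent of the finitely many first steps (in the spirit of Conjecture~\ref{conjX}) would imply Conjecture~\ref{conjY} for the corresponding pairs $(p,t)$,'' which is the state of the art recorded in \cite{ramb1}; an unconditional proof would require either a new descent mechanism immune to the field-of-moduli obstruction or a different source of dense optimal families defined over $\F_p$.
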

 
\end{op}

\begin{op}

The following conjecture was proposed in \cite{randITW}, to which we added a density requirement.

\begin{newconj}\label{conjZ} Let $p>2$ be an odd prime. Then there exists a sequence of numbers 
$(N_s)_s$, with $\lim_{s\rightarrow \infty}\frac{N_{s+1}}{N_s}=1$  
(density condition), such that Hecke operator $T_p(N_s)$ acting on the space of weight $2$ cusp forms $S_2(\Gamma_0(N_s))$,
has an odd determinant.
\end{newconj}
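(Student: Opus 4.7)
The approach is to exploit the newform decomposition of the Hecke algebra in order to translate the odd-determinant requirement into a non-vanishing statement modulo $2$ for Hecke eigenvalues, and then to harvest density from a suitable set of primes. Fix the prime $p$ of the conjecture. Whenever $p\nmid N$, the Hecke operator $T_p(N)$ acts diagonalisably on $S_2(\Gamma_0(N))\otimes\overline{\Q}$, and its eigenvalues are the algebraic integers $a_p(f)$, where $f$ runs over normalised newforms of levels $M\mid N$, each appearing with multiplicity $d(N/M)$. Taking the determinant over $\Z$ yields
\[
\det\bigl(T_p(N)\bigr)\;=\;\prod_{M\mid N}\ \prod_{[f]\in S_2^{\mathrm{new}}(\Gamma_0(M))/\mathrm{Gal}} N_{K_f/\Q}\bigl(a_p(f)\bigr)^{d(N/M)},
\]
where $K_f$ is the coefficient field of $f$. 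Since $d(N/M)$ is odd precisely when $N/M$ is a perfect square, the parity of the determinant is controlled solely by the pairs $(M,f)$ for which $N/M$ is a square, and the odd-determinant condition becomes: $a_p(f)\not\equiv 0\pmod{\mathfrak{p}}$ for every such $f$ and every prime $\mathfrak{p}$ of $\mathcal{O}_{K_f}$ above~$2$. In terms of Galois representations, this says that the residual mod $2$ representation $\bar\rho_f\colon G_\Q\to GL_2(\bar{\F}_2)$ attached to each such $f$ satisfies $\mathrm{tr}\,\bar\rho_f(\mathrm{Frob}_p)\neq 0$.

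I would then specialise to prime levels $N=\ell$, where the only divisor with $N/M$ square is $M=\ell$ itself, so the condition simplifies to asking that \emph{every} newform of level exactly $\ell$ has nonzero mod $2$ Frobenius trace at $p$; call such an $\ell$ a $p$-good prime. Once a positive lower density of $p$-good primes is established, enumerating them in increasing order $\ell_1<\ell_2<\cdots$ and setting $N_s=\ell_s$ produces the required sequence: by the prime number theorem, any subset of primes of positive lower density satisfies $\ell_{s+1}/\ell_s\to 1$, so the density condition comes for free. Should a purely prime-level attack prove too rigid, one may enlarge the search to composite levels $N_s=\ell\cdot n_s$ with $n_s$ a perfect square, using the newform/oldform decomposition above to keep track of which $a_p(f)$ actually contribute.

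To establish positive density of $p$-good primes I would combine two ingredients: a classification of two-dimensional residual mod $2$ Galois representations of $G_\Q$ of conductor dividing $2\ell\infty$ (available after Khare--Wintenberger), together with a Chebotarev-type equidistribution statement for the value $\mathrm{tr}\,\bar\rho(\mathrm{Frob}_p)$ within each such family as $\ell$ varies. Alternatively one may attempt a more elementary route via the Eichler--Selberg trace formula applied to $T_p$ on $S_2(\Gamma_0(\ell))$: sharp enough control of $\mathrm{Tr}(T_p)$ modulo $2$, coupled with $\dim S_2(\Gamma_0(\ell))\sim\ell/12$, ought to force non-vanishing of enough individual eigenvalues on a positive-density set of~$\ell$.

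The hardest point is by far this positive-density claim. One must rule out, or confine to a density-zero set of primes $\ell$, systematic vanishing $a_p(f)\equiv 0\pmod{\mathfrak{p}}$, which can occur naturally for dihedral residual representations induced from imaginary quadratic fields in which $p$ is inert, or for reducible mod $2$ representations arising from Eisenstein-style congruences. Controlling these exceptional families uniformly in $\ell$ seems to require either a non-vanishing statement of Lehmer--Serre type for Fourier coefficients modulo $2$, or effective Chebotarev bounds for mod $2$ Galois representations of bounded conductor, both of which remain essentially out of reach in the current state of the art; this is precisely why the conjecture has been left open.
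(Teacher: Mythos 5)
The statement you were asked to prove is Conjecture~\ref{conjZ} of the paper: it is an \emph{open conjecture} (proposed in \cite{randITW}, with the density requirement added by the authors of the survey), and the paper contains no proof of it. It is used there purely as a hypothesis, from which Proposition~\ref{prop:notwotors} and Proposition~\ref{prop:thAconjZ} are deduced. So there is no proof in the paper to compare yours with, and the relevant question is only whether your proposal actually establishes the statement.

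It does not, and by your own admission. The reductions you perform are sound as far as they go: the newform/oldform decomposition of $S_2(\Gamma_0(N))$ with multiplicity $\sigma_0(N/M)$, the fact that this multiplicity is odd exactly when $N/M$ is a perfect square, the resulting translation of $2\nmid\det T_p(N)$ into $a_p(f)\not\equiv 0$ modulo every prime above $2$ for the newforms at the square-quotient levels, and the remark that a positive-density set of prime levels automatically gives $N_{s+1}/N_s\to 1$ via the prime number theorem. But the whole content of the conjecture is concentrated in the step you yourself label the hardest point: a positive lower density of primes $\ell$ such that \emph{every} newform of level $\ell$ has $\mathrm{tr}\,\bar\rho_f(\mathrm{Frob}_p)\neq 0$ in residue characteristic $2$. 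You offer no proof of this, only two possible routes, and you correctly observe that both are blocked: effective control of mod-$2$ representations of bounded conductor, and the exclusion (uniformly in $\ell$) of dihedral and reducible/Eisenstein families where $a_p\equiv 0\pmod 2$ can occur systematically, are out of reach; moreover the Eichler--Selberg route only constrains the \emph{sum} of the eigenvalues of $T_p$ modulo $2$, which cannot force simultaneous non-vanishing of all of them on a space of dimension $\sim\ell/12$. In short, your text is a (reasonable) reduction of the conjecture to an equally open non-vanishing statement, not a proof — which is consistent with the fact that the paper itself leaves the statement conjectural.
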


Its consequence would be the asymptotic vanishing of two-torsion in classical modular curves:

\begin{newpropo} \label{prop:notwotors} Under Conjecture \ref{conjZ}, then there exists a dense family 
of (classical modular) curves $\{X_0(N_s)/\F_p\}_s$ such that $$\bigl(\mathrm{Cl}_0(X_0(N_s)\bigr)(\F_{p^2})[2]=\{0\} $$
(i.e. that have no two torsion in their class group.)\end{newpropo}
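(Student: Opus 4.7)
The plan is to translate the vanishing of $2$-torsion into a parity question about the cardinality of $J_s(\F_{p^2})$, where $J_s := \mathrm{Jac}(X_0(N_s))$, and then use the Eichler-Shimura identity to express this cardinality as a determinant of the Hecke operator $T_p(N_s)$. Since $\mathrm{Cl}_0(X_0(N_s))(\F_{p^2}) = J_s(\F_{p^2})$, it suffices to show that $|J_s(\F_{p^2})|$ is odd for every $s$; the conclusion $J_s(\F_{p^2})[2] = \{0\}$ follows immediately. After discarding the (sparse) indices $s$ with $p \mid N_s$, each $X_0(N_s)$ has good reduction modulo $p$, and $J_s/\F_p$ is an abelian variety of dimension $g_s = g(X_0(N_s))$.

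For the main computation, I would invoke the Eichler-Shimura identity to express the local $L$-polynomial of $X_0(N_s)/\F_p$ in terms of the Hecke operator $T_p(N_s)$ acting on $S_2(\Gamma_0(N_s))$:
\[ L_{X_0(N_s)}(T) = \det\bigl(I - T_p(N_s)\,T + p\,I\,T^2 \;\big|\; S_2(\Gamma_0(N_s))\bigr). \]
Combined with the standard identity $|J_s(\F_{p^2})| = L_{X_0(N_s)}(1)\cdot L_{X_0(N_s)}(-1)$, this yields
\[ |J_s(\F_{p^2})| = \det\bigl((1+p)^2 I - T_p(N_s)^2 \;\big|\; S_2(\Gamma_0(N_s))\bigr). \]
Now since $p>2$ is odd, $(1+p)^2 \equiv 0 \pmod 2$, so the determinant above reduces modulo $2$ to $\det(-T_p(N_s)^2) \equiv \det(T_p(N_s))^2 \pmod 2$. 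By Conjecture \ref{conjZ}, $\det(T_p(N_s))$ is odd, hence $|J_s(\F_{p^2})|$ is odd, and the conclusion follows.

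Two technical points will require care. First, the density of $\{X_0(N_s)\}_s$ in the sense of Definition \ref{dense} demands $g_{s+1}/g_s \to 1$; via the genus formula $g_s \sim \tfrac{N_s}{12}\prod_{\ell \mid N_s}(1+1/\ell)$, this does not automatically follow from the level density $N_{s+1}/N_s \to 1$, because the product factor can jump with the prime factorization of $N_s$. One therefore has to supplement Conjecture \ref{conjZ} with a structural condition on the prime support of the $N_s$ (for example, a uniform bound on the number of distinct prime divisors, as occurs naturally in the modular-tower constructions discussed in section \ref{courbes_Shimura}). Second, the mod-$2$ reduction of the Eichler-Shimura determinant must be interpreted integrally, through a Hecke-stable $\Z$-lattice inside $S_2(\Gamma_0(N_s))$, such as the module of cusp forms with integer $q$-expansions; once this is set up, the parity of $\det(T_p(N_s))$ is independent of the choice of lattice, and the argument goes through. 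I expect the density issue to be the main conceptual obstacle, whereas the integral Eichler-Shimura input is classical.
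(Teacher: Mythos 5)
Your main computation is exactly the route the paper takes: the proof it defers to \cite{ramb1} (the ``key formula (2.6)'' invoked there, echoed in the survey's remark that the size of the class group of such a curve is given by the determinant of a Hecke operator) is precisely the Eichler--Shimura identity giving $\#\mathrm{Cl}_0(X_0(N_s))(\F_{p^2})=\pm\det\bigl((1+p)^2I-T_p(N_s)^2\bigr)$ computed on a Hecke-stable integral lattice, whence this order is $\equiv\det\bigl(T_p(N_s)\bigr)^2\pmod 2$ and Conjecture \ref{conjZ} kills the $2$-torsion. Your reduction of $2$-torsion to the parity of the class number, the identification $\mathrm{Cl}_0(X_0(N_s))(\F_{p^2})=J_s(\F_{p^2})$, and the integral-lattice remark are all as intended, so the heart of your proposal is correct and identical in substance to the paper's.

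Two caveats in your last paragraph need adjustment. Good reduction indeed requires $p\nmid N_s$ (Igusa), and this is also needed for the Eichler--Shimura relation in the form you use (at bad levels $T_p$ becomes $U_p$); but your claim that such indices are ``sparse'' and can be discarded without harming density is unjustified for an arbitrary sequence, and the condition $p\nmid N_s$ should rather be read as implicit in Conjecture \ref{conjZ} and in the statement $X_0(N_s)/\F_p$ itself. As for density, you are right that $N_{s+1}/N_s\to1$ does not formally yield $g_{s+1}/g_s\to1$, since $g(X_0(N))\sim\psi(N)/12$ with $\psi(N)/N=\prod_{\ell\mid N}(1+1/\ell)$ unbounded; however your proposed repair (a uniform bound on the number of prime divisors of $N_s$) is itself insufficient, because it only makes the genus ratio bounded, whereas Definition \ref{dense} demands that it tend to $1$. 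What is actually needed is $\psi(N_{s+1})/\psi(N_s)\to1$, which holds in the settings the conjecture is modeled on (prime or near-prime levels as in \cite{randITW}, or levels with fixed prime support such as the intertwined families $l^il'^j$ of Section \ref{courbes_Shimura}), and this is how Conjecture \ref{conjZ} is meant to be understood; with that reading your argument is complete.
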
 

This proposition is stated as Conjecture I 2.8 in \cite{ramb1}. Here, a detailed proof that it results 
from Conjecture \ref{conjZ} is given: in the discussion above Conjecture I 2.8 and, also, in §II.5 
(for the key formula (2.6)). The following practical consequence will be proven in the Annex.

\begin{newpropo} \label{prop:thAconjZ} Let $p$ be a prime number such that Conjecture 
\ref{conjZ} holds for $p$, and $r$ an integer such that 
$\bigl\{q=p \hbox{ and } r=2 \bigr\}$ \hbox{ or } $\bigl\{ q=p^2 \hbox{ and } r=1 \bigr\}$, 
then formula (a) in Theorem \ref{TheoA-Sym} also holds.
\end{newpropo}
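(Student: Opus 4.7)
The plan is to combine the dense family of modular curves furnished by Proposition~\ref{prop:notwotors} with the generalized CCMA of Corollary~\ref{theo_evalder_simple}, exploiting the asymptotic vanishing of $2$-torsion to carry out the symmetric divisor construction.

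First, under the assumption that Conjecture~\ref{conjZ} holds for $p$, Proposition~\ref{prop:notwotors} yields a dense family $\{X_0(N_s)/\F_p\}_s$ of classical modular curves with genera $g_s\to\infty$, density $g_{s+1}/g_s\to 1$, and $(\mathrm{Cl}_0(X_0(N_s)))(\F_{p^2})[2]=\{0\}$. Being reductions mod $p$ of the canonical models of $X_0(N)$ over $\Q$, these curves are actually defined over the prime field $\F_p$, and by Igusa--Ihara they are asymptotically optimal over $\F_{p^2}$, attaining $\beta_1=p-1$ as in \eqref{DV}. Hence for $(q,r)=(p^2,1)$ we directly obtain a dense asymptotically exact family realizing $\betad_1(p^2)=p-1$; and for $(q,r)=(p,2)$, Theorem~\ref{th:dv} applied to the same family realizes $\betad_2(p)=(p-1)/2$.

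Second, I would apply Corollary~\ref{theo_evalder_simple} on $X_0(N_s)$ with $\ell=1$ and all $u_i=1$, selecting a place $Q$ of degree $n$ and a set $\mathscr P$ of roughly $2n/r$ places of degree~$r$, together with a divisor $\D$ of degree close to $n+g_s$ satisfying the surjectivity and injectivity conditions of (a) and (b). The critical step, specific to the \emph{symmetric} case, is producing a $\D$ for which $2\D$ is linearly equivalent to an effective divisor supported on $\mathscr P$ (or more generally such that $\mathcal{L}(2\D-\mathcal{G})=0$); the obstruction to doing this with $\D$ of the optimal degree lies precisely in the $2$-torsion of the degree-zero class group over $\F_{p^2}$, which Proposition~\ref{prop:notwotors} guarantees to be trivial along the family. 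Under that hypothesis, a counting argument in the spirit of \cite{rand2D-G,randJComp,cacrxi2}, but now rigorous, produces a suitable $\D$ for all $s$ large enough.

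Third, the density condition $g_{s+1}/g_s\to 1$ ensures that for \emph{every} sufficiently large $n$ one can pick some $s$ admitting both a place of degree $n$ and the required number of degree-$r$ places; this is what upgrades the resulting estimate from a statement on the liminf $\ms_q$ to one on the limsup $\Ms_q$, and thereby gives formula~(a) of Theorem~\ref{TheoA-Sym} in the two listed cases. The main obstacle I expect is the combinatorial step in the previous paragraph: writing down a precise double count of linear equivalence classes of degree $g_s-1$ whose double meets the required effectivity condition, then exploiting the vanishing of $(\mathrm{Cl}_0(X_0(N_s)))(\F_{p^2})[2]$ to guarantee a nonempty residual set avoiding the supports of $Q$ and $\mathscr P$. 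This is exactly the gap pointed out by Cascudo--Cramer--Xing in the original Shparlinski--Tsfasman--Vladut argument, and filling it rigorously under Conjecture~\ref{conjZ} is the heart of the proof; the remaining bookkeeping inside the generalized CCMA is standard.
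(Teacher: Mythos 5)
Your strategy is essentially the paper's: the appendix proof takes the dense modular family furnished by Proposition~\ref{prop:notwotors}, chooses for each $n$ the smallest index $s(n)$ with $rl\,B_r(X_{s(n)})-g_{s(n)}\geq 2n+1$, obtains the interpolation divisor by re-running the counting argument of Theorem~\ref{th:ccx} with $\mathrm{Cl}_0(X)(\F_q)[2]=\{0\}$, and uses the density $g_{s+1}/g_s\to1$ to convert the estimate into a bound on the limsup $\Ms_q$, exactly as in your third paragraph. Two remarks. First, the combinatorial step you defer as ``the heart of the proof'' does not need to be re-invented: it is precisely Theorem~\ref{th:ccx} (itself resting on the Riemann--Roch system criterion of Cascudo--Cramer--Xing) with the factor $J(\F_q)[2]$ set equal to $1$; once the torsion term disappears, the inequality $h>A_{2g-d+m}+A_{2d-R}$ follows from Proposition~\ref{prop:majoccx}(3) alone and the numerical criterion degenerates to the optimal-degree condition $rl\,B_r-g\geq 2n+1$. (For the case $q=p$, $r=2$, note that the relevant group is $\mathrm{Cl}_0(X)(\F_p)[2]$, which vanishes because it embeds into $\mathrm{Cl}_0(X)(\F_{p^2})[2]$, so your appeal to Proposition~\ref{prop:notwotors} is legitimate in both cases.)

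The one genuine shortfall is the parameter $l$: formula (a) of Theorem~\ref{TheoA-Sym} is asserted for arbitrary $l\geq1$, with $\mus_q(r,l)$ in the numerator and $rl$ in the denominator, whereas you apply Corollary~\ref{theo_evalder_simple} with $\ell=1$ and all $u_i=1$, which only yields the case $l=1$. To recover the full statement you must evaluate with multiplicity $l$ at each of the $B_r(X_{s(n)})$ places of degree $r$ --- i.e.\ take $n_{r,l}=B_r(X_{s(n)})$ in Theorem~\ref{th:ccx}, or $u_i=l$ in Theorem~\ref{theo_evalder} --- so that each place contributes $\mus_q(r,l)$ to the length and the criterion involves $rl\,B_r(X_{s(n)})$; with that modification your asymptotic bookkeeping goes through unchanged and coincides with the paper's derivation of bound (b) adapted to the torsion-free criterion.
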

\end{op}

\subsubsection{References and historical notes for section 6}

\phantom{a}\hfill

\emph{Recursive modular towers:} The recursivity of towers of classical modular curves was pointed 
in the seminal paper of N. Elkies \cite[pp 1-3]{elkiRec}, where more details and a proof over $\C$ can be found. 
The proof carries over the canonical models 
over $\Q$ since the moduli interpretation in terms of elliptic curves is the same. 
N. Elkies also claims --and uses-- that towers of Shimura curves are recursive. The proof of 
this fact is formally analogous: see \cite[Proposition IV.5.1]{duce}. But actually, 
extra care must be taken with the irreducibility of the 
tensor products involved: \cite[VI §2.3 \& §3.2]{ramb1}, 
because the moduli interpretation is much more complicated. \emph{Intertwinning two towers} over the same basis: 
this construction is already mentionned in \cite[top of page 7]{elkiRec}. 
The crucial observation that the resulting family is dense was pointed to us by N. Elkies in August 2015. 

\emph{Recursivity from the first step:} The fact that the first step of modular towers is actually 
enough to construct them recursively is already pointed in \cite[footnote 4]{elkiRec} 
and \cite[p8]{elki}, and brought to our attention by N. Elkies in 2017.

\emph{About conjecture \ref{conjY}:} this conjecture was essentially stated as a Lemma IV.4 in [39] . For their proof, the authors claim that some specific Shimura curves, with Galois invariant parameters, descend over the rationals. This claim is unfortunately false: in [23, §3] we exhibited counterexamples to this claim, which evidence more generally that Shimura curves do not descend over their field of moduli. Consequences of Conjecture  \ref{conjY} on upper-limit asymptotic complexities are given M. Rambaud in \cite[Table 2.2]{ramb1}, lines "Conj Y". Notice that they improve a bit those claimed by \cite{cacrxiya}, displayed in footnote \ref{URcacrxiya} page \pageref{URcacrxiya}.


%

\emph{More on explicit computations:} Since the seminal works of \cite{tvz} and \cite{ihar} 
on Shimura curves with many points, many equations of curves of genus zero and one were computed 
in \cite{elkiScc}, \cite{hall} and \cite{sij}. Further examples of recursive towers of Shimura curves 
can be found in: \cite[IV Example 5.3]{duce}; \cite{haserec}; \cite[VI §3]{ramb1} (defined over 
a totally real field of narrow class number two, with a record number of points over $F_{5^4}$ in genus 5). 
The (nonexplicit) list of Shimura curves of genus less than two can be found in \cite{voightGenus2}. From this 
data and the recent tools for Belyi maps developped in \cite{voightBelyi}, one could access the dozen of recursive 
towers whose first step are covering map of $\P^1$ of degree $\leq 9$ ramified above three points. Finally, 
when the first step is over a \emph{genus one} curve, then a first example was computed in C. Levrat's masters thesis \cite{levr}.

\section{Obtaining a divisor of optimal degree for symmetric algorithms}\label{sym_methods}

Using the numerical criteria at the end of Theorem~\ref{theo_evalder},
in the symmetric case $\D_1=\D_2$, we meet the following problem:
given
\begin{itemize}
\item $q$  a prime power
\item $F/\F_q$ a function field, of genus $g$
\item $\mathcal{Q}$ a divisor of $F/\F_q$, of degree $n=\deg\mathcal{Q}$
\item $\mathcal{G}$ a divisor of $F/\F_q$, of degree $N=\deg\mathcal{G}$
\end{itemize}
does there exist a divisor $\D$ such that the two conditions
\begin{equation}
\label{cond_nonspecial}
i(\D-\mathcal{Q})=0
\end{equation}
and
\begin{equation}
\label{cond_sanssection}
\dim\mathcal{L}(2\D-\mathcal{G})=0
\end{equation}
are both satisfied?

Clearly the answer will depend on $n$ and $N$. By Riemann-Roch's theorem,
condition \eqref{cond_nonspecial} implies $\deg\D-n\geq g-1$ and
condition \eqref{cond_sanssection} implies $2\deg\D-N\leq g-1$,
so combining both we see
\begin{equation}
\label{optimal_degree}
N\geq 2n+g-1
\end{equation}
is a necessary condition for the existence of a solution.

Observe that, in order to get the algorithm of best complexity for given $n$, we need $N$ to be as small as possible.

In their original paper \cite{chch}, D.V. Chudnovsky and G.V. Chudnovsky introduced
a simple cardinality and degree argument,
later made more explicit by S. Ballet in \cite{ball1},
which proved the existence of a solution under the less optimal condition
\begin{equation}
\label{suboptimal_degree}
N\geq 2n+2g-1.
\end{equation}

As explained in section~\ref{mM}, Shparlinski-Tsfasman-Vladut tried to improve
the original bound of Chudnovsky-Chudnovsky by proving the existence of $\D$
under the optimal condition \eqref{optimal_degree}, instead of \eqref{suboptimal_degree}.
For this they had to adapt the cardinality argument, but they failed to notice the
consequence of the existence of $2$-torsion in the class group when dealing with \eqref{cond_sanssection}.

In order to repair their proof, two approaches were devised:
\begin{itemize}
\item choose curves with $2$-torsion as small as possible
\item directly construct $\D$ under condition \eqref{optimal_degree}.
\end{itemize}

\subsection{Bounding the $2$-torsion} \label{Bound2torsion}

Bounds on torsion in the class group were first introduced in a very similar context,
that of frameproof codes (also called linear intersecting codes), by C. Xing \cite{Xing}.
Indeed, in order to obtain a $s$-frameproof code of high rate, one needs, given a
divisor $\mathcal{G}$, to prove the existence of a divisor $\D$ of high degree
such that
\begin{equation}
\label{cond_frameproof}
\dim\mathcal{L}(s\D-\mathcal{G})=0.
\end{equation}
C. Xing proved the existence of such a $\D$ using a cardinality argument similar to that
of Chudnovsky-Chudnovsky and Shparlinski-Tsfasman-Vladut, while correctly recognizing
the difficulty with $s$-torsion.
His result on the rate of $s$-frameproof codes thus includes a term accounting for the
size of the $s$-torsion subgroup.
Actually, C. Xing used the well known upper bound $s^{2g}$
for the size of the $s$-torsion subgroup in the Jacobian of curve of genus $g$.

It is natural to ask for better bounds, especially in the asymptotic case $g\to\infty$.
This problem was formalized and studied, independently,
\begin{itemize}
\item by H. Randriambolona, through the quantity $\delta_s^-(q)$ in \cite{randITW}
\item by I. Cascudo, R. Cramer and C. Xing, through the torsion-limit $J_r(q,a)$ in \cite{cacrxi}\cite{cacrxi2}.
\end{itemize}

One of the questions asked by H. Randriambolona in \cite{randITW} is the following: for given $q$ and $s$,
can one find an infinite sequence of curves having many rational points (ideally, matching the Ihara constant $A(q)$),
but whose class group has few $s$-torsion?

How asymptotically small this $s$-torsion can be is measured by the following quantity:
\begin{defi}\label{Ihara}
Let $\delta_s^-(q)$ be the smallest real number such that there exists a sequence $(\mathcal{X}_k)_{k\geq1}$
of curves over $\F_q$, of increasing genus $g_k=g(\mathcal{X}_k)$, having an asymptotically number of rational points:
$$\lim_{k\rightarrow\infty} \frac{|\mathcal{X}_k(\F_q)|}{g_k}= A(q)$$
and such that the cardinal of the $s$-torsion subgroup $\mathcal{J}_k(\F_q)[s]$ of the group of rational points over $\F_q$ of the Jacobian 
$\mathcal{J}_k=\mathcal{J}(\mathcal{X}_k)$
satisfies
$$\lim_{k\rightarrow\infty}\frac{\log_s|\mathcal{J}_k(\F_q)[s]|}{g_k}= \delta_s^-(q).$$ 
\end{defi}

\begin{op}\label{optorsionlimit}
Estimation of the quantity $\delta_s^-(q)$ for an infinite sequence of curves attaining the Drinfeld-Vladut bound.
H. Randriambololona conjectures that $\delta_s^-(q)=0$ for all $s$ and $q$,
i.e. that there exists curves that have an asymptotically maximal number of points over $\F_{q}$
and whose class groups have asymptotically negligible $s$-torsion.
Of special importance for us is the case $s=2$, i.e. the case of $2$-torsion.
In \cite{randITW} H. Randriambololona puts focus on classical modular curves, which have an asymptotically maximal number of points over $\F_{p^2}$ (for $p$ prime). The size of the class group of such a curve is given by the determinant of a Hecke operator.
This leads to deep number theoretic questions on the parity of these determinants,
which remain conjectural at this time.
\end{op}

\vspace{\baselineskip}

In \cite{cacrxi2}, I. Cascudo, R. Cramer and C. Xing generalize conditions like
\eqref{cond_nonspecial}\eqref{cond_sanssection} or like \eqref{cond_frameproof}
into what they name Riemann-Roch systems of equations.
They adapt the cardinality argument of \cite{chch}\cite{shtsvl}\cite{Xing} in this more general framework.
First, for a function field $F/\F_q$, let $\mathcal{J}_F$ be its zero divisor class group.
Let then $\mathcal{J}_F[r]$ be its $r$-torsion subgroup, of cardinality $J_F[r]=\mid \mathcal{J}_F[r]\mid$.
Their main result (see \cite[Theorem 3.2]{cacrxi2}) is as follows :
\begin{propo}
Let:
\begin{itemize}
\item $q$ be a prime power
\item $F/\F_q$ be a function field
\item $h$ be the class number of $F$  
\item $A_m$ the number of effective divisors
of degree $m$ in the group of divisors $Div(F)$ for $m>0$
\item $u\geq1$ be an integer
\item $\mathcal{Y}_1,\dots,\mathcal{Y}_u$ be divisors of $F$
\item $m_1,\dots,m_u$ be nonzero integers.
\end{itemize}
Suppose that for some integer $s\in\Z$, the inequality
$$h>\sum_{i=1}^uA_{r_i(s)}J_F[m_i]$$
holds, where $r_i(s)=m_is+\deg\mathcal{Y}_i$.
Then the system of conditions
$$\dim\mathcal{L}(m_1\D+\mathcal{Y}_1)=\dots=\dim\mathcal{L}(m_u\D+\mathcal{Y}_u)=0$$
is satisfied by some divisor $\D$ of degree $s$.
\end{propo}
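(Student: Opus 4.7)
\medskip

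\noindent\textbf{Proof plan.}
The plan is a counting argument in the Picard group, along the lines of \cite{chch}\cite{shtsvl}\cite{Xing}, now applied simultaneously to $u$ conditions and carefully tracking the $m_i$-torsion for each one.

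First I would set up the ambient counting. Since $F/\F_q$ is a global function field, the divisor map $\mathrm{Div}(F)\to\Z$ is surjective and the degree-zero Picard group has order $h$; hence every coset of degree $s$ in the Picard group also has cardinality $h$. So the set $\mathrm{Pic}^s(F)$ of degree-$s$ divisor classes contains exactly $h$ elements. The goal is to exhibit a class $[\D]\in\mathrm{Pic}^s(F)$ whose image under each map $[\D]\mapsto[m_i\D+\mathcal{Y}_i]$ avoids a certain ``bad'' subset of $\mathrm{Pic}^{r_i(s)}(F)$. By the union bound, it suffices to show that the total number of bad classes in $\mathrm{Pic}^s(F)$ is strictly less than $h$.

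Next, for each $i\in\{1,\dots,u\}$, I would estimate the number of bad classes as follows. The condition $\dim\mathcal{L}(m_i\D+\mathcal{Y}_i)>0$ means exactly that the class $[m_i\D+\mathcal{Y}_i]\in\mathrm{Pic}^{r_i(s)}(F)$ contains an effective divisor. The number of classes in $\mathrm{Pic}^{r_i(s)}(F)$ that contain at least one effective divisor is bounded above by $A_{r_i(s)}$, the number of effective divisors of degree $r_i(s)$ (since each such class contains at least one such effective representative). Now consider the affine map
\begin{equation*}
\varphi_i:\mathrm{Pic}^s(F)\longrightarrow\mathrm{Pic}^{r_i(s)}(F),\qquad[\D]\longmapsto[m_i\D+\mathcal{Y}_i].
\end{equation*}
Two classes $[\D]$ and $[\D']$ have $\varphi_i([\D])=\varphi_i([\D'])$ if and only if $m_i([\D]-[\D'])=0$ in the Jacobian, so each fiber of $\varphi_i$ has cardinality equal to $J_F[m_i]$ (note $m_i\neq0$, so the fiber is finite and of this size, or is empty; either way the bound holds). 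Consequently, the number of bad classes for the $i$-th condition is at most $A_{r_i(s)}\cdot J_F[m_i]$.

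Finally, summing over $i$, the total number of classes $[\D]\in\mathrm{Pic}^s(F)$ failing at least one of the conditions $\dim\mathcal{L}(m_i\D+\mathcal{Y}_i)=0$ is at most $\sum_{i=1}^u A_{r_i(s)}J_F[m_i]$. By hypothesis this is strictly smaller than $h=|\mathrm{Pic}^s(F)|$, so at least one class $[\D]$ is good for every $i$ simultaneously; any representative $\D$ of such a class has degree $s$ and satisfies the whole system. The only delicate point is the torsion bookkeeping in the fiber count of $\varphi_i$: this is precisely the $s$-torsion subtlety that I.\,Cascudo, R.\,Cramer and C.\,Xing identified as missing from \cite{shtsvl}, and tracking $J_F[m_i]$ here is what makes the argument rigorous.
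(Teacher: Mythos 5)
Your proof is correct, and it is essentially the argument behind this statement, which the paper only cites (it is \cite[Theorem 3.2]{cacrxi2} of Cascudo, Cramer and Xing): count the $h$ classes in $\mathrm{Pic}^s(F)$, bound the bad classes for each condition by pulling back the at most $A_{r_i(s)}$ effective classes through the map $[\D]\mapsto[m_i\D+\mathcal{Y}_i]$ whose fibers have size $J_F[m_i]$, and conclude by the union bound. The only cosmetic point is the convention for small degrees (one takes $A_0=1$ and $A_m=0$ for $m<0$ so the fiber-counting bound remains valid when some $r_i(s)\leq 0$), which affects neither your argument nor the statement.
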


In order to measure the size of the torsion subgroups, they introduce the notion of torsion-limit:
\begin{defi}\label{def_torsion-limit}
For each family $\mathcal{F}=\{F/\F_q\}$ of function fields with increasing genus $g(F)$, 
we define the asymptotic limit
$$J_r(\mathcal{F})=\liminf_{F \in \mathcal{F}} \frac{\log_q J_F[r]}{g(F)}.$$
For a prime power $q$, an integer $r>1$ and a real number $a \leq A(q)$, let $\Upsilon$
be a set of families $\{\mathcal{F}\}$ of function fields over $\F_q$ such that
the genus in each family tends to $\infty$ and the Ihara limit satisfies ${A(\mathcal{F}) \geq a}$
for every $\mathcal{F}\in \Upsilon$.
Then the asymptotic quantity $J_r(q,a)$ is defined by
$$J_r(q,a)=\liminf_{\mathcal{F} \in \Upsilon}J_r(\mathcal{F}).$$
\end{defi}

Thanks to the equivalence between curves and function fields, where the group of rational
points of the Jacobian corresponds to the zero divisor class group, we see that this
torsion-limit is related to the constant $\delta_r^-(q)$ by the relation:
\begin{equation}
J_r(q,A(q))=\log_q(r)\delta_r^-(q).
\end{equation}

This torsion-limit can be introduced as a correcting term in the denominator of the bound claimed by Shparlinski, Tsfasman, and Vladut, as we will see in Section \ref{msMs}.

However, another approach is possible namely the direct construction.

\subsection{Direct construction}\label{construction_sym}

The direct construction consists on finding the best divisors $D$ to apply CCMA, i.e divisors $D$ satisfying Conditions (\ref{cond_nonspecial}) and (\ref{cond_sanssection}) for given $q$ and $n$.
The idea is explicitly introduced by S. Ballet in \cite[Theorem 2.2]{ball4} as we will see more precisely in Section \ref{mus}.  
Then J. Chaumine proved in \cite{chau0} (cf. also \cite{chau1}) that the direct construction is optimal in the elliptic case, improving then the result of A. Shokrollahi \cite{shok} as we will see in Section \ref{musexact}.
Then, H. Randriambolona introduces news ideas which originate in his work \cite{randIJM} for the construction of intersecting codes.
The technique was then extended in \cite{rand2D-G} in order to solve more general
Riemann-Roch systems of equations.
In the case of the Riemann-Roch system associated with a CCMA, it allows the
effective construction of a solution, in most cases up to optimal degree.

The key point is the following result \cite[Lemma 9]{randIJM}, which can be seen as a numerical variant of a generalized Pl\"ucker formula:
\begin{lemm}
\label{l(A+2P)=l(A)}
Let $X$ be a curve of genus $g$ over a perfect field $K$,
and let $\mathcal{A}$ be a divisor on $X$ with $\deg\mathcal{A}\leq g-3$
and
$$\dim\mathcal{L}(\mathcal{A})=0.$$
Then for all points $P\in X(K)$ except perhaps for at most $4g$ of them,
we have
$$\dim\mathcal{L}(\mathcal{A}+2P)=0.$$
\end{lemm}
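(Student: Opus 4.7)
The plan is to translate the condition $\dim\mathcal{L}(\mathcal{A}+2P)\geq 1$ into the failure of a $2$-jet map via Serre duality, reduce to an extremal degree, then apply Riemann--Hurwitz to a naturally associated pencil. From the short exact sequence
$$0\to\mathcal{O}_X(\mathcal{A})\to\mathcal{O}_X(\mathcal{A}+2P)\to\mathcal{Q}\to 0,$$
where $\mathcal{Q}$ is a length-$2$ skyscraper at $P$, combined with $H^0(\mathcal{O}_X(\mathcal{A}))=0$ and Serre duality, one obtains: $\dim\mathcal{L}(\mathcal{A}+2P)\geq 1$ if and only if the $2$-jet evaluation map $\rho_P:H^0(X,\Omega(-\mathcal{A}))\to\Omega(-\mathcal{A})|_{2P}$ fails to be surjective. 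So I need to bound the set $S$ of points $P$ where this evaluation drops rank.

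Next, since $\mathcal{L}(\mathcal{A}+2P)\subseteq\mathcal{L}(\mathcal{A}+\mathcal{Z}+2P)$ for any effective $\mathcal{Z}\geq 0$, it suffices to prove the bound for a larger divisor $\mathcal{A}'\geq \mathcal{A}$ of degree exactly $g-3$ satisfying $\dim\mathcal{L}(\mathcal{A}')=0$. Such an $\mathcal{A}'$ can be constructed over $\overline{K}$ by adding points one at a time, each chosen outside the proper closed subvariety of $X$ for which $\dim\mathcal{L}$ would jump; this is possible precisely because $\deg\mathcal{A}+1\leq g-2<g$ ensures the relevant Brill--Noether locus in $\mathrm{Pic}$ is strictly smaller than the ambient Jacobian. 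In this extremal case, Riemann--Roch gives $\dim\mathcal{L}(K-\mathcal{A})=2$, so the linear system $|K-\mathcal{A}|$ is a pencil of degree $g+1$. Write it as its fixed base divisor $\mathcal{B}$ plus a basepoint-free movable part defining a morphism $\varphi:X\to\mathbb{P}^1$ of degree $g+1-\deg\mathcal{B}$. Every point of $\mathrm{supp}\,\mathcal{B}$ contributes to $S$ (as $\rho_P$ has image of dimension $\leq 1$ there), giving at most $\deg\mathcal{B}$ points. For $P\notin\mathrm{supp}\,\mathcal{B}$, a direct computation in a local parameter shows $\rho_P$ is non-surjective exactly when $\varphi$ is ramified at $P$; by Riemann--Hurwitz, the number of such ramification points is at most $2g-2+2(g+1-\deg\mathcal{B})=4g-2\deg\mathcal{B}$. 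Summing yields
$$|S\cap X(\overline{K})|\leq\deg\mathcal{B}+(4g-2\deg\mathcal{B})=4g-\deg\mathcal{B}\leq 4g,$$
which restricts to the same bound on $X(K)$.

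The delicate point, which I expect to be the main obstacle, is the application of Riemann--Hurwitz in positive characteristic, where $\varphi$ could a priori be inseparable (making every point nominally "ramified"). My preferred way around it is to refine the reduction step by further requiring that the added points make $\varphi$ separable, which is again a generic (open, dense) condition compatible with the previous genericity requirement. Alternatively one can bypass Riemann--Hurwitz by considering the Wronskian of two generic sections of $\Omega(-\mathcal{A})$, viewed as a global section of $\Omega(-\mathcal{A})^{\otimes 2}\otimes\Omega$; this line bundle has degree $4g$, and generic nonvanishing of the Wronskian, after suitable use of Hasse--Schmidt derivatives if necessary, gives the desired bound in a characteristic-free manner.
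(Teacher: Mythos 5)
Your separable-case skeleton is fine and does land exactly on $4g$ (at most $\deg\mathcal{B}$ base points plus at most $2g-2+2(g+1-\deg\mathcal{B})$ ramification points), but the gap you flag at the end is a genuine one, and neither of your two fixes closes it. If the movable part of $|K-\mathcal{A}'|$ defines an inseparable morphism $\varphi$ to $\mathbb{P}^1$ --- equivalently, if modulo the base divisor the sections of $K-\mathcal{A}'$ are Frobenius pullbacks --- then every fibre of $\varphi$ has all multiplicities divisible by $p$, hence $\dim\mathcal{L}(\mathcal{A}'+2P)\geq 1$ for \emph{every} $P$, and both counting tools collapse: Riemann--Hurwitz in the form you use is unavailable, and the Wronskian of the pencil vanishes identically; its Hasse--Schmidt/St\"ohr--Voloch replacement detects vanishing to order $\varepsilon_1=p^k$, not to order $2$, so it bounds the wrong set and does not give a ``characteristic-free'' count of the points with $\dim\mathcal{L}(\mathcal{A}'+2P)\geq1$. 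Your first repair (``choose the added points so that $\varphi$ is separable, a generic condition'') cannot work as stated: $H^0(K-\mathcal{A}-Q_1-\cdots-Q_j)$ is precisely the subspace of $H^0(K-\mathcal{A})$ of sections vanishing at the $Q_i$, so if $H^0(K-\mathcal{A})$ is (up to its base divisor) composed with Frobenius, every such subsystem is as well, whatever points you add; and in the extremal case $\deg\mathcal{A}=g-3$ there is nothing to add anyway. What a complete proof must supply is an argument that the hypotheses $\dim\mathcal{L}(\mathcal{A})=0$ and $\deg\mathcal{A}\leq g-3$ exclude this Frobenius-degenerate configuration (already for $g=3$, $p=2$ this is a real computation: there $\varphi$ inseparable forces $K-\mathcal{A}\sim F^*$ of the twisted $g^1_2$, which is $\sim K$, i.e.\ $\mathcal{A}\sim 0$, contradicting $\dim\mathcal{L}(\mathcal{A})=0$); nothing in your proposal does this in general, and it is exactly the place where the hypothesis $\dim\mathcal{L}(\mathcal{A})=0$ must be used beyond the enlargement step.

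Two smaller points. First, in the enlargement step your justification is slightly off target: that the Brill--Noether locus is a proper subvariety of the Jacobian does not by itself show that the set of bad points $Q$ \emph{on $X$} is proper; what you need is the classical difference statement that $\dim\mathcal{L}(\mathcal{A}+Q)\geq1$ for all $Q$ forces $\dim\mathcal{L}(\mathcal{A})\geq1$ when $\deg\mathcal{A}+1\leq g-1$ --- equivalently, the bad $Q$ are exactly the finitely many base points of $|K-\mathcal{A}|$; this is easily fixed. Second, for comparison: the survey does not reprove this lemma but quotes it from \cite{randIJM}; there the constant $4g$ comes from an intersection computation in the Jacobian (the bad $P$ lie in the pullback of a translate of the theta divisor under $P\mapsto \mathrm{cl}(\mathcal{A}+2P)$, with $[2]^*\Theta$ algebraically equivalent to $4\Theta$ and the Abel--Jacobi curve meeting $\Theta$ in degree $g$), and the delicate point is again to rule out the curve being contained in that translate --- so your pencil-based route does not circumvent the difficulty, it merely relocates it to the separability statement you left unproved.
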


In \cite{rand2D-G} it is shown how the bound $4g$ can be slightly improved when $K$ is a finite field.
However the original Lemma~\ref{l(A+2P)=l(A)} suffices to prove the following result \cite[Corollary 20]{rand2D-G}: 
\begin{propo}\label{propo_construction_sym}
Let:
\begin{itemize}
\item $q$ be a prime power
\item $F/\F_q$ be a function field, of genus $g$
\item $\mathcal{Q}$ be a divisor of $F/\F_q$, of degree $n=\deg\mathcal{Q}$
\item $\mathcal{G}$ be a divisor of $F/\F_q$, of degree $N=\deg\mathcal{G}$.
\end{itemize}
Assume that the number of degree $1$ places of $F$ satisfies
$$N_1(F/\F_q)>5g.$$
Then, provided
$$N\geq 2n+g-1$$
there exists a divisor $\D$ of $F/\F_q$ such that $\D-\mathcal{Q}$ is nonspecial
of degree $g-1$ and $2\D-\mathcal{G}$ is zero-dimensional:
\begin{itemize}
\item $\deg\D=n+g-1$
\item $\dim\mathcal{L}(\D-\mathcal{Q})=0$
\item $\dim\mathcal{L}(2\D-\mathcal{G})=0$.
\end{itemize}
\end{propo}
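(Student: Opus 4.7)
The plan is to construct $\D$ iteratively, adding one $\F_q$-rational place at each of $g$ successive steps, starting from a divisor whose degree is small enough that both zero-dimensionality conditions hold automatically. Lemma~\ref{l(A+2P)=l(A)} will control the second condition at each step, while a base-point count for a canonical-type linear system will control the first; the hypothesis $N_1(F/\F_q)>5g$ will then ensure that a suitable rational place remains available at every step.

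Fix $P_0\in X(\F_q)$, which exists since $N_1(F/\F_q)>5g\geq 0$, and set $\D^{(0)}=\mathcal{Q}-P_0$. Then $\deg\D^{(0)}=n-1$ yields $\deg(\D^{(0)}-\mathcal{Q})=-1<0$ and $\deg(2\D^{(0)}-\mathcal{G})=2n-2-N\leq -g-1<0$, so both conditions hold trivially for $\D^{(0)}$. I then set inductively $\D^{(i+1)}=\D^{(i)}+P_i$ with $P_i\in X(\F_q)$ to be chosen, so that $\D:=\D^{(g)}$ has the required degree $n+g-1$. At step $i+1$, condition~\eqref{cond_sanssection} reads $\dim\mathcal{L}(\mathcal{A}+2P_i)=0$ for $\mathcal{A}=2\D^{(i)}-\mathcal{G}$ of degree $2(n-1+i)-N\leq g-3$, so Lemma~\ref{l(A+2P)=l(A)} directly forbids at most $4g$ choices of $P_i$. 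Condition~\eqref{cond_nonspecial} reads $\dim\mathcal{L}(\mathcal{B}+P_i)=0$ for $\mathcal{B}=\D^{(i)}-\mathcal{Q}$ of degree $i-1\leq g-2$; the short exact sequence $0\to\mathcal{L}(\mathcal{B})\to\mathcal{L}(\mathcal{B}+P_i)\to\kappa(P_i)$ combined with Serre duality identifies the forbidden $P_i$ with the base points of the linear system $|K-\mathcal{B}|$.

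The crux is therefore to bound these base points by $g$. By Riemann--Roch, $\ell(K-\mathcal{B})=g-1-\deg\mathcal{B}$, so $|K-\mathcal{B}|$ has projective dimension $g-2-\deg\mathcal{B}$; since the moving part of any nontrivial linear system of projective dimension $r$ on a smooth curve has degree at least $r$, the fixed (base) locus of $|K-\mathcal{B}|$ has degree at most $\deg(K-\mathcal{B})-(g-2-\deg\mathcal{B})=g$. Thus at each step at most $4g+g=5g<N_1(F/\F_q)$ rational places are forbidden, a valid $P_i$ exists, and the induction completes after $g$ steps. The most delicate point of the argument is the uniformity of this base-point bound across all steps, in particular at the final step $i=g-1$ where $|K-\mathcal{B}|$ collapses to a single effective divisor of degree $g$ all of whose points are ``base'': the bound $\leq g$ is sharp there, and its matching the $4g$ coming from Lemma~\ref{l(A+2P)=l(A)} to reach precisely the hypothesis $N_1>5g$ is what allows the construction to run all the way to the optimal degree $n+g-1$.
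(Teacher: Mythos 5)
Your construction is correct and is essentially the argument the paper points to as the proof of this proposition (the iterative construction it says is enabled by Lemma~\ref{l(A+2P)=l(A)}): starting from a divisor of degree $n-1$ for which both vanishing conditions hold for degree reasons, one adds $g$ rational places one at a time, the lemma excluding at most $4g$ choices at each step for the condition on $2\D-\mathcal{G}$, while the forbidden places for the nonspeciality condition are the base points of $|K-(\D^{(i)}-\mathcal{Q})|$, whose fixed part has degree at most $d-r=g$, so that at most $5g<N_1(F/\F_q)$ places are excluded per step. The only blemish is cosmetic (the symbol $P_0$ denotes both the initially subtracted place and the first place to be chosen), not mathematical.
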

Observe that for a divisor of degree $g-1$, nonspecial and zero-dimensional are equivalent,
so here $i(\D-\mathcal{Q})=0$ and $\dim\mathcal{L}(\D-\mathcal{Q})=0$ are equivalent.

Observe also that Proposition~\ref{propo_construction_sym} gives precisely what was required in
the approach of Shparlinski, Tsfasman and Vladut, as described in section \ref{mM}, 
with $N=2n+g-1$, $\mathcal{Q}=Q$, and $\mathcal{G}=P_1+\cdots+P_N$.
The only downside is the condition that $F$ should have sufficiently many rational places.

Beside \cite{rand2D-G},
the proof of this Proposition~\ref{propo_construction_sym} can also be found inside the proof of \cite[Theorem 5.2(c)]{randJComp}.

\section{Asymptotic upper bounds}  \label{AsyBounds} 

The asymptotic study of the bilinear complexity of the multiplication consists on evaluating the quantities $m_q$, $M_q$, $\ms_q$, $\Ms_q$. 
The importance of this study comes from the fact that generally we have better estimations of these quantities than those of the constants 
$C_q$ and $C^{sym}_q$. Indeed, the best known families of curves suitable to the application of the D. V. and G. V. Chudnovsky algorithm 
are known asymptotically, in particular the families of Shimura curves used by I. Shparlinski, M. Tsfasman and S. Vladut in \cite{shtsvl}.
These latter establish the following general result which we can see as a direct consequence 
of Lemma \ref{plongement} (or of \cite[Lemma 1.2]{shtsvl})\footnote{Their main motivation to introduce this lemma was, 
from the finiteness of $M_q$ for $q$ square, to deduce finiteness of $M_q$ for all $q$.}.  

\begin{lemm}\label{lemasyMqmq}
For any prime power $q$ and any positive integer $n$ we have 
\begin{equation}\label{plongementasymptolimiteinf}
m_{q}\leq m_{q^n}\cdot\mu_{q}(n)/n
\end{equation}
\begin{equation}\label{plongementasymptolimitesup}
 M_{q}\leq M_{q^n}\cdot\mu_{q}(n).
\end{equation}
\end{lemm}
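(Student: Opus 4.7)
The plan is to deduce both inequalities from the right-hand side of Lemma~\ref{plongement}, namely that $\mu_q(nk)\leq \mu_q(n)\cdot\mu_{q^n}(k)$ for all positive integers $k$. Dividing by $nk$ gives the normalized bound
$$\frac{\mu_q(nk)}{nk}\;\leq\;\frac{\mu_q(n)}{n}\cdot\frac{\mu_{q^n}(k)}{k}, \qquad (\star)$$
which is the sole ingredient needed beyond elementary liminf/limsup manipulations.

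For inequality \eqref{plongementasymptolimiteinf}, I would pick a sequence of positive integers $(k_i)_{i\geq 1}$ with $k_i\to\infty$ along which $\mu_{q^n}(k_i)/k_i$ converges to $m_{q^n}$. Since $nk_i\to\infty$ as well, one has $m_q \leq \liminf_i \mu_q(nk_i)/(nk_i)$ by definition of $m_q$ as a liminf over \emph{all} sufficiently large indices. Combining with $(\star)$ applied at $k=k_i$ immediately yields $m_q\leq (\mu_q(n)/n)\cdot m_{q^n}$, which is exactly \eqref{plongementasymptolimiteinf}.

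For inequality \eqref{plongementasymptolimitesup}, the analogous subsequence argument fails, because the limsup over a subsequence is bounded \emph{above} by the full limsup, which is the wrong direction. Instead, I would approximate an arbitrary $k$ by a nearby multiple of $n$: set $k':=n\lceil k/n\rceil$, so that $k\leq k'<k+n$, and therefore $k'/k\to 1$ as $k\to\infty$. By the \emph{left-hand} inequality of Lemma~\ref{plongement}, $\mu_q(k)\leq \mu_q(k')$, so that
$$\frac{\mu_q(k)}{k}\;\leq\;\frac{k'}{k}\cdot\frac{\mu_q(k')}{k'}.$$
Writing $k'=n\cdot(k'/n)$ and applying $(\star)$ with $k$ replaced by $k'/n$ gives $\mu_q(k')/k'\leq (\mu_q(n)/n)\cdot \mu_{q^n}(k'/n)/(k'/n)$. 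Taking the limsup as $k\to\infty$, and using that $k'/n\to\infty$ and $k'/k\to 1$, I obtain $M_q\leq (\mu_q(n)/n)\cdot M_{q^n}$, which is even slightly sharper than the claimed bound \eqref{plongementasymptolimitesup}.

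The main obstacle I anticipate is precisely the asymmetry noted above between liminf and limsup: the liminf case is essentially formal, but the limsup case genuinely requires invoking the monotonicity property $\mu_q(a)\leq \mu_q(b)$ whenever $a\mid b$ (which is the left-hand inequality of Lemma~\ref{plongement}), in order to reduce arbitrary indices to multiples of $n$ without losing control on the normalizing denominator.
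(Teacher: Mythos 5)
Your treatment of \eqref{plongementasymptolimiteinf} is correct and is exactly the intended route (the paper derives the lemma as a direct consequence of Lemma~\ref{plongement}): restrict to indices of the form $nk_i$ with $\mu_{q^n}(k_i)/k_i\to m_{q^n}$, use that a liminf along a subsequence dominates the full liminf, and apply the right-hand inequality of Lemma~\ref{plongement}. The limsup part, however, has a genuine gap. The step $\mu_q(k)\leq\mu_q(k')$ with $k'=n\lceil k/n\rceil$ is not an instance of the left-hand inequality of Lemma~\ref{plongement}: that inequality yields $\mu_q(a)\leq\mu_q(b)$ only when $a$ divides $b$, since it comes from the inclusion $\F_{q^a}\subseteq\F_{q^b}$, and in general $k\nmid n\lceil k/n\rceil$ (take $n=2$, $k=5$, so $k'=6$). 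Monotonicity of $\mu_q$ with respect to the usual order on the integers is not proved in the paper and is not known in general, so this step fails; with it falls the ``slightly sharper'' bound $M_q\leq(\mu_q(n)/n)\,M_{q^n}$ you announce, which is indeed stronger than what the lemma (and the literature) asserts, and should have been a warning sign.

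The stated inequality \eqref{plongementasymptolimitesup} needs no rounding argument at all. For \emph{every} $k$, apply both inequalities of Lemma~\ref{plongement} in succession to get $\mu_q(k)\leq\mu_q(nk)\leq\mu_q(n)\cdot\mu_{q^n}(k)$, hence
\begin{equation*}
\frac{\mu_q(k)}{k}\;\leq\;\mu_q(n)\cdot\frac{\mu_{q^n}(k)}{k},
\end{equation*}
and take the limit superior over all $k$: the right-hand side has limsup $\mu_q(n)\,M_{q^n}$, which gives exactly \eqref{plongementasymptolimitesup}. The loss of the factor $1/n$ compared with the liminf case is precisely the price of bounding $\mu_q(k)$ by $\mu_q(nk)$ while dividing only by $k$ rather than by $nk$; this is why the lemma is stated with $\mu_q(n)$, not $\mu_q(n)/n$, in the $M_q$ inequality.
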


Actually, inequality \eqref{plongementasymptolimiteinf} about $m_q$ is already implicit
in the original paper of D. V. Chudnovsky and G. V. Chudnovsky (from \cite[eq. (6.2)]{chch}).
So, here, the important new contribution of I. Shparlinski, M. Tsfasman and S. Vladut is inequality \eqref{plongementasymptolimitesup} about $M_q$.
Note that these inequalities are also true in the symmetric case, as a consequence of Lemma \ref{plongementsym}:

\begin{lemm}\label{lemsymMqmq}
\begin{equation}\label{plongementasymptolimiteinfsym}
\ms_{q}\leq \ms_{q^n}\cdot \mus_{q}(n)/n
\end{equation}
\begin{equation}\label{plongementasymptolimitesupsym}
 \Ms_{q}\leq \Ms_{q^n}\cdot\mus_{q}(n).
\end{equation}
\end{lemm}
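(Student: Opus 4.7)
The plan is to mirror the proof of Lemma \ref{lemasyMqmq}, substituting the symmetric sandwich Lemma \ref{plongementsym} wherever its asymmetric counterpart Lemma \ref{plongement} was used. Since the lemma splits into two inequalities (liminf and limsup), I would treat them separately, as the arguments diverge slightly.

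For inequality \eqref{plongementasymptolimitesupsym}, I would combine both sides of Lemma \ref{plongementsym}. The left-hand side gives $\mus_q(k)\leq\mus_q(nk)$, and the right-hand side applied with $m=n$ gives $\mus_q(nk)\leq\mus_q(n)\cdot\mus_{q^n}(k)$. Chaining these yields
$$\mus_q(k)\leq\mus_q(n)\cdot\mus_{q^n}(k)$$
for every $k\geq1$. Dividing by $k$, then taking $\limsup_{k\to\infty}$ on both sides and pulling out the constant factor $\mus_q(n)$ gives $\Ms_q\leq\mus_q(n)\cdot\Ms_{q^n}$, which is exactly \eqref{plongementasymptolimitesupsym}.

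For inequality \eqref{plongementasymptolimiteinfsym}, I would restrict attention to the subsequence of multiples of $n$. Writing $k=nk'$ and using only the right-hand side of Lemma \ref{plongementsym} with $m=n$, we get
$$\frac{\mus_q(nk')}{nk'}\leq\frac{\mus_q(n)}{n}\cdot\frac{\mus_{q^n}(k')}{k'}.$$
Taking $\liminf$ as $k'\to\infty$ on both sides (and using that $\mus_q(n)/n$ is a nonnegative constant) gives $\liminf_{k'}\mus_q(nk')/(nk')\leq (\mus_q(n)/n)\cdot\ms_{q^n}$. The final step is to note that $\ms_q=\liminf_k\mus_q(k)/k$, being a liminf over all $k$, is bounded above by the liminf over any subsequence, in particular over $k=nk'$; this yields \eqref{plongementasymptolimiteinfsym}.

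I do not anticipate a serious obstacle: both inequalities reduce to a one-line computation after invoking Lemma \ref{plongementsym}. The only point that requires a touch of care is the subsequence argument in the liminf case, which relies on the elementary fact that $\liminf$ taken over a subsequence dominates the full $\liminf$. Notably, the proof goes through verbatim from the asymmetric case because Lemma \ref{plongementsym} has exactly the same shape as Lemma \ref{plongement}; no new ingredient specific to symmetric algorithms (such as $2$-torsion considerations from Section \ref{Bound2torsion}) is needed at this purely asymptotic level.
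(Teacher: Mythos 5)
Your proposal is correct and follows exactly the route the paper intends: the paper states Lemma \ref{lemsymMqmq} as a direct consequence of Lemma \ref{plongementsym}, just as Lemma \ref{lemasyMqmq} follows from Lemma \ref{plongement}, and your chaining of the two sides of the sandwich inequality (with the subsequence $k=nk'$ for the liminf to gain the factor $1/n$) is precisely the omitted routine verification.
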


By using Theorem \ref{thm_wdg} with Lemma \ref{lemasyMqmq} or Lemma \ref{lemsymMqmq}, 
we trivially get the following useful corollary: 

\begin{coro}\label{corotrivialmq}
For every prime power $q$, we have $m_q\leq\frac{3}{2}m_{q^2}$, $\ms_q\leq\frac{3}{2}\ms_{q^2}$, $M_q\leq 3M_{q^2}$, and $\Ms_q\leq 3\Ms_{q^2}$.
If ${q \geq 4}$, then $m_q\leq\frac{5}{3}m_{q^3}$, $\ms_q\leq\frac{5}{3}\ms_{q^3}$, $M_q\leq 5M_{q^3}$, and $\Ms_q\leq 5\Ms_{q^3}$.
\end{coro}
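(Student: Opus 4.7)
The plan is a direct combination of Theorem~\ref{thm_wdg} with Lemmas~\ref{lemasyMqmq} and~\ref{lemsymMqmq}; no sophisticated ingredient is required.

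First, I would specialize Theorem~\ref{thm_wdg} at $n=2$ and $n=3$. The condition $n\leq q/2+1$ is satisfied for $n=2$ as soon as $q\geq 2$, which always holds, so $\mu_q(2)=2\cdot 2-1=3$ for every prime power $q$. For $n=3$, the condition $3\leq q/2+1$ reads $q\geq 4$, and under this assumption $\mu_q(3)=2\cdot 3-1=5$.

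Second, I would note that the same values hold for the symmetric bilinear complexity. Indeed the upper bounds $\mu_q(n)\leq 2n-1$ in Theorem~\ref{thm_wdg} come from the Toom/Winograd interpolation algorithms, where one picks distinct evaluation points $a_1,\dots,a_{2n-1}$ in $F$ and the bilinear part of each multiplication is of the form $R(a_i)\cdot S(a_i)$, i.e.\ an evaluation of the same linear form on both arguments. Such algorithms are symmetric in the sense of Definition of $\mus$, so they also certify $\mus_q(2)\leq 3$ (for all $q$) and $\mus_q(3)\leq 5$ (for $q\geq 4$). Since $\mu_q(n)\leq \mus_q(n)$, these inequalities are equalities exactly where Theorem~\ref{thm_wdg} gives equality for $\mu_q$.

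Third, I would plug these values into the two asymptotic dominance lemmas. Applying \eqref{plongementasymptolimiteinf} and \eqref{plongementasymptolimitesup} of Lemma~\ref{lemasyMqmq} with $n=2$ yields
\[
m_q\leq m_{q^2}\cdot\frac{\mu_q(2)}{2}=\frac{3}{2}\,m_{q^2},\qquad M_q\leq M_{q^2}\cdot\mu_q(2)=3\,M_{q^2},
\]
and with $n=3$, assuming $q\geq 4$,
\[
m_q\leq m_{q^3}\cdot\frac{\mu_q(3)}{3}=\frac{5}{3}\,m_{q^3},\qquad M_q\leq M_{q^3}\cdot\mu_q(3)=5\,M_{q^3}.
\]
The symmetric analogues follow identically from \eqref{plongementasymptolimiteinfsym} and \eqref{plongementasymptolimitesupsym} of Lemma~\ref{lemsymMqmq}, using $\mus_q(2)=3$ and $\mus_q(3)=5$ established in the previous step.

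There is no real obstacle here; the only subtlety worth spelling out is why the Toom/Winograd $2n-1$ upper bound is symmetric, which is why I separated that observation as its own step.
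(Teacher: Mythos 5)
Your proposal is correct and follows essentially the same route as the paper, which obtains the corollary precisely by combining Theorem \ref{thm_wdg} (giving $\mu_q(2)=3$ for all $q$ and $\mu_q(3)=5$ for $q\geq 4$) with Lemmas \ref{lemasyMqmq} and \ref{lemsymMqmq}. Your extra observation that the Toom/Winograd interpolation algorithms are symmetric, so that $\mus_q(2)=3$ and $\mus_q(3)=5$ in the same range, is exactly the fact the paper takes for granted (cf.\ the remark opening Section \ref{musexact}), and making it explicit is a harmless refinement rather than a different approach.
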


Let us recall that $A(q)$ denotes the Ihara limit defined by 
$A(q):=\limsup_{g\rightarrow \infty} \frac{N_q(g)}{g}$ where $N_q(g)$ is the maximum number 
of rational places over all the algebraic function fields over $\F_q$ of genus $g$ (cf. also Definition \ref{Ihara}).

\subsection{Upper bounds on $m_q$ and $M_q$}\label{mM1}

Thanks to the asymmetric interpolation allowed by the generalized CCMA (cf. Section \ref{theo_evalder}), 
H. Randriambololona \cite[Theorem 6.3 and Theorem 6.4]{randJComp} obtains bounds for $m_q$ and $M_q$.
 For $m_q$, the bound reads:
\begin{theo}
\label{STV_m_repare}
Let $q$ be a prime power such that $A(q)>1$. Then
\begin{equation}
m_q\leq2\left(1+\frac{1}{A(q)-1}\right).
\end{equation}
\end{theo}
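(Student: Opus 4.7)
The plan is to apply the asymmetric version of the generalized CCMA (Theorem~\ref{theo_evalder} with $\ell=1$, $d_i=1$ and $u_i=1$ throughout) to a sequence of curves attaining the Ihara limit $A(q)$. The decisive ingredient, compared with the flawed original argument of Shparlinski, Tsfasman and Vladut, is that allowing $\D_1 \neq \D_2$ will bypass the $2$-torsion obstruction identified by Cascudo, Cramer and Xing.

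First I would fix $\epsilon > 0$ and choose a family $(X_s/\F_q)_s$ with genera $g_s \to \infty$ and $|X_s(\F_q)|/g_s \to A(q)$. For $s$ large, set $n_s = \lfloor (A(q)-1-\epsilon)g_s/2 \rfloor$ and $N_s = 2n_s + g_s - 1$. Since $N_s/g_s \to A(q) - \epsilon < A(q)$, the curve $X_s$ carries at least $N_s$ rational places for $s$ large, from which one chooses $\mathscr{P} = \{P_1,\ldots,P_{N_s}\}$. The numerical existence criterion at the end of Theorem~\ref{theo_evalder} also provides a place $Q$ of degree exactly $n_s$ for $s$ large, because $q^{(n_s-1)/2}(q^{1/2}-1)$ grows exponentially in $g_s$ while $2g_s+1$ grows linearly; by slightly shrinking $\mathscr{P}$ if necessary, one may assume $Q \notin \mathscr{P}$.

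The core of the argument is the construction of two divisors $\D_1$ and $\D_2$ of common degree $n_s + g_s - 1$, supported away from $Q$ and $\mathscr{P}$, such that $\D_i - Q$ is nonspecial for $i=1,2$ and $\D_1 + \D_2 - \mathcal{G}$ is zero-dimensional, where $\mathcal{G} = P_1 + \cdots + P_{N_s}$. All three of these auxiliary divisors have degree exactly $g_s - 1$, so ``nonspecial'' and ``zero-dimensional'' coincide there. I would first pick $\D_1$ by a counting argument: the classes in $\mathrm{Pic}^{n_s+g_s-1}(X_s)$ violating the condition on $\D_1 - Q$ form the image, under translation by the class of $Q$, of the set of effective divisor classes of degree $g_s-1$, hence their number is bounded by the number $A_{g_s-1}$ of effective divisors of that degree, which is negligible in front of the class number $h_s$ for $s$ large. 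Then, holding $\D_1$ fixed, I would pick $\D_2$ of the same degree avoiding two forbidden sets of classes: those making $\D_2 - Q$ special, and those of the form $\mathcal{G} - \D_1 + E$ with $E$ effective of degree $g_s - 1$. Each of these sets has cardinality at most $A_{g_s-1}$, so in total at most $2A_{g_s-1}$ classes must be avoided, still negligible in front of $h_s$.

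Theorem~\ref{theo_evalder} then yields $\mu_q(n_s) \leq N_s$, whence
$$\frac{\mu_q(n_s)}{n_s} \leq \frac{N_s}{n_s} = 2 + \frac{g_s - 1}{n_s} \longrightarrow 2 + \frac{2}{A(q)-1-\epsilon}.$$
Passing to the liminf and letting $\epsilon \to 0^+$ delivers $m_q \leq 2\bigl(1 + 1/(A(q)-1)\bigr)$. The main obstacle is precisely the divisor construction: in the purely symmetric setup $\D_1 = \D_2 = \D$, the analogous set of classes violating $\dim\mathcal{L}(2\D - \mathcal{G}) = 0$ is the preimage of effective classes under the multiplication-by-$2$ map on $\mathrm{Pic}$, which inflates its cardinality by a factor of $|J_{F_s}[2]|$ and breaks the naive count. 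The asymmetric CCMA eliminates this $2$-torsion factor, at the price of producing a bound for $\mu_q$ rather than for $\mus_q$.
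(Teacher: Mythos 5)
Your overall strategy is the right one and is indeed the route behind the paper's citation of \cite[Theorem 6.3]{randJComp}: apply the asymmetric CCMA (Theorem~\ref{theo_evalder} with $\ell=1$, $d_i=u_i=1$) along a family attaining $A(q)$, with $N_s\approx 2n_s+g_s$, and let $\epsilon\to0$; note also that $A(q)>1$ forces $q\geq5$ by Drinfeld--Vl\u{a}du\c{t}, so the degree-$n_s$ place exists as you say. The genuine gap is in the divisor count, which is precisely where the whole difficulty of this circle of results lies. You assert that the classes of degree $g_s-1$ to be avoided number at most $A_{g_s-1}$, ``which is negligible in front of the class number $h_s$''. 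Nothing supports this, and it is false in general: at degree exactly $g-1$ the only available estimate (Proposition~\ref{prop:majoccx}(3), i.e.\ $A_i/h\leq g/\bigl(q^{g-i-1}(\sqrt q-1)^2\bigr)$) gives $A_{g-1}\leq h\,g/(\sqrt q-1)^2$, which is vacuous; the classes of degree $g-1$ admitting a section form the theta divisor, a set whose size is of the order of a fixed \emph{proportion} of $h$ (and in the worst case cannot be bounded below $h$ at all by the cited tools --- for $q=2,3$ there even exist function fields with no nonspecial class of degree $g-1$ whatsoever). This is exactly why the existence of a single nonspecial divisor of degree $g-1$ is a nontrivial theorem, guaranteed only for $q\geq4$ \cite{balb}, why Proposition~\ref{propo_construction_sym} needs $N_1>5g$, and why the symmetric statement (Theorem~\ref{STV_ms_repare}) carries the hypothesis $A(q)>5$. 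Dropping the symmetry does eliminate the $2$-torsion factor, as you correctly explain, but it does not make your count at the exact optimal degree $g-1$ go through: with $N_s=2n_s+g_s-1$ both forbidden sets for $\D_2$ (and the one for $\D_1$) are of a size you cannot show to be $o(h_s)$, nor even $<h_s/2$, with any result quoted in the paper.

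The standard repair --- and what the actual proofs do --- is to concede an asymptotically harmless slack in the degrees: take $\deg(\D_i-Q)=g-1+e$ and $\deg(\D_1+\D_2-\mathcal{G})=g-1-e'$ with $e,e'$ of order $\log_q g_s$, i.e.\ $N_s=2n_s+g_s+O(\log g_s)$. Then the classes violating surjectivity are counted by effective classes of degree $g-1-e$ and those violating injectivity by effective classes of degree $g-1-e'$, and Proposition~\ref{prop:majoccx}(3) makes each fraction smaller than $1/3$, so one can pick $\D_1$ and then $\D_2$ successively. Since the slack is $o(g_s)$, your limit computation is unchanged and the bound $2\bigl(1+\frac{1}{A(q)-1}\bigr)$ follows for every $q$ with $A(q)>1$. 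This is the same mechanism that appears in the appendix proof of Theorem~\ref{th:ccx}, where the term $3\log_q\bigl(3qg/(\sqrt q-1)^2\bigr)$ is introduced for exactly this purpose; without such a device (or an extra hypothesis of the type $N_1>5g$ or $A(q)>5$), the counting step in your write-up does not close.
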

For $M_q$, it reads:
\begin{theo}
\label{STV_M_repare}
Let $q=p^{2r}\geq 9$ be a square prime power. Then
\begin{equation}
M_q\leq2\left(1+\frac{1}{\sqrt{q}-2}\right).
\end{equation}
\end{theo}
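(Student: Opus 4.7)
The plan follows the strategy of Theorem~\ref{STV_m_repare} for $m_q$, using the asymmetric form of the generalized CCMA of Theorem~\ref{theo_evalder}, with the essential reinforcement that here a \emph{dense} family of asymptotically optimal curves is required, so that a suitable multiplication algorithm is available for \emph{every} large extension degree $n$, and not just for an infinite subsequence of degrees (which would only control $m_q$).

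Since $q=p^{2r}\geq 9$ is a square, Section~\ref{courbes_Shimura} supplies a dense asymptotically exact sequence of Shimura curves $(X_s/\F_q)_s$ with genera $g_s\to\infty$ satisfying $g_{s+1}/g_s\to 1$ and $N_1(X_s)/g_s\to\sqrt{q}-1$. For each large $n$ and each small $\varepsilon>0$, I would select $s=s(n,\varepsilon)$ such that the genus $g:=g_s$ sits in the narrow window $(2n-1)/(\sqrt q-2-\varepsilon)\leq g\leq(1+\varepsilon)(2n-1)/(\sqrt q-2-\varepsilon)$; the density condition $g_{s+1}/g_s\to 1$ guarantees such an $s$ exists for all $n$ large enough. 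On $X_s$, I would apply Theorem~\ref{theo_evalder} with $\ell=1$, a place $Q$ of degree $n$ (provided by the Weil-type criterion $2g+1\leq q^{(n-1)/2}(q^{1/2}-1)$, automatic since $g=O(n)$ while the right-hand side grows exponentially in $n$), two divisors $\mathcal{D}_1,\mathcal{D}_2$ of degree $n+g-1$, and $N:=2n+g-1$ rational places with unit multiplicities. The chosen window for $g$, together with $N_1(X_s)/g_s\to\sqrt{q}-1$, ensures $N\leq N_1(X_s)$ for $n$ large.

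The numerical criteria at the end of Theorem~\ref{theo_evalder} then reduce to (a)~$\mathcal{D}_i-Q$ nonspecial of degree $g-1$ for $i=1,2$, and (b)~$\mathcal{D}_1+\mathcal{D}_2-\sum_{i=1}^N P_i$ zero-dimensional of degree $g-1$. The joint existence of such divisors is the main technical step, and this is where the asymmetric framework is decisive: because (a) and (b) now involve $\mathcal{D}_1+\mathcal{D}_2$ rather than $2\mathcal{D}$, the $2$-torsion obstruction in the Jacobian that caused the gap in \cite{shtsvl} (and that required the elaborate workarounds of Sections~\ref{Bound2torsion} and~\ref{construction_sym} in the symmetric case) simply does not arise. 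Fixing any $\mathcal{D}_1$ with $\mathcal{D}_1-Q$ nonspecial of degree $g-1$, the existence of $\mathcal{D}_2$ satisfying both (a) and (b) then follows from a standard cardinality comparison between the class number and the number of effective divisors of degree $g-1$, as in the original argument of \cite{chch}, now unencumbered by any torsion correction. One concludes $\mu_q(n)\leq 2n+g-1$ for every sufficiently large $n$; letting $\varepsilon\to 0$ yields $M_q\leq 2(1+1/(\sqrt q-2))$.
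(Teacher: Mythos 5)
Your overall architecture coincides with the one the paper relies on (and with the appendix proof of the more general Theorem~\ref{TheoA-Asym}): a dense family of Shimura curves over $\F_q$ attaining $\sqrt q-1$, density used to place a genus $g\approx 2n/(\sqrt q-2)$ for \emph{every} large $n$, existence of a place of degree $n$, and the asymmetric Theorem~\ref{theo_evalder} with $\ell=1$, rational places and $N=2n+g-1$; you also correctly identify that asymmetry is what removes the $2$-torsion obstruction of \cite{shtsvl}. The gap is in the step you settle by ``a standard cardinality comparison between the class number and the number of effective divisors of degree $g-1$, as in the original argument of \cite{chch}''. First, the cardinality/degree argument of \cite{chch} (made explicit in \cite{ball1}) only yields the divisor under the suboptimal condition \eqref{suboptimal_degree}, i.e.\ $N_1\geq 2n+2g-1$; pushed through your genus window it gives at best $M_q\leq 2\bigl(1+\frac{1}{\sqrt q-3}\bigr)$ as in \eqref{chudmq}, which is weaker than the statement and empty for $q=9$. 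Second, at the optimal degree the count does not close with the standard estimates: the classes of $\mathcal{D}_2$ to be avoided are those for which $\mathcal{D}_2-Q$ or $\mathcal{D}_1+\mathcal{D}_2-\mathcal{G}$, both of degree exactly $g-1$, is equivalent to an effective divisor, and the only standard bound available, $A_{g-1}/h\leq g/(\sqrt q-1)^2$ (Proposition~\ref{prop:majoccx}), tends to infinity with $g$; so ``number of effective divisors of degree $g-1$ $<$ class number'' is not something you may assert, torsion or no torsion. This is precisely why the counting criterion of Theorem~\ref{th:ccx} carries a slack of order $\log_q g$ in the degree even before any torsion term appears.

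The paper itself does not reprove this existence step: Theorem~\ref{STV_M_repare} is quoted from \cite[Theorems 6.3 and 6.4]{randJComp}, and in the appendix the corresponding step of the generalization is delegated to \cite[Proposition 5.7]{randJComp}. There the pair $(\mathcal{D}_1,\mathcal{D}_2)$ at essentially optimal degree is obtained not by comparing $h$ with a number of effective divisors, but by a direct construction in the spirit of Lemma~\ref{l(A+2P)=l(A)} and Proposition~\ref{propo_construction_sym}: one starts from a non-special class of degree $g-1$ (existence for $q\geq4$ from \cite{balb}) and adjusts by rational points while avoiding an exceptional set of size $O(g)$; this is exactly where the hypotheses on the number of rational points enter ($A(q)>1$ in Theorem~\ref{STV_m_repare}, versus $N_1>5g$, hence $q\geq49$, in the symmetric Theorem~\ref{STV_Ms_repare}). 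To complete your proof you must either invoke that result, or replace your counting step by a correct one, e.g.\ lowering the degrees of the relevant Riemann--Roch conditions by $O(\log_q g)$ so that the bound of Proposition~\ref{prop:majoccx} becomes effective (asymptotically harmless, and legitimately torsion-free in the asymmetric setting). As written, the decisive step is unjustified.
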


Combined with Lemma \ref{lemasyMqmq} and $\mu_q(2)=3$, this implies at once:
\begin{coro}
\label{coro_STV_repare}
Let $q\geq3$ be a prime or a nonsquare prime power. Then
\begin{equation}
m_q\leq3\left(1+\frac{1}{q-2}\right)
\end{equation}
and
\begin{equation}
M_q\leq6\left(1+\frac{1}{q-2}\right).
\end{equation}
\end{coro}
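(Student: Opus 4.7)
The plan is to deduce this corollary by bootstrapping Theorems \ref{STV_m_repare} and \ref{STV_M_repare} from $q^2$ down to $q$ via Lemma \ref{lemasyMqmq}. The key observation is that when $q\geq 3$ is a prime or a nonsquare prime power, $q^2$ is itself a square prime power with $q^2\geq 9$, so the hypotheses of Theorem \ref{STV_M_repare} are satisfied at the level of $q^2$, and $A(q^2)$ attains the Drinfeld--Vladut value.

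First, I would specialize Lemma \ref{lemasyMqmq} to $n=2$, yielding
\[
m_q \leq m_{q^2}\cdot\frac{\mu_q(2)}{2}, \qquad M_q \leq M_{q^2}\cdot\mu_q(2).
\]
Next I would invoke Theorem \ref{thm_wdg}: since $2\leq q/2+1$ holds for every $q\geq 2$, we have $\mu_q(2)=2\cdot 2-1=3$. This reduces the task to bounding $m_{q^2}$ and $M_{q^2}$.

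For the limsup, since $q\geq 3$ yields $q^2\geq 9$ a square prime power, Theorem \ref{STV_M_repare} applies directly and gives
\[
M_{q^2}\leq 2\left(1+\frac{1}{\sqrt{q^2}-2}\right)=2\left(1+\frac{1}{q-2}\right),
\]
hence $M_q\leq 3\cdot M_{q^2}\leq 6\left(1+\frac{1}{q-2}\right)$. For the liminf I would apply Theorem \ref{STV_m_repare} at level $q^2$: since $q^2$ is a square prime power $\geq 9$, the Ihara constant attains the Drinfeld--Vladut bound $A(q^2)=q-1\geq 2>1$, so the hypothesis $A(q^2)>1$ is satisfied and
\[
m_{q^2}\leq 2\left(1+\frac{1}{A(q^2)-1}\right)=2\left(1+\frac{1}{q-2}\right),
\]
whence $m_q\leq \tfrac{3}{2}\cdot m_{q^2}\leq 3\left(1+\frac{1}{q-2}\right)$.

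The argument is essentially a direct composition of the previously established ingredients; there is no genuine obstacle, but the one subtlety worth pointing out explicitly is the equality $A(q^2)=q-1$, which uses that $q^2$ is a square so the Drinfeld--Vladut upper bound is attained (via the Ihara/TVZ construction of Shimura curves recalled in Section \ref{courbes_Shimura}). Note also that restricting to $q$ a prime or nonsquare prime power in the statement is not a true restriction: if $q$ itself were already a square $\geq 9$, Theorems \ref{STV_m_repare} and \ref{STV_M_repare} would apply directly and yield strictly sharper bounds, making the corollary redundant in that case.
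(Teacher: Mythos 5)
Your proof is correct and is precisely the argument the paper intends: combining Theorems \ref{STV_m_repare} and \ref{STV_M_repare} applied over $\F_{q^2}$ (where $A(q^2)=q-1$ and $q^2\geq9$ is a square) with Lemma \ref{lemasyMqmq} for $n=2$ and the value $\mu_q(2)=3$ from Theorem \ref{thm_wdg}. Your explicit justification of $A(q^2)=q-1$ and of the hypothesis ranges is a welcome spelling-out of details the paper leaves implicit, but the route is the same.
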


Moreover, from Theorem \ref{bornesasympt}, J. Pieltant and H. Randriambololona deduce the following asymptotic bounds in the general case:

\begin{theo}
$$
\begin{array}{lllll}
M_3 \leq 6 & M_4 \leq \frac{87}{19}\simeq  4.579 &M_5 \leq 4.5 & M_{11} \leq 3.6 & M_{13} \leq 3.5.
\end{array}
$$
\end{theo}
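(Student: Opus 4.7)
The plan is to deduce each of the five bounds by applying Lemma~\ref{lemasyMqmq}, which (in the version relating the limsup of two successive extensions) bounds $M_q$ in terms of $M_{q^n}$ and $\mu_q(n)$, and then estimating $M_{q^n}$ for a suitable $n$ via either Theorem~\ref{STV_M_repare} (Randriambololona's fixed Shparlinski--Tsfasman--Vladut bound on square prime powers $\geq 9$) or via the more general Theorem~\ref{bornesasympt}. The exact values of $\mu_q(n)$ for the small $n$ used in the reduction are provided by Theorem~\ref{thm_wdg}, which states that $\mu_q(n)=2n-1$ as soon as $n\leq q/2+1$.

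The case $M_3\leq 6$ is a clean instance of this recipe. We take $n=2$. Since $9$ is a square prime power $\geq 9$, Theorem~\ref{STV_M_repare} applied to $q=9$ yields
\[
M_9 \leq 2\bigl(1+\tfrac{1}{\sqrt{9}-2}\bigr)=4.
\]
As $2\leq 3/2+1$, Theorem~\ref{thm_wdg} gives $\mu_3(2)=3$. Plugging these into Lemma~\ref{lemasyMqmq} yields $M_3\leq M_9\cdot\mu_3(2)/2 = 4\cdot 3/2 = 6$, as claimed.

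For the remaining cases $q\in\{4,5,11,13\}$, the same scheme applies in principle, with $n=2$ landing in the square prime powers $16,25,121,169$, all of which satisfy the hypotheses of Theorem~\ref{STV_M_repare}, and with $\mu_q(2)=3$ throughout (since $2\leq q/2+1$). However, a straightforward application of Lemma~\ref{lemasyMqmq} to these inputs does not by itself suffice to recover the exact numerical values $87/19$, $4.5$, $3.6$, and $3.5$. For these we must instead invoke the full strength of Theorem~\ref{bornesasympt}, which provides uniform bounds on $\mu_q(n)$ obtained from carefully chosen families of curves (densified descents of Garcia--Stichtenoth towers, or modular/Shimura constructions as discussed in Sections~\ref{descent}--\ref{courbes_Shimura}). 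The asymptotic bound on $M_q$ is then extracted by dividing these uniform bounds by $n$ and letting $n\to\infty$ along the chosen family.

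The main obstacle in making this scheme work is the choice of auxiliary extension level $n$ and the associated family of curves: one must balance the quality of $M_{q^n}$ (which improves as $q^n$ grows) against the ``loss'' incurred in the factor $\mu_q(n)$, and for small base fields like $\F_3$ and $\F_4$ this optimization is delicate, since the available curve families with many rational points tend to be defined over quadratic or higher extensions of the prime field, forcing one to perform a descent step (as in Section~\ref{descent}) before the bounds become effective. The specific constants $87/19$, etc., record the best values that emerge from this optimization in the respective cases.
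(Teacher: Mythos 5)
Your derivation of $M_3\leq 6$ rests on an inequality the paper does not provide: you apply Lemma~\ref{lemasyMqmq} in the form $M_3\leq M_9\cdot\mu_3(2)/2$, but the limsup inequality in that lemma is $M_q\leq M_{q^n}\cdot\mu_q(n)$, \emph{without} the division by $n$; the division by $n$ is valid only for the liminf $m_q$. The reason is that $\F_{q^k}\subseteq\F_{q^{k'}}$ only when $k\mid k'$, so to control $\mu_q(k)$ for an arbitrary $k$ one can only write $\mu_q(k)\leq\mu_q(nk)\leq\mu_q(n)\mu_{q^n}(k)$, which after dividing by $k$ loses the factor $1/n$. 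With the correct inequality your computation gives only $M_3\leq 4\cdot 3=12$ (the same value as Corollary~\ref{coro_STV_repare} at $q=3$), not $6$. This is not a cosmetic point: bounds of exactly the shape you invoke, $M_q\leq\frac{2\mu_q(t)}{t}\bigl(1+\frac{1}{q^{t/2}-2}\bigr)$, are precisely the ``purely asymptotic'' bounds of Pieltant--Randriambololona that the paper flags as unproved, and the liminf/limsup asymmetry is the recurring theme of Section~\ref{mM}.

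For the remaining cases you point to the right source, Theorem~\ref{bornesasympt}, but you stop short of the actual (and very short) argument and mischaracterize what remains to be done. The uniform bounds there hold for \emph{every} $n\geq2$: item (ii) gives $\mu_3(n)\leq 6n$, item (iii) gives $\mu_4(n)\leq\frac{87}{19}n$, item (iv) gives $\mu_5(n)\leq\frac{9}{2}n$, and item (viii) with $p=11$ and $p=13$ gives $\mu_{11}(n)\leq 3\bigl(1+\frac{2}{10}\bigr)n=3.6\,n$ and $\mu_{13}(n)\leq 3\bigl(1+\frac{2}{12}\bigr)n=3.5\,n$. Dividing by $n$ and taking the limsup immediately yields all five bounds, including $M_3\leq6$; there is no remaining optimization over an auxiliary extension level, no passage to a limit ``along a chosen family'', and no descent step at this stage --- all of that is already encapsulated in the proof of Theorem~\ref{bornesasympt} itself.
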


These bounds are the best published current asymptotic bounds in the general case. They are deduced from the best known uniform bounds.
Indeed, the purely asymptotic bounds\footnote{\samepage These unproven bounds are:
$$
M_q\leq\frac{2\mu_q(t)}{t}\left(1+\frac{1}{q^{t/2}-2}\right)
$$
for $q$ be a prime power and $t\geq1$ an integer such
that $q^t\geq9$ is a square;\\
and
$$
M_2\leq \frac{35}{6}, \qquad M_3\leq \frac{36}{7}, \qquad M_4\leq \frac{30}{7}, \qquad M_5\leq 4, \qquad
M_7\leq 3.6, \qquad
M_8\leq 3.5.
$$
}
 given in Theorem 5.3, Corollary 5.4, Corollary 5.5 of \cite{pira} are unproved as established in \cite{bapirasi}.
 In addition, as corollary of uniform bounds in Theorem \ref{newboundphr} (cf. Section \ref{mu}), 
 H. Randriambololona obtains recently the following result:
 
 \begin{theo}
For $p\geq 7$, we have: $$M_p\leq 3\left(1+\frac{1}{p-2}\right).$$
\end{theo}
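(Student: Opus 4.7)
The plan is to deduce this asymptotic bound from a uniform bound for $\mu_p(n)$. Specifically, one invokes Theorem \ref{newboundphr} in Section \ref{mu}, which supplies an inequality of the shape
$$\mu_p(n) \;\leq\; 3\left(1 + \frac{1}{p-2}\right) n + C(p), \qquad n \geq 1,$$
with error term $C(p)$ independent of $n$. Dividing by $n$ and taking $\limsup_{n\to\infty}$ yields
$$M_p \;=\; \limsup_{n\to\infty}\frac{\mu_p(n)}{n} \;\leq\; 3\left(1 + \frac{1}{p-2}\right).$$

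Since this reduces the asymptotic statement to the uniform Theorem \ref{newboundphr}, it is worth pointing out the underlying strategy. The naive approach via Lemma \ref{lemasyMqmq} (with $n=2$) and the square-case Theorem \ref{STV_M_repare} only yields $M_p \leq \mu_p(2)\cdot M_{p^2} \leq 6(1+1/(p-2))$, off by a factor of $2$. To close this gap, one applies the asymmetric generalized CCMA of Theorem \ref{theo_evalder} \emph{directly over} $\F_p$, using a dense family of Shimura modular curves defined over $\F_p$ attaining the dense order-$2$ Ihara constant $\betad_2(p) = (p-1)/2$ (equation~\eqref{DV2} of Section~\ref{courbes_Shimura}). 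For each target $n$ one picks $X_s$ in this family of genus $g \sim 2n/(p-2)$ and evaluates at $N \approx n + g/2$ places of degree~$2$, each with multiplicity $u_i = 1$. Since each contributes $\mu_p(2) = 3$ to the bilinear complexity, one obtains
$$\mu_p(n) \;\leq\; 3N \;\leq\; 3 n \left(1 + \frac{1}{p-2}\right) + O(1).$$

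The main obstacle is to verify, uniformly in $n$, the existence conditions of Theorem \ref{theo_evalder}: (i) a place $Q$ of degree $n$ on $X_s$, via the numerical criterion $2g+1 \leq p^{(n-1)/2}(p^{1/2}-1)$, which accommodates $g\sim 2n/(p-2)$ once $p$ is not too small (this is one way the hypothesis $p\geq 7$ enters); (ii) divisors $\D_1,\D_2$ of degree $n+g-1$ with $\D_i-Q$ nonspecial of degree $g-1$ (existence of nonspecial divisors of degree $g-1$ à la \cite{bariro}) and $[\D_1+\D_2-\mathcal{G}]$ of degree $g-1$ avoiding the effective locus; in the \emph{asymmetric} setting this can be arranged by a cardinality argument on divisor classes, which is exactly what bypasses the $2$-torsion obstruction discussed in Section~\ref{Bound2torsion}; (iii) the density of the Shimura family in the sense of Definition~\ref{dense}, guaranteeing a genus sufficiently close to $2n/(p-2)$ for every large~$n$. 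Making step (ii) work uniformly across the family, rather than for a single curve, is the most delicate point and is where Randriambololona's refinements of the CCMA intervene.
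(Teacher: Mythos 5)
Your proposal follows essentially the same route as the paper: there the theorem is stated precisely as a corollary of the uniform bounds of Proposition \ref{newboundphr} (which are themselves obtained, as you sketch, via the asymmetric generalized CCMA evaluated at degree-two places on dense families of modular curves defined over $\F_p$, thereby bypassing the $2$-torsion obstruction), and one concludes by dividing by $n$ and passing to the limit. One small correction of no consequence: the uniform bounds are not of the shape $3\left(1+\frac{1}{p-2}\right)n+C(p)$ with $C(p)$ independent of $n$, but rather $\frac{\mu_p(n)}{n}\leq 3\left(1+\frac{1+\delta(n)}{p-2}\right)$ with $\delta(n)\to 0$ (so the correction is sublinear but unbounded in $n$), which still yields $M_p\leq 3\left(1+\frac{1}{p-2}\right)$ by the same $\limsup$ argument.
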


Finally,  in \cite{ramb1} M. Rambaud obtains the current best general upper-limit asymptotic bound, namely:

\begin{theo}\label{TheoA-Asym}
Let $q$ a prime power and $r\geq 1$, $l\geq 1$ be two positive integers. Then, as long as $rl\betad_r(q)-1>0$, we have:
$$M_q\leq \frac{2\mu_q(r,l)}{rl} \left(1+\frac{1}{rl\betad_r(q)-1}\right). $$
\end{theo}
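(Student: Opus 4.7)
I would apply the generalized Chudnovsky--Chudnovsky multiplication algorithm (Theorem \ref{theo_evalder}) with $\ell=1$ at the target place $Q$ of degree $n$, and with all $N$ interpolation places of the \emph{same} degree $r$, evaluated each with multiplicity $u_i=l$. These interpolation places are harvested from a curve $X_s$ in a \emph{dense} asymptotically exact family $\mathcal{X}=(X_s)$ over $\F_q$ whose ratios $B_r(X_s)/g_s$ approach $\betad_r(q)$. Under the hypotheses (a) and (b) of Theorem \ref{theo_evalder} this produces $\mu_q(n)\leq N\cdot\mu_q(r,l)$, and the whole game is to minimize $N/n$ as $n\to\infty$ while keeping the family dense, so that the density feeds directly into the sharp factor $1+\tfrac{1}{rl\betad_r(q)-1}$.

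\textbf{Main steps.} First, fix such a family $\mathcal{X}$ with $\beta_r(\mathcal{X})$ arbitrarily close to $\betad_r(q)$; density forces $g_{s+1}/g_s\to 1$. For each large $n$, select the smallest index $s$ such that $g_s\geq 2n/(rl\betad_r(q)-1)$; by density this forces $g_s=(1+o(1))\cdot 2n/(rl\betad_r(q)-1)$. A place $Q$ of degree exactly $n$ on $X_s$ is furnished by the numerical criterion $2g_s+1\leq q^{(n-1)/2}(\sqrt{q}-1)$ of Theorem \ref{theo_evalder}, which is automatic for $n$ large since $g_s$ grows only linearly in $n$. Next, set $N=\lceil(2n+g_s-1)/(rl)\rceil$ and pick $N$ places of degree $r$ on $X_s$, which is feasible because $N\leq(\betad_r(q)+o(1))g_s\leq B_r(X_s)$, precisely thanks to the choice of $s$. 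Then exhibit two divisors $\D_1,\D_2$ on $X_s$, each of degree $n+g_s-1$, such that $\D_i-Q$ is nonspecial for $i=1,2$ (giving condition (a) via Riemann--Roch, since for degree $g_s-1$ nonspecial and zero-dimensional coincide) and $\D_1+\D_2-\mathcal{G}$ is zero-dimensional (giving condition (b)), where $\mathcal{G}=l(P_1+\cdots+P_N)$ has degree $Nrl\geq 2n+g_s-1$. Finally, Theorem \ref{theo_evalder} yields $\mu_q(n)\leq N\mu_q(r,l)$, and dividing by $n$ and passing to the limsup gives, after substituting the asymptotic value $g_s/n\to 2/(rl\betad_r(q)-1)$, the claimed bound $M_q\leq\tfrac{2\mu_q(r,l)}{rl}\bigl(1+\tfrac{1}{rl\betad_r(q)-1}\bigr)$.

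\textbf{Main obstacle.} The delicate step is the existence of divisors at the \emph{optimal} degree $n+g_s-1$: the suboptimal bound $\deg\D_i\geq n+g_s$ obtained by forcing $\deg(\D_1+\D_2-\mathcal{G})<0$ would only yield a factor $1+2/(rl\betad_r(q)-1)$, strictly worse than the target. The key is to exploit the \emph{asymmetric} interpolation allowed by Theorem \ref{theo_evalder}: varying $\D_1$ and $\D_2$ independently circumvents the $2$-torsion obstruction in the divisor class group identified by Cascudo--Cramer--Xing in the original Shparlinski--Tsfasman--Vladut proof, since no squaring map intervenes. A cardinality argument then compares the number of divisor classes violating (a) or (b) with the class number $h$ of $X_s$ (growing like $q^{g_s}$ by the Hasse--Weil bound), which suffices provided the family is dense and has sufficiently many low-degree places; existence of the initializing nonspecial divisor of degree $g_s-1$ is then standard. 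A second, more technical subtlety is to ensure that the density of the family is actually used effectively: the limsup is over \emph{all} large $n$, so for every $n$ one must produce a curve whose genus is within a $(1+o(1))$ factor of the target $2n/(rl\betad_r(q)-1)$, which is exactly what the condition $g_{s+1}/g_s\to 1$ guarantees.
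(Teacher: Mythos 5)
Your proposal follows essentially the same route as the paper's proof in the Appendix: a dense family of curves over $\F_q$ realizing $\betad_r(q)$, the asymmetric generalized CCMA (Theorem \ref{theo_evalder}) applied with all interpolation places of degree $r$ and multiplicity $l$, divisors $\mathcal{D}_1,\mathcal{D}_2$ of (near-)optimal degree made possible precisely by the asymmetry, a genus scaled like $2n/(rl\betad_r(q)-1)$, and a passage to the limsup using density; the paper simply delegates the divisor-existence step to \cite[Proposition 5.7]{randJComp} rather than re-running the counting argument. Two bookkeeping points in your write-up need repair, though neither changes the asymptotic constant. First, your feasibility claim $N\leq(\betad_r(q)+o(1))g_s\leq B_r(X_s)$ is not justified: since $\betad_r(q)$ is a limsup over families, one only knows $B_r(X_s)\leq(\beta_r(\mathcal{X})+o(1))g_s$ with $\beta_r(\mathcal{X})\leq\betad_r(q)$, so with your choice of $s$ (smallest genus above $2n/(rl\betad_r(q)-1)$) the curve may have marginally too few degree-$r$ places. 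The paper avoids this by taking $s(n)$ minimal with $rlB_r(X_{s(n)})-g_{s(n)}\geq 2n+5$, which guarantees enough places by construction, and then minimality plus density give $2n=(rl\betad_r(q)-1)g_{s(n)}+o(g_{s(n)})$ (with an $\varepsilon$-argument if the supremum is only approached). Second, the asymmetric counting argument for $\mathcal{D}_1,\mathcal{D}_2$ is known to work with a small constant slack above the optimal degree $2n+g-1$ (hence the $+5$ above), not necessarily at exactly $2n+g-1$ as you set it; a bounded degree excess is asymptotically harmless, but your remark that the cardinality argument needs "sufficiently many low-degree places" conflates this with the symmetric direct construction (Proposition \ref{propo_construction_sym}), which is the only place where an abundance of rational points is required.
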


In particular, this result enables to obtain the following value (with $(r,l)=(4,1)$, $\mu_q(r,l)\leq \mus_q(r,l)= 9$ by Table \ref{ExactBilinComp} and $\betad_r(2)=\frac{3}{4}$ by Formula (\ref{Atilder}):

\begin{coro}\label{M_2Ramb}
$$M_2\leq 7.$$ 
\end{coro}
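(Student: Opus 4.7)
The plan is a one-step application of Theorem~\ref{TheoA-Asym} with the parameters $(q,r,l)=(2,4,1)$, which are the choices that optimize the right-hand side for $q=2$ among the cases where the required inputs are known.

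First, I would collect the two numerical inputs that the formula requires. For the bilinear-complexity factor, Table~\ref{ExactBilinComp} provides $\mus_2(4)=9$, so that $\mu_2(4,1)=\mu_2(4)\leq\mus_2(4)=9$. For the geometric factor, I would take $\betad_4(2)=\tfrac{3}{4}$: the upper bound $\betad_4(2)\leq\tfrac{3}{4}$ is immediate from Formula~(\ref{Atilder}) since $\tfrac{\sqrt{2^4}-1}{4}=\tfrac{3}{4}$, while the matching lower bound requires exhibiting a dense asymptotically exact family of curves defined over $\F_2$ whose number of places of degree~$4$ (equivalently, whose number of $\F_{16}$-rational points) attains the Drinfeld-Vladut bound.

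Next I would check that the admissibility hypothesis of Theorem~\ref{TheoA-Asym} is met: with these values, $rl\betad_r(q)-1=4\cdot\tfrac{3}{4}-1=2>0$. Substituting into the bound then gives
$$M_2\;\leq\;\frac{2\,\mu_2(4,1)}{4}\left(1+\frac{1}{4\betad_4(2)-1}\right)\;\leq\;\frac{2\cdot 9}{4}\left(1+\frac{1}{2}\right)\;=\;\frac{9}{2}\cdot\frac{3}{2}\;=\;\frac{27}{4}\;\leq\;7.$$

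The hard part of the argument is really the lower bound $\betad_4(2)\geq\tfrac{3}{4}$: the Drinfeld-Vladut upper bound is a formality, but producing a dense family of curves over $\F_2$ that reaches this value is the substantive content, and relies on the base-field descent techniques for Shimura-type families discussed in Section~\ref{courbes_Shimura}. Once that geometric input is granted, together with the value $\mus_2(4)=9$ recorded in Table~\ref{ExactBilinComp}, the proof of $M_2\leq 7$ collapses to the single computation above.
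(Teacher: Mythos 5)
Your computation is exactly the paper's own proof: Theorem~\ref{TheoA-Asym} applied with $(r,l)=(4,1)$, $\mu_2(4,1)\leq\mus_2(4)=9$ from Table~\ref{ExactBilinComp}, and $\betad_4(2)=\tfrac{3}{4}$, giving $\tfrac{27}{4}\leq 7$. The one point to be careful about is the status of the input $\betad_4(2)\geq\tfrac{3}{4}$, which you rightly single out as the substantive content but then attribute to the descent techniques of Section~\ref{courbes_Shimura}: what is actually established there is dense optimality only for degrees $1$ and $2$ (Formulas~(\ref{DV}) and~(\ref{DV2})), while a \emph{dense} family over $\F_2$ attaining the Drinfeld--Vladut bound at degree $4$ is precisely the content of Conjecture~\ref{conjY} with $p=2$, $2t=4$; the paper itself justifies the value only by citing Formula~(\ref{Atilder}), which gives the inequality in the opposite (upper-bound) direction. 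So your write-up faithfully mirrors the paper's argument, including this granted geometric input, and is in fact more candid than the paper about where the weight of the proof lies. If you want a derivation of $M_2\leq 7$ resting only on values established in the survey, apply Theorem~\ref{TheoA-Asym} instead with $(r,l)=(2,4)$: Table~\ref{UBCmuqrl} gives $\mu_2(2,4)\leq\mus_2(2,4)\leq 21$ and Formula~(\ref{DV2}) gives $\betad_2(2)=\tfrac{1}{2}$, so $rl\,\betad_2(2)-1=3>0$ and $M_2\leq\frac{2\cdot 21}{8}\left(1+\frac{1}{3}\right)=7$.
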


\subsection{Upper bounds on $\ms_q$ and $\Ms_q$} \label{msMs}


Initially, by using the original Chudnovsky and Chudnovsky, I. Shparlinski, M. Tsfasman and S. Vladut \cite{shtsvl} 
obtain upper bounds\footnote{
These are following bounds: 
$$\ms_q\leq 2\left(1+\frac{1}{A(q)-1}\right),$$ where $A(q)>1$ is defined in Proposition \ref{mqwithAq},  
$$\ms_q\leq 2\left(1+\frac{1}{\sqrt{q}-2}\right),$$ where $q$ is a perfect square $\geq 9,$ 
$$\ms_q\leq 2\left(1+\frac{1}{c\log_2{q}-1}\right),$$ where $q\geq 2^{1/c}$ with $c$ is a positive constant, 
$$\ms_q\leq 2\left(1+\frac{q^{1/3}+2}{2q^{2/3}-q^{1/3}-4}\right),$$ $$\ms_2\leq \frac{35}{6},$$ $$\ms_q\leq 3\left(1+\frac{1}{q-2}\right),$$ 
where $q>2,$ $$\Ms_q\leq 2\left(1+\frac{1}{\sqrt{q}-2}\right),$$ 
where $q\geq 9$ is a perfect square, $$\Ms_q\leq 6\left(1+\frac{1}{q-2}\right),
$$ where $q>2$, and $$\Ms_2\leq 27$$
given respectively in \cite[Theorem 3.1]{shtsvl}, \cite[Corollary 3.4]{shtsvl},  \cite[Corollary 3.5]{shtsvl}, \cite[Remark 3.6]{shtsvl}, 
\cite[Corollary 3.7]{shtsvl}, 
\cite[Corollary 3.8]{shtsvl}, \cite[Theorem 3.9]{shtsvl} and \cite[Corollary 3.10]{shtsvl} for the last two bounds.
 Note that these bounds are originally formulated with notation $m_q$ and $M_q$, but for the same reasons that those mentioned in footnote \ref{symornot} of Section \ref{mM}, 
 these bounds concern the quantities $\Ms_q$ and $\ms_q$. Note that there exist proved bounds exceeding the last bound (cf. Proposition \ref{newbound4}).} 
of $\Ms_q$ and $\ms_q$ for any $q$, which are not completely proved because of the gap mentioned in Section \ref{mM}.
H. Randriambololona in \cite[Theorem 6.3 and Theorem 6.4]{randJComp} obtains the following results which prove the bounds 
of Shparlinsky-Tsfasman-Vladut with a slight restriction on the range of the values for $A(q)$ and $q$.
For $\ms_q$, the bound reads:
\begin{theo}
\label{STV_ms_repare}
Let $q$ be a prime power such that $A(q)>5$. Then
\begin{equation}
\ms_q\leq2\left(1+\frac{1}{A(q)-1}\right).
\end{equation}
\end{theo}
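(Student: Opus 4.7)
The plan is to apply the symmetric version of the generalized CCMA (Corollary~\ref{theo_evalder_simple}) to an asymptotically optimal tower, with the divisor $\D$ supplied by the direct construction of Proposition~\ref{propo_construction_sym}. The crucial point is that the hypothesis $A(q)>5$ is exactly what makes Proposition~\ref{propo_construction_sym} applicable asymptotically, thereby circumventing the $2$-torsion gap identified in Section~\ref{mM}.

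First I would invoke the definition of the Ihara constant to fix a sequence of function fields $(F_s/\F_q)_{s\geq1}$ with genera $g_s\to\infty$ and $N_1(F_s)/g_s\to A(q)$. Since $A(q)>5$, there exists $s_0$ such that $N_1(F_s)>5g_s$ for all $s\geq s_0$, which is precisely the hypothesis of Proposition~\ref{propo_construction_sym}. Next I would choose, for each such $s$, an integer $n_s$ maximal subject to $2n_s+g_s-1\leq N_1(F_s)$ and to the existence of a place $Q_s$ of degree exactly $n_s$ in $F_s/\F_q$. Since $n_s$ is of order $g_s$, whereas the existence criterion from Theorem~\ref{theo_evalder} only asks $2g_s+1\leq q^{(n_s-1)/2}(q^{1/2}-1)$, this latter condition is trivially satisfied for $s$ large. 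Note that with these choices, $n_s/g_s\to (A(q)-1)/2$.

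Then I would set $N_s:=2n_s+g_s-1$, pick $N_s$ pairwise distinct rational places $P_1,\ldots,P_{N_s}$ of $F_s$, and put $\mathcal{G}_s:=P_1+\cdots+P_{N_s}$. Proposition~\ref{propo_construction_sym}, applied with $\mathcal{Q}=Q_s$ and $\mathcal{G}=\mathcal{G}_s$, then produces a divisor $\D_s$ of degree $n_s+g_s-1$ with $\dim\mathcal{L}(\D_s-Q_s)=0$ and $\dim\mathcal{L}(2\D_s-\mathcal{G}_s)=0$. By Riemann--Roch, since $\D_s-Q_s$ has degree $g_s-1$, the first vanishing is equivalent to $\D_s-Q_s$ being nonspecial, which forces $\D_s$ to be nonspecial as well, hence the evaluation map $\varphi_{\D_s,Q_s}:\mathcal{L}(\D_s)\to\F_{q^{n_s}}$ is onto; the second vanishing is exactly the injectivity of $Ev_{\mathscr P,\underline{u}}$ with $u_i=1$ and $d_i=1$ for all $i$. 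Therefore Corollary~\ref{theo_evalder_simple} gives
\begin{equation*}
\mus_q(n_s)\;\leq\;\sum_{i=1}^{N_s}\mus_q(1,1)\;=\;N_s\;=\;2n_s+g_s-1.
\end{equation*}

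Dividing by $n_s$ and passing to the limit,
\begin{equation*}
\ms_q\;\leq\;\liminf_{s\to\infty}\frac{\mus_q(n_s)}{n_s}\;\leq\;\liminf_{s\to\infty}\left(2+\frac{g_s-1}{n_s}\right)\;=\;2+\frac{2}{A(q)-1}\;=\;2\left(1+\frac{1}{A(q)-1}\right),
\end{equation*}
which is the desired estimate. The main technical obstacle is hidden inside Proposition~\ref{propo_construction_sym}: producing a divisor of \emph{optimal} degree $n_s+g_s-1$ that is simultaneously nonspecial and whose double has no sections vanishing on $\mathcal{G}_s$, despite the presence of $2$-torsion in the class group. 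This is exactly what the direct construction of \cite{randIJM,rand2D-G} buys at the price of the condition $N_1(F)>5g$, and it is the reason the theorem requires $A(q)>5$ rather than the weaker $A(q)>1$ that suffices in the asymmetric case (Theorem~\ref{STV_m_repare}).
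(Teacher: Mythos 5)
Your proof is correct and follows essentially the route the paper intends for this result: you combine the symmetric CCMA (Corollary~\ref{theo_evalder_simple} with $d_i=u_i=1$) with the direct construction of Proposition~\ref{propo_construction_sym} (whose hypothesis $N_1>5g$ is exactly where $A(q)>5$ enters) applied along a sequence of function fields attaining the Ihara constant, yielding $\mus_q(n_s)\leq 2n_s+g_s-1$ with $n_s/g_s\to (A(q)-1)/2$. This is precisely the argument behind the cited \cite[Theorem 6.3]{randJComp}, so nothing essential is missing beyond the routine remark that $\D_s$ may be moved within its linear equivalence class to avoid the supports of $Q_s$ and $\mathcal{G}_s$.
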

For $\Ms_q$, it reads:
\begin{theo}
\label{STV_Ms_repare}
Let $q=p^{2r}\geq 49$ be a square prime power. Then
\begin{equation}
\Ms_q\leq2\left(1+\frac{1}{\sqrt{q}-2}\right).
\end{equation}
\end{theo}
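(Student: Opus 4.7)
The plan is to adapt the proof of the asymmetric analogue (Theorem~\ref{STV_M_repare}) to the symmetric setting by combining the generalized CCMA with the direct construction of divisors of optimal degree. The key ingredient is Proposition~\ref{propo_construction_sym}, which produces a divisor $\D$ simultaneously satisfying the nonspeciality condition $i(\D-Q)=0$ and the zero-dimensionality condition $\dim\mathcal{L}(2\D-\mathcal{G})=0$ at optimal degree $N=2n+g-1$, provided $N_1(F/\F_q)>5g$. This last hypothesis, when combined with the Drinfeld--Vladut-type asymptotic $N_1/g\to\sqrt{q}-1$, is precisely what forces $\sqrt{q}-1>5$, i.e.\ $q\geq 49$.

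First I would invoke a dense family $(X_s/\F_q)_s$ of Shimura curves over $\F_q$ attaining the Drinfeld--Vladut bound, as provided by \eqref{DV} and the discussion of Section~\ref{courbes_Shimura}: the genera $g_s$ tend to infinity with $g_{s+1}/g_s\to 1$, and $N_1(X_s)/g_s\to\sqrt{q}-1\geq 6$. Hence $N_1(X_s)>5g_s$ for all $s$ large enough, so Proposition~\ref{propo_construction_sym} becomes applicable on the tail of the family. Now fix $\varepsilon>0$ and let $n$ be large. Using density, select a curve $X=X_s$ whose genus $g$ satisfies $g\leq 2n/(\sqrt{q}-2)+\varepsilon n$, and such that the actual ratio $N_1(X)/g$ is at least $\sqrt{q}-1-\varepsilon$. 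The existence of a degree-$n$ place $Q$ on $X$ then follows from the numerical criterion $2g+1\leq q^{(n-1)/2}(q^{1/2}-1)$ of Theorem~\ref{theo_evalder}, since $g$ grows linearly in $n$ while the right-hand side grows exponentially.

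Next, choose $N=2n+g-1$ rational places $P_1,\ldots,P_N$ on $X$ (possible because $N_1(X)\geq (\sqrt{q}-1-\varepsilon)g>5g>N$ for $n$ large enough, given $g=O(n)$ and $\sqrt{q}-1\geq 6$), and set $\mathcal{G}=P_1+\cdots+P_N$. Apply Proposition~\ref{propo_construction_sym} with $\mathcal{Q}=Q$ to produce a divisor $\D$ of degree $n+g-1$ with $\D-Q$ nonspecial and $2\D-\mathcal{G}$ zero-dimensional. Then Corollary~\ref{theo_evalder_simple} applied with $d_i=u_i=1$ (so that each $\mus_q(d_i,u_i)=1$) yields
\[
\mus_q(n)\leq N=2n+g-1\leq 2n\left(1+\frac{1}{\sqrt{q}-2}\right)+O(\varepsilon n).
\]
Dividing by $n$, taking $\limsup_n$, and letting $\varepsilon\to 0$ gives $\Ms_q\leq 2\bigl(1+\frac{1}{\sqrt{q}-2}\bigr)$.

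The main obstacle is the construction of the interpolation divisor at optimal degree $n+g-1$: this was precisely the gap in the Shparlinski--Tsfasman--Vladut argument discussed in Section~\ref{mM}, where the $2$-torsion contribution to the class group was overlooked. Proposition~\ref{propo_construction_sym} resolves this by a Pl\"ucker-style argument (Lemma~\ref{l(A+2P)=l(A)}) that bypasses the $2$-torsion count altogether, at the cost of the additional hypothesis $N_1(F/\F_q)>5g$. This is what is responsible for the strengthened assumption $q\geq 49$ in the symmetric version, as opposed to $q\geq 9$ in the asymmetric Theorem~\ref{STV_M_repare}. Everything else---the degree-$n$ place, the rational points supporting $\mathcal{G}$, and the density of the family needed to make the argument work for \emph{all} large $n$ (not merely infinitely many)---is secured by the Shimura-curve family from Section~\ref{courbes_Shimura}.
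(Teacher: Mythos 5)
Your overall strategy is exactly the one underlying the paper's treatment (the paper cites \cite[Theorem 6.4]{randJComp} and proves the generalization, Theorem~\ref{TheoA-Sym}(a), in the Appendix by the same route): a dense family of Shimura curves over $\F_q$ attaining the Drinfeld--Vl\u{a}du\c{t} bound, the direct construction of the divisor at optimal degree via Proposition~\ref{propo_construction_sym} (whose hypothesis $N_1>5g$ is indeed the source of $q\geq49$), and the symmetric CCMA of Corollary~\ref{theo_evalder_simple} with $d_i=u_i=1$, giving $\mus_q(n)\leq 2n+g-1$.

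There is, however, a genuine flaw in the step where you secure the $N=2n+g-1$ rational places. The chain $N_1(X)\geq(\sqrt{q}-1-\varepsilon)g>5g>N$ is false: with $g$ of order $2n/(\sqrt{q}-2)$ one has $5g\approx 10n/(\sqrt{q}-2)\leq 2n<N$ for every $q\geq49$, so $5g>N$ never holds in the relevant range. More seriously, your curve selection imposes only an \emph{upper} bound on the genus ($g\leq 2n/(\sqrt{q}-2)+\varepsilon n$), whereas the existence of $N$ rational places requires $N_1\geq 2n+g-1$, which together with $N_1\leq(\sqrt{q}-1)g$ forces $g\gtrsim 2n/(\sqrt{q}-2)$, i.e.\ a \emph{lower} bound on the genus; a low-genus curve in the family satisfies your constraints but has far too few points. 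The standard repair, which is exactly what the paper's Appendix does via criterion \eqref{eq:critsym}, is to define $s(n)$ as the smallest index with $N_1(X_{s(n)})\geq 2n+g_{s(n)}+1$; such an index exists because $\sqrt{q}-2>0$, the condition $N_1>5g$ then holds on the tail of the family since $N_1\geq 2n+g_{s(n)}+1\geq(\sqrt{q}-1-o(1))g_{s(n)}$ and $\sqrt{q}-1\geq6$, and the density hypothesis applied to the minimality of $s(n)$ yields $g_{s(n)}\leq\frac{2n}{\sqrt{q}-2}(1+o(1))$, from which $\mus_q(n)\leq 2n+g_{s(n)}-1$ gives the stated bound on $\Ms_q$. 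With that substitution (and the degree-$n$ place and support conditions handled as you indicate), the rest of your argument is correct and coincides with the paper's.
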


Combined with Lemma \ref{lemsymMqmq} and $\mus_q(2)=3$, this implies at once:
\begin{coro}
\label{coro_STV_sym_repare}
Leq $q\geq7$ be a prime or a nonsquare prime power. Then
\begin{equation}
\ms_q\leq3\left(1+\frac{1}{q-2}\right)
\end{equation}
and
\begin{equation}
\Ms_q\leq6\left(1+\frac{1}{q-2}\right).
\end{equation}
\end{coro}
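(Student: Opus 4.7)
The proof is a direct application of the two preceding theorems combined with the symmetric embedding lemma, so the plan is essentially mechanical. The key observation is that for $q \geq 7$ a prime or nonsquare prime power, $q^2 \geq 49$ is automatically a square prime power, which is exactly what is needed to apply Theorem~\ref{STV_Ms_repare}, and moreover $q^2$ has a sufficiently large Ihara constant to apply Theorem~\ref{STV_ms_repare}.

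First I would apply Lemma~\ref{lemsymMqmq} with $n=2$ to obtain the two inequalities
\begin{equation*}
\ms_q \leq \ms_{q^2} \cdot \frac{\mus_q(2)}{2} \qquad\text{and}\qquad \Ms_q \leq \Ms_{q^2} \cdot \mus_q(2).
\end{equation*}
Using $\mus_q(2) = 3$ (which is standard, being the symmetric bilinear complexity of multiplication in a degree $2$ extension), these become $\ms_q \leq \tfrac{3}{2}\ms_{q^2}$ and $\Ms_q \leq 3\,\Ms_{q^2}$.

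Next I would bound $\ms_{q^2}$ and $\Ms_{q^2}$ by invoking the two previous theorems. Since $q \geq 7$, we have $q^2 \geq 49$, so Theorem~\ref{STV_Ms_repare} applies and yields
\begin{equation*}
\Ms_{q^2} \leq 2\left(1 + \frac{1}{\sqrt{q^2}-2}\right) = 2\left(1 + \frac{1}{q-2}\right).
\end{equation*}
For the liminf bound, I would use the Drinfeld-Vladut bound $A(q^2) = q-1$ (attained since $q^2$ is a square prime power), which for $q\geq 7$ gives $A(q^2) \geq 6 > 5$, so Theorem~\ref{STV_ms_repare} applies and gives
\begin{equation*}
\ms_{q^2} \leq 2\left(1 + \frac{1}{A(q^2)-1}\right) \leq 2\left(1 + \frac{1}{q-2}\right).
\end{equation*}

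Finally, substituting these into the inequalities from the first step yields exactly
\begin{equation*}
\ms_q \leq 3\left(1 + \frac{1}{q-2}\right) \qquad\text{and}\qquad \Ms_q \leq 6\left(1 + \frac{1}{q-2}\right),
\end{equation*}
as claimed. There is no real obstacle here: the only thing to verify is that the numerical hypotheses of the two input theorems are satisfied at $q^2$, and the threshold $q\geq 7$ in the statement is precisely chosen to guarantee both $q^2\geq 49$ (hypothesis of Theorem~\ref{STV_Ms_repare}) and $A(q^2)\geq 6 > 5$ (hypothesis of Theorem~\ref{STV_ms_repare}).
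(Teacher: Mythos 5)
Your proposal is correct and follows essentially the same route as the paper: the corollary is stated there as an immediate consequence of Theorems~\ref{STV_ms_repare} and~\ref{STV_Ms_repare} combined with Lemma~\ref{lemsymMqmq} (applied with $n=2$) and $\mus_q(2)=3$, which is exactly your computation. Your verification of the hypotheses at $q^2$ (namely $q^2\geq49$ and $A(q^2)=q-1>5$) is the only point the paper leaves implicit, and you handle it correctly.
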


\vspace{1em}

%
%
%

In \cite{bachpi}, S. Ballet, J. Chaumine and J. Pieltant obtain bounds slightly less accurate than the bounds of the above results but for a slightly larger range of values for $A(q)$ and $q$.
They give the following propositions.

\begin{propo}\label{mqwithAq}
 Let $q$ be a prime power such that $A(q)>2$. Then
$$\ms_q \leq 2\left(1+\frac{1}{A(q)-2}\right).$$
\end{propo}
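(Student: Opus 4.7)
The plan is to apply the symmetric version of the Chudnovsky–Chudnovsky method (Corollary \ref{theo_evalder_simple}, specialized to $u_i=1$ for every $i$) to a sequence of function fields over $\F_q$ whose ratio of rational places to genus tends to $A(q)$.

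First, since $A(q)>2$, fix $\varepsilon>0$ small enough that $A(q)-\varepsilon>2$ and, by the very definition of the Ihara constant, choose a family $(F_s/\F_q)_{s\ge1}$ with genera $g_s\to\infty$ such that $N_1(F_s)\ge (A(q)-\varepsilon)\,g_s$ for all sufficiently large $s$. For each such $s$ I would pick an integer $n_s$ of order proportional to $g_s$, with $n_s\to\infty$; the numerical criterion at the end of Theorem~\ref{theo_evalder} (which reduces to $2g_s+1\le q^{(n_s-1)/2}(q^{1/2}-1)$) is then eventually satisfied, so there exists a place $Q_s$ of $F_s$ of degree $n_s$.

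Second, produce, by the cardinality argument of Chudnovsky–Ballet type recalled in Section~\ref{sym_methods} (inequality \eqref{suboptimal_degree}), a symmetric divisor $\D_s$ together with a set $\{P_1,\ldots,P_{N_s}\}$ of $N_s\le N_1(F_s)$ degree-one places of $F_s$ satisfying hypotheses (a) and (b) of Corollary~\ref{theo_evalder_simple} with $\D_1=\D_2=\D_s$. The degree inequality coming out of the cardinality estimate forces $N_s$ to be at most a definite linear combination of $n_s$ and $g_s$; Corollary~\ref{theo_evalder_simple} then yields $\ms_q(n_s)\le N_s$. Optimizing the ratio $n_s/g_s$ against the constraint $N_s\le N_1(F_s)\ge (A(q)-\varepsilon)g_s$, passing to the $\liminf$ to bound $\ms_q$, and finally letting $\varepsilon\to 0$ is then a routine computation that produces the announced fraction $2(1+1/(A(q)-2))$.

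The main obstacle — and the reason why the denominator here is $A(q)-2$, strictly weaker than the $A(q)-1$ appearing in Theorem~\ref{STV_ms_repare} — lies in ensuring the existence of the symmetric divisor $\D_s$ in the regime $2<A(q)\le 5$. As discussed in Sections~\ref{Bound2torsion} and \ref{construction_sym}, the sharper optimal-degree construction of Proposition~\ref{propo_construction_sym} is only available under the assumption $N_1(F_s)>5g_s$, and asymptotically this forces $A(q)>5$; the obstruction is precisely the $2$-torsion in the class group. For $2<A(q)\le 5$ one must therefore revert to the older Chudnovsky–Ballet argument, where the degree constraint on $\D$ is relaxed. The precise book-keeping of the constants in the resulting cardinality estimate — making sure that every loss in the degree count is tight and no slack is wasted — is where the delicate part of the proof resides, and is what produces the exact factor stated.
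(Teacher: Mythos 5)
Your overall strategy (apply the symmetric CCMA to a family of function fields with $N_1/g\to A(q)$, let $n$ grow proportionally to $g$, and optimize) is the right one, but the quantitative heart of the argument, your second step, does not deliver the stated constant. You take the existence of the divisor from the suboptimal condition \eqref{suboptimal_degree}, i.e.\ $\deg\mathcal{G}=N_s\geq 2n_s+2g_s-1$, and then conclude $\mus_q(n_s)\leq N_s$. But in that scheme $N_s$ is the number of evaluation places actually used, so it must be at \emph{least} $2n_s+2g_s-1$ (not ``at most a linear combination'', as you write), and under the constraint $N_s\leq N_1(F_s)\approx(A(q)-\varepsilon)g_s$ the best choice is $2n_s\approx(A(q)-2)g_s$, giving
$$\frac{\mus_q(n_s)}{n_s}\leq\frac{2n_s+2g_s-1}{n_s}\longrightarrow 2\left(1+\frac{2}{A(q)-2}\right),$$
which is the weaker constant of type (b) in Theorem \ref{TheoA-Sym}, not the announced $2\bigl(1+\frac{1}{A(q)-2}\bigr)$. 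No book-keeping of constants closes this gap: the loss is structural, because you let the entire surplus of rational places enter the evaluation divisor.

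The proof in \cite{bachpi} (as the paper indicates) instead rests on criterion 1) of Theorem \ref{theoprinc} with $a=0$, i.e.\ Ballet's Theorem 1.1 of \cite{ball1}: if $N_1>2n+2g-2$ and a place of degree $n$ exists, then $\mus_q(n)\leq 2n+g-1$. Here the evaluation divisor has the \emph{optimal} degree $2n+g-1$ of \eqref{optimal_degree}; the roughly $g$ extra rational places guaranteed by $N_1\geq 2n+2g-1$ are not evaluated at all — they only provide the room needed in the existence argument (a divisor $\D$ of degree $n+g-1$ with $i(\D-Q)=0$, plus a successive choice of $2n+g-1$ places avoiding base points), which is how one sidesteps the $2$-torsion difficulty without invoking Proposition \ref{propo_construction_sym} and its hypothesis $N_1>5g$. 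With this bound, $n_s\approx\frac{(A(q)-\varepsilon-2)g_s}{2}$ yields $\mus_q(n_s)/n_s\leq 2+(g_s-1)/n_s\to 2\bigl(1+\frac{1}{A(q)-\varepsilon-2}\bigr)$, and $\varepsilon\to0$ gives the proposition. Your closing discussion of why the denominator $A(q)-1$ of Theorem \ref{STV_ms_repare} needs $A(q)>5$ is correct as context, but the mechanism you substitute for it in the range $2<A(q)\leq5$ is not the one that produces the claimed factor.
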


\begin{coro}\label{coromq1}
Let $q=p^{2m}$ be a square prime power such that $q \geq 16$.
Then
$$\ms_{q}\leq 2\left(1+\frac{1}{\sqrt{q}-3}\right).$$
\end{coro}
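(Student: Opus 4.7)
The plan is to derive Corollary \ref{coromq1} as a direct specialization of Proposition \ref{mqwithAq} by inserting the value of the Ihara constant $A(q)$ in the square case. First I would recall that for any square prime power $q = p^{2m}$, the Drinfeld-Vladut bound is attained, giving
\begin{equation*}
A(q) = \sqrt{q} - 1.
\end{equation*}
This equality is classical: for $m \geq 1$, one can invoke either the Shimura modular curves used by Ihara and Tsfasman-Vladut-Zink (as mentioned in Section~\ref{courbes_Shimura}, see equation~\eqref{DV}), or the explicit first Garcia-Stichtenoth tower over $\F_{p^{2m}}$ discussed in Section~\ref{descent}; both give sequences of function fields achieving $\lim_{s\to\infty} N_1(F_s)/g_s = \sqrt{q}-1$, which combined with the Drinfeld-Vladut upper bound yields the announced value.

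Next I would check the hypothesis of Proposition~\ref{mqwithAq}. Under the assumption $q \geq 16$, one has $\sqrt{q} \geq 4$, whence $A(q) = \sqrt{q} - 1 \geq 3 > 2$. So Proposition~\ref{mqwithAq} applies and gives
\begin{equation*}
\mus_q \leq 2\left(1 + \frac{1}{A(q) - 2}\right) = 2\left(1 + \frac{1}{(\sqrt{q}-1) - 2}\right) = 2\left(1 + \frac{1}{\sqrt{q} - 3}\right),
\end{equation*}
which is precisely the stated bound.

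There is essentially no obstacle here: the corollary is a specialization, and the only content beyond Proposition~\ref{mqwithAq} is the input that $A(p^{2m}) = p^m - 1$, which is standard. The threshold $q \geq 16$ is exactly what is needed to ensure both the strict inequality $A(q) > 2$ (so that the denominator $\sqrt{q} - 3$ is positive and Proposition~\ref{mqwithAq} applies) and the positivity of the right-hand side; note that the weaker threshold $q \geq 9$ would yield $A(q) = 2$ and an undefined bound, while $q = 16$ gives $A(q) = 3$, the first square value for which the proposition produces a meaningful estimate.
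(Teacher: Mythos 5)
Your proposal is correct and follows exactly the intended derivation: Corollary \ref{coromq1} is obtained by substituting the known value $A(q)=\sqrt{q}-1$ for square prime powers (Ihara, Tsfasman--Vl\u{a}du\c{t}--Zink, or the Garcia--Stichtenoth tower) into Proposition \ref{mqwithAq}, with $q\geq16$ ensuring $A(q)\geq3>2$ so that the hypothesis holds and the denominator $\sqrt{q}-3$ is positive. Your remark that $q=9$ fails because $A(9)=2$ correctly explains the threshold, and nothing further is needed.
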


Note that this corollary slightly improves the range of the bound (\ref{chudmq}) proved by D.V. and G.V. Chudnovsky. 
Now in the case of arbitrary $q$, they obtain:

\begin{coro}\label{coromq2}
For any $q=p^m>3$,
$$\ms_{q}\leq 3\left(1+\frac{1}{q-3}\right).$$
\end{coro}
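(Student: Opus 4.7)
The plan is to derive the bound for arbitrary $q>3$ by base-field descent from Corollary \ref{coromq1}, applied to $q^2$. Since $q>3$ forces $q\geq 4$, the square prime power $q^2=p^{2m}$ satisfies $q^2\geq 16$, so Corollary \ref{coromq1} is in force and yields
$$\ms_{q^2}\leq 2\left(1+\frac{1}{\sqrt{q^2}-3}\right)=2\left(1+\frac{1}{q-3}\right).$$

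Next I would invoke Lemma \ref{lemsymMqmq} in the form $\ms_q\leq \mus_q(2)\cdot\ms_{q^2}/2$, obtained by taking $n=2$ in its right-hand inequality. The quantity $\mus_q(2)$ equals $3$ for every prime power $q$: the lower bound $\mus_q(2)\geq\mu_q(2)\geq 3$ is the Winograd--De Groote inequality of Theorem \ref{thm_wdg} with $n=2$, and the matching upper bound is realized by the usual Karatsuba-style $2$-term symmetric algorithm, which is already available as soon as $q\geq 2$ (this is precisely the equality case $n\leq q/2+1$ in Theorem \ref{thm_wdg}, and the algorithm it produces is symmetric). Substituting $\mus_q(2)/2=3/2$ and combining with the estimate on $\ms_{q^2}$ above gives
$$\ms_q\leq \frac{3}{2}\cdot 2\left(1+\frac{1}{q-3}\right)=3\left(1+\frac{1}{q-3}\right),$$
which is the claim.

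I do not expect a genuine obstacle here: the whole argument is a one-step transfer using the multiplicative tower inequality, with the constant $3/2$ being absorbed cleanly into the $2$ coming from Corollary \ref{coromq1}. The only things to double-check are purely arithmetic: that $q>3$ is exactly the condition needed both to make $q^2\geq 16$ (so that Corollary \ref{coromq1} applies) and to keep the right-hand side $3(1+1/(q-3))$ meaningful (finite, positive), and that the symmetric version of $\mu_q(2)=3$ is indeed realized by a symmetric algorithm so that one stays inside the framework of $\mus$ throughout. No new geometric or torsion-theoretic input is required beyond what Corollary \ref{coromq1} has already supplied.
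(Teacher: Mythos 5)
Your argument is correct and is precisely the intended derivation: apply Corollary \ref{coromq1} to the square $q^2\geq 16$ and descend via $\ms_q\leq\tfrac{3}{2}\ms_{q^2}$, i.e. Lemma \ref{lemsymMqmq} with $n=2$ and $\mus_q(2)=3$ (this is Corollary \ref{corotrivialmq}), so the factor $\tfrac{3}{2}$ turns the constant $2$ into $3$ while $\sqrt{q^2}-3=q-3$. No gaps; the only cosmetic point is that the inequality you need is the first one of Lemma \ref{lemsymMqmq} (the $\ms$ bound), not a ``right-hand'' part of a single chain.
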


Moreover, for $M^{sym}_q$ they obtain the same value for the same range than that of  $m^{sym}_q$:

\begin{propo}\label{newbound}
Let $q=p^{2m}$ be a square prime power such that $q \geq 16$. Then
\begin{equation}\label{formulenewbound}
\Ms_{q}\leq 2\left(1+\frac{1}{\sqrt{q}-3}\right).
\end{equation}
\end{propo}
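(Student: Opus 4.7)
The plan is to apply the symmetric Chudnovsky--Chudnovsky algorithm along a \emph{dense} asymptotically exact sequence of curves over $\F_q$ whose fractions $B_1/g$ converge to the Drinfeld--Vladut bound $\sqrt{q}-1$. The crucial difference from the proof of Proposition~\ref{mqwithAq} (which only bounds $\liminf$) is the density requirement: without it one only controls $\ms_q$, whereas density upgrades the bound to the $\limsup$ $\Ms_q$. Since $q=p^{2m}\geq 16$ is a square prime power, such a dense asymptotically optimal family $(X_s/\F_q)_{s\geq 1}$ is available from Shimura modular curves and their intertwinings as discussed in Section~\ref{courbes_Shimura}, with $g_s\to\infty$, $g_{s+1}/g_s\to 1$, and $B_1(X_s)/g_s\to\sqrt{q}-1$.

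First, given small $\epsilon>0$ and sufficiently large $n$, I would exploit density to choose $s=s(n)$ so that the genus $g=g_s$ is sandwiched as
\[
\frac{2n}{\sqrt{q}-3}\leq g\leq\frac{2n}{\sqrt{q}-3}(1+\epsilon).
\]
For this choice, the numerical criterion $2g+1\leq q^{(n-1)/2}(\sqrt{q}-1)$ from Theorem~\ref{theo_evalder} is easily met for $n$ large, yielding a place $Q$ of degree $n$ on $X_s$; moreover, asymptotically, $B_1(X_s)\geq (\sqrt{q}-1-\epsilon)g\geq 2n+2g-1$, which leaves enough rational places both for an evaluation set of size $N=2n+g-1$ and for the divisor construction below.

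Next, I would produce a divisor $\D$ on $X_s$ of degree $n+g-1$ such that $\D-Q$ is nonspecial and $2\D-(P_1+\cdots+P_N)$ is zero-dimensional, where $P_1,\ldots,P_N$ are distinct rational places avoiding $Q$. Whenever $\sqrt{q}-1>5$ (i.e.\ $q\geq 49$) the direct construction of Proposition~\ref{propo_construction_sym} applies out of the box. For the remaining small square values $q\in\{16,25,36\}$, where the hypothesis $N_1>5g$ of Proposition~\ref{propo_construction_sym} fails, I would instead use a cardinality argument refined to incorporate the $2$-torsion of the class group, in the spirit of Section~\ref{Bound2torsion}: the slack $B_1(X_s)-N\approx g$ combined with standard bounds on $|\mathcal{J}_{X_s}(\F_q)[2]|$ then suffices to produce such a $\D$.

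Applying Corollary~\ref{theo_evalder_simple} then yields
\[
\mus_q(n)\leq N=2n+g-1\leq 2n+\frac{2n}{\sqrt{q}-3}(1+\epsilon),
\]
and dividing by $n$, taking $\limsup$, and letting $\epsilon\to 0$ delivers $\Ms_q\leq 2(1+1/(\sqrt{q}-3))$. The main obstacle in this scheme is genuinely the density step: this is precisely where the original Shparlinski--Tsfasman--Vladut argument had a gap, as recalled in Section~\ref{SectionAlgCurvApproach}, and it is what distinguishes the present $\Ms_q$ estimate from Proposition~\ref{mqwithAq} on $\ms_q$, for which an asymptotically exact but possibly non-dense sequence reaching $A(q)$ would already suffice. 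The secondary difficulty is the symmetric divisor construction for those small square $q$ outside the reach of Proposition~\ref{propo_construction_sym}.
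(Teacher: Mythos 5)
Your overall frame — a \emph{dense} Drinfeld--Vladut-attaining family over $\F_q$ (Shimura curves, available since $q\geq16$ is a square), the genus scaling $g\approx 2n/(\sqrt{q}-3)$, and the observation that density is what upgrades the $\liminf$ statement of Proposition~\ref{mqwithAq} to the $\limsup$ — is exactly the strategy of the paper (the bound comes from \cite{bachpi}). The divergence, and the gap, is in the divisor step. For $q\geq49$ your use of Proposition~\ref{propo_construction_sym} is correct (indeed overkill: with that tool you could take $g\approx 2n/(\sqrt{q}-2)$ and prove the stronger Theorem~\ref{STV_Ms_repare}). But that proposition needs $N_1>5g$, i.e.\ $\sqrt{q}-1>5$, so it says nothing about $q=16,25,36$ — which are precisely the values this proposition exists to cover.

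Your fallback for those three values does not work as written. Keeping $\deg\mathcal{G}=N=2n+g-1$ (the optimal degree) and invoking a $2$-torsion-corrected counting puts you exactly in the Shparlinski--Tsfasman--Vladut situation: the number of classes $[\mathcal{D}]$ for which $\dim\mathcal{L}(2\mathcal{D}-\mathcal{G})>0$ carries the factor $|\mathcal{J}(\F_q)[2]|$, which with the ``standard'' bounds is as large as $2^{2g}$ (odd $q$) or $2^{g}$ (even $q$), and this exponential factor cannot be absorbed when $\deg\mathcal{G}$ exceeds $2n+g-2$ by only a constant; the $\approx g$ spare rational points never enter this count — what they buy is the option of enlarging $\deg\mathcal{G}$, but then the conclusion is no longer $\mus_q(n)\leq 2n+g-1$ and the genus must be re-optimized. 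If you do enlarge $\mathcal{G}$ \`a la Cascudo--Cramer--Xing (Theorem~\ref{th:ccx}) you land on Theorem~\ref{TheoA-Sym}(c),(d) with $r=l=1$, which at $q=25$ gives $2\left(1+\frac{1+2\log_{25}2}{4-1-2\log_{25}2}\right)\approx 3.11$, strictly worse than the claimed $2\left(1+\frac{1}{\sqrt{25}-3}\right)=3$; so at $q=25$ the route cannot be repaired by these means at all. The paper's proof avoids the optimal-degree machinery entirely: as recalled in the footnote to the $q=25$ entry of Proposition~\ref{newbound4}, one applies criterion 1) of Theorem~\ref{theoprinc} with $a=0$ (Ballet's suboptimal-degree criterion from \cite[Theorem 1.1]{ball1}), whose hypothesis $N_1>2n+2g-2$ is exactly what your genus choice already guarantees on the dense family, and which yields $\mus_q(n)\leq 2n+g-1$ for every square $q\geq16$, with no $2$-torsion issue and no $N_1>5g$ condition. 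Replacing your divisor step by that criterion turns your outline into the intended proof.
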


\begin{propo}\label{newbound2}
Let $q=p^m$ be a prime power with odd $m$ such that $q \geq 5$ .
Then
\begin{equation}\label{formulenewbound2}
\Ms_{q}\leq 3\left(1+\frac{2}{q-3}\right).
\end{equation}
\end{propo}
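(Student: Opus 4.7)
The plan is to derive this bound by applying the generalized CCMA (Corollary~\ref{theo_evalder_simple}) directly over $\F_q$ using a dense family of curves with many places of degree~$2$. To obtain such a family, I would start from a dense family of Shimura- or Garcia--Stichtenoth-type curves over $\F_{q^2}$ attaining the Drinfeld--Vladut bound $\beta_1=q-1$, and apply the descent technique of \cite{baro1,balbro}, which is known to work in all characteristics in the quadratic case $r=2$. This yields a dense family $(X_s/\F_q)_s$ of curves over $\F_q$ whose genera satisfy $g_s\to\infty$ and $g_{s+1}/g_s\to 1$, and whose asymptotic degree-$2$ ratio $B_2(X_s)/g_s$ approaches $\betad_2(q)=(q-1)/2$ in accordance with equation~(\ref{DV2}).

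For each sufficiently large $s$ and each $n$ in the admissible range, I would then apply Corollary~\ref{theo_evalder_simple} with $\mathcal{G}=P_1+\cdots+P_{N_2}$ a sum of degree-$2$ places of $X_s$ and $Q$ a place of degree $n$ on $X_s$ (whose existence, for large $n$ compared to $g_s$, follows from the Weil-type numerical criterion at the end of Theorem~\ref{theo_evalder}). The auxiliary divisor $D$, of degree $n+g_s-1$, would be produced by the classical cardinality argument of Chudnovsky--Chudnovsky as made explicit in \cite{ball1}: nonspecialness of $D-Q$ is ensured by existence results for nonspecial divisors of degree $g-1$ \cite{balb}, while the condition $\dim\mathcal{L}(2D-\mathcal{G})=0$ becomes automatic from the degree inequality $\deg(2D-\mathcal{G})<0$, that is, as soon as $2N_2\geq 2n+2g_s-1$. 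This uses the \emph{suboptimal} cardinality bound \eqref{suboptimal_degree}, which costs an extra factor of $g$ in the numerator but deliberately sidesteps the $2$-torsion obstructions of Section~\ref{Bound2torsion}.

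Corollary~\ref{theo_evalder_simple} then gives $\mus_q(n)\leq 3N_2$. Taking $N_2$ close to $B_2(X_s)\sim(q-1)g_s/2$ allows $n$ to be chosen up to $(2N_2-2g_s+1)/2\sim(q-3)g_s/2$, so that
\[
\frac{\mus_q(n)}{n}\,\leq\,\frac{3N_2}{n}\;\underset{s\to\infty}{\longrightarrow}\;\frac{3(q-1)}{q-3}\;=\;3\!\left(1+\frac{2}{q-3}\right).
\]
Density of the family is what promotes this from a $\liminf$-style estimate to a bound on $\Ms_q=\limsup\mus_q(n)/n$: for every $n$ large enough, one can select a step $s$ whose genus $g_s$ lies in the narrow admissible window around $2n/(q-3)$, so the previous estimate holds for this $n$, uniformly up to $o(1)$.

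The hard part will be producing a dense family with the required parameters $\betad_2(q)=(q-1)/2$ \emph{uniformly} for all odd $m\geq 1$ with $q=p^m\geq 5$. The Shimura construction over $\F_{q^2}$ does not always descend to nonsquare $\F_q$ in a fully controlled way (cf.\ the status of Conjectures~\ref{conjX}--\ref{conjY}), so the proof has to rely instead on the fully explicit descended and densified Garcia--Stichtenoth towers of \cite{ball2,baro1,balbro}, whose asymptotic invariants (genera, number of degree-$2$ places) have been explicitly computed. A secondary technical point is checking the existence conditions for $D$ and $Q$ for every $n$ in the gaps between successive genera, which is precisely where the density property of the tower is essential.
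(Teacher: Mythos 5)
Your CCMA skeleton is essentially the paper's: evaluation at degree-$2$ places only, $\mus_q(n)\leq 3N_2$, the suboptimal degree condition $2N_2\geq 2n+2g-1$ to make $\dim\mathcal{L}(2D-\mathcal{G})=0$ automatic and sidestep the $2$-torsion issue, and the density of the family to convert the estimate into a bound on the limsup $\Ms_q$; this is exactly how Proposition~\ref{newbound2} is obtained in \cite{bachpi} and how it is recovered here as the case $(r,l)=(2,1)$ of Theorem~\ref{TheoA-Sym}(b) (see the Appendix, criterion \eqref{eq:critbp}, where the nonspecial divisor comes from \cite{bariro} and the conclusion from \cite[Proposition 5.1]{randJComp}), using $\betad_2(q)=\frac{q-1}{2}$.

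The genuine gap is in your choice of the curve family, in two respects. First, you dismiss the Shimura construction on the grounds that it ``does not always descend to nonsquare $\F_q$''; but in the quadratic case no descent is needed at all: by Ihara \cite{ihar} and Tsfasman--Vl\u{a}du\c{t}--Zink \cite{tvz}, together with the density statement \cite[Claim p.~163]{shtsvl}, there exist \emph{dense} families of Shimura curves already \emph{defined over} $\F_q$ (for every prime power $q$) that are asymptotically optimal over $\F_{q^2}$, which is precisely the unconditional equality \eqref{DV2}, $\betad_2(q)=\frac{q-1}{2}$, used in the proof. Conjectures~\ref{conjX} and \ref{conjY} concern different situations (descent to the prime field, higher-degree constants such as $\betad_6$), not this one. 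Second, the fallback you propose does not work: the descended and densified Garcia--Stichtenoth towers of \cite{ball2}\cite{baro1}\cite{balbro} are \emph{not} dense in the sense of Definition~\ref{dense} --- the densification proceeds by Galois steps of degree $p$, so the ratio of consecutive genera stays close to $p$, not $1$. Consequently, for a general $n$ there is no step whose genus lies in your ``narrow admissible window around $2n/(q-3)$''; the overshoot by a factor up to $p$ is exactly why these towers only yield the uniform bounds of Theorem~\ref{Cq} with $p$-dependent correction terms (e.g.\ bound (6)), and they cannot give $\Ms_q\leq 3\bigl(1+\frac{2}{q-3}\bigr)$. With the dense Shimura families over $\F_q$ reinstated (and the rest of your argument kept as is, including the existence of a degree-$n$ place via the Weil-type criterion and of a nonspecial divisor of degree $g-1$, which is fine for $q\geq 5$), the proof goes through and coincides with the paper's.
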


\begin{rema}
For $q$ square, Bound (\ref{formulenewbound}) is better that Bound (\ref{formulenewbound2}) except for $q=16$. 
\end{rema}

When $q$ is a prime number, the uniform bounds of Proposition \ref{newboundpbz} obtained in \cite[Proposition 10]{bazy2} by S. Ballet and A. Zykin lead to the asymptotic symmetric complexity given in the following proposition:

\begin{propo}\label{newbound3}
Let $p \geq 5$ be a prime number.
Then
\begin{equation}\label{formulenewbound3}
\Ms_{p} \leq 3\left(1+\frac{\frac{4}{3}}{p-3}\right).
\end{equation}
\end{propo}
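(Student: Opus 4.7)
The asymptotic statement is intended as a straightforward consequence of the uniform bound for $\mus_p(n)$ established by Ballet and Zykin in \cite{bazy2} and recalled as Proposition~\ref{newboundpbz}. The plan is thus to quote that uniform bound and then pass to the limit.

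First, I would apply Proposition~\ref{newboundpbz}, which for every prime $p\geq 5$ supplies an inequality of the shape
\[
\mus_p(n)\;\leq\;3n\!\left(1+\frac{4/3}{p-3}\right)+\varepsilon_p(n),
\]
where $\varepsilon_p(n)=o(n)$ as $n\to\infty$ (in the references cited, $\varepsilon_p(n)$ is in fact a constant coming from the finitely many low-genus steps of the descended Garcia--Stichtenoth tower). Dividing by $n$ and invoking the definition
\[
\Ms_p=\limsup_{n\to\infty}\frac{\mus_p(n)}{n},
\]
the $\varepsilon_p(n)/n$ contribution vanishes in the limsup, so that we obtain
\[
\Ms_p\;\leq\;3\!\left(1+\frac{4/3}{p-3}\right),
\]
which is the asserted bound.

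The genuine content lies entirely in the prior uniform bound, not in the limsup operation itself. There, the main obstacle, which we inherit but do not re-prove, is to produce a dense, asymptotically exact family of curves defined over $\F_p$ (as opposed to over $\F_{p^2}$) with sufficiently many places of degree one and two; this is achieved by combining the descent of the densified Garcia--Stichtenoth tower from $\F_{p^2}$ to $\F_p$ (see Section~\ref{descent}) with the generalized CCMA of Corollary~\ref{theo_deg12evalder}, which permits the use of degree-two places at the uniform cost $\mus_p(2)=3$ per place. The constant $4/3$ in the numerator, strictly better than the $2$ appearing in Proposition~\ref{newbound2} for non-square~$q$, reflects precisely this direct use of the descended tower, rather than the cruder route via Lemma~\ref{lemsymMqmq} applied to the bound $\Ms_{p^2}\leq 2(1+1/(p-3))$ of Proposition~\ref{newbound}, which would yield the weaker estimate $\Ms_p\leq 3\cdot 2(1+1/(p-3))$.
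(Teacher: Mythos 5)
Your derivation is correct and is exactly the paper's route: Proposition~\ref{newboundpbz} is quoted as a black box, the bound is divided by $n$, and the sub-leading terms (the factor $\epsilon_p(n)=\left(\tfrac{2n}{p-3}\right)^{\alpha-1}\to0$ and the bounded additive terms) disappear in the limsup, giving $\Ms_p\leq 3\bigl(1+\tfrac{4/3}{p-3}\bigr)$ for all primes $p\geq5$ (including $p=11$, whose separate case in Proposition~\ref{newboundpbz} has the same leading term). Only your parenthetical attribution is slightly off: in \cite{bazy2} the correction is not a constant from the descended Garcia--Stichtenoth tower but the term $\epsilon_p(n)$ arising from prime-gap theorems of Hoheisel type (Baker--Harman--Pintz, Dudek) applied to dense families of modular curves, combined with descent techniques; this does not affect the validity of your limsup argument.
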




%
%
%


\vspace{1em}

The following theorem due to M. Rambaud in \cite{ramb1} generalizes essentially all the known formulas providing 
the current best symmetric upper-limit asymptotic bounds. 

\begin{theo}\label{TheoA-Sym}
Let $q$ a prime power and $r\geq 1$, $l\geq 1$ be two positive integers. Then, as long as the respective denominators are positive, we have:
\begin{enumerate}
\item[(a)] if $r=1$ and $q$ is such that $\betad_1(q)>5$
$$\Ms_q\leq \frac{2\mus_q(r,l)}{rl} \left(1+\frac{1}{rl\betad_r(q)-1}\right). $$
\item[(b)]
$$\Ms_q\leq \frac{2\mus_q(r,l)}{rl} \left(1+\frac{2}{rl\betad_r(q)-2}\right).$$
\item[(c)] if $2|q$
$$\Ms_q\leq \frac{2\mus_q(r,l)}{rl} \left(1+\frac{1+\log_q(2)}{rl\betad_r(q)-1-\log_q(2)}\right).$$
\item[(d)] if $2 \nmid q$
$$\Ms_q\leq \frac{2\mus_q(r,l)}{rl} \left(1+\frac{1+2\log_q(2)}{rl\betad_r(q)-1-2\log_q(2)}\right).$$
\end{enumerate}
\end{theo}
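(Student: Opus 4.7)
The plan is to apply the symmetric version of the generalized CCMA of Theorem~\ref{theo_evalder} (with $\D_1=\D_2=\D$) to the following data. Fix $\delta>0$ and pick a dense asymptotically exact family $(X_s)$ over $\F_q$ with $\beta_r(\mathcal{X})\geq\betad_r(q)-\delta$. For each large $n$, select a curve $X=X_s$ of genus $g=g_s$, a place $Q$ of degree $n$ on $X$ (whose existence is guaranteed by a Hasse--Weil estimate as soon as $q^{n/2}$ dominates $g_s$), a set of $N$ places of degree $r$ each assigned multiplicity $l$, and a divisor $\D$ satisfying both conditions of the CCMA. The algorithm then outputs
$$\mus_q(n)\leq N\cdot\mus_q(r,l),$$
and the whole task reduces to minimizing $N$. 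The density hypothesis $g_{s+1}/g_s\to 1$ is what enables a genuine $\limsup$ bound rather than a mere $\liminf$ one, in the spirit of the Shparlinski--Tsfasman--Vladut strategy.

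In each of the four cases one shows that $\D$ can be chosen with $Nrl\geq 2n+c\,g+O(1)$ for a constant $c$ depending on the construction:
\begin{itemize}
\item[(a)] When $\betad_1(q)>5$, apply the direct construction of Proposition~\ref{propo_construction_sym}, yielding $\D$ of optimal degree $n+g-1$; here $c=1$.
\item[(b)] Without any torsion control, take $\deg\D=n+g-1$ and force $\deg(2\D-\mathcal{G})<0$, which makes condition~(b) of the CCMA automatic regardless of $2$-torsion, while condition~(a) is ensured by the classical cardinality argument of Chudnovsky--Chudnovsky; this gives $c=2$.
\item[(c)--(d)] Apply the repaired Shparlinski--Tsfasman--Vladut cardinality argument to both conditions. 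The number of divisor classes violating condition~(b) is at most $A_{2\deg\D-Nrl}(F)\cdot|J_F[2](\F_q)|$, the $2$-torsion factor accounting for the fibres of the squaring map on $\mathrm{Pic}^0(F)$. Using the classical bounds $|J_F[2]|\leq 2^g$ in characteristic two and $|J_F[2]|\leq 2^{2g}$ in odd characteristic, together with $h(F)$ of order $q^g$, one obtains $c=1+\log_q 2$ and $c=1+2\log_q 2$ respectively.
\end{itemize}

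Once $\D$ is produced, the final step balances $Nrl\geq 2n+cg$ against $N\leq\beta_r(X_s)g+o(g)=(\betad_r(q)-\delta)g+o(g)$. The optimal choice $g\sim 2n/(rl\betad_r(q)-c)$ yields
$$\frac{\mus_q(n)}{n}\leq\frac{2\mus_q(r,l)}{rl}\Bigl(1+\frac{c}{rl\betad_r(q)-c}\Bigr)+o(1),$$
and passing to the $\limsup$ as $n\to\infty$, then letting $\delta\to 0$, gives the announced bound on $\Ms_q$.

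The main obstacle is the $2$-torsion management in cases~(c) and~(d): this is precisely the gap in the original Shparlinski--Tsfasman--Vladut proof detected by Cascudo--Cramer--Xing, and its repair demands the characteristic-sensitive bounds on $|J_F[2](\F_q)|$ together with a careful counting of effective divisor classes. Case~(a) bypasses this issue altogether via the direct construction (at the cost of requiring $\betad_1(q)>5$), while case~(b) sidesteps it by insisting on a strictly negative-degree $2\D-\mathcal{G}$ (at the cost of a weaker constant). A subsidiary technical point, kept implicit above, is that places $Q$ of degree exactly $n$ can be located in the family $(X_s)$ with the prescribed growth rate of $g_s$; this follows from combining the density of the family with a standard Hasse--Weil count.
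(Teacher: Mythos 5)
Your proposal follows essentially the same route as the paper's appendix proof: a dense family of curves realizing $\betad_r(q)$, the symmetric generalized CCMA with $N$ places of degree $r$ and multiplicity $l$ giving $\mus_q(n)\leq N\,\mus_q(r,l)$, the divisor obtained by the direct construction in case (a), by forcing $\deg(2\D-\mathcal{G})<0$ in case (b), and by the $2$-torsion--corrected cardinality argument with $|J(\F_q)[2]|\leq 2^{g}$ (resp. $2^{2g}$) in cases (c)--(d), followed by the same density-based balancing of $g$ against $n$ that upgrades a liminf bound to a bound on $\Ms_q$. The only deviations are cosmetic and harmless for the asymptotic constants: the paper shifts $\deg\D$ by an $O(1)$ amount (using non-special divisors of degree about $g+3$ from Ballet--Ritzenthaler--Rolland, since non-special divisors of degree exactly $g-1$ need not exist for $q=2,3$) and works with the bound $A_i/h\leq g/\bigl(q^{\,g-i-1}(\sqrt{q}-1)^2\bigr)$ rather than the heuristic ``$h$ of order $q^g$''.
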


\begin{rema} 
In comparison to the other known results : 
\begin{itemize}
\item Bound (a) encompasses the upper-limit bounds of \ref{STV_m_repare} \ref{coro_STV_repare}, where it adds multiplicities of evaluation. This additional tool was introduced in \cite{arna1} and improved by \cite{ceoz}, then by \cite[Lemma 3.4]{randJComp}; 
\item Bound (b) allows evaluation on points of arbitrary degree compared to \cite[Proposition 11]{bachpi};
\item Bounds (c) and (d) allow evaluation on points of odd degree $r$ in \cite[Theorem 5.18]{cacrxi2}, and adds multiplicities of evaluation. Also, instead of using the formula $\betad_r(q)=(\sqrt{q^r}-1)/r$ in loc. cit., which is unproven in the general case, they are replaced here by $\betad_r(q)$. Notice that \emph{bounds (b) and (c) give stricly better numerical values than Proposition \ref{newbound2} for all values of $q$} for which Proposition \ref{newbound2} holds\footnote{Proposition \ref{newbound2} is let for the simplicity of its expression.}. Indeed, it suffices to use $r=2$ (and $l=1$), and to use the known value (\ref{DV2}) of $\betad_2(q)$ in Section \ref{CurveChoice}. 
\end{itemize}
\end{rema}

The following bounds are deduced from theorem \ref{TheoA-Sym}, except for $q=25$. We indicate the criterions (a) (b), etc. from which they are deduced, and the parameters $(r,l)$ used. The values $\betad_r(q)$ are directly taken from the known values given in Section \ref{courbes_Shimura}.


We detail how the upper bounds of the $\mus_q(r,l)$ are infered, because many where not directly published. Because of their interest, these bounds will be summarized in Section \ref{mus}.
To obtain these upper bounds we often use Formula (58) in \cite[Lemma 3.2]{randJComp} given by Inequality (\ref{IneqFormRecur}) in Section \ref{CurrentCCMA}:
\begin{equation} \label{eq:majomun} \mus_q(r,l)\leq \mus_{q^r}(1,l)\mus_{q}(r)\end{equation}
in particular
$$\mus_q(2,2)\leq \mus_{q^2}(1,2)\mus_{q}(2)\leq 3\times 3=9$$
(where the last two values are actually both equal to $3$, as shown by S. Winograd.

The biggest emphasis must be put on the following upper bound:
$$\mus_q(2,5)\leq 30$$
which is deduced from formula \eqref{eq:majomun} and from the upper bound:
\begin{equation} \label{eq:newmu4} \mus_4(1,5)\leq 10 \end{equation}
which was only published in \cite[Table 2]{rambWaifi}, in the justification of entry (1,10). It is regrettable that this record bound was not more emphasized in \cite{rambWaifi}: this has been repaired in \cite[Appendix §2.3]{ramb1}, where an explicit formula attaining this bound is given. Even more regrettable, the entry for (1,10) in the loc cit \cite[Table 1 \& Table 2]{rambWaifi} is grossly false. One should not read $\mus_q(1,10)\leq 30$ but instead $\mus_q(2,5)\leq 30$, as deduced from formula \eqref{eq:majomun} above. This was corrected in \cite[Table 3.1]{ramb1}. The error in \cite[Table 1 \& Table 2]{rambWaifi} comes from a grossly wrong application of formula \eqref{eq:majomun}.

Let us determine the values of the quantities $\mus_q(r,l)$ and $\mu_q(r,l)$ required in order to obtain Proposition \ref{newbound4}. 
All these values will be summarized in Sections \ref{mus} and \ref{mu}.

For $q=2$: from (b) with $(r,l)=(2,5)$ with $\mus_q(2,5)\leq 30$ as emphasized above.

For $q=3$:  (b) $(r,l)=(2,3)$ with $$\mu_3(2,3)\leq \mus_9(1,3)\mus_3(2,1)\leq \mus_3(1,3)\mus_3(2,1)\leq 5\times 3=15$$ where the 
latter, $3$, is from Karatsuba and the former, $5$, from \cite[Table 1 col. (2.4)]{ceozMod} (note that $5$ is actually \emph{equal} 
to the asymmetric complexity, by \cite[Table 3]{badeeszi}).

For $q=4$: (c) $(r,l)=(2,2)$ with $\mu_4(2,2)\leq 8 $ from \cite[(88)]{randJComp} (which, as a side remark, we even claim to be an equality, as follows from an unpublished exhaustive search performed while working on \cite[§1]{rambWaifi}). 

For $q=5$: (d) $(r,l)=(2,2)$ with $\mu_5(2,2)\leq 8 $ (\cite[(88)]{randJComp}).

For $q=7$: (d) $(r,l)=(2,1)$\footnote{Let us recall that $\mu_q(2,1)=\mu_q(2)=3$.}. 

For $q=8$: (c) $(r,l)=(2,1)$.

For $q=9$:  (d) $(r,l)=(2,1)$.

For $q=11$: (d) $(r,l)=(2,1)$.

For $q=25$  apply Proposition \ref{newbound}  obtained in \cite[Proposition 2]{bachpi}. 
\footnote{Notice that the authors did not apply themselves their bound to $q=25$, because it gives a higher value than the one from \cite{cacrxi2}: they did not know at the time that this latter bound was not actually proved. Note also that this bound is obtained by using the criterium $1)$ in \ref{theoprinc} with $a=0$, obtained in \cite[Theorem 1.1]{ball1}.}. 

\begin{propo}\label{newbound4}
$$\Ms_{2}\leq 10,$$
$$\Ms_3\leq 7.5,$$
$$\Ms_4\leq 5.33,$$
$$\Ms_{5}\leq 5.21,$$
$$\Ms_{7}\leq 4.08,$$
$$\Ms_{8}\leq 3.71,$$
$$\Ms_{9}\leq 3.77,$$
$$\Ms_{11}\leq 3.56,$$
$$\Ms_{25}\leq 3.$$
\end{propo}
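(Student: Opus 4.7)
The plan is to treat each of the nine inequalities as a separate application of either Theorem \ref{TheoA-Sym} (for all cases except $q=25$) or Proposition \ref{newbound} (for $q=25$), using the tabulated values of $\betad_r(q)$ from Section \ref{courbes_Shimura} together with the upper bounds on $\mus_q(r,l)$ spelled out in the paragraphs immediately preceding the statement. In each case, the recipe is: choose the parameters $(r,l)$ indicated in the text, check that the corresponding denominator $rl\,\betad_r(q)-\varepsilon$ (with $\varepsilon$ depending on which of (a)--(d) is invoked) is positive, and then substitute.

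More concretely, I would proceed case by case. For $q=2$, apply (b) with $(r,l)=(2,5)$, using $\betad_2(2)=1/2$ (from \eqref{DV2}) and the key bound $\mus_2(2,5)\leq 30$; this latter is itself obtained from the recursive inequality \eqref{eq:majomun}, namely $\mus_2(2,5)\leq \mus_4(1,5)\cdot\mus_2(2)\leq 10\cdot 3=30$, where $\mus_4(1,5)\leq 10$ is the record bound from \cite[Table 2]{rambWaifi} (corrected in \cite[Appendix §2.3]{ramb1}) and $\mus_2(2)=3$ is Karatsuba. For $q=3$, apply (b) with $(r,l)=(2,3)$, using $\betad_2(3)=1$ and $\mus_3(2,3)\leq 15$ obtained from \eqref{eq:majomun} as $\mus_3(2,3)\leq \mus_9(1,3)\mus_3(2)\leq \mus_3(1,3)\mus_3(2)\leq 5\cdot 3=15$. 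For $q=4$, apply (c) with $(r,l)=(2,2)$ and $\mu_4(2,2)\leq 8$ from \cite[(88)]{randJComp}. For $q=5$ apply (d) with $(r,l)=(2,2)$ and the analogous bound $\mu_5(2,2)\leq 8$. For $q=7,9,11$ apply (d) with $(r,l)=(2,1)$, recalling $\mus_q(2,1)=\mus_q(2)=3$; and for $q=8$ apply (c) with the same parameters. Finally, for $q=25$, invoke Proposition \ref{newbound} directly, which gives $\Ms_{25}\leq 2(1+1/(5-3))=3$.

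The main obstacle is really bookkeeping rather than mathematics: one must keep track of which variant (a)--(d) of Theorem \ref{TheoA-Sym} is available under which arithmetic condition on $q$ (odd vs.\ even characteristic, and whether $\betad_1(q)>5$), and one must feed in the correct bounds on $\mus_q(r,l)$. The most delicate input is $\mus_4(1,5)\leq 10$, which is obtained by an explicit construction and was for some time incorrectly reported in the literature; I would verify it against the explicit formula given in \cite[Appendix §2.3]{ramb1}. Once these ingredients are in place, each of the nine numerical bounds reduces to a direct arithmetic check.

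With the inputs fixed, the proof concludes by simple substitution. For example, the $q=2$ case yields
\[
\Ms_2\leq \frac{2\cdot 30}{10}\left(1+\frac{2}{10\cdot\tfrac12-2}\right)=6\left(1+\tfrac{2}{3}\right)=10,
\]
and the remaining eight cases are handled analogously. No further geometric input beyond $\betad_r(q)$ and no further complexity input beyond the listed $\mus_q(r,l)$ bounds is needed.
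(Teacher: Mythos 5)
Your proposal takes essentially the same route as the paper's own argument: the paper likewise derives each bound from Theorem \ref{TheoA-Sym} with exactly the criteria and parameters you list ((b) with $(r,l)=(2,5)$ for $q=2$, (b) with $(2,3)$ for $q=3$, (c) resp.\ (d) with $(2,2)$ for $q=4$ resp.\ $q=5$, (c)/(d) with $(2,1)$ for $q=7,8,9,11$), the same complexity inputs obtained via inequality \eqref{IneqFormRecur} (in particular $\mus_2(2,5)\leq\mus_4(1,5)\mus_2(2)\leq 30$ with $\mus_4(1,5)\leq 10$, and $\mu_3(2,3)\leq 15$, $\mu_4(2,2)\leq 8$, $\mu_5(2,2)\leq 8$), the known values of $\betad_2(q)$ from Section \ref{courbes_Shimura}, and Proposition \ref{newbound} for $q=25$. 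So your proof is correct and coincides with the paper's.
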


These previous asymptotic bounds are the best published current numerical ones in the symmetric case\footnote{\samepage 
These bounds improve the following bounds: $\Ms_{2}\leq \frac{1035}{68}\simeq 15.23$ and 
$\Ms_3\leq \frac{1933}{250}\simeq 7.74$, obtained for $q=2$ and for $q=3$ in \cite[Theorem 4.9]{bapi2} (cf. also \cite[Theorem 4.9]{bapi2bis}) and for $q=4$  in \cite[Theorem 1.6 (i)]{bapirasi}: 
$\Ms_{4}\leq \frac{237}{39}\simeq 6.08$, which already improved the old following results : 
$\Ms_{2}\leq \frac{477}{26}\simeq 18.35$ obtained in \cite[Theorem 4.1]{bapi} 
and the old result $\Ms_{3}\leq 27$ obtained from \cite[Remark of Corollary 3.1]{ball3}.}. 

Now, if equation \eqref{eq:thB} did hold: $\betad_6(3)=\frac{3^3-1}{6}=13/3$, as would be implied e.g. by Conjecture \ref{conj:recmodp}, then applying criterion (b) to (6,1), using $\mus_3(6,1)\leq 15$ from \cite[table 1]{ceoz}, would yield $\Ms_3\leq \dfrac{65}{12}\simeq 5.41$. 
 And likewise for the couple of other bounds mentionned in \cite[Table 2.2]{ramb1} on the two lines named "Adding theorem B". Similarly, conjectures \ref{conjX}, \ref{conjY} and \ref{conjZ} would imply the bounds on the corresponding lines of \cite[Table 2.2]{ramb1}.

Then, using the general quantities linked to the $2$-torsion (cf. Section \ref{Bound2torsion}), I. Cascudo, R. Cramer, and C. Xing in\cite[Theorem 6.27]{cacrxi2} (cf. also \cite{cacrxi}) obtain the following general result:

\begin{theo}
Let $\F_q$ be a finite field. 
If there exists a real number ${a \leq A(q)}$ with ${a \geq 1+J_2(q,a)}$, then
$$m_q^{sym} \leq 2\left(1+\frac{1}{a-J_2(q,a)-1}\right).$$
In particular, if $A(q) \geq 1-J_2(a,A(q))$, then
$$m_q^{sym} \leq 2\left(1+\frac{1}{A(q)-J_2(q,A(q))-1}\right).$$
\end{theo}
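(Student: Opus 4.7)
The plan is to deduce this bound via the symmetric form of the original CCMA (Theorem~\ref{chudchud}, which produces symmetric algorithms) applied along a suitable asymptotic family of function fields over $\F_q$, the interpolating divisor being constructed through the Riemann--Roch system cardinality argument of \cite[Theorem~3.2]{cacrxi2} recalled just above. The decisive point, which the original Shparlinski--Tsfasman--Vladut argument had overlooked, is that the doubling map on the degree-zero class group is $|\mathcal{J}_F[2]|$-to-one; hence the count of classes $[D]$ with $\dim\mathcal{L}(2D-\mathcal{G})>0$ carries an extra factor $|\mathcal{J}_F[2]|$, which is asymptotically governed by the torsion-limit $J_2(q,a)$.

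Concretely, fix $\epsilon>0$ and use Definition~\ref{def_torsion-limit} together with the assumption $a\le A(q)$ to produce a family $\mathcal{F}=(F_k/\F_q)_{k\ge 1}$ of function fields with $g_k\to\infty$, $N_1(F_k/\F_q)/g_k\to a^*\ge a$, and $\log_q|\mathcal{J}_{F_k}[2]|/g_k\to j^*\le J_2(q,a)+\epsilon$. On each $F_k$, choose a place $Q$ of degree $n=n_k$ (whose existence for $n_k$ growing linearly in $g_k$ follows from Weil-type bounds on the number $B_n(F_k)$ of places of degree $n$), a set $\mathcal{P}=\{P_1,\ldots,P_N\}$ of $N=N_k$ distinct rational places disjoint from $Q$, and look for a divisor $D$ of degree $s=n+g_k-1$ satisfying the two hypotheses of Theorem~\ref{chudchud}: $i(D-Q)=0$, which ensures surjectivity of $\varphi_{D,Q}$, and $\dim\mathcal{L}(2D-\mathcal{G})=0$ with $\mathcal{G}=P_1+\cdots+P_N$, which ensures injectivity of $Ev_{\mathcal{P}}$. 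Rewriting the first condition as $\dim\mathcal{L}(K+Q-D)=0$ and applying \cite[Theorem~3.2]{cacrxi2} to the Riemann--Roch system with $(m_1,\mathcal{Y}_1)=(-1,K+Q)$ and $(m_2,\mathcal{Y}_2)=(2,-\mathcal{G})$, the existence of such a $D$ is guaranteed as soon as
\begin{equation*}
h_{F_k}\;>\;A_{g_k-1}\;+\;A_{2s-N}\cdot|\mathcal{J}_{F_k}[2]|.
\end{equation*}

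It then remains to optimize $n_k$ and $N_k$ subject to this cardinality condition and to $N_k\le N_1(F_k/\F_q)$. Using the sharp asymptotic behavior of $h_{F_k}$ and of the $A_m$ (through the $L$-polynomial of $F_k$), together with the estimate $|\mathcal{J}_{F_k}[2]|=q^{(j^*+o(1))g_k}$, Theorem~\ref{chudchud} yields $\mus_q(n_k)\le N_k$, and a direct computation shows that the ratio $N_k/n_k$ tends, as $k\to\infty$ and $\epsilon\to 0$, to $2\bigl(1+1/(a-J_2(q,a)-1)\bigr)$. Passing to the $\liminf$ in $n$ gives the first inequality, and the ``in particular'' statement follows by specializing to $a=A(q)$. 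The main obstacle is this last numerical optimization: one has to exploit the detailed shape of $A_m$ in the critical range $m\sim g_k$ (not merely crude upper bounds) and balance the two summands of the cardinality inequality carefully against $h_{F_k}$, so as to extract precisely the stated constant rather than a weaker variant.
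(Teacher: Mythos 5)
Your overall strategy is the natural one and is indeed the strategy behind the cited result (note that the survey itself gives no proof of this statement: it is quoted from \cite[Theorem 6.27]{cacrxi2}): apply the symmetric CCMA of Theorem~\ref{chudchud} along a family realizing simultaneously the Ihara ratio $a$ and the torsion exponent $J_2(q,a)$ of Definition~\ref{def_torsion-limit}, and produce the divisor $D$ via the Riemann--Roch system criterion recalled in Section~\ref{Bound2torsion}. Your translation of the two CCMA conditions into the system $(m_1,\mathcal{Y}_1)=(-1,K+Q)$, $(m_2,\mathcal{Y}_2)=(2,-\mathcal{G})$, and your insistence that the factor $|\mathcal{J}_{F}[2]|$ multiplies the second term, are exactly right; this is the same mechanism used in the Appendix to prove the related worst-case statement, Theorem~\ref{th:ccx}.

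The genuine gap is precisely in the step you dispose of with ``a direct computation shows that the ratio tends to $2\bigl(1+1/(a-J_2(q,a)-1)\bigr)$.'' Carrying out that computation with the only estimate available in this method, namely $A_i/h\leq g/\bigl(q^{g-i-1}(\sqrt{q}-1)^2\bigr)$ from Proposition~\ref{prop:majoccx}, the solvability condition $h>A_{2g-2+n-s}+|\mathcal{J}_F[2]|\,A_{2s-N}$ forces (after taking $s=n+g-1+O(\log g)$, a slack you need anyway, since at $s=n+g-1$ exactly the term $A_{g-1}$ alone need not be smaller than $h$) the inequality $N\geq 2n+\bigl(1+J_2(q,a)+o(1)\bigr)g$; optimizing under the constraint $N\leq N_1(F_k)\approx ag$ then yields $N/n\to 2\bigl(1+\tfrac{1+J_2(q,a)}{a-J_2(q,a)-1}\bigr)$, i.e.\ the torsion exponent appears in the numerator as well, exactly as it does in the bounds the paper obtains by this very counting method (Theorem~\ref{TheoA-Sym}(c),(d), where the worst-case exponent $\log_q 2$ sits in both numerator and denominator). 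Your sketch contains no mechanism for removing $J_2(q,a)$ from the numerator, and sharper information on $A_m$ in the range $m\sim g$ cannot by itself do it, because every estimate here is relative to $h$ and the exponential rate $q^{m-g}$ is what it is: reaching the stated constant would amount to solving the system at essentially the torsion-free optimal degree $N\approx 2n+g$ while only shrinking $n$ to $\tfrac{(a-1-J_2(q,a))}{2}g$, which requires an additional idea (for instance exploiting the freedom in the choice of the evaluation set, i.e.\ of the class of $\mathcal{G}$ modulo $2\,\mathrm{Cl}$, or whatever device is used in \cite{cacrxi2}) that is absent from your argument. As written, your proof establishes only the weaker bound with numerator $1+J_2(q,a)$, not the stated one.
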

Actually, Cascudo, Cramer and Xing stated their result in terms of $m_q$,
not of $m_q^{sym}$(cf. footnote \ref{symornot} Section \ref{mM}).
Here we stated it in terms of $m_q^{sym}$ because, as already explained,
the $2$-torsion really enters the play only when we restrict to symmetric algorithms.

In order to be useful, this result should be combined with upper bounds on the torsion-limit.
Some upper-bounds of this sort can be easily deduced from Weil's classical results on the torsion in Abelian varieties.
However, Cascudo, Cramer and Xing obtain a spectacular improvement using the Deuring-Shafarevich theorem.
This allows them to give an upper-bound on the $2$-torsion-limit of certain explicit towers (such as the Garcia-Stichtenoth tower),
as well as the following general result \cite[Theorem 2.3(iii)]{cacrxi2}:

\begin{theo}
Let $q=p^{2t}$ be an even power of a prime $p$.
Then we have
$$J_p(q,\sqrt{q}-1)\leq\frac{1}{(\sqrt{q}+1)\log_p(q)}.$$
\end{theo}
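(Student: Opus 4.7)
My strategy is to exhibit an explicit family attaining the asserted bound. Since $q=p^{2t}$, I work with a $p$-power-degree Galois tower of function fields $(F_s)$ over $\F_q$ which attains the Drinfeld--Vladut bound $\sqrt{q}-1$; the classical second Garcia--Stichtenoth tower over $\F_{\ell^2}$ with $\ell=p^{t}$ is the natural candidate, since every step $F_{s+1}/F_s$ is a composition of Artin--Schreier extensions of $p$-power degree and closed formulas are available both for the genera $g_s$ and for the numbers of rational places $N_1(F_s)$, the latter giving $N_1(F_s)/g_s \to \sqrt{q}-1$, so the family is admissible in the definition of $J_p(q,\sqrt{q}-1)$.

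Rather than estimating $|J_{F_s}[p]|$ directly, I use the standard inequality
$$|J_{F_s}[p](\F_q)|\leq p^{\gamma_{F_s}},$$
where $\gamma_{F_s}$ denotes the $p$-rank (Hasse--Witt invariant) of the Jacobian of $F_s$. Hence
$$\frac{\log_p |J_{F_s}[p]|}{g_{F_s}}\leq \frac{\gamma_{F_s}}{g_{F_s}},$$
and it is enough to bound the ratio on the right asymptotically. The key tool is the Deuring--Shafarevich formula: for any Galois $p$-cover $F'/F$ of degree $p^n$ of function fields in characteristic $p$,
$$\gamma_{F'}-1=p^{n}(\gamma_F-1)+\sum_{P}(e_P-1),$$
where $P$ runs over the places of $F'$ ramified over $F$ and $e_P$ is the ramification index. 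This is the $p$-rank analogue of Riemann--Hurwitz and applies directly to our tower.

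Applying it inductively along the tower and using the explicit description of the wildly ramified places in the Garcia--Stichtenoth construction, one telescopes to a closed formula for $\gamma_{F_s}$ in terms of the degrees $[F_s:F_1]$ and the ramification data. The same ramification data appears in Riemann--Hurwitz (with different exponents) as the source of the genus growth; since at each wildly ramified place the different exponent is strictly larger than the corresponding $e_P-1$ contribution to $\gamma_{F_s}$, dividing one formula by the other produces an asymptotic ratio much smaller than one. A careful accounting, using the fact that each step has degree $\ell=\sqrt{q}=p^{t}$ and that $\log_p q=2t$, shows that
$$\limsup_{s\to\infty}\frac{\gamma_{F_s}}{g_{F_s}}\leq\frac{1}{(\sqrt{q}+1)\log_p q},$$
which combined with the preceding inequality and the admissibility of the family yields the claim.

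The main obstacle is the bookkeeping of the ramification data along the tower: one must identify exactly which places are wildly ramified at each step and with what exponents, so as to telescope the Deuring--Shafarevich increments cleanly and match them with the genus increments, in order to obtain the sharp constant $\frac{1}{(\sqrt{q}+1)\log_p q}$ rather than a weaker bound. Once this is done, the conclusion is immediate from the definition of $J_p(q,\sqrt{q}-1)$ as a $\liminf$ over all admissible families.
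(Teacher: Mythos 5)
Your overall route is exactly the one the paper attributes to Cascudo--Cramer--Xing (the survey does not reprove this theorem; it cites \cite[Theorem 2.3(iii)]{cacrxi2}): take a Garcia--Stichtenoth tower attaining $\sqrt{q}-1$, bound the rational $p$-torsion by $|J_{F_s}[p](\F_q)|\leq p^{\gamma_{F_s}}$ with $\gamma$ the $p$-rank, and control $\gamma_{F_s}$ by telescoping the Deuring--Shafarevich formula along the Artin--Schreier steps. However, your quantitative endgame contains a genuine error. The torsion limit in Definition \ref{def_torsion-limit} is normalized by $\log_q$, not $\log_p$: one has $\log_q|J_{F_s}[p]|\leq \gamma_{F_s}\log_q p=\gamma_{F_s}/\log_p q$, so to reach the stated bound it suffices to prove $\limsup_s \gamma_{F_s}/g_{F_s}\leq \frac{1}{\sqrt{q}+1}$; the factor $\frac{1}{\log_p q}$ in the theorem is purely this change of base. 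Because you instead work with $\log_p$, you are forced to claim $\limsup_s \gamma_{F_s}/g_{F_s}\leq \frac{1}{(\sqrt{q}+1)\log_p q}$, and that claim is false for these towers (and would prove a statement stronger than the theorem by a factor $\log_p q$, which should have been a warning sign).

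Concretely, the bookkeeping you describe cannot produce the extra factor $\log_p q=2t$. At a totally ramified place of an elementary abelian step of degree $\ell=p^t=\sqrt{q}$ with ramification jump $m_P$ (coprime to $p$), the Deuring--Shafarevich increment is $e_P-1=\ell-1$ while the Riemann--Hurwitz different exponent is $d_P=(\ell-1)(m_P+1)$, so the local ratio is $\frac{1}{m_P+1}$; in the Garcia--Stichtenoth towers the jumps $m_P$ stay bounded (of size about $\ell+1$, independently of $t$), and telescoping both formulas with $g_1=\gamma_1=0$ gives $\gamma_s/g_s\to$ a constant of order $\frac{1}{\sqrt{q}}$ (the sharp value being $\frac{1}{\sqrt{q}+1}$ in \cite{cacrxi2}), not of order $\frac{1}{\sqrt{q}\,\log_p q}$. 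For instance already $F_2$ is Hermitian with $\gamma_2=0$ and $\gamma_3/g_3=\frac{\ell-1}{\ell^2+\ell-1}$, which exceeds your claimed bound as soon as $t\geq1$. The repair is simple: prove the correct asymptotic $\gamma_{F_s}/g_{F_s}\leq \frac{1}{\sqrt{q}+1}+o(1)$ from the Deuring--Shafarevich telescoping, and then insert the $\log_q p=1/\log_p q$ factor coming from the definition of $J_p(\mathcal{F})$; with that correction your argument becomes the intended proof.
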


Despite this important progress, at this time this approach does not allow to obtain the claimed bounds 
by Shparlinski-Tsfasman-Vladut bound for symmetric complexity.
Indeed, for this, one has to show that the $2$-torsion-limit is $0$,
or equivalently, that $\delta_2^-(q)=0$ which is the open problem \ref{optorsionlimit}. 


Note that all the upper bounds on $M_q^{sym}$ obtained by I. Cascudo et al in \cite{cacrxiya} and \cite{cacrxi2} are unproved 
because the proofs are based on \cite[Lemma IV]{cacrxiya} which is not completely correct as it is shown in \cite[Section 3]{bapirasi} (cf. also  \cite{ramb1}). However, the bounds are correct under Conjecture \ref{conjY}
\footnote{\label{URcacrxiya}
The following results rely on the above unproven assumption: Theorem IV.6, Theorem IV.7 and the list of specific bounds in Corollary IV.8 of \cite{cacrxiya}. 
Also, Theorem 5.18 and the list of bounds in Corollary 5.19 of \cite{cacrxi}. More precisely, here is the unproved bounds:

\begin{itemize}
\item the symmetric bounds in Theorem IV.6, Theorem IV.7 and the list of specific bounds in Corollary IV.8 of \cite{cacrxiya}; namely the following:
$$\Ms_q \leq \mus_q(2t)\frac{q^t-1}{t(q^t-5)}$$
for any ${t \geq 1}$ as long as ${q^t-5> 0}$ for $q$ a prime power;
$$\Ms_q \leq \mus_q(t)\frac{q^{t/2}-1}{t(q^{t/2}-5)}$$
for any ${t \geq 1}$ as long as ${q^{t/2}-5> 0}$ for $q$ a prime power which is a square.

$$
\begin{array}{|c||c|c|c|c|c|c|c|c|c|}
	\hline
	q & 2 & 3 & 4 & 5 & 7 & 8 & 9 & 11 & 13 \\
	 \hline
 	 \Ms_q & 7.47 & 5.49  & 4.98 & 4.8 & 3.82 & 3.74 & 3.68 & 3.62 & 3.59 \\

	\hline
	\end{array}
	$$

\item also, the symmetric bounds in Theorem V.18 and the list of bounds in Corollary V.19 of \cite{cacrxi2}, namely:\\

$$\Ms_q \leq
\left\{
\begin{array}{ll}
\mus_q(2t)\frac{q^t-1}{t(q^t-2-\log_q2)} & \mbox{if } 2 | q \\
\mus_q(2t)\frac{q^t-1}{t(q^t-2-2\log_q2)} & \mbox{otherwise}
\end{array}
\right
.$$
for a prime power $q$ and for any $t \geq 1$ as long as $q^t-2-\log_q2 > 0$ for even $q$;
and $q^t-2-2\log_q2 > 0$ for odd $q$.

$$
\begin{array}{|c||c|c|c|c|}
	\hline
	q & 2 & 3 & 4 & 5  \\
	 \hline
 	 \Ms_q & 7.23 & 5.45  & 4.44 & 4.34  \\

	\hline
	\end{array}
	$$
\end{itemize}
}.

\section{Uniform bounds} \label{UniBounds}

\subsection{Some exact values for $\mus_{q}(n)$} \label{musexact}

Recall that by Theorem \ref{thm_wdg}, we have 
$\mus_q(n)=\mu_q(n)=2n-1$ if and only if $n\leq \dfrac{q}{2}+1$.
Applying CCMA with
well fitted elliptic curves, Shokrollahi in \cite{shok} (for the strict inequality) and Chaumine in \cite{chau1} have shown 
that:

\begin{theo}\label{thm_shokr}
If
\begin{equation}\label{ine}
\frac{1}{2}q +1< n \leq \frac{1}{2}(q+1+{\epsilon (q) })
\end{equation} 
where $\epsilon$ is the function defined by:
$$
\epsilon (q)=
\left \{
\begin{array}{l}
 \mbox{the greatest integer } \leq 2{\sqrt q} \mbox{ prime to $q$, if  $q$ is not a perfect square} \\
 2{\sqrt q},\mbox{ if $q$ is a perfect square,}
\end{array}
\right .
$$ 
then the symmetric bilinear complexity $\mus_q(n)$ of 
the multiplication in the finite extension $\F_{q^n}$ of the
finite field $\F_q$ is equal to $2n$. 
In particular, in this case, we have: $$\mus_q(n)=\mu_q(n).$$
\end{theo}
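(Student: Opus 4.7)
The plan is to obtain the lower bound immediately from Theorem~\ref{thm_wdg}, then to obtain the upper bound by applying CCMA (Theorem~\ref{chudchud}) on a carefully chosen elliptic curve over $\F_q$.

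For the lower bound, the hypothesis $n>\frac{q}{2}+1$ lies strictly outside the equality range of Theorem~\ref{thm_wdg}. Hence $\mu_q(n)\geq 2n-1$ must be strict, forcing $\mu_q(n)\geq 2n$, and therefore $\mus_q(n)\geq\mu_q(n)\geq 2n$.

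For the upper bound, the goal is to exhibit an elliptic function field $F/\F_q$ (genus $g=1$), a place $Q$ of degree $n$, a set $\mathscr{P}=\{P_1,\dots,P_N\}$ of $N=2n$ rational places, and a divisor $\D$ of degree $n$, satisfying the hypotheses (a) and (b) of Theorem~\ref{chudchud}. First, by the theorem of Deuring--Waterhouse, there exists an elliptic curve $E/\F_q$ whose number of rational points equals $q+1+\epsilon(q)$; by the hypothesis $2n\leq q+1+\epsilon(q)$, this gives enough rational places for $\mathscr{P}$. Second, the existence of a place $Q$ of degree exactly $n$ follows from standard counting for zeta functions of curves of genus $1$ in the range considered. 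Third, one picks $\D$ of degree $n$ outside the supports of $Q$ and of the $P_i$. By Riemann--Roch on an elliptic curve, $\ell(\D)=n$, so $Ev_Q:\mathcal{L}(\D)\to\F_{q^n}$ is a map between $\F_q$-vector spaces of the same dimension $n$; its surjectivity is equivalent to $\ell(\D-Q)=0$, which (since $\deg(\D-Q)=0$) simply means $\D\not\sim Q$ in $\mathrm{Pic}^n(E)$. Similarly, $Ev_{\mathscr{P}}:\mathcal{L}(2\D)\to\F_q^{2n}$ is a map between $\F_q$-vector spaces of the same dimension $2n$, and its injectivity is equivalent to $\ell(2\D-P_1-\cdots-P_N)=0$, which (since this divisor has degree $0$) means $2\D\not\sim P_1+\cdots+P_N$ in $\mathrm{Pic}^{2n}(E)$.

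The main obstacle is therefore to show that the two linear-equivalence conditions $\D\not\sim Q$ and $2\D\not\sim P_1+\cdots+P_N$ can be simultaneously arranged, even in the extremal case $2n=q+1+\epsilon(q)$ where no rational place can be discarded from $\mathscr{P}$ (this is precisely where Shokrollahi's original argument required strict inequality and Chaumine's refinement is needed). The idea is to exploit the transitive action of the group law of $E$ on $\mathrm{Pic}^n(E)$: varying $\D$ inside the classes of degree $n$ supported away from $Q$ and from the $P_i$, the first obstruction $\D\sim Q$ cuts out a single class, while the second obstruction $2\D\sim P_1+\cdots+P_N$ cuts out a coset under the $2$-torsion subgroup $E[2](\F_q)$, of size at most $4$. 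Since $E(\F_q)$ has order $\geq 2n\geq q/2+2$, it is strictly larger than the union of these obstructions for $q$ sufficiently large; the remaining small cases are handled directly. Once such a $\D$ is produced, Theorem~\ref{chudchud} yields $\mus_q(n)\leq N=2n$, completing the proof, and the final assertion $\mus_q(n)=\mu_q(n)$ follows from $\mu_q(n)\leq\mus_q(n)\leq 2n\leq\mu_q(n)$.
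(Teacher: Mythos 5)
Your proposal is correct and follows essentially the same route the paper relies on for this statement: the paper gives no proof of its own but cites Shokrollahi and Chaumine and describes their method as ``applying CCMA with well fitted elliptic curves'', which is precisely your construction — lower bound from Theorem~\ref{thm_wdg}, a maximal elliptic curve supplied by Deuring--Waterhouse, a degree-$n$ place $Q$, and a divisor class of degree $n$ avoiding the class of $Q$ and the at most $|E(\F_q)[2]|\leq 4$ classes solving $2\mathcal{D}\sim P_1+\cdots+P_N$, exactly the point where Chaumine's refinement handles the extremal case $2n=q+1+\epsilon(q)$. The only loose ends are harmless: no ``$q$ sufficiently large'' is actually needed, since $h=q+1+\epsilon(q)\geq 2n\geq q+3>5$ for all $q\geq3$ and the case $q=2$ is vacuous, while the asserted existence of a degree-$n$ place should be checked directly in the smallest case (e.g.\ $q=3$, $n=3$ falls outside the generic criterion $2g+1\leq q^{(n-1)/2}(q^{1/2}-1)$ but the maximal curve there has four places of degree $3$).
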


\begin{op}\label{reciproqueexactcomplexity}
We still do not know if the converse is true. More precisely the question is:
suppose that $\mu_q(n)=2n$, are the inequalities (\ref{ine}) true?
\end{op}

Moreover, for the values of $n$ not concerned by Theorems \ref{thm_wdg} and \ref{thm_shokr}, very few particular exact values 
are known and are all obtained in \cite{chch}:

\begin{table}
$$
\begin{array}{||c|c|c|c||}
\hline 
\hline
q & n & \mus_q(n) & \mu_q(n) \\
\hline
2 & 4 & 9 & 9\\
\hline
2 & 6 & 15 & 15\\
\hline 
\hline
\end{array}
$$
\caption{Exact bilinear complexities}\label{ExactBilinComp}
\end{table}

\begin{rema}
The bilinear complexity $\mu_2(4)=9$ is obtained in \cite[Example 3.2]{chch} by a personal computer program. It is easy to check 
this value can be obtained by a symmetric tensor corresponding to the iteration of the Karatsuba algorithm. 
Then $\mu_2(4)=\mus_2(4)=9$. The bilinear complexity $\mu_2(6)=15$ is obtained in \cite[Example 3.3]{chch} thanks to Inequality 
(\ref{plongement}) of Lemma \ref{lemasyMqmq} and a lower bound over the length of binary codes of dimension $6$ equal to the minimal distance. 

\end{rema}
 
 \begin{op}
Find exact values for $\mus_q(n)$ and $\mu_q(n)$. Find examples where $\mu_q(n)<\mus_q(n)$.
\end{op}

\subsection{Upper bounds for $\mus_{q}(n)$ and $\mus_{q}(l,r)$} \label{mus}

From the results of \cite{ball1} and the
algorithm of Corollary \ref{theo_deg12evalder} with ${\ell_1=\ell_2=0}$, we obtain (cf. \cite{ball1}, \cite{baro1}):

\begin{theo} \label{theoprinc}
Let $q$ be a prime power and let $n$ be an integer $>1$. 
Let $F/\F_q$ be an algebraic function field of genus $g$ 
and $N_k$ a number of places of degree $k$ in $F/\F_q$.
If $F/\F _q$ is such that there exists a place of degree $n$(which is always the case if 
$2g+1 \leq q^{\frac{n-1}{2}}(q^{\frac{1}{2}}-1)$) then:
\begin{enumerate}[1)]
	\item if $N_1 +a> 2n+2g-2$ for some integer $a\geq 0$, then $$ \mus_q(n) \leq 2n+g-1+a,$$
	\item if there exists a non-special divisor of degree $g-1$ (which is always the case if $q\geq 4$)
and $N_1+a_1+2(N_2+a_2)>2n+2g-2$ for some integers $a_1\geq 0$ and $a_2\geq 0$, 
then $$\mus_q(n)\leq 3n+2g+\frac{a_1}{2}+3a_2-1,$$
	\item if $N_1+2N_2>2n+4g-2$, then $$\mus_q(n)\leq 3n+6g.$$
\end{enumerate}
\end{theo}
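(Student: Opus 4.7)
The plan is to specialize Corollary~\ref{theo_deg12evalder} to the case $\ell_1=\ell_2=0$, which gives $\mus_q(n)\leq N_1+3N_2$ whenever one can produce a divisor $\D$ and a set of $N_1$ degree-$1$ and $N_2$ degree-$2$ places satisfying conditions~(a) (surjectivity of $Ev_Q$) and~(b) (injectivity of $Ev_{\mathscr{P}}$). The existence of a place of degree $n$ under the numerical hypothesis $2g+1\leq q^{(n-1)/2}(q^{1/2}-1)$ is the classical Serre--Weil type existence estimate for higher-degree places, so the task in each case reduces to choosing the degree of $\D$ so as to balance (a) and (b): the former asks for $\D-Q$ to be non-special (easily guaranteed if $\deg\D\geq n+2g-1$, or at the borderline $\deg\D=n+g-1$ under the hypothesis that a non-special divisor of degree $g-1$ exists), while the latter is easiest when $\deg(2\D-\mathcal{G})<0$, where $\mathcal{G}$ denotes the sum of the chosen places.

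For case~(3), take $\deg\D=n+2g-1$; then both $\D$ and $\D-Q$ have degree at least $2g-1$ and are non-special, so by Riemann--Roch $\dim\Ld{}-\dim\mathcal{L}(\D-Q)=n$, which gives surjectivity of $Ev_Q$. The hypothesis $N_1+2N_2>2n+4g-2$ forces $\deg(2\D-\mathcal{G})<0$, hence $\mathcal{L}(2\D-\mathcal{G})=\{0\}$ and (b) follows. Choosing $N_2=n+2g$ degree-$2$ places and $N_1=0$ then yields the bound $3(n+2g)=3n+6g$.

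Case~(2) exploits the hypothesis on non-special divisors of degree $g-1$: setting $\D=R+Q$ with $R$ such a divisor gives $\deg\D=n+g-1$ and $\D-Q\sim R$ non-special, whence $Ev_Q$ is an isomorphism $\Ld{}\simeq F_Q$ and (a) holds. For (b) one selects $N_1'\leq N_1$ and $N_2'\leq N_2$ places so that $\deg(2\D-\mathcal{G})=2n+2g-2-N_1'-2N_2'$ is slightly negative (for automatic vanishing) or slightly non-negative (with vanishing of $\mathcal{L}(2\D-\mathcal{G})$ then ensured by a counting argument on the effective classes in the relevant Picard component); the slack parameters $a_1,a_2$ encode this flexibility and lead to the complexity bound $3n+2g+a_1/2+3a_2-1$. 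Case~(1) is the specialization $N_2'=0$ (only degree-$1$ places are used), the bound $2n+g-1+a$ being obtained by a counting argument in the style of Shparlinski--Tsfasman--Vladut, with the slack $a$ absorbing the contribution of effective divisor classes of degree $g-1$.

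The main obstacle is the verification of (b) when $\deg(2\D-\mathcal{G})\geq 0$, since then $\dim\mathcal{L}(2\D-\mathcal{G})=0$ is not automatic and one must argue by cardinality that $2\D-\mathcal{G}$ avoids the image of the Abel--Jacobi map on effective divisors of the given degree. This is precisely the step at which the famous $2$-torsion gap in the original Shparlinski--Tsfasman--Vladut argument occurs; in the present setting the slack parameters $a$, $a_1$, $a_2$ supply enough room for the count to succeed without further hypotheses on the $2$-torsion of the Jacobian. Alternatively, one could invoke the direct construction of Proposition~\ref{propo_construction_sym} to produce the divisor $\D$ of the desired degree.
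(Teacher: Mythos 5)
Your overall frame---specializing Corollary~\ref{theo_deg12evalder} with $\ell_1=\ell_2=0$, choosing $\deg\D$ so that condition (a) holds while (b) is forced---is the route the paper itself indicates, and your handling of condition (a) (non-special $\D-Q$ by degree in case 3, via $\D=R+Q$ with $R$ non-special of degree $g-1$ in case 2) is correct. Even in case 3, though, note that the hypothesis $N_1+2N_2>2n+4g-2$ does not give you $n+2g$ places of degree $2$: you must select a mixed subfamily of total degree just above $2n+4g-2$ and bound its cost by $N_1'+3N_2'\leq\tfrac{3}{2}(N_1'+2N_2')\leq 3n+6g$; this is a one-line fix.

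The genuine gap is in cases 1) and 2), i.e.\ exactly where the slack parameters and the constants $2n+g-1+a$ and $3n+2g+\tfrac{a_1}{2}+3a_2-1$ live: you never derive these bounds, and neither of the two mechanisms you gesture at can produce them under the stated hypotheses. The ``make $\deg(2\D-\mathcal{G})$ slightly negative'' option already fails quantitatively in the subcase $a_1=a_2=0$ of 2): when rational places are scarce each unit of degree must be bought from degree-$2$ places at cost $3/2$, giving roughly $3n+3g$, not $3n+2g-1$. Your other option, an STV-style count of effective classes with the slack ``supplying enough room without further hypotheses on the $2$-torsion,'' is unsubstantiated and does not work: imposing $\dim\mathcal{L}(2\D-\mathcal{G})=0$ by counting classes of $\D$ reintroduces the factor $|\mathcal{J}(\F_q)[2]|$ (up to $2^{2g}$), and lowering $\deg(2\D-\mathcal{G})$ by $a$ only shrinks the relevant number of effective classes by about $q^{a}$, so the count could only close for $a$ on the order of $2g\log_q 2$ and for curves with few classes of effective divisors---nothing of the sort is hypothesized (the theorem allows $a=1$ and any $q$, $g$); moreover when $N_1\leq 2n+2g-2$ the hypotheses do not even guarantee $2n+g-1+a$ rational places at which to evaluate, so the algorithm whose existence you are counting for cannot be set up. This also runs against the paper's own framing: the results of Ballet and Ballet--Rolland being cited here are precisely those \emph{not} relying on the Shparlinski--Tsfasman--Vladut class-counting (that is why they give the first complete proof of linearity); condition (b) is obtained there without Jacobian counting---by degree/greedy arguments on a full set of evaluation data, the length then being cut down to $\dim\mathcal{L}(2\D)=2n+g-1$ plus explicitly bookkept penalties accounting for the deficiencies $a$, $a_1$, $a_2$. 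Finally, your fallback to Proposition~\ref{propo_construction_sym} is unavailable: it requires $N_1>5g$, a hypothesis absent from Theorem~\ref{theoprinc} and incompatible with the very situations (few rational places) that cases 2) and 3) are designed to handle.
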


\begin{rema}
The previous theorem enables to obtain general bounds on the bilinear complexity of the multiplication in $\F_{q^n}$ sur $\F_q$ 
from infinite families of algebraic function fields defined over $\F_q$. But a fixed finite field $\F_q^n$, if we want to obtain the 
best possible bound, we can search the best algebraic function field defined over $\F_q$ (i.e with the possible smallest genus) 
satisfying the conditions of this theorem.
\end{rema}

Finally, from good towers of algebraic functions fields satisfying Theorem \ref{theoprinc}, different improvements of the bounds of 
the symmetric bilinear complexity were successively obtained in \cite{ball1}, \cite{ball3}, \cite{baro1}, \cite{balbro}, \cite{ball4}, \cite{bach}, \cite{arna1}, \cite{bapi2}, and \cite{bapirasi}:

\begin{theo}\label{Cq}
Let $q=p^r$ be a power of the prime $p$ and let $n$ be an integer $>1$.
Then the symmetric bilinear complexity  of multiplication in any finite field $\F_{q^n}$  
is linear with respect to the extension degree $n$; more precisely, there exists a constant $C^{sym}_q$ such that for any $n>1$:
$$\mus_q(n) \leq C^{sym}_q n.$$ The best current values of the constants $C^{sym}_q$ are : 
$$
C^{sym}_q=
\left \{
\begin{array}{ll}

\mbox{if } q=2, & \mbox{then (1)} \quad 15.4575\\  
&  \mbox{see \cite[Corollary 29]{bapi2}}\\

\mbox{else if } q=3, &  \mbox{then (2)} \quad \dfrac{1933}{250}\simeq 7.732\\
&  \mbox{see \cite{bapi2} }\\
               
\mbox{else if } q=p \geq 7, &  \mbox{then (3)} \quad  3\left(1+ \frac{8}{3p-5}\right)\\
& \mbox{see \cite[Theorem 1.6 (ii)]{bapirasi}}\\

\mbox{else if } q=p^2 \geq 25, & \mbox{then (4)} \quad  2\left(1+\frac{2}{p-\frac{33}{16}}\right)\\
&  \mbox{see \cite[Theorem 1.7 (ii)]{bapirasi}} \\

\mbox{else if } q=p^{2k} \geq 64 \quad (k \geq 2), & \mbox{then (5)} \quad 2\left(1+\frac{p}{\sqrt{q}-3 + (p-1)\frac{\sqrt{q}}{\sqrt{q}+1}}\right)\\
&  \mbox{see \cite{arna1} and \cite[Theorem 1.7 (i)]{bapirasi}} \\

\mbox{else if } q \geq 4, & \mbox{then (6)} \quad 3\left(1+\frac{\frac{4}{3}p}{q-3+2(p-1)\frac{q}{q+1}}\right)\\
& \mbox{see \cite[Theorem 1.6 (i)]{bapirasi}}\\

\end{array}
\right .
$$
\end{theo}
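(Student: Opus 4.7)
The plan is to apply Theorem~\ref{theoprinc} in each of the six cases by choosing, for every integer $n>1$, a well-adapted algebraic function field $F/\F_q$ whose genus $g$ is as close as possible to its theoretical minimum relative to the number $N_1$ of rational places and, when needed, the number $N_2$ of places of degree $2$. Since the statement is a compilation of results from \cite{ball1, ball3, baro1, balbro, ball4, bach, arna1, bapi2, bapirasi}, the proposal is to organize the argument case by case, always aiming at making the numerical conditions $N_1+a>2n+2g-2$ or $N_1+a_1+2(N_2+a_2)>2n+2g-2$ hold with $g$ as small as possible compared to~$n$.

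For $q=p^{2k}\geq 64$ and $q=p^2\geq 25$ (cases (4) and (5)), the plan is to use the first Garcia--Stichtenoth tower over $\F_q$, which attains the Drinfeld--Vladut bound $A(q)=\sqrt{q}-1$. Densifying this tower by inserting intermediate Galois steps, as in \cite{ball3}, ensures the ratio $g_{s+1}/g_s \to 1$, so that for every $n$ one can choose a step whose genus satisfies $g \sim n/(\sqrt{q}-1)$; applying criterion~1) of Theorem~\ref{theoprinc} gives (4), while inserting Arnaud's derivated evaluation at the places of the tower \cite{arna1} (which contributes an extra $p-1$ evaluations at each rational place) produces the sharper constant (5). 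For $q=p\geq 7$ (case (3)) and general $q\geq 4$ (case (6)), the Garcia--Stichtenoth tower is first descended from $\F_{q^2}$ to $\F_q$ via the methods of \cite{baro1, balbro, ball4}, which replaces the rational points of the upper tower by places of degree~$2$ of the descended one. One then applies criterion~2) of Theorem~\ref{theoprinc} using simultaneously places of degree $1$ and $2$; the existence of non-special divisors of degree $g-1$ is guaranteed by \cite{balb} as soon as $q\geq 4$, so only the combinatorial counting of $N_1$ and $N_2$ remains, yielding the announced constants.

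For the hardest small cases $q=2$ (case (1)) and $q=3$ (case (2)), the plan is to follow \cite{bapi2, bapirasi}: combine the descent and densification techniques with derivated evaluation at both degree-$1$ and degree-$2$ places and, when the characteristic prevents direct use of non-special divisors of degree $g-1$, use divisors of degree zero whose existence is established in \cite{bariro}. A careful bookkeeping of the numerical conditions in Theorem~\ref{theoprinc}, optimized over the choice of the step in the tower and of the nonnegative parameters $a_1,a_2$, then yields the constants $15.4575$ and $1933/250$. The main obstacle, uniformly over the cases, is not a single hard argument but the bookkeeping required to ensure that, for \emph{every} $n>1$ and not only asymptotically, a step of the densified (or descended) tower exists whose genus and place counts match the inequalities of Theorem~\ref{theoprinc} with the claimed ratio $\mus_q(n)/n\leq C^{sym}_q$; this is precisely why densification, degree-$2$ evaluation, derivated evaluation, and non-special divisor results are all needed simultaneously.
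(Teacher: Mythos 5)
Your overall toolkit is the right one --- this theorem is a compilation of results from \cite{ball1}--\cite{bapirasi}, the paper gives no self-contained proof beyond the citations, and the cited proofs do indeed proceed by applying Theorem~\ref{theoprinc} (or Corollary~\ref{theo_deg12evalder}) to the densified Garcia--Stichtenoth tower, descended to $\F_q$ when $q$ is not a square, with derivated evaluations and the non-special/zero-dimensional divisor results. However, your plan contains a concrete error that would break the derivation of the stated constants: densification of the Garcia--Stichtenoth tower does \emph{not} make the ratio $g_{s+1}/g_s$ tend to $1$. Adding intermediate steps only brings the ratio of consecutive genera down from about $q$ to about $p$ (Hurwitz, since consecutive intermediate steps have degree $p$), and this residual factor $p$ is precisely why the uniform constants (3)--(6) contain $p$ and are strictly worse than the asymptotic bounds such as $2\bigl(1+\frac{1}{\sqrt{q}-3}\bigr)$ of Proposition~\ref{newbound}. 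If your claim were true, you could pick for every $n$ a step with $g$ essentially equal to the optimal value $\approx 2n/(\sqrt{q}-3)$ (note: not $n/(\sqrt{q}-1)$ as you write) and you would obtain the asymptotic constant uniformly --- which is not what Theorem~\ref{Cq} asserts. Families with genus ratio tending to $1$ are exactly the ``dense'' families of Definition~\ref{dense}; they require modular or Shimura curves (Section~\ref{courbes_Shimura}) and feed the later uniform bounds of Propositions~\ref{newboundpcarréfusion} and~\ref{newboundpbz}, not the tower-based bounds of Theorem~\ref{Cq}.

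Two further points. First, your description of Arnaud's refinement as contributing ``an extra $p-1$ evaluations at each rational place'' is not how the mechanism works: in Corollary~\ref{theo_deg12evalder} a derivated evaluation is one additional local-expansion coefficient at a \emph{selected} place (costing $2$, resp.\ $6$, extra units at a degree-$1$, resp.\ degree-$2$, place), and the terms $(p-1)\frac{\sqrt{q}}{\sqrt{q}+1}$ in bounds (5)--(6) come from optimizing how many places of the next intermediate step receive such evaluations, given the $p$-fold genus gap between consecutive steps. Second, the specific numerical constants ($15.4575$, $\frac{1933}{250}$, the $\frac{33}{16}$, the $\frac{8}{3p-5}$) are never actually derived in your argument; you defer them to ``careful bookkeeping,'' i.e.\ to the very references the theorem cites. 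As a survey-style justification that mirrors the paper this is defensible, but as a proof the substance is missing, and with the density claim as stated the bookkeeping would in fact produce different (unattainable) constants.
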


\begin{rema}
Note that, from Corollary \ref{theo_deg12evalder} applied on a Garcia-Stichtenoth tower, 
N. Arnaud obtained in \cite{arna1} which is not published the bound (5) of Theorem \ref{Cq}.
In \cite{bapirasi}, the authors give a detailed proof of Bound (5). 
In \cite{bapirasi}, it is also proved the two revised bounds (3) and (4) for $\mu_{p^2}(n)$ 
and $\mu_p(n)$\footnote{\samepage In \cite{arna1}, N. Arnaud gives the two following bounds with no detailed calculation:
\begin{enumerate}
       \item[(3')]  If $p\geq5$ is a prime, then $\displaystyle{\mus_p(n)\leq 3 \left(1+ \frac{4}{p-1}\right) n}$.	
	\\
	\item[(4')] If $p\geq5$ is a prime, then  $\displaystyle{\mus_{p^2}(n) \leq 2 \left(1 + \frac{2}{p-2} \right)n}$.\\
\end{enumerate}
In fact, one can check that the denominators $p-1$ and $p-2$ are slightly overestimated under Arnaud's hypotheses.
}.
\end{rema}

Note also that the upper bounds\footnote{\samepage In \cite{ball5} and \cite{ball6}, S. Ballet gives the unproved following bounds: 
\begin{enumerate}
       \item[(1)]  If $q\geq 3$ is a prime power, then $\displaystyle{\mus_{q^2}(n)\leq 2 \left(1+ \frac{2}{q-2}\right) n}$,\\
	\item[(2)] If $q\geq5$ is a prime power, then  $\displaystyle{\mus_{q}(n) \leq 6 \left(1 + \frac{2}{q-2} \right)n}$,\\
	\item[(3)]  If $q=p^r> 3$ is a prime power, then $\displaystyle{\mus_{q}(n)\leq 3 \left(1+ \frac{2}{p-2}\right) n}$,\\
	\item[(4)] If $p>5$ is a prime, then  $\displaystyle{\mus_{q}(n) \leq 3 \left(1 + \frac{2}{p-2} \right)n}$.\\
	
\end{enumerate}
} 
obtained successively in \cite{ball5} and \cite{ball6} 
are obtained by using the mistaken statements of I. Shparlinski, M. Tsfasman and S. Vladut \cite{shtsvl} mentioned 
in the above section \ref{mM}. 

Moreover, for certain finite fields (in particular the cases of $\F_2$, $\F_3$ and $\F_4$), 
we have certain refined bounds for certain extensions obtained in \cite[Table 1]{ceoz}. 
Let us recall this table: 

 \vspace{.5em}

\hspace{-1.5em}{\setlength{\tabcolsep}{3.5pt}
\begin{table}
 \begin{tabular}{|l|*{17}{c|}}
	\hline
	$n$  &\  2 \  & \ 3 \ & \  4 \ & 5 & 6 & 7 & 8 & 9 & 10 & 11 & 12 & 13 & 14 & 15 & 16 & 17 & 18 \\
	 \hline
 	 $\mus_2(n) \leq$ & 3 & 6 & 9 & 13 & 15 & 22 & 24 & 30 & 33 & 39 & 42 & 48 & 51 & 54 & 60 & 67 & 69 \\
	\hline
	 $ \mus_3(n) \leq$ & 3 & 6 & 9 & 11 & 15 & 19 & 21 & 26 & 27 & 34 & 36 & 42 & 45 & 50 & 54 & 58 & 62 \\
	\hline
	$ \mus_4(n) \leq$ & 3 & 6 & 8 & 11 & 14 & 17 & 20 & 23 & 27 & 30 & 33 & 37 & 39 & 45 & 45 & 53 & 51 \\
	\hline
\end{tabular}

\vspace{.5em}

\caption{Best known bounds on complexities for small fields}\label{TableCenkOzbudak}
\end{table}
}

\vspace{.5em}

Moreover, in \cite[Tables 3 and 4]{babotu},  improving results obtained in \cite{ceoz} and \cite[Example 4.7]{randJComp}, 
bounds are given for certain particular extensions:

\vspace{.5em}

$$\begin{array}{|c|c|c|c|c|c|}
\hline
n & 163 & 233 & 283 & 409 & 571 \\
\hline
\mus_2(n) & 906 & 1340 & 1668 & 2495 & 3566 \\
\hline
\end{array}$$

\vspace{.5em}

$$\begin{array}{|c|c|c|c|c|c|}
\hline
n & 57 & 97 & 150 & 200 & 400 \\
\hline
\mus_3(n) & 234 & 410 & 643 & 878 & 1879 \\
\hline
\end{array}$$

\vspace{.5em}

The bounds presented in the previous tables are the best published current bounds for $\mu_q(n)$.
For the quantity $\mu_q(r,l)$, with $l>1$, different values have been given by M. Rambaud in \cite{ramb1} and 
explained in Section \ref{msMs}. Let us summarize for $q=2$ these values (including the case l=1) in the following table \ref{UBCmuqrl}.

\begin{table}[h]
$$
\begin{array}{ccc}
\begin{array}{|c||c|c|c|c|}
\hline
l \backslash r & 1& 2 & 3 & 4\\
\hline
\hline
1 & 1 & 3 & 6 & 9 \\
\hline
2 & 3 & 9 & 16 & 24 \\
\hline
3 & 5 & 15& 30 & \\
\hline
4 & 8& 21& & \\
\hline
5 & 11 & 30 & & \\
\hline
\end{array}
& \hbox{      } & 
\begin{array}{|c||c|c|c|c|}
\hline
l \backslash r & 1& 2 & 3 & 4\\
\hline
\hline
6 & 14& & & \\
\hline
7 & 18 & & & \\
\hline
8 & 22 & & & \\
\hline
9 & 27 & & & \\
\hline
10 & 31 & & & \\
\hline
\end{array}
\end{array}
$$
\caption{Upper bounds on the complexities $\mus_2(r,l)$.}\label{UBCmuqrl}
\end{table}

For other values of $q$ let us summarize the known results, obtained in Section  \ref{msMs}.

$$
\mus_q(2,2)\leq 9, \hbox{ } \mus_q(2,5)\leq 30.
$$

$$
\mus_4(1,5) \leq 10.
$$


\medskip

Recently in \cite{bazy2}, S. Ballet and A. Zykin would improve all the known uniform 
upper bounds for $\mus_{p^2}(n)$ and $\mus_{p}(n)$ for a prime $p\geq 5$.
Their approach consists on using dense families of modular curves which are not obtained asymptotically 
thanks to prime number density theorems 
of type Hoheisel, in particular a result due to Dudek \cite{dude}. Note that one of main ideas used in \cite{bazy2} was 
introduced in \cite{ball5} by S. Ballet thanks to the use of the Chebyshev Theorem (or also called the Bertrand Postulat) to 
bound the gaps between prime numbers in order to construct families of modular 
curves as dense as possible. Later, motivated by \cite{ball5}, 
the approach of using such bounds on gaps between prime numbers 
(e.g. Baker-Harman-Pintz \cite{bahapi}) was also used by H. Randriambololona in the 
preprint \cite{rand2D-G} in order to improve the upper bounds of $\mus_{p^2}(n)$ where $p$ is a prime number. 
In summary, let us give the new uniform bounds given there (and recalled in \cite{randGap2}).

In order to present these bounds, let us recall the following notation.
For any infinite subset ${\mathcal A}$ of $\N$ and for any real $x>0$, 
let $$\lceil x \rceil_{{\mathcal A}}=\min {\mathcal A}\cap [x,+\infty [$$ 
be the smallest element of ${\mathcal A}$ larger than or equal to $x$.
Also set: 
$$\epsilon_{{\mathcal A}}(x)=\sup_{y\geq x}\frac{\lceil y \rceil_{{\mathcal A}}-y}  {y}.$$

Now, we have:


\begin{propo}\label{newboundpcarréfusion}
Let $p\geq 7$ be a prime number. Then:
\begin{enumerate}
\item for all $k\geq \frac{p^2+p+1}{2}$, $$\frac{1}{k}\mus_{p^2}(k)\leq 2\biggr(1+\frac{1+\epsilon_{{\mathcal P}}(\frac{24k}{p-2})}{p-2} \biggl).$$
\item for all $k\geq 1$, $$\frac{1}{k}\mus_{p^2}(k)\leq 2\biggr(1+\frac{2}{p-2} \biggl).$$
\item for all $k\geq 1$, $$\frac{1}{k}\mus_{p^2}(k)\leq 2\biggr(1+\frac{1+\frac{10}{139}}{p-2} \biggl)$$
\item for all $k\geq e^{50}p$, $$\frac{1}{k}\mus_{p^2}(k)\leq 2\biggr(1+\frac{1.000000005}{p-2} \biggl)$$
\item for all $k\geq 16531(p-2)$, $$\frac{1}{k}\mus_{p^2}(k)\leq 2\biggr(1+\frac{1+\frac{1}{25\log^2(\frac{24k}{p-2})}}{p-2} \biggl)$$
\item for $k$ large enough, $$\frac{1}{k}\mus_{p^2}(k)\leq 2\biggr(1+\frac{1+\frac{1}{(\frac{24k}{p-2})^{0.475}}}{p-2} \biggl).$$


\end{enumerate}

\end{propo}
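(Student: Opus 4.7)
The plan is to apply the generalized Chudnovsky--Chudnovsky algorithm (Corollary \ref{theo_evalder_simple}) to the reductions modulo $p$ of the classical modular curves $X_0(\ell)$ for varying primes $\ell\neq p$, viewed over $\F_{p^2}$. These curves have good reduction at $p$, genus $g(X_0(\ell))=\lfloor(\ell+1)/12\rfloor$ up to a bounded adjustment, and, by the work of Ihara \cite{ihar} (cf.~\eqref{DV}), attain the Drinfeld--Vladut bound $\betad_1(p^2)=p-1$. Explicitly, $N_1:=|X_0(\ell)(\F_{p^2})|\geq (p-1)(g+1)$ is available for these curves. Together, these two facts give precisely what is needed: a sufficiently dense family of curves with maximally many rational points over $\F_{p^2}$, whose genera can be tuned through the choice of $\ell$.

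First, for a given extension degree $k$, I would select the smallest prime $\ell$ such that $g:=g(X_0(\ell))$ satisfies $g(p-2)\geq 2k-1$, i.e.\ $g\geq \tfrac{2k-1}{p-2}$; since $g\approx \ell/12$, this amounts to taking $\ell=\lceil 24k/(p-2)\rceil_{\mathcal{P}}$ up to lower order terms. The existence of a place $Q$ of degree $k$ is guaranteed by the numerical criterion $2g+1\leq p^{k-1}(p-1)$ given at the end of Theorem \ref{theo_evalder}; under the hypothesis $k\geq (p^2+p+1)/2$ of assertion~(1) this is automatic for the chosen $\ell$. Since $p\geq 7$, one has $N_1\geq 6(g+1)>5g$, so Proposition \ref{propo_construction_sym} applies and produces a divisor $\D$ of degree $k+g-1$ with $\D-Q$ nonspecial and $2\D-\mathcal{G}$ zero-dimensional, where $\mathcal{G}=P_1+\cdots+P_N$ with $N=2k+g-1$ and the $P_i$ are distinct $\F_{p^2}$-rational points. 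Corollary \ref{theo_evalder_simple} then yields $\mus_{p^2}(k)\leq 2k+g-1$. Dividing by $k$ and bounding $g\leq \ell/12$ with $\ell=\lceil 24k/(p-2)\rceil_{\mathcal{P}}=\tfrac{24k}{p-2}(1+\delta)$ where $\delta\leq\epsilon_{\mathcal{P}}(24k/(p-2))$ gives exactly assertion~(1):
$$\frac{\mus_{p^2}(k)}{k}\leq 2+\frac{g-1}{k}\leq 2\!\left(1+\frac{1+\epsilon_{\mathcal{P}}(24k/(p-2))}{p-2}\right).$$

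Assertions (2)--(6) then follow by inserting the appropriate upper bound for $\epsilon_{\mathcal{P}}(x)$ coming from a prime-gap estimate: Bertrand's postulate gives $\epsilon_{\mathcal{P}}\leq 1$ and hence (2); an explicit Hoheisel-type inequality (as in Ramar\'e--Saouter, with constant $10/139$) gives (3); Dudek's refinement of the prime gap result yields the quantitative bound $\epsilon_{\mathcal{P}}(x)\leq 5\cdot 10^{-9}$ for $x\geq e^{50}p$, producing~(4); an explicit logarithmic prime-gap estimate (Baker--Harman--Pintz in a weakened form, or Heath-Brown) produces (5); and the Baker--Harman--Pintz bound $p_{n+1}-p_n\ll p_n^{0.525}$ gives $\epsilon_{\mathcal{P}}(x)\ll x^{-0.475}$, yielding (6). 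For the small-$k$ regimes where the construction cannot be applied directly (outside the range of assertion (1)), one instead invokes the functorial inequality of Lemma \ref{plongementsym} to descend from a larger $k'$ where the construction works, or combines with Winograd-type bounds from Theorem \ref{thm_wdg}.

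The main obstacle is the bookkeeping of the boundary cases and constants: verifying that the overshoot in passing from the continuous parameter $24k/(p-2)$ to the nearest prime $\ell$ truly behaves as $\epsilon_{\mathcal{P}}$ rather than accumulating secondary error terms (from the $\lfloor\cdot\rfloor$ in the genus formula, from the exclusion $\ell\neq p$, from the exact count of rational points of $X_0(\ell)$ over $\F_{p^2}$, and from the condition $N_1>5g$ in Proposition \ref{propo_construction_sym}). Each of these contributes a lower-order term that must be absorbed into the numerator $1+\epsilon_{\mathcal{P}}$, which explains why bounds (3)--(5) have slightly enlarged constants instead of the ideal $1/(p-2)$; the sharpest asymptotic form (6) is the cleanest because these lower-order terms become negligible compared to $x^{-0.475}$.
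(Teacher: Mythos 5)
Your proposal is correct in substance and follows essentially the same route as the paper, which itself only recalls these bounds from \cite{rand2D-G} and \cite{randGap2}: apply the symmetric CCMA to the reductions mod $p$ of classical modular curves $X_0(\ell)$ viewed over $\F_{p^2}$, with $\ell$ the least prime near $24k/(p-2)$, the interpolation divisor supplied by the direct construction of Proposition \ref{propo_construction_sym} (applicable precisely because $N_1\approx(p-1)g>5g$ when $p\geq7$), and then substitute explicit prime-gap estimates into $\epsilon_{\mathcal P}$ (your attributions of the specific gap results in (3)--(5) are off --- e.g.\ (5) is Dusart's $x/(25\log^2 x)$ interval, threshold $24\cdot16531\geq396738$ --- but this is immaterial to the structure). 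One correction, though: for the small-$k$ range needed in (2)--(3), descending via Lemma \ref{plongementsym} from a larger $k'$ loses a multiplicative factor $k'/k$ and does not yield the stated bounds; the right patch is your other alternative, namely the exact values $\mus_{p^2}(k)\leq 2k$, which require Theorem \ref{thm_shokr} (not just Theorem \ref{thm_wdg}) to cover $\frac{p^2}{2}+1<k\leq\frac{p^2+p+1}{2}\leq\frac{(p+1)^2}{2}$.
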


Recently, combining his results of \cite{rand2D-G} with the result of A. Dudek \cite{dude} as in \cite{bazy2}, 
H. Randriambolona improves in \cite{randGap2} almost all these bounds except for the case $q=p^2=25$ obtained 
in \cite{bazy2}. In summary, let us give the new uniform bound of the symmetric bilinear complexity 
given respectively in \cite[Corollary 10]{randGap2} and \cite[Proposition 7]{bazy2}. 

\begin{propo}\label{newboundpcarréfusion2}
Let $p\geq 7$ be a prime number. Then:
\begin{enumerate}
\item[(7)] for all $k\geq \frac{p-2}{24}e^{e^{33.217}}$, 
$$\frac{1}{k}\mus_{p^2}(k)\leq 2\biggr(1+\frac{1+\frac{3}{(\frac{24k}{p-2})^{\frac{1}{3}}}}{p-2} \biggl).$$
\end{enumerate}

\end{propo}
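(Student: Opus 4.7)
The plan is to combine the intrinsic ``interpolation'' bound proved in Proposition \ref{newboundpcarréfusion}(1) with the sharpest currently available explicit estimate on prime gaps. Recall that Proposition \ref{newboundpcarréfusion}(1) reduces, for every $k\geq\frac{p^2+p+1}{2}$, the problem to controlling the quantity $\epsilon_{\mathcal P}(24k/(p-2))$: one has
$$\frac{1}{k}\mus_{p^2}(k)\leq 2\left(1+\frac{1+\epsilon_{\mathcal P}(24k/(p-2))}{p-2}\right).$$
Thus the only remaining task is to insert an explicit upper bound for $\epsilon_{\mathcal P}$ that is valid throughout the range prescribed by the statement.

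The analytic input is Dudek's theorem \cite{dude}, which asserts that for every $x\geq e^{e^{33.217}}$ there exists a prime in the interval $(x,\,x+3x^{2/3}]$. Translated into the notation introduced just before Proposition \ref{newboundpcarréfusion}, this reads $\lceil x\rceil_{\mathcal P}-x\leq 3x^{2/3}$. Since the estimate is valid uniformly for all $y\geq x$ and the function $y\mapsto 3/y^{1/3}$ is decreasing, one gets
$$\epsilon_{\mathcal P}(x)=\sup_{y\geq x}\frac{\lceil y\rceil_{\mathcal P}-y}{y}\leq \frac{3}{x^{1/3}}.$$

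Setting $x=24k/(p-2)$, the hypothesis $k\geq\frac{p-2}{24}e^{e^{33.217}}$ in the statement is precisely $x\geq e^{e^{33.217}}$, so the above estimate applies and yields
$$\epsilon_{\mathcal P}\!\left(\frac{24k}{p-2}\right)\leq \frac{3}{(24k/(p-2))^{1/3}}.$$
Substituting this into the bound recalled in the first paragraph gives the inequality asserted by the proposition, once one has also checked the mild compatibility condition $k\geq(p^2+p+1)/2$: this is automatic, since for $p\geq 7$ the lower bound $\frac{p-2}{24}e^{e^{33.217}}$ dwarfs $\frac{p^2+p+1}{2}$ by many orders of magnitude.

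The main obstacle lies not in the manipulation above --- which is essentially a substitution --- but in tracking the explicit constants. One must verify that the version of Dudek's theorem being invoked does give the multiplicative constant $3$ together with the precise threshold $e^{e^{33.217}}$ rather than a slightly weaker value, and one must be careful that the supremum defining $\epsilon_{\mathcal P}$ cannot accidentally be spoiled by some isolated value of $y$ slightly above $x$ (this is what forces the monotonicity observation in the second paragraph). Both points are routine but require some explicit bookkeeping, and any future sharpening of Dudek-type estimates (either in the multiplicative constant or in the threshold) will propagate mechanically through this argument to sharpen the proposition.
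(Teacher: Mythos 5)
Your route is the same one the paper intends: the survey does not reprove this statement but recalls it from \cite[Corollary 10]{randGap2}, indicating that it is obtained by feeding an explicit Dudek-type prime-gap estimate (Theorem \ref{lemmek0}) into the interpolation bound of Proposition \ref{newboundpcarréfusion}(1); that is exactly your substitution $\epsilon_{\mathcal P}\bigl(\tfrac{24k}{p-2}\bigr)\leq 3\bigl(\tfrac{24k}{p-2}\bigr)^{-1/3}$, and your check that the threshold $k\geq\tfrac{p-2}{24}e^{e^{33.217}}$ makes the side condition $k\geq\tfrac{p^2+p+1}{2}$ automatic is the right (and necessary) observation. The only caveat is that the step you defer is in fact the sole nontrivial one: the input you quote, namely a prime in $(x,x+3x^{2/3}]$ for every real $x\geq e^{e^{33.217}}$, is not the literal statement of Dudek's theorem (which asserts a prime between the consecutive cubes $n^3$ and $(n+1)^3$ for integers $n\geq e^{e^{33.217}}$), nor the form recalled in the survey as Theorem \ref{lemmek0} (consecutive prime gaps $l_{k+1}-l_k\leq l_k^{2/3}$ for primes $l_k\geq x_\alpha$). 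The interval form is indeed the right input — it bounds $\lceil y\rceil_{\mathcal P}-y$ directly at every $y\geq x$ with no boundary case, and it is where the constant $3$ in the statement comes from — but extracting it with that precise threshold is exactly the bookkeeping done in \cite{randGap2} (following \cite{bazy2}), so it should be cited as such rather than attributed to Dudek's theorem as stated. Note also that if you instead use Theorem \ref{lemmek0} verbatim, the same monotonicity argument gives the stronger bound $\epsilon_{\mathcal P}(x)\leq x^{-1/3}$ except possibly for $y$ lying in the single prime gap straddling $x_\alpha$ (where the preceding prime is below $x_\alpha$ and the gap hypothesis does not apply); that residual case is precisely what the interval form with the factor $3$ absorbs. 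So: same approach as the paper, correct in structure, complete once the explicit interval version of the prime-gap input is sourced precisely.
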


\begin{propo}\label{newboundpcarré}
Let $x_{\alpha}$ be the constant defined in  \cite[Theorem 6]{bazy2} (recalled in Theorem \ref{lemmek0}).
For any integer $ n\geq x_\alpha+3$ we have

$$
\mus_{25}(n) \leq 2\left(1+\frac{1+n^{\alpha-1})}{2}\right)n-3n^{\alpha-1}- 4.
$$
\end{propo}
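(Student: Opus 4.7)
Following the strategy of \cite{bazy2}, I would combine the Chudnovsky--Chudnovsky algorithm (in the form of Theorem~\ref{theoprinc}(1)) with a dense family of modular curves over $\F_{25}$, controlling the error between the target degree and the available genera via Dudek's theorem on short intervals containing primes.

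First, I would fix the family of classical modular curves $\{X_0(\ell)\}$ for $\ell$ prime, $\ell\neq 5$, viewed after reduction modulo $5$ as curves over $\F_{25}$. Since $25=5^2$ is a square, this family attains the Drinfeld--Vladut bound, so $N_1(X_0(\ell)/\F_{25}) = 4\,g_\ell + O(1)$, where $g_\ell = (\ell+1)/12 + O(1)$ denotes the genus (with explicit correction depending on $\ell\bmod 12$). The family is dense (ratio of consecutive genera tending to $1$) thanks to the intertwining construction recalled in Section~\ref{courbes_Shimura}.

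Next, given $n \geq x_\alpha + 3$, I would apply Theorem~6 of \cite{bazy2} (itself a form of Dudek's bound \cite{dude}) to select a prime $\ell$ in a suitable congruence class modulo~$12$, so that the genus $g_\ell$ satisfies simultaneously $g_\ell \geq n$ and $g_\ell \leq (n-3)(1 + n^{\alpha-1})$. For this choice, the inequality $N_1 \geq 4g_\ell + O(1) \geq 2n + 2g_\ell - 2$ holds, the slack $4g_\ell - 2g_\ell = 2g_\ell \geq 2n$ absorbing the $2n$-term, so Theorem~\ref{theoprinc}(1) applies with correction $a = 0$ and yields $\mus_{25}(n) \leq 2n + g_\ell - 1$. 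The existence of a place of degree exactly $n$ on $X_0(\ell)$ is guaranteed by the numerical criterion $2g+1 \leq q^{(n-1)/2}(q^{1/2}-1)$ from Theorem~\ref{theoprinc}, trivially satisfied for $g \approx n$ and $q = 25$.

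The concluding step is elementary algebra: substituting $g_\ell \leq (n-3)(1+n^{\alpha-1})$ into $\mus_{25}(n) \leq 2n + g_\ell - 1$ gives
\[
\mus_{25}(n) \leq 2n + (n-3)(1+n^{\alpha-1}) - 1 = 2\left(1+\frac{1+n^{\alpha-1}}{2}\right)n - 3n^{\alpha-1} - 4,
\]
as required. \emph{The main obstacle} is precisely the bookkeeping underlying the clean estimate $g_\ell \leq (n-3)(1+n^{\alpha-1})$: one must carefully track the $O(1)$ corrections in the genus formula for $X_0(\ell)$ (depending on $\ell\bmod 12$), in the count of $\F_{25}$-rational points, and most importantly in the explicit form of Dudek's short-interval theorem at the threshold~$x_\alpha$, and verify that all these constants combine to produce exactly the lower-order terms $-3n^{\alpha-1}-4$ claimed in the final form.
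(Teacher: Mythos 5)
The survey does not reprove this proposition: it is quoted from \cite[Proposition 7]{bazy2}, and the method described there is exactly the one you outline --- a family of modular curves over $\F_{25}$ attaining the Drinfeld--Vladut bound, the explicit prime-gap theorem of Dudek as recalled in Theorem \ref{lemmek0}, and Theorem \ref{theoprinc}(1) with $a=0$ --- and your concluding algebra $2n+(n-3)(1+n^{\alpha-1})-1=2\bigl(1+\frac{1+n^{\alpha-1}}{2}\bigr)n-3n^{\alpha-1}-4$ is correct. The one place where you genuinely deviate is the choice of curves, and that is precisely where your acknowledged ``bookkeeping'' lives: with $X_0(\ell)$ the genus is only $\approx \ell/12$ up to corrections depending on $\ell\bmod 12$, and the number of $\F_{25}$-rational supersingular points is likewise known only up to bounded corrections, so reaching the window $n\lesssim g\leq(n-3)(1+n^{\alpha-1})$ means applying the gap bound near $12n$ and absorbing all these constants; this does work (the gap $(12n)^{2/3}\approx 5.3\,n^{2/3}$ sits comfortably inside the roughly $12\,n^{2/3}$ of room available in $\ell$-space), but the stated threshold $n\geq x_\alpha+3$ and the exact terms $-3n^{\alpha-1}-4$ would then have to be re-derived rather than read off. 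The line of work you are reproducing instead uses curves of the type $X_0(11\ell)$ with $\ell$ prime (the same family the survey recalls in its subsection on asymptotic constructions, following \cite{shtsvl}): for these the genus is exactly $\ell$ and there are exactly $4(\ell+1)=4(g+1)$ supersingular points rational over $\F_{25}$, so the hypothesis $N_1>2n+2g-2$ of Theorem \ref{theoprinc}(1) becomes exactly $g\geq n-2$, the genus window is literally a statement about consecutive primes, and the constants of the proposition fall out with no further estimation. Finally, the density of $\{X_0(\ell)\}$ has nothing to do with the intertwining construction of Section \ref{courbes_Shimura}; the effective ingredient you actually use (and all that is needed) is the prime-gap bound, so that justification should be dropped.
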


Let us recall the following key result as direct consequence of the results of 
Baker, Harman, and Pintz \cite{bahapi} and A. Dudek \cite{dude} 
on which Assertion (vi) in Proposition \ref{newboundpcarréfusion}, 
Proposition \ref{newboundpcarréfusion2} as well as Proposition \ref{newboundpcarré} are essentially based on.

Their results concern explicit prime number density theorems, usually called theorems of type Hoheisel. 
In particular, by a result of Baker, Harman and Pintz \cite[Theorem 1]{bahapi} established in 2001 
and by a recent result established by Dudek  \cite[Theorem 1.1]{dude} in 2016, we directly deduce 
the following result \cite[Theorem 6]{bazy2}:

\vspace{.5em}

\begin{theo}\label{lemmek0}
Let $l_k$ be the $k$-th prime number. Then there exist real numbers $\alpha<1$ and $x_{\alpha}$ such 
that the difference between two consecutive prime numbers $l_k$ and $l_{k+1}$ satisfies 
$$l_{k+1}-l_k\leq l_k^{\alpha}$$ for any prime $l_k\geq x_{\alpha}.$ 
In particular, one can take $\alpha=\frac{2}{3}$ with $x_{\alpha}=\exp(\exp(33.217))$.
Moreover, one could take $\alpha=\frac{21}{40}$ with a value of $x_{\alpha}$ that could in principle be determined effectively.
\end{theo}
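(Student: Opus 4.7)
The plan is to deduce the statement directly as a corollary of the two cited results from analytic number theory: the prime gap theorem of Baker, Harman, and Pintz \cite[Theorem 1]{bahapi}, and the explicit refinement due to Dudek \cite[Theorem 1.1]{dude}. Both of these give, under their respective hypotheses, the existence of a prime in a short interval above an arbitrary real number $x$, and the theorem to be proved is simply a rewriting of such statements in terms of consecutive primes.

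First I would handle the explicit part with $\alpha=2/3$ and $x_\alpha=\exp(\exp(33.217))$. Dudek's theorem asserts that for every real $x\geq\exp(\exp(33.217))$, there exists a prime in the interval $(x,x+x^{2/3}]$. Applying this at $x=l_k$ for any prime $l_k\geq x_\alpha$ produces a prime $p$ with $l_k<p\leq l_k+l_k^{2/3}$. Since $l_{k+1}$ is by definition the smallest prime strictly greater than $l_k$, one obtains $l_{k+1}\leq p\leq l_k+l_k^{2/3}$, which is exactly the required inequality $l_{k+1}-l_k\leq l_k^{\alpha}$.

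For the sharper exponent $\alpha=21/40=0.525$, the same reasoning applies verbatim, invoking instead \cite[Theorem 1]{bahapi}: for all sufficiently large $x$, the interval $(x,x+x^{0.525}]$ contains a prime. Taking $x=l_k$ for $l_k$ beyond the (unspecified) threshold gives $l_{k+1}-l_k\leq l_k^{21/40}$. The only reason this second statement is not fully explicit is that the proof in \cite{bahapi} proceeds through zero-density estimates for $L$-functions and sieve bounds whose numerical constants are not tracked; in principle, however, all these constants can be made effective by auditing the argument, which justifies the assertion that $x_\alpha$ \emph{could in principle} be determined effectively.

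At the level of this theorem there is no real obstacle: the deep work is entirely absorbed into the references \cite{bahapi} and \cite{dude}. The only nontrivial point to verify is the compatibility of formulations, namely that the existence of a prime in $(x,x+x^{\alpha}]$ applied at $x=l_k$ translates into a bound on the gap $l_{k+1}-l_k$, which is immediate from the definition of the $k$-th prime.
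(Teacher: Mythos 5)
Your overall structure is the same as the paper's, which offers no proof beyond asserting that the statement is directly deduced from \cite[Theorem 1]{bahapi} and \cite[Theorem 1.1]{dude}; and your mechanism for turning an interval statement applied at $x=l_k$ into a bound on $l_{k+1}-l_k$ is fine. The genuine gap is in your quotation of Dudek. His Theorem 1.1 does not say that every interval $(x,x+x^{2/3}]$ with $x\geq\exp(\exp(33.217))$ contains a prime; it says that there is a prime between $n^3$ and $(n+1)^3$ for all $n\geq\exp(\exp(33.217))$ (the paper is literally about ``primes between cubes''). Converting that into a gap bound at a prime $l_k$ forces you to take $n$ near $l_k^{1/3}$, so the hypothesis becomes $l_k^{1/3}\geq\exp(\exp(33.217))$, i.e. $l_k\geq\exp(3\exp(33.217))$ rather than $l_k\geq\exp(\exp(33.217))$, and the interval between the relevant consecutive cubes has length about $3\,l_k^{2/3}$ (about $6\,l_k^{2/3}$ once $n$ is rounded to an integer and one must pass to the next cube interval to stay above $l_k$). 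So the precise claim ``$l_{k+1}-l_k\leq l_k^{2/3}$ for $l_k\geq\exp(\exp(33.217))$'' does not follow verbatim from the result you cite; this constant-and-threshold bookkeeping is the only non-citation content of the theorem, and it matters here because the explicit value of $x_{\alpha}$ is what gets used quantitatively afterwards (e.g.\ in Proposition \ref{newboundpbz} and in the ranges of the form $k\geq\frac{p-2}{24}e^{e^{33.217}}$).

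A milder version of the same issue affects the $\alpha=21/40$ part: Baker--Harman--Pintz prove that $[x-x^{0.525},x]$ contains a prime for all sufficiently large $x$, which in gap form gives $l_{k+1}-l_k\leq(1+o(1))\,l_k^{21/40}$, i.e.\ with an implied constant rather than the constant $1$ asserted; taking any fixed $\alpha$ slightly larger than $21/40$, or adding a short extra argument, repairs this, and your remark on effectivity of their threshold is reasonable. None of this threatens the qualitative assertion that some $\alpha<1$ and $x_{\alpha}$ exist, but as written your justification of the numerical specializations rests on misstated inputs: either quote the cited theorems in their actual form and accept the resulting factor $3$ (or $6$) and cubed threshold, or supply an interval-form result with exactly the claimed constants. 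The same caveat in fact applies to the paper's own one-line sourcing of this statement, so spelling out this translation would be a genuine improvement rather than a mere repetition of it.
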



%

\begin{op}
A problem which is highly not trivial consists on determining effectively a value of 
$x_{\alpha}$ for $\alpha=\frac{21}{40}$. This problem is a typical problem of analytic number 
theory, said problem of type Hoheisel.
\end{op}


Then, the second result concerns the case of prime fields. The optimal method used 
by H. Randriambolona \cite{randGap2} for solving Riemann-Roch systems (cf. Section \ref{Bound2torsion}) 
does not work well for symmetric algorithms over prime fields. Instead, to prove 
\cite[Proposition 10]{bazy2} Ballet and Zykin use a suboptimal method from \cite{baro4} 
associated to descent technics (cf. Section \ref{descent}) and obtain:

\begin{propo}\label{newboundpbz}
Let $p\geq 5$ be a prime number, let $x_{\alpha}$ be defined as in Theorem \ref{lemmek0}.
\begin{enumerate}
\item If $p\neq 11,$ then for any integer $ n\geq \frac{p-3}{2}x_\alpha+\frac{p+1}{2}$ we have

$$
\mus_{p}(n) \leq 3\left(1+\frac{\frac{4}{3}(1+\epsilon_p(n))}{p-3}\right)n-\frac{2(1+\epsilon_p(n))(p+1)}{p-3},
$$
where $\epsilon_p(n)=\left(\frac{2n}{p-3}\right)^{\alpha-1}.$

\item For $p=11$ and $ n\geq (p-3)x_\alpha+p-1=8x_\alpha+10$ we have
$$
\mus_{p}(n) \leq 3\left(1+\frac{\frac{4}{3}(1+\epsilon_p(n))}{p-3}\right)n-\frac{4(1+\epsilon_p(n))(p-1)}{p-3}+1,
$$
where $\epsilon_p(n)=\left(\frac{2n}{p-3}\right)^{\alpha-1}.$
\end{enumerate}
\end{propo}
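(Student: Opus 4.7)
The plan is to apply Theorem \ref{theoprinc} case 2) with $a_1 = a_2 = 0$ to a suitable algebraic function field $F/\F_p$ of genus $g$ admitting a place of degree $n$, many degree one and two places, and (automatically, since $p \geq 4$) a non-special divisor of degree $g-1$. The function field $F$ is chosen from a dense family over $\F_p$ constructed, as in Section \ref{descent} and the method of \cite{baro4}, by descent of an asymptotically optimal family of modular curves defined over $\F_{p^2}$; after descent, the resulting family attains $\betad_2(p) = (p-1)/2$ as in equation \eqref{DV2}, so that its members satisfy $N_1 + 2 N_2 \geq (p-1)g - O(\sqrt{g})$.

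For a given $n$, the next step is to select from this family a curve whose genus $g$ is minimal subject to the injection condition $N_1 + 2 N_2 > 2n + 2g - 2$ of Theorem \ref{theoprinc}. Asymptotically this reads $(p-3)g \gtrsim 2n$, so the target value of $g$ is roughly $\tfrac{2(n-(p+1)/2)}{p-3}$ once the Weil-type correction for $|X(\F_{p^2})|$ is absorbed into the constant $(p+1)$. Since the admissible genera in this family are parametrized by primes and hence form a set of positive density only up to prime gaps, one invokes Theorem \ref{lemmek0} (the explicit gap result of Baker--Harman--Pintz \cite{bahapi} and Dudek \cite{dude}): provided $n \geq \tfrac{p-3}{2} x_\alpha + \tfrac{p+1}{2}$ so that the target value is at least $x_\alpha$, an admissible genus $g$ exists exceeding the target by a factor at most $1 + \epsilon_p(n)$ with $\epsilon_p(n) = \bigl(\tfrac{2n}{p-3}\bigr)^{\alpha-1}$. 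Substituting this $g$ into the conclusion $\mus_p(n) \leq 3n + 2g - 1$ of Theorem \ref{theoprinc} then yields the announced formula in case (1).

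For case (2), corresponding to $p = 11$, the technical obstruction is that the parameter value $l = 11$ must be skipped from the set of admissible primes indexing the family, since the corresponding modular curve has bad reduction modulo $11$. One therefore passes to the next admissible prime, which doubles the correction constant and also doubles the threshold on $n$, producing the announced modified bound with $-\tfrac{4(1+\epsilon_p(n))(p-1)}{p-3} + 1$. The main obstacle will lie in the careful bookkeeping of constants: the factors $(p+1)$ and $(p-1)$ in the two correction terms come from a precise tracking of the Weil--Serre bound on $|X(\F_{p^2})|$ for members of the specific descended family, together with the exact relation between the prime parameter indexing the family and the genus; and one must also verify, throughout the relevant range, the numerical criterion $2g + 1 \leq p^{(n-1)/2}(p^{1/2} - 1)$ of Theorem \ref{theoprinc} ensuring the existence of a degree-$n$ place, which holds because $g$ is only of order $\tfrac{n}{p-3}$ in the construction above.
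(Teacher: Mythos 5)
Your overall skeleton does match the route the survey attributes to Ballet--Zykin \cite{bazy2}: take the reductions modulo $p$ of a family of classical modular curves with many $\F_{p^2}$-points, feed the degree-one and degree-two places into the suboptimal criterion 2) of Theorem \ref{theoprinc} with $a_1=a_2=0$ (giving $\mus_p(n)\leq 3n+2g-1$, with the degree-$n$ place and the non-special divisor of degree $g-1$ handled exactly as you say), and control the choice of genus via the explicit prime-gap result of Theorem \ref{lemmek0}. However, two of your key steps would fail as written. First, the input on the number of places cannot be ``asymptotic optimality $\betad_2(p)=(p-1)/2$ up to a Weil-type correction'': $\betad_2$ is a purely asymptotic quantity, and the Weil--Serre inequality bounds $|X(\F_{p^2})|$ from \emph{above}, so neither can produce the explicit, per-curve lower bound needed for a \emph{uniform} statement with the stated threshold $n\geq\frac{p-3}{2}x_\alpha+\frac{p+1}{2}$ and the exact constant $(p+1)$. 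What the actual argument uses is the arithmetic of the specific family $X_0(11\ell)$ (as in \cite{shtsvl}): for a prime $\ell\neq 11,p$ its reduction mod $p$ has genus exactly $g=\ell$, and all supersingular points are $\F_{p^2}$-rational and number at least $(p-1)(\ell+1)$, whence the clean inequality $N_1+2N_2\geq(p-1)(g+1)$ with no error term; your ``$\geq(p-1)g-O(\sqrt g)$'' with an unspecified constant cannot be converted into the stated explicit bound.

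Second, your explanation of case (2) is not the right mechanism. For $p=11$ the obstruction is not that one value $\ell=11$ of the running prime must be deleted from the index set --- skipping a single index has no asymptotic effect on prime gaps and could not double anything. The problem is that the \emph{fixed auxiliary level} $11$ makes every curve $X_0(11\ell)$ have bad reduction at $p=11$, so the whole family must be replaced by one with another fixed level (e.g.\ $X_0(23\ell)$, of genus $2\ell+1$ and with supersingular count about $2(p-1)(\ell+1)$). It is this coarser genus parametrization ($g\approx 2\ell$ instead of $g=\ell$) that doubles the effective spacing, and hence the threshold on $n$ and the constant in the correction term $-\frac{4(1+\epsilon_p(n))(p-1)}{p-3}+1$; ``passing to the next admissible prime'' in the original family would leave both unchanged. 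With these two repairs --- the exact supersingular-point count on $X_0(11\ell)$ (resp.\ a replacement family for $p=11$) and the resulting exact relation between the running prime and the genus --- your bookkeeping plan does lead to the stated formulas.
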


\subsection{Upper bounds for $\mu_{q}(n)$ and $\mu_{q}(l,r)$}\label{mu}

By using the asymmetric part of Theorem~\ref{theo_evalder}, J.~Pieltant and
H.~Randriambololona obtained in \cite{pira} results about bilinear complexity not necessarily symmetric.
In particular, they obtain the best bounds in the extensions of $\F_2$, $\F_p$ and $\F_{p^2}$
for all $p \geq 3$ and $\F_q$ and $\F_{q^2}$ for all $q \geq 4$.

\begin{propo}
Let $q$ be a prime power and $d$ be an positive integer for which all proper divisors
verify $j < \frac{1}{2}(q+1+\epsilon(q))$ if $q \geq 4$, or $j \leq \frac{1}{2}q+1$ if $q \in \{2,3\}$.
Let $F/\F_q$ be an algebraic function field of genus $g \geq 2$ with $N_i$ places
of degree $i$ and let $\ell_i$ be integers such that $0 \leq \ell_i \leq N_i$, for all $i | d$.
Suppose that:
\begin{enumerate}[(i)]
   \item there exists a place of degree $n$ in $F/\F_q$,
   \item $\sum_{i|d} i(N_i+\ell_i) \geq 2n+g+\alpha_q$,
   where $\alpha_2=5, \alpha_3=\alpha_4=\alpha_5=2$ and $\alpha_q=-1$ for $q > 5$.
\end{enumerate}
Then
$$
\mu_q(n) \leq \frac{2\mus_q(d)}{d}\left(n+\frac{g}{2}\right)+\gamma_{q,d}\sum_{i|d} i\ell_i+\kappa_{q,d},
$$
where $\gamma_{q,d}=\max_{i|d}\left(\frac{\mu_q(i,2)}{i}\right)-\frac{2\mus_q(d)}{d}$
and $\kappa_{q,d} \leq \frac{\mus_q(d)}{d}(\alpha_q+d-1)$.
\end{propo}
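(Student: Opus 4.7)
The plan is to specialize Theorem \ref{theo_evalder} (the asymmetric, generalized CCMA, in the case $\ell=1$) to the data of the proposition. I take as evaluation point the degree-$n$ place $Q$ from hypothesis (i) and, for each $i|d$, I select a sub-collection of $N'_i \leq N_i$ places of degree $i$ (keeping all $\ell_i$ multiplicity-$2$ places intact) so that the effective degree $D_0 := \sum_{i|d} i(N'_i+\ell_i)$ lies in the interval $[2n+g+\alpha_q,\,2n+g+\alpha_q+d-1]$; hypothesis (ii) guarantees this trimming is feasible. Theorem \ref{theo_evalder} then yields the raw bound
\[
\mu_q(n) \leq \sum_{i|d}\bigl[(N'_i-\ell_i)\,\mu_q(i)+\ell_i\,\mu_q(i,2)\bigr].
\]

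The divisors $\D_1,\D_2$ needed for conditions (a) and (b) of Theorem \ref{theo_evalder} are produced by the Riemann-Roch / cardinality argument going back to Chudnovsky-Chudnovsky \cite{chch} and refined by Ballet \cite{ball1}: the asymmetric setup allows one to fix $\deg\D_1=\deg\D_2=n+g-1$ with each $\D_j - Q$ nonspecial and $\D_1+\D_2-\mathcal{G}$ zero-dimensional, provided $D_0$ exceeds $2n+g-1$ by the margin $\alpha_q+1$. The precise values $\alpha_2=5$, $\alpha_3=\alpha_4=\alpha_5=2$, $\alpha_q=-1$ for $q>5$ encode the extra slack required in small characteristic, where the scarcity of rational points and the $2$-torsion obstructions reviewed in Section~\ref{sym_methods} prevent the naive pigeonhole argument from being tight.

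The algebraic heart of the proof is the key inequality $\mu_q(i)/i \leq \mus_q(d)/d$ for every $i|d$. For $i=d$ this is immediate from $\mu_q(d)\leq\mus_q(d)$. For a proper divisor $i<d$, the hypothesis places $i$ either in the basic Winograd range, giving $\mu_q(i)=2i-1$ by Theorem \ref{thm_wdg}, or in the elliptic Shokrollahi-Chaumine range, giving $\mu_q(i)=2i$ by Theorem \ref{thm_shokr}; a short case split on whether $d$ itself is in the basic range $d\leq q/2+1$ or in the elliptic range verifies the inequality in every case, sometimes with equality. Combined with the tautological bound $\mu_q(i,2) \leq (\gamma_{q,d}+2\mus_q(d)/d)\,i$ coming from the very definition of $\gamma_{q,d}$, substitution into the raw bound and the identity $(N'_i-\ell_i)+2\ell_i = N'_i+\ell_i$ collapse everything to
\[
\mu_q(n) \leq \frac{\mus_q(d)}{d}\, D_0 + \gamma_{q,d}\sum_{i|d} i\ell_i.
\]
Finally, the upper bound $D_0\leq 2n+g+\alpha_q+d-1$ converts the main term into $\frac{2\mus_q(d)}{d}(n+g/2) + \kappa_{q,d}$ with $\kappa_{q,d}=\frac{\mus_q(d)}{d}(\alpha_q+d-1)$, matching the claimed inequality exactly.

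The main obstacle is the divisor construction step: guaranteeing $\D_1,\D_2$ of the right degrees, with the required nonspecialness and zero-dimensionality properties, under the tight degree condition (ii) requires a delicate asymmetric adaptation of the Chudnovsky-Chudnovsky counting argument, and it is precisely the need to make this work for $q\in\{2,3,4,5\}$ that forces the larger values of $\alpha_q$. A secondary concern is to ensure that the trimming step $N_i\to N'_i$ can land $D_0$ in the target interval without violating $N'_i\geq\ell_i$; this is automatic when enough places of small degree are available, but may require mild ancillary arguments in degenerate cases.
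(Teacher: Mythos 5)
The paper does not actually prove this proposition: it is imported from Pieltant--Randriambololona \cite{pira}, with only the indication that it follows from the asymmetric part of Theorem~\ref{theo_evalder}. Your outline follows that intended route, and the reduction part is correct: the inequality $\mu_q(i)/i\leq\mus_q(d)/d$ for $i\mid d$ does follow from Theorems~\ref{thm_wdg} and~\ref{thm_shokr} by your case split (if $d\leq\frac{q}{2}+1$ every proper divisor lies in the Winograd range and $\mus_q(d)\geq 2d-1$; otherwise $\mus_q(d)\geq 2d$ while $\mu_q(i)\leq 2i$), the bound $\mu_q(i,2)\leq\bigl(\gamma_{q,d}+\frac{2\mus_q(d)}{d}\bigr)i$ is just the definition of $\gamma_{q,d}$, and substituting these into $\sum_{i\mid d}\bigl[(N'_i-\ell_i)\mu_q(i)+\ell_i\mu_q(i,2)\bigr]$ gives the stated bound once $\deg\mathcal{G}$ is pinned in a window of width $d$ above $2n+g+\alpha_q$.

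The genuine gap is the step you defer: the existence of $\D_1,\D_2$ under hypothesis (ii) with precisely these constants $\alpha_q$; moreover the justification you sketch for it is misdirected. Two-torsion is not the obstruction here: the point of the asymmetric algorithm (and the reason the conclusion concerns $\mu_q$ rather than $\mus_q$) is that the class of $\D_2$ is chosen independently of $\D_1$, so $\dim\mathcal{L}(\D_1+\D_2-\mathcal{G})=0$ becomes a condition on a translate of the class of $\D_2$, and the torsion problem of Section~\ref{sym_methods} simply does not arise. What actually determines $\alpha_q$ is, on one hand, the existence of non-special divisors of degree $g-1$, guaranteed only for $q\geq4$ (\cite{balb}, cf.\ Theorem~\ref{theoprinc}); for $q=2,3$ your normalization $\deg\D_j=n+g-1$ with $i(\D_j-Q)=0$ is not available in general, the degrees must be raised (using zero-dimensional divisors of larger ``defect'' as in \cite{bariro}), and this is where $\alpha_2=5$ comes from; and on the other hand a class-counting inequality of the type used in Proposition~\ref{prop:majoccx} and Proposition~\ref{propo_construction_sym}, ensuring that one class of $\D_2$ simultaneously satisfies $i(\D_2-Q)=0$ and $\dim\mathcal{L}(\D_1+\D_2-\mathcal{G})=0$; it is this counting, not torsion, that separates $q>5$ (where the optimal degree $2n+g-1$, i.e.\ $\alpha_q=-1$, is reachable) from $q\in\{3,4,5\}$. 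Without carrying this out, the specific values of $\alpha_q$ --- which are the substance of the statement --- remain unproved. A secondary unresolved point: your trimming keeps all multiplicity-two places, but if $\sum_{i\mid d}2i\ell_i$ already exceeds $2n+g+\alpha_q+d-1$ you are forced to drop derived evaluations, and recovering the stated bound then requires $\gamma_{q,d}\geq0$, which you neither prove nor discuss.
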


By choosing $d=1,2$ or $4$, they obtain the two following corollaries:

\begin{coro}
Let $q \geq 3$ be a prime power and $F/\F_q$ be an algebraic function field of genus $q \geq 2$ with $N_i$ places
of degree $i$.
Let $\ell_i$ be integers such that $0 \leq \ell_i \leq N_i$.
Suppose that:
\begin{enumerate}[(i)]
   \item there is a place of degree $n$ in $F/\F_q$,
   \item $N_1+\ell_1+2(N_2+\ell_2) \geq 2n+g+\alpha_q$,
   where $\alpha_3=\alpha_4=\alpha_5=2$ and $\alpha_q=-1$ for $q > 5$.
\end{enumerate}
Then
$$\mu_3(n) \leq 3n+\frac{3}{2}g+\frac{3}{2}(\ell_1+2\ell_2)+\frac{9}{2},$$
$$\mu_q(n) \leq 3n+\frac{3}{2}g+\ell_1+2\ell_2+\frac{9}{2},\mbox{ for } q=4 \mbox{ or } 5,$$
and for $q > 5$,
$$\mu_q(n) \leq 3n+\frac{3}{2}g+\frac{1}{2}(\ell_1+2\ell_2),$$
or in the particular case where $N_2=\ell_2=0$
$$\mu_q(n) \leq 2n+g+\ell_1-1.$$
\end{coro}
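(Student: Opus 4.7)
The plan is to derive each of the four inequalities by specializing the preceding proposition to a small divisor $d$. For the three generic cases ($q = 3$, $q \in \{4,5\}$, and $q > 5$), I will take $d = 2$, so that the general hypothesis $\sum_{i \mid d} i(N_i + \ell_i) \geq 2n + g + \alpha_q$ becomes exactly the corollary's condition (ii); in the degenerate case $N_2 = \ell_2 = 0$ with $q > 5$, I will take $d = 1$ instead, whence the hypothesis reduces to $N_1 + \ell_1 \geq 2n + g + \alpha_q$, which is still implied by (ii). The side requirement that every proper divisor $j$ of $d$ satisfy $j < (q+1+\epsilon(q))/2$ (or $j \leq q/2 + 1$ for $q = 3$) is trivial in all subcases, since only $j = 1$ arises.

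The argument then reduces to evaluating the three constants produced by the preceding proposition: the leading coefficient $2\mus_q(d)/d$, the multiplicity coefficient $\gamma_{q,d} = \max_{i \mid d}(\mu_q(i,2)/i) - 2\mus_q(d)/d$, and the constant $\kappa_{q,d}$, whose upper bound $\tfrac{\mus_q(d)}{d}(\alpha_q + d - 1)$ I will use. Throughout I will apply the standard values $\mus_q(1) = 1$, $\mus_q(2) = 3$ (for $q \geq 2$), and $\mu_q(1,2) = 3$ from Karatsuba.

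For $q = 3$ with $d = 2$, the leading term becomes $3(n + g/2)$; iterated Karatsuba (equivalently Table~\ref{UBCmuqrl}) gives $\mu_3(2,2) \leq 9$, whence $\gamma_{3,2} \leq 9/2 - 3 = 3/2$, and $\kappa_{3,2} \leq (3/2)(2+1) = 9/2$. For $q \in \{4,5\}$ with $d = 2$, the sharper value $\mu_q(2,2) \leq 8$ from \cite[(88)]{randJComp} yields $\gamma_{q,2} \leq 1$, with $\kappa_{q,2} \leq 9/2$ as before. For $q > 5$ with $d = 2$, the still sharper estimate $\mu_q(2,2) \leq 7$, available once the ground field is large enough to support full Toom--Cook interpolation for the relevant $\F_q$-algebra, gives $\gamma_{q,2} \leq 1/2$, while $\kappa_{q,2} \leq 0$ since $\alpha_q = -1$. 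Finally, for $q > 5$ with $N_2 = \ell_2 = 0$, taking $d = 1$ gives leading coefficient $2$, $\gamma_{q,1} = 3 - 2 = 1$, and $\kappa_{q,1} \leq \alpha_q = -1$, producing the sharper bound $2n + g + \ell_1 - 1$.

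The main obstacle, such as it is, lies in tracking down the tightest published upper bound on $\mu_q(2,2)$ in each regime: it is precisely the progression $9, 8, 7$ as $q$ runs through $3$, $\{4,5\}$, and $> 5$ that drives the three distinct multiplicity coefficients $3/2$, $1$, $1/2$, while the stepdown in $\alpha_q$ from $2$ to $-1$ is what causes the $9/2$ constant to vanish in the last two cases. Once these inputs are in hand, the derivation is a purely mechanical substitution into the preceding proposition.
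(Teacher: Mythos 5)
Your derivation is correct and follows essentially the same route as the paper: the corollary is exactly the preceding proposition specialized to $d=2$ (and to $d=1$ in the case $N_2=\ell_2=0$), with the numerical inputs $\mus_q(2)=3$, $\mu_q(1,2)=3$, $\mu_3(2,2)\leq 9$, $\mu_4(2,2),\mu_5(2,2)\leq 8$ and $\mu_q(2,2)\leq 7$ for $q>5$ feeding the coefficients $\gamma_{q,d}$ and the constant $\kappa_{q,d}$. Your justification of the last input is sound, since $\F_{q^2}[t]/(t^2)\simeq\F_q[X]/(P(X)^2)$ is a quotient of the product of two cubics, computable by evaluation--interpolation with $2\cdot4-1=7$ bilinear multiplications as soon as $q+1\geq 7$, i.e. for all $q>5$.
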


\begin{coro}
Let $F/\F_2$ be an algebraic function field of genus $g \geq 2$ with $N_i$ places of degree $i$
and let $\ell_i$ be integers such that $0 \leq \ell_i \leq N_i$.
Suppose that:
\begin{enumerate}[(i)]
   \item there is a place of degree $n$ in $F/\F_2$,
   \item $\sum_{i|4}i(N_i+\ell_i) \geq 2n+g+5$,
\end{enumerate}
then
$$\mu_2(n) \leq \frac{9}{2}\left(n+\frac{g}{2}\right)+\frac{3}{2}\sum_{i|4}i\ell_i+18.$$
\end{coro}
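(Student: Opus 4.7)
The plan is to deduce this corollary by direct specialization of the preceding Pieltant--Randriambololona proposition to the case $q=2$ and $d=4$, so the whole task reduces to verifying admissibility of this choice and computing the three numerical constants that appear in the conclusion.

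First I would check that $d=4$ is allowed in the base case $q=2$: its proper divisors are $j=1$ and $j=2$, and since $q\in\{2,3\}$ the required inequality $j\leq q/2+1 = 2$ is satisfied. With $d=4$ and $\alpha_2=5$, hypothesis (ii) of the proposition reads $\sum_{i\mid 4}i(N_i+\ell_i)\geq 2n+g+5$, which is exactly hypothesis (ii) of the corollary; hypothesis (i) matches as well. So both hypotheses translate directly, and it only remains to evaluate the three constants in the proposition's bound.

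The leading coefficient is $\tfrac{2\mus_2(4)}{d} = \tfrac{2\cdot 9}{4} = \tfrac{9}{2}$, using the exact value $\mus_2(4)=9$ recorded in Table \ref{ExactBilinComp}. The additive constant satisfies
$$\kappa_{2,4} \leq \frac{\mus_2(4)}{d}(\alpha_2 + d - 1) = \frac{9}{4}\cdot 8 = 18.$$
Finally, for the coefficient in front of $\sum_{i\mid 4} i\ell_i$, I would use the upper bounds on $\mus_2(r,l)$ compiled in Table \ref{UBCmuqrl} (which bound a fortiori the asymmetric $\mu_2(r,l)$): namely $\mu_2(1,2)\leq 3$, $\mu_2(2,2)\leq 9$, and $\mu_2(4,2)\leq 24$. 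The corresponding ratios $\mu_2(i,2)/i$ are $3,\,\tfrac{9}{2},\,6$, so their maximum is $6$, attained at $i=4$; therefore
$$\gamma_{2,4} \;=\; \max_{i\mid 4}\frac{\mu_2(i,2)}{i} \;-\; \frac{2\mus_2(4)}{d} \;\leq\; 6 - \frac{9}{2} \;=\; \frac{3}{2}.$$

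Substituting these three quantities into the conclusion of the proposition yields exactly the claimed inequality. The only genuinely non-trivial ingredient is the upper bound $\mu_2(4,2)\leq 24$, since this is what forces the factor $\tfrac{3}{2}$ in front of the correction term $\sum_{i\mid 4} i\ell_i$ (rather than something larger); any weakening here would propagate into a weaker corollary. The remaining inputs, in particular the exact value $\mus_2(4)=9$ obtained by iterating Karatsuba, are entirely standard and were already tabulated earlier in the survey, so no further effort is required.
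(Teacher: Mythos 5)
Your proposal is correct and is essentially the paper's own route: the corollary is precisely the specialization $q=2$, $d=4$ of the preceding Pieltant--Randriambololona proposition, with the exact value $\mus_2(4)=9$ (Table \ref{ExactBilinComp}) yielding the leading coefficient $\tfrac{9}{2}$ and $\kappa_{2,4}\le\tfrac{9}{4}(5+3)=18$, and the tabulated bound $\mu_2(4,2)\le\mus_2(4,2)\le 24$ (Table \ref{UBCmuqrl}) yielding $\gamma_{2,4}\le 6-\tfrac{9}{2}=\tfrac{3}{2}$. Your admissibility check for $d=4$ over $\F_2$ (proper divisors $1,2\le\tfrac{q}{2}+1$) and your identification of $\mu_2(4,2)\le 24$ as the binding input are exactly right.
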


Then, they establish new asymmetrical uniform bounds:

\begin{theo}\label{bornesasympt}
For $n \geq 2$,
\begin{enumerate}[(i)]
   \item if $q=2$, then
   $$\mu_2(n) \leq \frac{189}{22}n + 18,$$
   \item if $q=3$, then
   $$\mu_3(n) \leq 6n,$$
   \item if $q=4$, then
   $$\mu_4(n) \leq \frac{87}{19}n,$$
   \item if $q=5$, then
   $$\mu_5(n) \leq \frac{9}{2}n,$$
   \item if $q \geq 4$, then
   $$\mu_{q^2}(n) \leq 2\left(1+\frac{p}{q-2+(p-1)\frac{q}{q+1}}\right)n-1,$$
   \item if $p \geq 3$, then
   $$\mu_{p^2}(n) \leq 2\left(1+ \frac{2}{p-1}\right)n-1,$$
   \item if $q>5$, then
   $$\mu_q(n) \leq 3\left(1+\frac{p}{q-2+(p-1)\frac{q}{q+1}}\right)n,$$
   \item if $p>5$, then
   $$\mu_p(n) \leq 3\left(1+\frac{2}{p-1}\right)n.$$
\end{enumerate}
\end{theo}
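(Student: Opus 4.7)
The strategy is to apply the asymmetric corollaries preceding the theorem (specializations of Theorem \ref{theo_evalder} with $\D_1 \neq \D_2$) to dense families of algebraic function fields. Asymmetric interpolation is crucial here: the numerical constraint on the interpolation divisors becomes $\deg(\D_1+\D_2) \geq 2n + g - 1$ rather than $2\deg\D \geq 2n + g - 1$, so the constants obtained are essentially sharper than what the symmetric CCMA would yield, at the cost of losing the symmetry of the output algorithm.

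For (v) and (vi), my plan is to work with the first Garcia--Stichtenoth tower over $\F_{q^2}$, which attains the Drinfeld--Vl\u{a}du\c{t} bound $N_1/g \to q - 1$. Taking $d = 1$ in the asymmetric corollary and optimizing $\ell_1$ against the condition $N_1 + \ell_1 \geq 2n + g + \alpha_q$ produces an upper bound on $\mu_{q^2}(n)/n$ whose shape $2(1 + p/(q-2 + (p-1)q/(q+1)))$ reflects the Artin--Schreier step structure of the tower and its Galois densification (which explains the appearance of the characteristic $p$ in the denominator). For (vii) and (viii), I would invoke Lemma \ref{plongement}, namely $\mu_q(2n) \leq \mu_q(2)\,\mu_{q^2}(n) = 3\,\mu_{q^2}(n)$, which multiplies the constant of (v) or (vi) by $3/2$; odd extension degrees are covered by the elementary inequality $\mu_q(n) \leq \mu_q(2n)$, also from Lemma \ref{plongement}.

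For the small-field cases (i)--(iv), the plan is to use descended variants of the Garcia--Stichtenoth tower, descended from $\F_{q^2}$ to $\F_q$ for $q \in \{2,3,4,5\}$ via the techniques of \cite{baro1,balbro}. These supply enough degree-$1$, degree-$2$, and in case (i) degree-$4$ places that the $d=2$ or $d=4$ version of the asymmetric corollary applies; the rational constants $189/22$, $6$, $87/19$, $9/2$ then emerge from the asymptotic ratios of the various place counts to the genus, after careful bookkeeping with the corrections $\alpha_2 = 5$ and $\alpha_3 = \alpha_4 = \alpha_5 = 2$.

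The main obstacle is \emph{density}: the bound must hold for every $n \geq 2$, so the underlying family of function fields must satisfy $g_{s+1}/g_s \to 1$. A plain Garcia--Stichtenoth tower grows too fast (ratio $\geq q$), so Galois intermediate steps have to be inserted in order to guarantee that for each $n$ there is a step of the right size; these intermediate steps are known to descend to $\F_q$ only in restricted situations, which is precisely what limits the range of $q$'s where (iii)--(iv) take their specific form. Tracking this density, together with small-$n$ additive constants such as the $+18$ in (i) that absorb the poorly behaved bottom of the tower, is the technical heart of the proof.
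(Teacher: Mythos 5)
Your plan for (i)--(vi) is essentially the intended one (it is the route of J.~Pieltant and H.~Randriambololona in \cite{pira}, summarized here by the Proposition and the two corollaries stated just before the theorem): apply the asymmetric specialization of Theorem~\ref{theo_evalder} with $d=1$, $2$ or $4$ to the densified Garcia--Stichtenoth towers (descended from $\F_{q^2}$ to $\F_q$ via \cite{baro1}\cite{balbro} for the small fields), and optimize the number of derived evaluations $\ell_i$ against the degree condition; the characteristic $p$ indeed enters through the degree-$p$ intermediate steps of the densification, and the additive constant in (i) is exactly the term $\kappa_{2,4}\leq\frac{\mus_2(4)}{4}(\alpha_2+3)=18$ of the general Proposition rather than an ad hoc absorption of the bottom of the tower.

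However, your derivation of (vii) and (viii) from (v) and (vi) has a genuine gap. The chain $\mu_q(2n)\leq\mu_q(2)\,\mu_{q^2}(n)\leq 3(2Cn-1)$, with $C=1+\frac{p}{q-2+(p-1)\frac{q}{q+1}}$, does give $\mu_q(m)\leq 3Cm-3$ for \emph{even} $m$, but Lemma~\ref{plongement} gives no monotonicity in the extension degree: for odd $m$ your fallback $\mu_q(m)\leq\mu_q(2m)$ yields only $\mu_q(m)\leq 6Cm-3$, i.e.\ twice the claimed constant, so (vii) and (viii) are not established for odd degrees. The actual proof does not pass through $\mu_{q^2}$ at all: it applies the asymmetric algorithm directly over $\F_q$ (for $q>5$, resp.\ $p>5$) to the \emph{descended} densified tower, using places of degree $1$ and $2$ together with derived evaluations, via the displayed corollary ``for $q>5$, $\mu_q(n)\leq 3n+\frac{3}{2}g+\frac{1}{2}(\ell_1+2\ell_2)$''. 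This treats all $n\geq2$ uniformly, and the factor $3$ in (vii)--(viii) (versus $2$ in (v)--(vi)) appears because each degree-$2$ place of $F/\F_q$ costs $\mu_q(2)=3$ bilinear multiplications, not because of an extension-of-scalars argument.
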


Recently, by using the same dense families of modular curves defined over $\F_p$ than the one used to get 
Theorem \ref{newboundpcarréfusion} 
in Section \ref{mus}, H. Randriambololona obtains the following result.

\begin{propo}\label{newboundphr}
Let $p\geq 7$ be a prime number. Then:
\begin{enumerate}
\item for all $k> \frac{p+1}{2}$, $$\frac{1}{k}\mu_{p}(k)\leq 
3\biggr(1+\frac{1+\epsilon_{{\mathcal P}}(\frac{24k}{p-2})}{p-2} \biggl),$$
\item for all $k\geq \frac{p-2}{24}e^{e^{33.217}}$, 
$$\frac{1}{k}\mu_{p}(k)\leq 3\biggr(1+\frac{1+\frac{3}{(\frac{24k}{p-2})^{\frac{1}{3}}}}{p-2} \biggl),$$
\item for $k$ large enough, 
$$\frac{1}{k}\mu_{p}(k)\leq 3\biggr(1+\frac{1+\frac{1}{(\frac{24k}{p-2})^{0.475}}}{p-2} \biggl).$$
\end{enumerate}
\end{propo}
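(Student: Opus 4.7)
The plan is to mirror the strategy used for the symmetric analogue Proposition~\ref{newboundpcarréfusion}, but working with the asymmetric generalized CCMA (Theorem~\ref{theo_evalder}) applied over $\F_p$ at places of degree~$2$, each contributing $\mu_p(2)=3$ (versus $\mus_{p^2}(1)=1$ in the symmetric case). The underlying family is a dense family of classical modular curves $X_0(l_s)/\Q$ with $l_s$ running through primes distinct from $p$: these have good reduction modulo $p$, are defined over $\F_p$, are asymptotically optimal over $\F_{p^2}$, and by Theorem~\ref{th:dv} applied with $r=2$ satisfy $B_2(X_0(l_s))/g_s\to\betad_2(p)=(p-1)/2$ (formula~\eqref{DV2}). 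Since each curve has genus $g_s=(l_s+1)/12+O(1)$, the set of available genera has relative gaps controlled by the gaps between consecutive primes, measured precisely by $\epsilon_{\mathcal{P}}$.

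Given $k>(p+1)/2$, I would first select the smallest prime $l_s$ for which the corresponding genus $g_s$ satisfies $g_s\geq 2k/(p-2)$; this forces $l_s$ to lie between $24k/(p-2)$ and $\bigl(24k/(p-2)\bigr)\bigl(1+\epsilon_{\mathcal{P}}(24k/(p-2))\bigr)$, hence
\[g_s\leq \frac{2k}{p-2}\left(1+\epsilon_{\mathcal{P}}\!\left(\frac{24k}{p-2}\right)\right)+O(1).\]
Under the hypothesis $k>(p+1)/2$, the Weil inequalities guarantee a place $Q$ of degree $k$ on $X_0(l_s)$. I would then apply Theorem~\ref{theo_evalder} with $\ell=1$, the chosen $Q$, and $N_2$ distinct places of degree~$2$, whose supply exceeds $\betad_2(p)g_s+o(g_s)$, comfortably more than the $N_2\geq k+(g_s-1)/2$ needed. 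The existence of divisors $\D_1,\D_2$ of degree $k+g_s-1$ satisfying conditions (a) and (b) of Theorem~\ref{theo_evalder} is secured by the Chudnovsky--Chudnovsky cardinality argument on divisor classes, which in the \emph{asymmetric} setting is unobstructed by $2$-torsion (unlike the symmetric situation discussed in Section~\ref{Bound2torsion}), so the optimal total degree $2n+g-1$ is attainable. Collecting terms,
\[\mu_p(k)\leq 3N_2\leq 3\left(k+\frac{g_s-1}{2}\right)\leq 3k\left(1+\frac{1+\epsilon_{\mathcal{P}}(24k/(p-2))}{p-2}\right)+O(1),\]
and the $O(1)$ can be absorbed (as in \cite{bazy2}) by slightly enlarging $\epsilon_{\mathcal{P}}$ within its admissible range, yielding part~(i).

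Parts~(ii) and~(iii) follow by feeding into part~(i) the explicit bounds on prime gaps from Theorem~\ref{lemmek0}. Dudek's effective result with $\alpha=2/3$ valid for $l_s\geq e^{e^{33.217}}$ translates, via the substitution $l_s\approx 24k/(p-2)$, into $\epsilon_{\mathcal{P}}(24k/(p-2))\leq 3/(24k/(p-2))^{1/3}$ for $k\geq (p-2)e^{e^{33.217}}/24$, which is exactly the threshold and exponent in~(ii); the factor $3$ in the numerator accounts for the implicit constant in Dudek's bound. Similarly, the Baker--Harman--Pintz exponent $\alpha=21/40$ gives $\epsilon_{\mathcal{P}}(x)\leq x^{-19/40}=x^{-0.475}$ for $x$ sufficiently large, which is part~(iii). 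The main obstacle I anticipate is the careful bookkeeping needed to pass between the prime index $l_s$, the genus $g_s$, and the asymptotic count of degree-$2$ places (with all lower-order terms absorbed into the $\epsilon_{\mathcal{P}}$ correction), together with a clean verification that the asymmetric divisor construction in the optimal regime $N=2n+g-1$ indeed goes through without modification; the rest follows the blueprint of \cite{bazy2} and \cite{randGap2} with the obvious substitutions.
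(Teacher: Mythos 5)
Your proposal is correct in outline and follows essentially the same route the paper indicates for this result (which it attributes to \cite{randGap2} rather than proving in detail): the same dense family of modular curves $X_0(l)$ with good reduction over $\F_p$ used for Proposition~\ref{newboundpcarréfusion}, the asymmetric generalized CCMA of Theorem~\ref{theo_evalder} applied at places of degree two (each costing $\mu_p(2)=3$), where the independence of $\D_1$ and $\D_2$ removes the $2$-torsion obstruction of the symmetric case, and the prime-gap estimates of Dudek and Baker--Harman--Pintz to control $\epsilon_{\mathcal{P}}$ for parts (ii) and (iii). Your reading of the constant $3$ in (ii) as coming from Dudek's primes-between-cubes statement (gap roughly $3y^{2/3}$) is consistent with the published bounds, and the remaining additive constants and explicit point counts are indeed the bookkeeping you identify.
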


\begin{rema}
Note that the difficulty of solving the Riemann-Roch systems (cf. \ref{construction_sym}) in 
the context of symmetric algorithms using curves having not sufficiently rational points is 
avoided here, since the previous result is obtained by using the asymmetric version of type 
Chudnovsky algorithm (cf. Section Section  \ref{DiscussionSym} and Section \ref{CurrentCCMA}) applied over places of degree two.
\end{rema}

Now, let us recall some particular values of the quantities $\mu_q(l,r)$, obtained in Section \ref{msMs}:
$$
\mu_3(2,3) \leq 15, \hbox{ } \mu_4(2,2) \leq 8, \hbox{ } \mu_5(2,2) \leq 8.
$$

\section{Effective construction of bilinear multiplication algorithms}\label{SecEffective}

In this section, we are interested by the study of the effective construction of bilinear multiplication algorithms in finite fields. 
Little few work has been done 
on the effective construction of the algorithms of type Chudnovky. They are mainly contained in the following articles:
\cite{bash}, \cite{ball2}, \cite{ceoz}, \cite{babotu}, \cite{atbaboro1} and  \cite{atbaboro2}.

\subsection{Non-asymptotic construction}

\subsubsection{Classical multiplication algorithms}

\begin{itemize}
\item[a)] Example of an effective symmetric construction using an elliptic curve.

\vspace{.5em}

This example developped by U. Baum and A. Shokrollahi in \cite{bash} is the first effective construction 
of an bilinear algorithm of multiplication which implements CCMA. It concerns a multiplication algorithm in the finite field 
$\F_{256}$ over $\F_4$, namely $q=4$ and $n=4$, using the maximal Fermat elliptic curve $y^2+y=x^3+1$. 
The bilinear complexity $\mus({\mathcal U})$of this 
symmetric algorithm ${\mathcal U}$ is optimal and such that $$\mus({\mathcal U})=\mus_q(n)=\mu_q(n)=2n=8.$$

\vspace{.5em}

\item[b)] Example of effective symmetric constructions using an hyperelliptic curve. 

\vspace{.5em}

This example developped by S. Ballet in \cite{ball2} is the first effective construction 
of an bilinear algorithm of multiplication which implements CCMA for an algebraic curve of genus $g>1$.
It concerns a multiplication algorithm in the finite field 
$\F_{16^n}$ over $\F_{16}$, more precisely $q=16$ and $n=13, 14, 15$, using the maximal hyperelliptic curve $y^2+y=x^5$. 
The bilinear complexity of this symmetric algorithm ${\mathcal U}$ is quasi-optimal and such that 

$$\mus({\mathcal U})= 2n+1,$$ which proves that $2n\leq\mu_q(n)\leq\mus_q(n)\leq 2n+1$. 

\begin{op}
Find the exact bilinear complexity in these finite fields $\F_{16^n}$ over $\F_{16}$ with $n=13, 14, 15$, 
knowing that this complexity is $2n$ or $2n+1$. 
Optimize the scalar complexity of these constructions.
\end{op}

\vspace{.5em}

\item[c)] Example of an effective symmetric construction using higher degree places and derivated 
evaluations on rational places on elliptic curves. 

\vspace{.5em}

This example developped by M. Cenk and F. \"Ozbudak in \cite{ceoz} is the first effective construction 
of an bilinear algorithm of multiplication which implements the combination of the generalizations 
of CCMA introduced in \cite{baro1} using places of degree one and two and in \cite{arna1} 
using derivated evaluations. Note that in this example, the derivated evaluations are only used 
on rational places at the order one. 
More precisely,  it concerns a multiplication algorithm in the finite field $\F_{3^9}$ over $\F_{3}$ 
using the non-optimal elliptic curve $y^2=x^3+x+2$. 
In this case, the authors use the evaluation on four rational places with derivated evaluation on two 
among them as well as the evaluation on six places of degree two.
The bilinear complexity of this symmetric algorithm ${\mathcal U}$ is such that 

$$\mus({\mathcal U})=4+2\times 2+6\times 3=26.$$ 

\vspace{.5em}

\item[d)] Example of effective asymmetric construction using higher degree places on algebraic curves.

\vspace{.5em}

This example developped by  S. Ballet, N. Baudru, A. Bonnecaze and M. Tukumuli  
in \cite{bababotu2} (announced in \cite{bababotu}) and by Tukumuli in \cite{tuku} 
is the first effective construction of bilinear algorithms of multiplication which 
implements the asymetric generalization of CCMA introduced in \cite{randJComp}. 
Note that these examples use two distinct Riemann-Roch spaces ${\mathcal L}(D_1)$ 
and ${\mathcal L}(D_2)$ without derivated evaluations. 
More precisely,  in \cite{bababotu2}, three algorithms are constructed. 
The first example concerns a multiplication algorithm in the finite field $\F_{16^{13}}$ over $\F_{16}$ 
using the maximal hyperelliptic curve $y^2+y=x^5$ and only rational places on it. 
The second example concerns a multiplication algorithm in the finite field $\F_{4^{4}}$ over $\F_{4}$ using the optimal curve 
$y^2+y=\frac{x}{x^3+x+1}$ over $\F_4$. 
The third example concerns a multiplication algorithm in the finite field $\F_{2^{5}}$ over $\F_{2}$ using the optimal curve 
$y^2+y=\frac{x}{x^3+x+1}$ over $\F_4$.

\vspace{.5em}

\end{itemize}

\subsubsection{Parallel algorithms designed for multiplication and exponentiation}

In \cite{atbaboro1} and  \cite{atbaboro2}, thanks to a new construction of CCMA, 
 K. Atighechi, S. Ballet, A. Bonnecaze, and R. Rolland design efficient algorithms for both 
 the exponentiation and the multiplication in finite fields. 
They are tailored to hardware implementation and 
they allow computations to be parallelized while maintaining a low number of bilinear multiplications. 
 Notice that so far, practical implementations of multiplication 
 algorithms over finite fields have failed to simultaneously optimize the number of scalar multiplications, additions and bilinear multiplications.
 Regarding exponentiation algorithms, the use of a normal basis is of interest  because the $q^{th}$ power 
 of an element is just a cyclic shift of its coordinates. A remaining question is, how to implement 
 multiplication efficiently  in order to have simultaneously fast multiplication and fast exponentiation. In 2000,  S. Gao et al. \cite{gao} show that 
 fast multiplication methods can beadapted to normal bases constructed with Gauss periods. They show that if $\F_{q^n}$ is represented by a normal 
basis over $\F_q$ generated by a Gauss period of type $(n,k)$, the multiplication in $\F_{q^n}$ 
can be computed with $\BigO{nk\log nk \log\log nk}$ and the exponentiation with $\BigO{n^2k\log k \log\log nk}$ operations in $\F_q$ ($q$ being small). 
This result is valuable when $k$ is bounded. However, in the general case $k$ is upper-bounded by $\BigO{n^3\log^2 nq }$.
 
In 2009, J.-M. Couveignes and R. Lercier construct in \cite[Theorem 4]{cole} 
two families of basis (called elliptic and normal elliptic) for finite field extensions from which they obtain a 
model $\Xi$ defined as follows.
To every couple $(q,n)$, they associate a model, $\Xi(q,n)$, of the degree $n$ extension of $\F_q$ such that the following holds: 
there is a positive constant $K$ such that the following are true: 

- Elements in $\F_{q^n}$ are represented by vectors for which the number of components in $\F_q$ is upper bounded by  
$Kn(\log n)^2\log (\log n)^2.$

- There exists an algorithm that multiplies two elements at the expense of $Kn(\log n)^4|\log (\log n)|^3$ 
multiplications in $\F_q$.

- Exponentiation by $q$ consists in a circular shift of the coordinates.

\vspace{.5em}

Therefore, for each extension of finite field, they show that there exists a model which allows both fast 
multiplication and fast application of the Frobenius automorphism.
Their model has the advantage of existing for all extensions. 
However, the bilinear complexity of their algorithm is not competitive compared with the best known 
methods, as pointed out in \cite[Section 4.3.4]{cole}. Indeed, it is clear that such a model requires at least 
$Kn(\log n)^2(\log (\log n))^2$  bilinear multiplications.


\vspace{1em}

The authors of \cite{atbaboro2} propose another model with the following characteristics:

- The model is based on CCMA, thus the multiplication algorithm has a bilinear complexity in $O(n)$, which is optimal. 

- The model is tailored to parallel computation.  Hence, the computation time used to perform a multiplication or 
any exponentiation can easily be reduced with an adequate number of processors.  
Since the method has a bilinear complexity of multiplication in $O(n)$, it can be parallelized to obtain a constant 
time complexity using $\BigO{n}$ processors. The previous aforementioned works (\cite{gao} and \cite{cole}) 
do not give any parallel algorithm (such an algorithm is more difficult to conceive than a serial one). 

- Exponentiation by $q$ is a circular shift of the coordinates and can be considered free. Thus, efficient parallelization 
can be done  when doing exponentiation.

 - The scalar complexity of their exponentiation algorithm is reduced, compare to a basic 
 exponentiation using CCMA, thanks to a suitable basis representation of the Riemann-Roch space ${\mathcal L}(2D)$ in the second evaluation map.  
More precisely, the normal basis representation of the residue class field is carried in the associated Riemann-roch space ${\mathcal L}
(D)$, and the exponentiation by $q$ consists in a circular shift of the $n$ first coordinates of the vectors lying in the Riemann-Roch space ${\mathcal L}(2D)$.

 - The model uses Coppersmith-Winograd \cite{cowi1} method (denoted CW) or any variants 
 thereof to improve matrix products  and to diminish the number of scalar operations. 
 
\begin{op}
The structure of the involved matrices in the algorithm CCMA should be looked at more closely 
but unfortunately, there are no theoretical means or criteria today to build the best matrices because 
they depend on the geometry of the curves, the field of definition of these curves, as well as the Riemann-Roch 
spaces involved. A study of suitable optimisation strategies of CCMA from this point of view can be found in \cite{baboda}. In particular, the algorithm CCMA using an elliptic curve for multiplication in $\F_{256}/\F_4$ constructed by U. Baum and A. Shokrollahi \cite{bash} is improved.
The remaining open question is how to choose the geometrical objects in order to minimise the number of zeroes in a matrix 
of the evaluation map on the rational points of a curve. 
\end{op}

\subsection{Asymptotic construction}

D. V. and G.V. Chudnovsky claim in \cite{chch} that one can construct in 
polynomial time bilinear multiplication algorithm realizing 
a a bilinear complexity attaining the upper bound for $m_q$. 
Then, I. Shparlinsky, M. Tsfasman and S. Vladut in \cite{shtsvl} note that the argument 
of D. V. and G.V. Chudnovsky is insufficient. Indeed, the construction of such algorithms involves some random 
choice of divisors having prescribed properties 
over an exponentially large set of divisors.

\vspace{.5em}

I. Shparlinsky, M. Tsfasman and S. Vladut obtain a partial result concerning this polynomial 
construction by the following way. Let $q= p^{2m}\geq 49$ 
and let  $X_i=X_0(11l_i)$ be the reduction of the classical modular curve, 
$l_i$ being the i-th prime (for $q=p^2$), or $X_i=X_0(p_i)$ where $p_i$ is 
an irreducible polynomial over $\F_q$ of odd degree coprime with $q-1$ (for $q=p^{2m}$). 
Here, $X_0(p_i)$ is the reduction of the Drinfeld modular curve. 
Note that $\{X_i\}$ is a family of absolutely irreducible smooth curves of 
genus $g=g_i$ with $\lim_{g \rightarrow \infty} \frac{\mid X(\F_q)\mid}{g}={\sqrt q}-1$.  
Then, they prove the following result:

\begin{propo}
Suppose that for a family of modular curves described above for any $X\in \{X_i\}$ there is given 
an explicit point $Q$ of $X$ of some degree $n$ 
such that $$g.\left({\sqrt q}-5\right)/2-o(g)\leq n\leq g.\left({\sqrt q}-5\right)/2.$$
Let $Q$ be defined by its coordinates in some projective embeddings. Then one can polynomially construct a 
sequence ${\mathcal U}= {\mathcal U}_i$ of bilinear multiplication 
algorithms in finite fields $\F_{q^n}$ for the given sequence of 
$n\rightarrow \infty$ such that $$\lim_{g \rightarrow \infty}\mu^{sym}({\mathcal U})/n=2\left(1+\frac{4}{{\sqrt q}-5}\right).$$
\end{propo}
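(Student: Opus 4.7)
The strategy is to apply the original CCMA (Theorem \ref{chudchud}) effectively to the given explicit data: the modular curve $X_i$ of genus $g$, the place $Q$ of degree $n$, and a set $\mathscr{P}=\{P_1,\dots,P_N\}$ consisting of all $\F_q$-rational places of $X_i$ avoiding the support of an auxiliary divisor $\D$ still to be chosen. Because the Shimura (resp.\ Drinfeld) modular curves in question come equipped with explicit defining equations and explicit moduli-theoretic parameterizations of their cusps and further rational points, the full set $\mathscr{P}$ can be enumerated in time polynomial in $g$, and by the Drinfeld-Vladut asymptotic one has $N=g(\sqrt{q}-1)+o(g)$.

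The heart of the argument is the polynomial-time construction of a divisor $\D$ of degree $n+g-1$ satisfying the two CCMA hypotheses. For condition (b), the injectivity of $Ev_{\mathscr{P}}$ on $\mathcal{L}(2\D)$, our choice of parameters gives $2\deg\D-N\leq g(\sqrt{q}-5)+2g-2-g(\sqrt{q}-1)+o(g)=-2g+o(g)<0$, whence $\mathcal{L}(2\D-\mathcal{G})=0$ automatically, where $\mathcal{G}=P_1+\cdots+P_N$; this is precisely the role of the conservative upper bound $n\leq g(\sqrt{q}-5)/2$ built into the hypothesis. For condition (a), the surjectivity of $Ev_Q$, it suffices to arrange $\D-Q$ to be nonspecial of degree $g-1$. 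A standard counting argument over the Jacobian shows that only a small fraction of divisor classes of degree $g-1$ are special, so, fixing an explicit rational point $P_0$ and enumerating divisors of the form $(n+g-1)P_0+\mathcal{E}$ with $\mathcal{E}$ ranging through a polynomial-size family of small-degree modifications, one finds a valid $\D$ in polynomially many trials, with each trial verifying the nonspeciality by direct computation of $\dim\mathcal{L}(\D-Q)$ via standard Riemann-Roch algorithms on the explicit curve.

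Once $\D$ is fixed, bases of $\mathcal{L}(\D)$ and $\mathcal{L}(2\D)$ are computed explicitly, the matrices of $Ev_Q$ and $Ev_{\mathscr{P}}$ in these bases are written down, and the extraction of a symmetric bilinear decomposition $xy=\sum_{i}\phi_i(x)\phi_i(y)w_i$ reduces to routine linear algebra over $\F_q$, polynomial in $g$ and $n$. The resulting algorithm $\mathcal{U}_i$ then satisfies $\mu^{\mathrm{sym}}(\mathcal{U}_i)=N$, and
$$\frac{\mu^{\mathrm{sym}}(\mathcal{U}_i)}{n}\longrightarrow \frac{g(\sqrt{q}-1)}{g(\sqrt{q}-5)/2}=\frac{2(\sqrt{q}-1)}{\sqrt{q}-5}=2\left(1+\frac{4}{\sqrt{q}-5}\right),$$
as claimed. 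The main obstacle is unambiguously the middle step: the deterministic polynomial-time construction of $\D$. Its existence is immediate from counting, but making the search algorithmic is historically the delicate point (highlighted by the Shparlinski-Tsfasman-Vladut critique recalled in Section \ref{mM} and motivating the refinements of Sections \ref{Bound2torsion} and \ref{construction_sym}); here the generous slack encoded in $n\leq g(\sqrt{q}-5)/2$ is precisely what allows the elementary enumeration-and-verify approach to succeed in polynomial time, at the price of a weaker numerical constant than the non-constructive Shparlinski-Tsfasman-Vladut bound.
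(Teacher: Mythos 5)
Your overall frame (CCMA on the given modular curves with all rational points, final ratio $N/n\to 2\bigl(1+\frac{4}{\sqrt{q}-5}\bigr)$) is right, but the middle step — the polynomial-time construction of $\D$ — does not hold as you argue it, and it is exactly the historically delicate point. You take $\deg\D=n+g-1$, so that condition (a) requires $\D-Q$ to be a \emph{nonspecial} divisor class of degree $g-1$, and you claim to find it by enumerating a polynomial-size family $(n+g-1)P_0+\mathcal{E}$ of small modifications, justified by "a standard counting argument over the Jacobian". The counting argument only says that the special classes form a small fraction of the $h\approx q^{g}$ classes of degree $g-1$; it says nothing about a polynomially large, explicitly enumerated subset of classes, which could perfectly well lie entirely inside the theta divisor. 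To cover a constant fraction of the Jacobian by modifications of a fixed base divisor you would need modifications of degree on the order of $g$, i.e.\ exponentially many candidates — this is precisely the obstruction recalled in Section \ref{mM} and at the start of the asymptotic-construction discussion ("random choice of divisors having prescribed properties over an exponentially large set of divisors"). A derandomized search \emph{can} be made to work, but only via the later greedy machinery of Lemma \ref{l(A+2P)=l(A)} and Proposition \ref{propo_construction_sym} (applicable here since $N_1>5g$ for $q\geq 49$), not via the counting argument you invoke; as written, this step is a gap.

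You also misread where the slack $n\leq g(\sqrt{q}-5)/2$ is used: with your $\deg\D=n+g-1$, condition (b) only needs $N>2n+2g-2$, i.e.\ $n\lesssim g(\sqrt{q}-3)/2$, so the extra $2g$ of slack is wasted in your construction. The intended use of this slack — which is what makes the proposition provable without any search at all — is to take $\deg\D=n+2g-1$ (e.g.\ $\D$ an explicit multiple of a rational point, adjusted to avoid the supports involved). Then $\deg(\D-Q)\geq 2g-1$ forces $i(\D-Q)=0$ by Riemann--Roch, so condition (a) of Theorem \ref{chudchud} is automatic, and condition (b) reduces to the degree inequality $2\deg\D=2n+4g-2<\#\mathscr{P}\approx g(\sqrt{q}-1)$, which is exactly what $n\leq g(\sqrt{q}-5)/2$ guarantees. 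Everything is then explicitly computable in polynomial time from the given coordinates of $Q$ (bases of $\Ld{}$ and $\Ld{2}$, the evaluation matrices, and the symmetric decomposition by linear algebra), the complexity is $\approx 2n+4g\approx 2n\bigl(1+\frac{4}{\sqrt{q}-5}\bigr)$, and the weaker constant compared with $2\bigl(1+\frac{1}{\sqrt{q}-2}\bigr)$ is precisely the price of replacing the nonconstructive divisor by this trivially nonspecial one. You should either switch to this choice of $\D$, or, if you insist on the optimal degree $n+g-1$, replace your counting argument by the direct-construction results of Section \ref{construction_sym}.
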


\vspace{.5em}

This proposition means that to get a polynomially constructable algorithm with linear complexity, one needs to 
construct explicitly (i.e polynomially) points of corresponding 
degrees on modular curves (or on other curves with many points). Unfortunately, so far 
it is unknown how to produce such points.

\vspace{.5em}

In \cite[Remark 6.6]{randJComp}, H. Randriambololona improves this result under the same hypothesis concerning the construction 
of a point of degree $n$. 
More precisely, up to this existence, he obtains a polynomial time (in $n$) construction of a multiplication algorithm (respect. 
a symmetric multiplication algorithm) 
in $\F_{q^n}/\F_q$ of length $2n(1+\frac{1}{\sqrt{q}-2})+o(n)$ for $q\geq 9$ (resp. $q\geq 49$).

\vspace{.5em}

In \cite{babotu}, S. Ballet, A. Bonnecaze and M. Tukumuli obtain a polynomial construction 
of a symmetric multiplication algorithm 
of type elliptic Chudnovsky--Chudnovsky (i.e with the Chudnovsky-Chudnovky interpolation 
method on an elliptic curve) of length in
$O(n(2q/K)^{\log^{\star}(n)})$ 
where 
\begin{equation}
\log^{\star}(n)=
\left \{
\begin{array}{ll}
0 & \hbox{if } n\leq 1,  \\
1+ \log^{\star}(\log n) & \hbox{otherwise,}   \\  
\end{array}\right .
\end{equation}
$K=2/3$ if the characteristic of $\F_q$ is $2$ or $3$ and $K=5/8$ otherwise.
Note that the length is only quasi-linear in $n$. However, this construction is without the restriction linked 
to the construction of a point of degree $n$.
Moreover, this asymptotical construction is not realized from an infinite family of 
suitable curves as the above results but thanks 
to the use of a sequence $\mathcal{A}_{q,n}$ of symmetric bilinear multiplication algorithms constructed from an 
arbitrary elliptic curve defined 
over $\F_q$ and using high degree points of this curve.

\vspace{.5em}

In \cite{bsho2}, N. Bshouty gives a deterministic polynomial time construction of a 
tester of type $(\mathcal{H}\mathcal{L}\mathcal{F}(\F_q,n,d), \F_{q^n},\F_q))$
and of size $\mu=O(d^{\tau(d,q)}n)$ where 
\begin{equation}
\tau(d,q)=
\left \{
\begin{array}{llll}
3 & \hbox{if} & q\geq cd^2,  & c>1 \hbox{ constant, } q \hbox{ perfect square,} \\
4 & \hbox{if} & q\geq cd,  & c>1 \hbox{ constant, }  \\
5 & \hbox{if} & q\geq d+1,  &   \\
6 & \hbox{if} & q=d.  &   \\
\end{array}\right .
\end{equation}

From \cite{bsho2}, in \cite[Corollary 2]{bsho}, N. Bshouty gives the first polynomial 
time construction of a multilinear multiplication algorithm 
with linear multiplicative complexity in $O(d^{\tau(d,q)}n)$ for the multiplication of $d$ 
elements in any extension finite field $\F_{q^n}$. 
This solves the open problem of deterministic polynomial time constructing a bilinear 
algorithm (i.e with $d=2$) with linear bilinear complexity for 
the multiplication of two elements in finite fields \cite{chch}\cite{shtsvl}\cite{ball4}. However, it does 
not solve the problem of deterministic polynomial time constructing 
a bilinear algorithm of type Chudnovsky--Chudnovsky. Indeed, the method of N. Bshouty is only based 
upon the equivalence between an optimal tester size 
and multilinear complexity. More precisely, the minimal size of a tester for 
$\mathcal{H}\mathcal{L}\mathcal{F}(\F_q,n,d)$ turns out to be equivalent to
the rank of the tensor of the multiplication of $d$ elements in $\F_{q^n}$ over $\F_q$.
The minimal size of a tester for $\mathcal{H}\mathcal{P}(\F_q,n,d)$ is equivalent to
the symmetric rank of the tensor of multiplication of $d$ elements.




 \section{Appendix: proof of Theorem \ref{TheoA-Sym}, Theorem \ref{TheoA-Asym} and Proposition \ref{prop:thAconjZ}} \label{SecAnnexe}
 We compress here the proof in \cite[II §1.2-3]{ramb1}.
\subsection{Repairing (and extending) the criterion of Cascudo \& al}
The following theorem does control for 2-torsion in the worst case. It is a straight generalization of \cite[Theorem 5.18]{cacrxi2}. The parameters will be later specified in the next paragraph to derive criterions for asymptotic bounds.

\begin{theo} \label{th:ccx}
Let $X$ be a curve of genus $g$ over $\F_q$, where $q\geq 2$ is any prime power, and let $m\geq 1$ be an integer.

Suppose that $X$ admits a closed point $Q$ of degree $\deg Q=m$ (a sufficient condition for this is $2g+1\leq q^{(m-1)/2}(q^{1/2}-1)$).

Consider now a collection of integers $n_{d,u}\geq 0$ (for $d,u\geq1$), such that almost all of them are zero, and that for any $d$,
\begin{equation} 
n_d=\sum_u n_{d,u}\leq B_d(X),
\end{equation}
where $B_d(X)$ denotes the number of closed points of $X$ of degree $d$.

Let $R$ the smallest \emph{integer} such that
\begin{align} \label{eq:critqeven} R\geq g(1+\log_q(2)) + 2m +3log_q\left(\frac{3qg}{(\sqrt{q}-1)^2} \right) +2 \text{ (if $2|q$)} \\
 R\geq g(1+2\log_q(2)) + 2m +3log_q\left(\frac{3qg}{(\sqrt{q}-1)^2} \right)+2  \text{ (otherwise).} \end{align}
Then, provided
\begin{equation*} \label{eq:thccx} \sum_{d,u} n_{d,u}du \geq R \end{equation*}
we have
\begin{equation} \mu_q(m)\leq\sum_{d,u}n_{d,u}\mu_q(d,u). \end{equation}
\end{theo}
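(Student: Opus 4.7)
The plan is to apply the symmetric case of the generalized CCMA (Theorem \ref{theo_evalder}) with $\ell = 1$ and $\mathcal{D}_1 = \mathcal{D}_2 = \mathcal{D}$. For each pair $(d,u)$ with $n_{d,u}>0$ I pick $n_{d,u}$ distinct closed points of $X$ of degree $d$, which is possible by the hypothesis $n_d\leq B_d(X)$, and I assemble them into the weighted divisor $\mathcal{G} = \sum_{d,u,i} u\,P_{d,u,i}$ of total degree $N = \sum_{d,u} n_{d,u} du \geq R$. If I can exhibit a divisor $\mathcal{D}$ with support disjoint from $Q$ and from the chosen points, satisfying the two numerical sufficient conditions at the end of Theorem \ref{theo_evalder}, namely $i(\mathcal{D}-Q)=0$ and $\dim \mathcal{L}(2\mathcal{D}-\mathcal{G})=0$, then Theorem \ref{theo_evalder} immediately delivers the bound $\mu_q(m) \leq \sum_{d,u} n_{d,u}\mu_q(d,u)$.

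I obtain such a $\mathcal{D}$ by a Cascudo-Cramer-Xing style counting argument in the degree-$s$ component $\mathrm{Pic}^s(X)$ of the Picard group, whose cardinality equals the class number $h$; here $s = \deg \mathcal{D}$ is a parameter to be optimized. A class $[\mathcal{D}]$ is \emph{bad of type (i)} if $i(\mathcal{D}-Q)>0$, which by Serre duality is equivalent to $[K_X-\mathcal{D}+Q]$ being effective, so the number of such classes is at most $A_{2g-2-s+m}$. A class $[\mathcal{D}]$ is \emph{bad of type (ii)} if $2[\mathcal{D}]-[\mathcal{G}]$ is effective; since the doubling map $[\mathcal{D}]\mapsto 2[\mathcal{D}]$ has fibers of size $|\mathcal{J}(X)(\F_q)[2]|$, the number of such classes is at most $|\mathcal{J}(X)(\F_q)[2]|\cdot A_{2s-N}$. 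Hence it is enough to ensure
\begin{equation*}
A_{2g-2-s+m} \;+\; |\mathcal{J}(X)(\F_q)[2]|\cdot A_{2s-N} \;<\; h.
\end{equation*}

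To close the argument I plug in, on the one hand, the Weil-type bound $|\mathcal{J}(X)(\F_q)[2]|\leq 2^g$ in characteristic $2$ (the $2$-rank of the Jacobian being at most $g$ in that case) and $|\mathcal{J}(X)(\F_q)[2]|\leq 2^{2g}$ in odd characteristic, and, on the other hand, the standard uniform estimate $A_t/h \leq \tfrac{q}{(q-1)(\sqrt{q}-1)^{2}}\cdot q^{t-g+1}$ (with a mild $g$-factor absorbed into the log). The two bad contributions are then balanced by choosing $s$ close to $\tfrac{1}{3}(N+2g-2+m)$ rounded to an integer, and taking $q$-logarithms reduces the sufficient condition to a lower bound on $N$ of exactly the displayed form for $R$: the $g(1+\log_q 2)$ (resp.\ $g(1+2\log_q 2)$) term comes from combining the $q^{-g+1}$ in $A_t/h$ with the $2$-torsion contribution, the $3\log_q\!\bigl(3qg/(\sqrt{q}-1)^{2}\bigr)$ absorbs the auxiliary constants uniformly for both bad families, the $2m$ captures the Serre-duality shift by $Q$, and the final $+2$ handles the strict inequality together with the integer rounding on $s$.

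The main technical obstacle lies precisely in this constant-tracking: making the $A_t/h$ estimate hold uniformly in $t$ for both values $t=2g-2-s+m$ and $t=2s-N$ with a single explicit constant, and verifying that the $+2$ slack actually suffices to absorb every rounding loss. Existence of the place $Q$ of degree $m$ itself follows from the Serre-Weil criterion $2g+1\leq q^{(m-1)/2}(q^{1/2}-1)$ quoted in the hypotheses.
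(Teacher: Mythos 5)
Your overall strategy is the paper's own, almost verbatim (the paper in turn follows the proof of Cascudo--Cramer--Xing's Theorem 5.18): count in $\mathrm{Pic}^s(X)$ the classes failing $i(\mathcal{D}-Q)=0$ (at most $A_{2g-2-s+m}$, by Serre duality) and those failing $\dim\mathcal{L}(2\mathcal{D}-\mathcal{G})=0$ (at most $|J(\F_q)[2]|\cdot A_{2s-N}$, via the doubling map on classes), compare the total with the class number $h$ using the bounds of Proposition~\ref{prop:majoccx}, and feed the resulting divisor into the generalized CCMA of Theorem~\ref{theo_evalder} with $\ell=1$ and $\mathcal{D}_1=\mathcal{D}_2$. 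Two remarks on the estimates you invoke: the paper uses $A_t/h\leq g\,q^{t-g+1}/(\sqrt{q}-1)^2$, and it is precisely the factor $g$ in this numerator that produces the $g$ inside the term $3\log_q\bigl(3qg/(\sqrt{q}-1)^2\bigr)$ of $R$, so your "mild $g$-factor absorbed into the log" is exactly where that constant comes from.

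There is, however, a concrete flaw in the execution: your choice $s\approx\frac{1}{3}(N+2g-2+m)$ equalizes the exponents of the \emph{raw} quantities $A_{2g-2-s+m}$ and $A_{2s-N}$, whereas the second count carries the torsion weight $|J(\F_q)[2]|$, and the balance must be done on the weighted quantities. Indeed, with your $s$ and $N=R$, the available estimate for the second term becomes roughly $2^{2g/3}/(3q^2)$ for $q$ even (and $2^{4g/3}/(3q^2)$ for $q$ odd), which is no longer small once $g$ exceeds roughly $3\log_2 q$ (resp. $\tfrac{3}{2}\log_2 q$), i.e. exactly in the large-genus regime the theorem is designed for; the counting argument then fails to close. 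The correct choice is shifted down by $\tfrac{g\log_q 2}{3}$ (resp. $\tfrac{2g\log_q 2}{3}$), namely $s\approx\frac{1}{3}\bigl(N+2g-2+m-g\log_q 2\bigr)$ for even $q$ and $s\approx\frac{1}{3}\bigl(N+2g-2+m-2g\log_q 2\bigr)$ for odd $q$, which for $N=R$ gives $s\approx g+m+\log_q\bigl(3qg/(\sqrt{q}-1)^2\bigr)$; the coefficient $g(1+\log_q 2)$ (resp. $g(1+2\log_q 2)$) in the definition of $R$ is exactly what guarantees that this shifted window has length about $3$ and hence contains an integer. This is what the paper does when it selects an even integer $2d$ in an explicit interval whose endpoints already incorporate the $g\log_q 2$ correction. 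With this repair (and a check of the rounding slack you already flag), your argument coincides with the paper's proof.
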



The following proposition gathers the upper-bounding made in the proof. The first two follow from \cite[p. 39 (or p. 64)]{mumf1}
whereas the third one is borrowed from \cite[Proposition 3.4]{cacrxi2}.

\begin{propo} \label{prop:majoccx} Let $\F_q$ be a finite field and $X$ a curve over $\F_q$ of genus $g\geq 1$. Let $J$ be the Jacobian of $X$ and $J(\F_q)$ the rational class group.
\begin{enumerate}
\item If $q$ is odd, then $J(\F_q)[2]\leq 2^{2g}$ 
\item If $q$ is even, then $J(\F_q)[2]\leq 2^g$ 
\item Let $h$ be the class number of $X$ and, for any integer $i$ with $0\leq i \leq g-1$, $A_i$ the number of $\F_q$-rational effective divisors of degree $r$. Then 
\begin{equation*} \frac{A_i}{h}\leq\frac{g}{q^{g-i-1}(\sqrt{q}-1)^2} \end{equation*}
\end{enumerate}
\end{propo}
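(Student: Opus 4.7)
Since the statement itself attributes (1) and (2) to \cite[pp.~39, 64]{mumf1} and (3) to \cite[Proposition 3.4]{cacrxi2}, the task is to assemble a concise reproof. For (1) and (2), the plan is to invoke the structure of the multiplication-by-$2$ isogeny on the Jacobian $J$ of $X$: this isogeny has degree $2^{2g}$, and its separable part is \'etale. When $\mathrm{char}(\F_q)\neq 2$ the isogeny is fully separable, so the $\bar{\F}_q$-points of its kernel form a group of order exactly $2^{2g}$. When $\mathrm{char}(\F_q)=2$ the \'etale quotient of $J[2]$ has order $2^{f}$, where $f$ is the $2$-rank of $J$ and lies between $0$ and $g$. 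In either case $J(\F_q)[2]\subseteq J(\bar{\F}_q)[2]$, which yields the two stated bounds.

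For (3), the plan is to extract $A_i$ from the Weil zeta function
\[
Z(T)\;=\;\sum_{i\geq 0} A_i T^i\;=\;\frac{L(T)}{(1-T)(1-qT)},
\]
where $L(T)=\prod_{k=1}^{g}(1-\alpha_k T)(1-\bar{\alpha}_k T)$ with $|\alpha_k|=\sqrt{q}$ (Riemann hypothesis for curves) and $L(1)=h$. A partial fraction decomposition writes $Z(T)$ as a polynomial $Q(T)$ of degree $2g-2$ plus $-h/((q-1)(1-T))+h/((q-1)q^{g-1}(1-qT))$, so that
\[
A_i\;=\;[T^i]Q(T)+\frac{h(q^{i-g+1}-1)}{q-1}\qquad(0\leq i\leq 2g-2).
\]
Combined with the functional equation $A_i=q^{i-g+1}A_{2g-2-i}$, the estimate of $A_i$ for $i\leq g-1$ reduces to the range $n=2g-2-i\geq g-1$, in which $h^0$ on every class of degree $n$ is directly controlled by Riemann-Roch. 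Carrying out the resulting bound and using the lower estimate $h=\prod_k |1-\alpha_k|^2\geq(\sqrt{q}-1)^{2g}$ then yields the claimed inequality.

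The principal technical obstacle is producing the sharp constant $g/(\sqrt{q}-1)^2$: a naive estimate using the coefficient bound $|c_k|\leq\binom{2g}{k}q^{k/2}$ on $L(T)=\sum c_k T^k$ would introduce a catastrophic factor $((\sqrt{q}+1)/(\sqrt{q}-1))^{2g}$ and must be avoided. The workaround, as in \cite[Proposition 3.4]{cacrxi2}, is precisely to route through the functional equation above, so that the dominant contribution to $A_i$ comes from the simple pole at $T=1/q$ in the partial fraction expansion, contributing exactly $h/((q-1)q^{g-i-1})$, while the residual from $Q(T)$ is controlled class-by-class via Riemann-Roch, the factor $g$ in the numerator reflecting the linear-in-$g$ growth of the dimensions of the special linear series on $X$ in the critical degree range.
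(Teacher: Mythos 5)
Your treatment of (1) and (2) is correct and standard (and matches the paper, which simply cites Mumford for these two items); for (3) the paper likewise gives no proof but quotes \cite[Proposition 3.4]{cacrxi2}, so the substance of your proposal is whether your sketch actually carries that argument out --- and there is a genuine gap there. The pivot of your part (3) is the claimed functional equation $A_i=q^{i-g+1}A_{2g-2-i}$, which is false: the functional equation $L(T)=q^gT^{2g}L(1/(qT))$ holds for the coefficients of $L$, whereas for the $A_i$ one gets (from your own partial-fraction formula, or from Riemann--Roch applied classwise together with the bijection $[D]\mapsto[K-D]$)
$$A_i \;=\; q^{i-g+1}A_{2g-2-i}\;+\;\frac{h\,(q^{i-g+1}-1)}{q-1},$$
so the two sides of your identity differ by roughly $h/(q-1)$; for instance $A_0=1$ while $q^{1-g}A_{2g-2}=1+\frac{h(1-q^{1-g})}{q-1}$.

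Worse, the reduction cannot be repaired within your strategy. For $i\leq g-1$ the correct relation does give the inequality $A_i\leq q^{i-g+1}A_{2g-2-i}$, but the right-hand side equals $A_i+\frac{h(1-q^{i-g+1})}{q-1}\approx A_i+\frac{h}{q-1}$, so the best this route can ever yield is $A_i\lesssim h/(q-1)$, missing the claimed bound $\frac{g\,h}{(\sqrt{q}-1)^2 q^{g-1-i}}$ by a factor of order $q^{g-1-i}/g$; concretely, at $i=0$ your plan would require $A_{2g-2}\leq\frac{g\,h}{(\sqrt{q}-1)^2}$, which is false since $A_{2g-2}=q^{g-1}+\frac{h(q^{g-1}-1)}{q-1}$. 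The auxiliary claims are also off: for $g-1\leq n\leq 2g-2$ the dimension $\ell(D)$ is not ``directly controlled by Riemann--Roch'' (special classes occur in exactly this range), and $[T^i]Q(T)$ is not a small residual dominated by the pole at $T=1/q$ --- it has size about $h/(q-1)$ and must cancel against the $-h/(q-1)$ coming from the pole at $T=1$, a cancellation your sketch never establishes. So the quantitative heart of (3) --- where the factor $g$, the $(\sqrt{q}-1)^2$, and the decay $q^{-(g-1-i)}$ actually come from --- is missing; you should either cite \cite[Proposition 3.4]{cacrxi2} as the paper does, or reproduce its genuine argument rather than the route sketched here.
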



Let us now follow the original proof of the theorem of Cascudo \& al [only in the case $q$ even, the odd case being identic modulo using the corresponding upper-bound in Proposition \ref{prop:majoccx}].
Adding the terms $-\log_q\left(\frac{3qg}{(\sqrt{q}-1)^2} \right)$ and $2g(1-\log_q(2))$ to both sides of the inequality \eqref{eq:critqeven} reads :
$$2g+2m+2\log_q\left( \frac{3qg}{(\sqrt{q}-1)^2} \right) \leq g(1-\log_q(2)) + R -\log_q\left(\frac{3qg}{\sqrt{q}-1)^2}\right)-2 $$
Thus there exists an even integer $2d$ between the two sides of the previous inequality.
Raising $q$ to the inequalities $LHS \leq 2d$ and $2d \leq RHS$ respectively gives:
\begin{align} \frac{g}{q^{g-(2g-d+m)-1 }(\sqrt{q}-1)^2 } \leq \frac{1}{3} \label{eq:A2g-d+k}\\
						  \frac{g2^g}{q^{g-(2d-R)-1 }(\sqrt{q}-1)^2 } \leq \frac{1}{3} \label{eq:A2d-R} \end{align}   
Using the upper-bound (3) of Proposition \ref{prop:majoccx}, and combining the two inequalities \eqref{eq:A2g-d+k} and \eqref{eq:A2d-R} above with the upper-bound \ref{prop:majoccx}, yields
\begin{equation*} \label{eq:ccxmajo} h>\frac{2}{3}h\geq A_{2g-d+m} + J(\F_q)[2] A_{2d-R} \end{equation*}
Now let us choose a collection of pairwise distinct thickened points $\{P\}$ on the curve $X$ such that, for each $(d,u)$, there are exactly $n_{d,u}$ points among them of degree $d$ and multiplicity $u$ (this is possible by assumption). Let $G$ be their divisorial sum and $Q$ a closed point of degree $m$ as in the assumption. $G$ being of degree greater than $R$ by assumption \eqref{eq:thccx}, the general criterion of \cite[\S 4 Theorem 6]{cacrxi} along with the inequality \eqref{eq:ccxmajo} imply the existence of a divisor $D=X$ of degree $d$ that satisfies the following system of Riemann-Roch spaces vanishing conditions (with $K$ being the canonical divisor of $X$):
\begin{align} l(K-X+Q)=0\\ l(2X-G)=0 \end{align}  
Thus criterions (i') and (ii') of Theorem \cite[Theorem 3.5]{randJComp} are satisfied with the divisors $G$ and $D$.

		\subsection{Deriving the bounds from the previous theorem and other criterions from the litterature}
		
Let $(X_s)_s$ be a dense sequence of curves over $\F_q$ with genera $g_s$ growing to infinity, and a ratio of points of degree $r$ matching $\betad_r(q)$. Noting $\betad_r=\betad_r(q)$, this reads :
\begin{align} 
\tag{d1} & g_s  \xrightarrow[s\rightarrow \infty]{} \infty \label{eq:eqXinf}\\
\tag{d2} & B_r(X_s)=  \betad_r g_s+o(g_s) \label{eq:Xopt} \\
\tag{d3} &g_{s}=  g_{s-1}+o(g_s) \label{eq:Xdense}
\end{align}

\bigskip
Let us prove first the bound (b) in \ref{TheoA-Sym}, which generalizes \cite[Proposition 3]{bachpi}, but whose arguments were already introduced in \cite[Theorem 3.2]{bapi}. Given an integer $n$, let $s(n)$ be the smallest integer such that
\begin{equation*} \label{eq:critbp} rl B_r(X_{s(n)})-2g_{s(n)}\geq 2n+3. \end{equation*}
\eqref{eq:Xopt} makes clear (or anyway it will be in the following equivalences), that such an integer $s(n)$ exists as soon as the denominator in the criterion (b) of Theorem \ref{TheoA-Sym} is strictly positive.

Moreover $g$ being large enough, \cite[Proposition 4.3 and Remark 4.4]{bariro} state in general the existence of a zero-dimensional divisor of degree $g-5$ on $X_{s(n)}$. Thus the existence of a non-special divisor $R$ of degree (lower than) $g+3$.

Therefore, Corollary \cite[Proposition 5.1]{randJComp} applies to \eqref{eq:critbp}. Taking all $n_{d,u}$ null except $n_{r,l}$ equal to $B_r(X_{s(n)})$, this reads :
\begin{equation*} \label{eq:critbpg} \mus_q(n) \leq \mus_q(r,l) B_r(X_{s(n)}). \end{equation*}

Let us now tie the asymptotics behaviors of $g_{s(n)}$ and $B_r(X_{s(n)})$. The minimality of $s(n)$ satisfying \eqref{eq:critbp} implies :
$$rl B_r(X_{s(n)})-2g_{s(n)}\geq 2n+3>rlB_r(X_{s(n)-1})-2g_{s(n)-1}$$
Dividing the two inequalities by $g_{s(n)-1}$, and applying the asymptotic equivalences \eqref{eq:Xopt} and \eqref{eq:Xdense} (and \eqref{eq:eqXinf}) yields :
$$rl \betad_r-2+o(n)\geq \frac{2n}{g_{s(n)}}+o(1)>rl\betad_r-2 +o(n)$$
hence the asymptotic equivalence :
\begin{equation} \label{eq:ass} 2n+o(n)=(rl\betad_r-2)g_{s(n)}+o(g_{s(n)}) \end{equation}
(which implies in particular that $o(n)=o(g_{s(n)}) $). One can now divide both sides of the upper-bound \eqref{eq:critbpg} by the previous equality :
$$\frac{\mus_q(n)}{n} \leq \mus_q(r,l).2\left( \frac{\betad_r g_{s(n)} + o(n) }{(rl\betad_r-2)g_{s(n)}+o(n)} \right) $$
Multiplying and dividing the RHS parenthesis by $rl$, then subtracting and adding $2g_{s(n)}$ to the numerator of the RHS, gives the result by letting $n$ tend to infinity.  

\bigskip

The other bounds are derived similarly. Namely, given an integer $n$, consider $s(n)$ be the smallest integer such that the following inequalities hold, then apply the respective criterions with all the $n_{d,u}$ null excepted $n_{r,l}=B_r(X_{s(n)})$:
\begin{align} \label{eq:critasym} rl B_r(X_{s(n)})-g_{s(n)} & \geq 2n+5 \text{ then apply {\cite[Proposition 5.7]{randJComp}} for Theorem \ref{TheoA-Asym}} \\
\label{eq:critsym} rl B_r(X_{s(n)})-g_{s(n)} & \geq 2n+1 \text{ then apply \cite[Proposition 5.2 c)]{randJComp} for Theorem \ref{TheoA-Sym} (a)} \\
rl B_r(X_{s(n)})-g_{s(n)} & \geq 2n+1 \text{ (same $s(n)$) this time for Proposition \ref{prop:thAconjZ}}
\end{align}
[Justification for the latter: simply set $\mathrm{Cl}_0(X)(\F_q)[2]=0$ in the proof of Theorem \ref{th:ccx}, thanks to Proposition \ref{prop:notwotors}]

\begin{align}
\label{eq:critccx} rl B_r(X_{s(n)})-(1+\log_q2)g_{s(n)} & \geq 2n +3log_q\left(\frac{3qg_{s(n)}}{(\sqrt{q}-1)^2} \right) +3 \text{  if $2|q$ for Theorem \ref{TheoA-Sym} (c)} \\
\label{eq:critccx2} rl B_r(X_{s(n)})-(1+2\log_q2)g_{s(n)} & \geq 2n +3log_q\left(\frac{3qg_{s(n)}}{(\sqrt{q}-1)^2} \right) +3 \text{  otherwise for Theorem \ref{TheoA-Sym} (d)}.
\end{align}


\end{document}